\theoremstyle{plain}
\newtheorem{thm}{Theorem}[section]
\newtheorem{lem}[thm]{Lemma}
\newtheorem{cor}[thm]{Corollary}
\newtheorem{prop}[thm]{Proposition}
\theoremstyle{definition}
\newtheorem{de}[thm]{Definition}
\theoremstyle{remark}
\newtheorem{rem}[thm]{Remark}
\DeclareMathOperator{\Hess}{\rm Hess}
\DeclareMathOperator{\diver}{\rm div}
\DeclareMathOperator{\supp}{supp}
\DeclareMathOperator{\diam}{diam}
\DeclareMathOperator{\lip}{\rm lip}
\DeclareMathOperator{\RCD}{\sf RCD}
\DeclareMathOperator{\CD}{\sf CD}
\DeclareMathOperator{\MCP}{\sf MCP}
\DeclareMathOperator{\BE}{\rm BE}
\DeclareMathOperator{\Ric}{\rm Ric}
\DeclareMathOperator{\TestF}{\rm TestF}
\DeclareMathOperator{\TestV}{\rm TestV}
\DeclareMathOperator{\Tan}{\rm Tan}
\DeclareMathOperator{\reg}{reg}
\DeclareMathOperator{\Ch}{\sf Ch}
\DeclareMathOperator{\Ent}{\sf Ent}
\DeclareMathOperator{\Span}{Span}
\DeclareMathOperator{\HS}{HS}
\DeclareMathOperator*{\esssup}{ess\,sup}
\DeclareMathOperator*{\essinf}{ess\,inf}
\DeclareMathOperator*{\Geo}{\rm Geo}
\DeclareMathOperator*{\OptGeo}{\rm OptGeo}
\newcommand\inner[2]{\langle #1, #2 \rangle}
\newcommand\braces[1]{\left\{ #1 \right\}}
\newcommand\norm[1]{\| \, #1 \, \|}
\newcommand{\tPsi}{\tilde{\Psi}}
\newcommand{\gamd}{\gamma^{\delta}}
\newcommand{\gamdp}{\gamma^{\delta'}}
\newcommand{\lgam}{\bar{\gamma}}
\newcommand{\tgam}{\tilde{\gamma}}
\setlist[enumerate,1]{label={\arabic*.}, ref=\arabic*}
\def\l@subsection{\@tocline{2}{0pt}{2.5pc}{5pc}{}}
\begin{document}

\title{H\"older continuity of tangent cones in RCD(K,N) spaces and applications to non-branching}
\author{Qin Deng}\thanks{University of Toronto, qin.deng@mail.utoronto.ca} 

\begin{adjustwidth}{0.5cm}{0.5cm}
\maketitle
\end{adjustwidth}

\begin{abstract}
	In this paper we prove that a metric measure space $(X,d,m)$ satisfying the finite Riemannian curvature-dimension condition $\RCD(K,N)$ is non-branching and that tangent cones from the same sequence of rescalings are H\"older continuous along the interior of every geodesic in $X$. More precisely, we show that the geometry of balls of small radius centred in the interior of any geodesic changes in at most a H\"older continuous way along the geodesic in pointed Gromov-Hausdorff distance. This improves a result in the Ricci limit setting by Colding-Naber where the existence of at least one geodesic with such properties between any two points is shown. As in the Ricci limit case, this implies that the regular set of an $\RCD(K,N)$ space has $m$-a.e. constant dimension, a result already established by Bru\`e-Semola, and is $m$-a.e convex. It also implies that the top dimension regular set is weakly convex and, therefore, connected. In proving the main theorems, we develop in the $\RCD(K,N)$ setting the expected second order interpolation formula for the distance function along the Regular Lagrangian flow of some vector field using its covariant derivative. 
\end{abstract}

\renewcommand*\contentsname{\normalsize \bfseries Contents \normalfont}
\tableofcontents

\section{Introduction}
	
	In this paper, we prove that $\RCD(K,N)$ spaces are non-branching and generalize to the $\RCD(K,N)$ setting an improved version of the main result from Colding-Naber \cite{CN12}.  We begin by stating two formulations of the latter, which is be the main technical result of this paper. 
	
\begin{thm}\label{main theorem 1}(H\"older continuity of geometry of small balls with same radius)
Let $(X,d,m)$ be an $\RCD(K,N)$ space for some $K \in \mathbb{R}$ and $N \in (1, \infty)$. Let $p, q \in X$ and $d(p,q)=\ell$. Define $K' = (\frac{K}{-(N-1)} \vee 1)^{1 \slash 2}$ and $\ell' = \ell \wedge 1$. For any unit speed geodesic $\gamma:[0,\ell] \to X$ between $p$ and $q$, there exist constants $C(N)$, $\alpha(N)$ and $r_0(N) > 0$ so that for any $\delta > 0$ with $0 < r < r_0\frac{\delta\ell'}{K'}$ and $\delta \ell < s < t < \ell - \delta \ell$, 
\begin{equation}
	d_{pGH}\bigg((B_{r}(\gamma(s)), \gamma(s)), (B_{r}(\gamma(t)), \gamma(t))\bigg) < \frac{CK'}{\delta \ell'}r|s-t|^{\alpha}.
\end{equation}
\end{thm}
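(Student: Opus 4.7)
My plan is to follow the strategy of Colding-Naber \cite{CN12} while carrying it out entirely in the $\RCD(K,N)$ setting and strengthening the output from ``some geodesic'' to \emph{every} geodesic. The central object is a Regular Lagrangian Flow $F_t$ generated by a vector field $V$ whose integral curves approximately follow $\gamma$, so that $F_{t-s}$ sends $B_{r}(\gamma(s))$ close to $B_{r}(\gamma(t))$ for $s,t$ in the interior of $[0,\ell]$. A natural candidate for $V$ is (minus) the gradient of a signed distance-type function constructed from $\gamma$ and its endpoints, regularized via the heat semigroup to obtain enough Sobolev and second-order regularity to invoke the Ambrosio-Trevisan theory of Regular Lagrangian Flows on $\RCD(K,N)$ spaces.

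The key technical step is the second-order interpolation formula for the distance function along the flow of $V$ announced in the abstract, which expresses $|d(F_t(x), F_t(y)) - d(x,y)|$ as a double time-integral involving the symmetric part of the covariant derivative $\nabla V$. Combined with Bochner's inequality and Li-Yau type gradient estimates for the heat flow on $\RCD(K,N)$ spaces, this should reduce the desired pGH estimate to an integral bound on $\int |\nabla V|_{\HS}^{2}\, d\m$ of order comparable to $|s-t|$. A Chebyshev-type argument then produces a subset of $B_{r}(\gamma(s)) \times B_{r}(\gamma(t))$ of large product measure on which the flow is an almost-isometry with the advertised H\"older rate.

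To convert this measured almost-isometry into an actual pGH bound between the two ambient balls, I would invoke Bishop-Gromov volume comparison: since the good set occupies a definite fraction of each ball, every point of each ball is within distance $\lesssim r|s-t|^{\alpha}$ of a good point, so perturbing a net within each ball yields the required $\varepsilon$-isometry. The final statement follows after optimizing the parameters and absorbing the dimensional constants into $C(N)$, $\alpha(N)$ and $r_0(N)$.

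The main obstacle I expect is the passage from ``some good geodesic'' in the style of \cite{CN12} to \emph{every} geodesic. In the Ricci-limit argument one averages over pairs of points, which only selects a measure-generic curve; to pin down a specific $\gamma$ one needs uniform estimates on $V$ that do not depend on pre-selecting a measure-theoretically typical curve. I expect this to be handled by running the argument with a parameterized family of test functions indexed by nearby endpoint pairs and then using the Sobolev-to-Lipschitz property of $\RCD(K,N)$ spaces to transfer integral bounds onto the specific $\gamma$ under consideration. This step is subtle because branching of $\gamma$ is precisely what the paper will deduce as a \emph{consequence} of the H\"older continuity, so non-branching cannot be used as an input at this stage.
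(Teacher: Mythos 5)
Your high-level framework is correct — heat-flow regularization of the distance functions, the second-order interpolation formula, Hessian estimates via Bochner, a Chebyshev argument producing a good set, and Bishop-Gromov to upgrade the measured almost-isometry to a pGH bound — this is the Colding-Naber skeleton, and it is indeed the skeleton followed by the paper (via the local flow $\Psi$ of $-\nabla d_p$, approximated by the RLF of $-\nabla h^-_t$). But there are two genuine gaps that the proposal does not close.

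First, the proposal does not engage with the \emph{start-of-induction} problem, which is the main obstruction to transplanting \cite{CN12} to the $\RCD$ setting. The segment-inequality-plus-Hessian argument only controls the distance distortion of a set of \emph{positive measure} under the flow; since $\gamma$ itself is measure-zero, nothing forces $\Psi_t(B_r(\gamma(s)))$ to stay near $\gamma(s-t)$, and once it drifts away the Hessian estimate of Theorem \ref{hess h integral bound} (which is an integral estimate on balls \emph{along} an $\epsilon$-geodesic) becomes inapplicable. Colding-Naber seed the induction with the infinitesimal Hessian bound \eqref{infinitesimal hess} and a smoothness argument, neither of which is available on an $\RCD$ space. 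The paper's replacement is the Main Lemma \ref{main lemma}: an \emph{induction in time} (not in radius) in which the set $B_r(\gamma(s))$ is repeatedly re-anchored to a nearby low-excess point and its associated piecewise $\epsilon$-geodesic, so that the Hessian estimate remains applicable for a time $\epsilon_1(N,\delta)$ independent of $r$. The start of that induction is the trivial observation that integral curves of $\Psi$ are $1$-Lipschitz. Without this (or an equivalent device) the rest of your argument never launches: "run the flow and use the segment inequality" has no initial window on which it is known to be valid. A single heat-regularized vector field $V$ whose "integral curves approximately follow $\gamma$" (as you suggest) does not exist with uniform estimates — that is precisely the thing one is trying to prove.

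Second, your proposed fix for the passage from "some geodesic" to "every geodesic" — "running the argument with a parameterized family of test functions indexed by nearby endpoint pairs and then using the Sobolev-to-Lipschitz property to transfer integral bounds onto the specific $\gamma$" — is not a proof and would not work. Sobolev-to-Lipschitz converts an essential-supremum gradient bound into a Lipschitz bound for a \emph{function}; it does not let you localize an integral Hessian bound to a single (measure-zero) curve. You are right that this is the delicate step, but you have the logical structure backwards when you say non-branching "cannot be used as an input at this stage." The paper does the opposite: it first constructs, for any $p,q$, at least one "limit geodesic" $\bar\gamma$ with the Hölder property (Theorem \ref{limit geodesic}), and — crucially — the construction of Lemma \ref{at geodesic one direction} leaves freedom to force the constructed geodesic to agree with an \emph{arbitrary prescribed} geodesic on $[1-\delta,1]$. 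That freedom, together with the measure-contraction properties of $\Psi$ and essential non-branching, is what rules out branching (Theorem \ref{no branch}). Only \emph{after} non-branching is established does Corollary \ref{section main theorem 2} conclude: restrict to a sub-segment of the given $\gamma$, construct a limit geodesic between its endpoints, and invoke non-branching to identify it with $\gamma$, inheriting the Hölder estimate with worse constants. So non-branching is proved as a standalone theorem and \emph{then} used as input for Theorem \ref{main theorem 1}; there is no circularity. The proposal's alternative is too vague to check, and the specific mechanism it invokes is not strong enough.
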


In order to pass the result to tangents, we use the following terminology: Let $x_1,x_2 \in X$, $(Y, d_{Y}, m_{Y}, y)$ $\in \Tan(X,d,m,x_1)$ and $(Z, d_{Z}, m_{Z}, z) \in \Tan(X,d,m,x_2)$. We say $Y$ and $Z$ come from the \textit{same sequence of rescalings} if there exists $s_j \downarrow 0$ so that
\begin{equation}
	(X, s_j^{-1}d, m^{x_1}_{s_j}, x_1) \xrightarrow{pmGH} (Y, d_{Y}, m_{Y}, y) \; \; \text{and}\; \; (X, s_j^{-1}d, m^{x_2}_{s_j}, x_2) \xrightarrow{pmGH} (Z, d_{Z}, m_{Z}, z).
\end{equation} 
The following estimate on tangents from the same sequence of rescaling follows from Theorem \ref{main theorem 1}.
\begin{thm}\label{main theorem 2}(H\"older continuity of tangent cones)
In the notations of Theorem \ref{main theorem 1}, for any unit speed geodesic $\gamma:[0,\ell] \to X$ between $p$ and $q$, there exist constants $C(N)$, $\alpha(N) > 0$ so that if $(Y_s, d_{Y_s}, m_{Y_s}, y_s) \in \Tan(X, d, m, \gamma(s))$ and $(Y_t, d_{Y_t}, m_{Y_t}, y_t) \in \Tan(X,d,m,\gamma(t))$ come from the same squence of rescalings, then
\begin{equation}
	d_{pGH}\bigg((B_{r}(y_s), y_s), (B_{r}(y_t),y_t)\bigg) < \frac{CK'}{\delta \ell'}r|s-t|^{\alpha}
\end{equation}
for all $r > 0$. 
\end{thm}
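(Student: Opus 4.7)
The plan is to apply Theorem~\ref{main theorem 1} along the common rescaling sequence $s_j \downarrow 0$ realizing both tangents, rescale to absorb the scale factor into the radius, and pass to the pmGH limit.

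Concretely, fix $r > 0$. For $j$ sufficiently large we have $s_j r < r_0 \frac{\delta \ell'}{K'}$, so Theorem~\ref{main theorem 1} applied to balls of radius $s_j r$ along $\gamma$ yields
\begin{equation*}
d_{pGH}\bigg((B_{s_j r}(\gamma(s)), \gamma(s)), (B_{s_j r}(\gamma(t)), \gamma(t))\bigg) < \frac{CK'}{\delta \ell'} s_j r |s-t|^{\alpha}.
\end{equation*}
An $\epsilon$-GH approximation between these balls in $(X,d)$ transforms, upon dividing the metric by $s_j$, into an $s_j^{-1}\epsilon$-approximation between the same balls viewed in $(X, s_j^{-1}d)$, where they have radius $r$. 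Thus, writing $B^{s_j^{-1}d}_r$ for balls in the rescaled metric,
\begin{equation*}
d_{pGH}\bigg((B^{s_j^{-1}d}_{r}(\gamma(s)), \gamma(s)), (B^{s_j^{-1}d}_{r}(\gamma(t)), \gamma(t))\bigg) < \frac{CK'}{\delta \ell'} r |s-t|^{\alpha},
\end{equation*}
a bound uniform in $j$.

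By hypothesis, $(X, s_j^{-1}d, m^{\gamma(s)}_{s_j}, \gamma(s)) \xrightarrow{pmGH} (Y_s, d_{Y_s}, m_{Y_s}, y_s)$ and analogously for $\gamma(t)$. Standard arguments in the RCD setting---where the ball-volume function is continuous in the radius and the measure has no atoms---show that for $r$ in a dense subset of $(0,\infty)$ the balls of radius $r$ in the rescaled spaces converge in pGH to the balls of radius $r$ in the respective tangent cones. Combining this with the lower semi-continuity of $d_{pGH}$ under pGH convergence and sending $j \to \infty$ in the displayed bound gives
\begin{equation*}
d_{pGH}\bigg((B_{r}(y_s), y_s), (B_{r}(y_t), y_t)\bigg) \le \frac{CK'}{\delta \ell'} r |s-t|^{\alpha},
\end{equation*}
which can then be turned back into a strict inequality by an arbitrarily small enlargement of $C$.

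The only technical subtlety is handling those $r$ for which balls may not a priori converge in pGH (e.g.\ if the sphere carries positive measure in the limit). This is bypassed by choosing $r' > r$ arbitrarily close to $r$ from within the dense set of good radii, establishing the bound at radius $r'$, and then passing $r' \downarrow r$ using the continuous dependence of the right-hand side on the radius together with the monotonicity of pGH distance under slight enlargement of the domain ball. This rescaling-and-limit step is essentially soft; the real work lies in Theorem~\ref{main theorem 1}.
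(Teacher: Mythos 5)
Your proposal is correct and takes essentially the same rescale-and-pass-to-the-limit route that the paper implicitly intends (the paper merely asserts that Theorem~\ref{main theorem 2} ``follows from Theorem~\ref{main theorem 1}'' and gives no further argument). The core steps --- applying Theorem~\ref{main theorem 1} at radius $s_j r$ once $s_j r < r_0\frac{\delta\ell'}{K'}$, observing that rescaling the metric by $s_j^{-1}$ multiplies the pGH distance by $s_j^{-1}$ while the balls of radius $s_j r$ become balls of radius $r$, and then sending $j\to\infty$ --- are exactly right and give the stated bound for all $r>0$.

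One small imprecision worth tightening: in the final limiting step you invoke ``monotonicity of pGH distance under slight enlargement of the domain ball,'' which is not literally true (shrinking the domain can either increase or decrease the pGH distance). What you actually want is a Hausdorff-distance estimate: in a geodesic space $d_H\big(B_r(y),B_{r'}(y)\big)\le r'-r$, hence $\big|d_{pGH}\big((B_r(y_s),y_s),(B_r(y_t),y_t)\big)-d_{pGH}\big((B_{r'}(y_s),y_s),(B_{r'}(y_t),y_t)\big)\big|\le 2(r'-r)$. Passing $r'\downarrow r$ with this bound in hand closes the argument. In fact, since tangent cones of $\RCD(K,N)$ spaces are geodesic, the same reasoning shows that open balls of any radius $r$ converge in pGH under pmGH convergence of the ambient pointed spaces (the ball at radius $r+\epsilon$ lies in the $\epsilon$-tube of the ball at radius $r$), so the detour through a ``dense set of good radii'' is not strictly necessary.
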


To prove these we first construct at least one geodesic between any two points satisfying the conclusion of Theorem \ref{main theorem 1}, which is the main result of \cite{CN12}. We then use this construction to prove that $\RCD(K,N)$ spaces, and so, in particular, Ricci limit spaces, are non-braching in Subsection \ref{subsection 6.1}. 
\begin{thm}\label{no branch}
Let $(X,d,m)$ be an $\RCD(K,N)$ space for some $K \in \mathbb{R}$ and $N \in (1,\infty)$. $(X,d,m)$ is non-branching. 
\end{thm}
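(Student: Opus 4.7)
The plan is a proof by contradiction: suppose $(X,d,m)$ is branching, so there exist unit speed geodesics $\gamma_1, \gamma_2 \colon [0, L] \to X$ and $t_0 \in (0, L)$ with $\gamma_1|_{[0, t_0]} = \gamma_2|_{[0, t_0]}$ but $\gamma_1(t_0 + s) \neq \gamma_2(t_0 + s)$ for all $s \in (0, \epsilon_0]$, for some $\epsilon_0 > 0$. Write $p := \gamma_1(t_0) = \gamma_2(t_0)$, which is interior to both $\gamma_i$. The strategy is to use Theorem \ref{main theorem 2} to exhibit a Euclidean tangent cone at $p$ and pass to it to derive the contradiction.

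First, let $k$ denote the essential dimension of $X$ --- the integer such that $m$-a.e.\ $x \in X$ admits a unique tangent cone isomorphic to $(\mathbb{R}^k, 0)$; this is well-defined by Mondino-Naber rectifiability together with the constant-dimension result of Bru\`e-Semola reproven in this paper. Pick $\delta_j \downarrow 0$ and $k$-regular points $q_j$ near $\gamma_1(t_0 - \delta_j)$, at which every tangent cone is $(\mathbb{R}^k, 0)$. Theorem \ref{main theorem 2} bounds the unit-ball $pGH$-distance between the tangent cones at $q_j$ and at $p$ taken along the same rescaling sequence by $C \delta_j^\alpha$, so a diagonal argument produces a rescaling sequence $s_j \downarrow 0$ with $(X, s_j^{-1} d, p) \to (\mathbb{R}^k, 0)$ in $pmGH$.

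Rescaling the branching geodesics by $s_j$, the curves $\gamma_i^{s_j}(\tau) := \gamma_i(t_0 + s_j \tau)$ are unit speed geodesics in $(X, s_j^{-1} d)$ converging along a further subsequence to unit speed geodesics $\eta_1, \eta_2 \colon [-a, a] \to \mathbb{R}^k$ through the origin at time $0$. Since $\gamma_1, \gamma_2$ agree on $[0, t_0]$, the rescaled curves agree for $\tau \leq 0$, so $\eta_1|_{[-a, 0]} = \eta_2|_{[-a, 0]}$. In $\mathbb{R}^k$, unit speed geodesics are straight lines, uniquely determined by any half-line, hence $\eta_1 \equiv \eta_2$ on $[-a, a]$. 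This gives $d(\gamma_1(t_0 + s_j \tau), \gamma_2(t_0 + s_j \tau)) = o(s_j)$ as $j \to \infty$, for each fixed $\tau > 0$. To upgrade this tangent-cone coincidence into an actual contradiction with branching in $X$, I would combine it with the essentially non-branching property of $\RCD(K,N)$ spaces (Rajala-Sturm) via a thickening argument: Theorem \ref{main theorem 1} controls how balls of small radius transport along each $\gamma_i$, so absolutely continuous marginals can be constructed whose optimal Wasserstein transport carries both $\gamma_1$- and $\gamma_2$-type curves in its support; branching of $\gamma_1, \gamma_2$ near $p$ then forces branching of the optimal plan, contradicting essential non-branching.

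I expect the main obstacle to be the first step, the propagation of a Euclidean tangent cone up to $p$. This requires that $k$-regular points lie arbitrarily close to $p$ on the interior of $\gamma_1$, which in turn uses the Regular Lagrangian flow and second-order interpolation formula machinery developed earlier in the paper, together with a careful diagonal extraction across both $\delta_j \to 0$ and the rescaling choices at each $q_j$. The thickening step is also subtle, since the coincidence $\eta_1 = \eta_2$ in the tangent cone does not itself preclude $\gamma_1 \neq \gamma_2$ in $X$.
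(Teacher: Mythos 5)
Your route hits a circularity that the paper's proof is carefully structured to avoid. Theorem \ref{main theorem 2} (and Theorem \ref{main theorem 1}) are stated for \emph{all} geodesics, but they are established in that generality only \emph{after} Theorem \ref{no branch}: the direct construction of Section \ref{section 5} (Theorems \ref{limit geodesic} and \ref{section main theorem}) gives at least one geodesic between any pair of endpoints with the H\"older property; Corollary \ref{section main theorem 2} then upgrades this to all geodesics by invoking non-branching; and Theorems \ref{main theorem 1} and \ref{main theorem 2} follow from that corollary. So when proving Theorem \ref{no branch} you may not yet invoke Theorem \ref{main theorem 2} along the branching geodesics $\gamma_1,\gamma_2$ --- these are precisely the geodesics about which the pre-non-branching results say nothing, since they are certainly not unique geodesics (Corollary \ref{at unique geodesic} does not apply: $\gamma_1$ and $\gamma_2$ share an initial segment), nor is there any reason they should coincide with the specially constructed geodesic $\bar\gamma$ of Theorem \ref{limit geodesic}.

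Even setting circularity aside, the regular-point step has a gap. Theorem \ref{main theorem 2} compares tangent cones only at points $\gamma(s),\gamma(t)$ in the interior of a \emph{single} geodesic $\gamma$. You would therefore need $k$-regular points $q_j$ lying on a geodesic through $p$ with $p$ in its interior, not merely $q_j$ ``near'' $\gamma_1(t_0-\delta_j)$. But $\gamma_1$ is a single curve of $m$-measure zero, so the $m$-a.e.\ statement of Bru\`e--Semola gives no information about points on $\gamma_1$; the segment-inequality argument that produces geodesics with $\mathcal R_k$-dense interior (as in Theorem \ref{unique local dimension 2}) controls only $(m\times m)$-a.e.\ endpoint pair, not a prescribed geodesic. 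Tangent cones are not continuous in the base point, so regularity at a nearby $q_j$ off $\gamma_1$ does not transfer to $\gamma_1(t_0-\delta_j)$. Finally, as you observe yourself, $\eta_1\equiv\eta_2$ in a single blow-up gives only $d(\gamma_1(t_0+s_j\tau),\gamma_2(t_0+s_j\tau))=o(s_j)$ along the chosen subsequence, which is compatible with $\gamma_1\neq\gamma_2$ separating at a sub-linear rate; the ``thickening'' step meant to close this gap is not carried out.

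Your instinct to use essential non-branching is the right one, and the paper does exactly that, but in the space itself rather than in a tangent cone, and it leans on two features the tangent-cone route lacks. First, Corollary \ref{at geodesic both directions} gives the freedom to build a nice geodesic $\gamma_1^\delta$ from $p$ to $q_1$ whose initial piece $[0,\delta]$ is pinned to a fixed nice geodesic $\gamma$ from $p$ to $p'=\gamma_1(0.5)$ produced by Theorem \ref{limit geodesic}. Second, the ball-transport estimates in the proof of Theorem \ref{section main theorem} show that most of a small ball around $\gamma_1^\delta(t_1)$ flows under $\Psi_{t_1-t_0}$ into a comparably small ball around $\gamma_1^\delta(t_0)$ with controlled measure distortion. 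If $\gamma_1^\delta$ and $\gamma$ split at some $t_0\in[\delta,0.5)$, then for $t_1$ slightly past $t_0$ two \emph{disjoint} balls of comparable measure around $\gamma_1^\delta(t_1)$ and $\gamma(t_1)$ would each map with near-full measure into the single ball around the common point $\gamma_1^\delta(t_0)=\gamma(t_0)$, producing a positive-$m$-measure set of $x$ whose geodesic from $p$ branches and contradicting Theorem \ref{directional BG}. Hence $\gamma_1^\delta\equiv\gamma$ on $[0,0.5]$, and letting $\delta\to0$ yields nice geodesics $\bar\gamma_1,\bar\gamma_2$ agreeing with $\gamma$ on $[0,0.5]$ but ending at $q_1,q_2$; the same measure argument forbids $\bar\gamma_1,\bar\gamma_2$ from splitting, a contradiction. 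This quantitative mass-transport control is exactly what a blow-up comparison is too coarse to capture.
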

This has been a natural open problem for Ricci limits since the seminal work of Cheeger-Colding in \cite{CC96, CC97, CC00a, CC00b}. Potential branching which can come from some simple examples were ruled out in \cite[Section 5]{CC00a} and some partial results were obtained in \cite{CN12} for noncollapsed limits of manifolds with uniform two-sided Ricci curvature bounds. To compensate for the lack of non-branching, the weaker notion of essentially non-branching was introduced and shown to hold for $\RCD(K,\infty)$ spaces in \cite{RS14}. This has found wide use in $\RCD$ theory and serves as a suitable replacement for the non-branching condition in most measure theoretic arguments. We point out finally that non-branching would follow for Ricci limits directly from the results of \cite{CN12} and the argument we use if one were able to, for example, prove that all geodesics in any Ricci limit space are limit geodesics. Our intrinsic construction of a geodesic in Subsection \ref{subsection 5.2} which satisfies the conclusion of Theorem \ref{main theorem 1} offers a little more freedom than the extrinsic construction of limit geodesics and this was enough to prove Theorem \ref{no branch}. Theorem \ref{no branch} is then used to pass from the existence of a geodesic between any two points which satisfies the conclusion of Theorem \ref{main theorem 1} to the theorem for all geodesics. 

In the case of Ricci limits, the H\"older continuity of tangent cones had several key applications. 
\begin{thm}\label{Ricci limit app}(\cite[Theorems 1.18, 1.20 and 1.21]{CN12})
Let $(X,d)$ be the Ricci limit of $(M_i^{n}, g_i)_{i \in \mathbb{N}}$ and $m$ be its canonical limit measure. The following holds:
\begin{enumerate}
	\item There is a unique $k \in \mathbb{N}$, $0 \leq k \leq n$ so that $m(X \backslash \mathcal{R}_k) = 0$, where $\mathcal{R}_k$ is the $k$-dimensional regular set;
	\item $\mathcal{R}_k$ from statement 1 is $m$-a.e. convex and weakly convex. In particular, $\mathcal{R}_k$ is connected;
	\item The isometry group of $X$ is a Lie group. 
\end{enumerate}
\end{thm}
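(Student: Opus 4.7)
All three statements should follow from the H\"older continuity in Theorems \ref{main theorem 1} and \ref{main theorem 2}, combined with the Cheeger-Colding regularity theory: for Ricci limits, $m$-a.e.\ point is regular (i.e.\ has a Euclidean tangent cone) and the regular set is rectifiable.

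For part (1), the plan is to show that along any unit speed geodesic $\gamma$ in $X$, the dimension function $t \mapsto k(t)$, defined on the set of $t$ with $\gamma(t)$ regular, is locally constant, and then conclude by density. The key input is Theorem \ref{main theorem 2}: any two tangent cones at $\gamma(t_0)$ and $\gamma(t_1)$ taken along the same sequence of rescalings have pGH distance at most $C|t_0-t_1|^{\alpha}$ on the unit ball. Since unit balls in $\mathbb{R}^j$ and $\mathbb{R}^k$ are pGH-separated whenever $j \neq k$, this forces $k(t_0) = k(t_1)$ for $|t_0 - t_1|$ small. Applying this to a geodesic between representative points of $\mathcal{R}_j$ and $\mathcal{R}_k$, each assumed to have positive measure, yields a contradiction unless $j = k$, so the dimension is unique.

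For part (2), the same mechanism forces every regular interior point of a geodesic from $p \in \mathcal{R}_k$ to $q \in \mathcal{R}_k$ to itself lie in $\mathcal{R}_k$. Combining this with a Fubini-type argument against a family of geodesics whose interior meets the regular set for a.e.\ parameter yields $m$-a.e.\ convexity of $\mathcal{R}_k$. For weak convexity, given $p, q \in \mathcal{R}_k$, the plan is to use density of $\mathcal{R}_k$ in $X$ to produce a chain of nearby pairs admitting a.e.-convex geodesics, and to concatenate these into a rectifiable path inside $\mathcal{R}_k$; connectedness is then immediate.

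For part (3), the plan is to apply the Gleason-Yamabe theorem by verifying that $\mathrm{Iso}(X,d)$ is locally compact (standard, from properness of $X$) and has no small subgroups. The H\"older continuity enters in proving the latter: an isometry $\phi$ with very small displacement should, via continuous variation of tangent cones along any geodesic it approximately preserves, induce a trivial action on tangent cones at a dense set of regular points; an isometry that acts trivially on tangent data at a dense set must be the identity. I expect this last step to be the main obstacle, since converting ``small displacement'' into genuine rigidity on the tangent cones, and then back to a global statement about $\phi$, will require carefully combining Theorem \ref{main theorem 2} with the differentiable structure at regular points.
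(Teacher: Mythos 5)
Your overall strategy for parts (1) and (2) is the right one and matches the paper's Subsection 6.2 in spirit: use Theorem \ref{main theorem 2} together with pGH-separation of Euclidean model balls to force the tangent-cone dimension to be locally constant on the interior of a geodesic, then propagate this by density and a Fubini-type argument. However, there is a concrete gap in part (1). Theorem \ref{main theorem 1}/\ref{main theorem 2} is a statement about the \emph{interior} $[\delta\ell, \ell-\delta\ell]$ of a geodesic. Your argument therefore shows that the tangent-cone dimension is constant on the open interior of $\gamma_{p,q}$, but gives no link between that interior dimension and the dimensions $j$ and $k$ of the \emph{endpoints} $p\in\mathcal{R}_j$, $q\in\mathcal{R}_k$; the contradiction you want does not yet follow. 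The paper closes this gap by first using Bishop--Gromov and Fubini to show that for $(m\times m)$-a.e.\ $(x,y)$ the geodesic $\gamma_{x,y}$ extends past both endpoints, so $x$ and $y$ are interior points of an extended geodesic and the H\"older estimate applies there. It also uses the segment inequality applied to $\mathbf{1}_{\mathcal{S}}$ to guarantee that for a.e.\ $(x,y)$, $\gamma_{x,y}\cap\mathcal{R}_{\reg}$ is dense in $[0,1]$, which is what you label ``conclude by density'' but should be made explicit. Combined with the relative closedness of $\gamma\cap\mathcal{R}_k$ in the interior (your pGH argument), these two ingredients give $\gamma_{x,y}\subseteq\mathcal{R}_k$ for a single $k$, from which the dimension is unique. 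For part (2), your description is essentially correct: the a.e.\ convexity is literally the same argument as part (1), and weak convexity is the chaining argument from \cite{CN12}.

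For part (3), note that this paper does \emph{not} prove that $\mathrm{Iso}(X)$ is a Lie group; it cites \cite{CN12} for the Ricci-limit case and \cite{S18,SRG19} for the RCD case. Your sketch via Gleason--Yamabe and the no-small-subgroups criterion is indeed the route taken in \cite{CN12}, and you correctly identify the crux: upgrading small displacement of an isometry to triviality of its action on tangent cones, and then to global triviality. That step is nontrivial and is not something you should expect to close in a few lines; since the present paper does not attempt it, you should either cite the references or be prepared to reproduce the argument of \cite{CN12} in detail.
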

Statements 1 and 3 have since been proved by other means in the case of $\RCD(K,N)$ spaces, see \cite{BS20} for 1 and \cite{S18, SRG19} for 3. We prove statement 2 in Subsection \ref{subsection 6.2} following \cite{CN12}. Since the proofs of statements 1 and 2 are intricately related, we will prove statement 1 as well. 

\subsection{Outline of Paper and Proof} We begin this subsection by introducing the strategy of the proof in \cite{CN12}, which we will largely follow. We then discuss the issues that arise when extending this to the metric measure setting and give an outline of their solutions. 

The existence of at least one geodesic satisfying the conclusion of Theorem \ref{main theorem 1} was shown in \cite{CN12}. The proof there was extrinsic and obtained by proving the theorem for manifolds. As such, consider a Riemannian manifold with $(M^n, g)$ with $\Ric_{M} \geq -(n-1)$ and a unit speed minimizing geodesic $\gamma: [0,1] \to M$ from some $p$ to $q$. Fix some $\delta > 0$ and $\delta < s_0 < s_1 < 1-\delta$. The desired Gromov-Hausdorff approximations in the proof of Theorem \ref{main theorem 1} for $\gamma$ are constructed on a large subset of the ball $B_{r}(\gamma(s_1))$ from the gradient flow $\Psi$ of $-d_p$ (more on the definition of this later). This is not altogether surprising since in the interior of $\gamma$, $d$ is a smooth function and the Laplacian of $d$ has a two-sided bound. A simple argument applying the Bochner formula to $d$ gives  
\begin{equation}\label{infinitesimal hess}
	\int_{\delta}^{1-\delta} |\Hess d_p|^2(\gamma(t)) \, dt \leq \frac{c(n)}{\delta}.
\end{equation}
The fact that $\Hess d_p = \frac{1}{2}\mathscr{L}_{\nabla d_p}g$ shows that $D(\Psi_{s_1-s_0}): T_{\gamma(s_1)}M \to T_{\gamma(s_0)}M$ satisfies the estimates of Theorem \ref{main theorem 2}. Of course, Theorem \ref{main theorem 2} is completely trivial in the case of Riemannian manifolds but the point is that this map comes from a construction on the manifolds itself. Smoothness then allows one to use $\Psi_{s_1-s_0}$ to construct Gromov-Hausdorff aproximations from $B_{r}(\gamma(s_1))$ to $B_{r}(\gamma(s_0))$, where $r$ needs to be sufficiently small depending on $\gamma$ and $\delta$ instead of just $n$ and $\delta$. This property does not pass through Ricci limits and so the challenge, then, is to remove the dependence on $\gamma$. 

One might consider using $\Hess d_p$ to control the geometry of balls of uniform (i.e. independent of $\gamma$) radius under $\Psi$. However, we do not have estimates on $\Hess d_p$ for such balls along $\gamma$; we do not even have smoothness of $d_p$. We mention that due to this lack of smoothness, $\Psi$ is not globally defined. The integral curves of $\Psi$ starting at any $x \in M$ should be thought of simply as a (choice of) unit speed geodesic from $x$ to $p$. In this way $\Psi$ is defined locally away from $p$ for a definite amount of time. 

Nevertheless, it turns out that one can still control the geometry under $\Psi$ by utilizing the next formula, which follows from the standard first variation formula and second order interpolation formula using the Hessian.  
\begin{equation}\label{CN interpolation}
\bigg| \frac{d}{dt} d(\sigma_1(t), \sigma_2(t)) \bigg| \leq |\nabla h - \sigma_1'|(\sigma_1(t))+|\nabla h - \sigma_2'|(\sigma_2(t)) + \inf \int_{\gamma_{\sigma_1(t), \sigma_2(t)}}|\Hess h|
\end{equation}
for a.e. $t$, where $\sigma_1, \sigma_2$ are unit speed geodesics in $M$, $h: M \to \mathbb{R}$ is a smooth function, and $\inf$ is taken across all minimizing geodesics connecting $\sigma_1(t), \sigma_2(t)$. This means as long as we can closely approximate $d_p$ by a smooth function $h$ where we have reasonable control on $|\nabla h - \nabla d_p|$ along two geodesics and on $\Hess h$, then we can control the distance between those two geodesics.

We mention a similar strategy was used to prove the almost splitting theorem in \cite{CC96}. In that setting, a single ball $B_{r}(\gamma(s))$ of small radius for a fixed $s \in (\delta,1-\delta)$ is considered. It turns out that the correct approximation to take for $d_p$ is the harmonic replacement $b$ of $d_p$ on $B_{2r}(\gamma(s))$. One is able to obtain the better than scale invariant estimate 
\begin{equation}\label{harmonic hess estimate}
	\fint_{B_{r}(\gamma(s))}|\Hess b|^2 \, dV \leq c(n,\delta)r^{-2+\alpha(n)}
\end{equation}
using the Bochner formula, which is enough to prove almost splitting. The almost splitting theorem has since been proved through other means for $\RCD$ spaces in \cite{G13}, see also \cite{MN19}. 

For our purposes, \eqref{harmonic hess estimate} and the resulting almost splitting theorem is not good enough because it only allows one to compare two balls of radius $r$ that are distance $r$ away from each other. As discussed in detail in \cite[Section 2]{CN12}, this estimate blows up as $r \to 0$ if one iterates along $\gamma$ in $r$-length intervals. The crucial idea in \cite{CN12} was then to use the heat flow approximation $h$ to (some cutoff of) $d_p$ instead. For such an approximation, they were able to obtain the estimate
\begin{equation}\label{heat flow hess estimate}
	\int_{\delta}^{1-\delta} \fint_{B_{r}(\gamma(t))}|\Hess h|^2 \, dV \, dt \leq c(n,\delta),
\end{equation}
where $h$ is the heat flow taken to some time on the scale of $r^2$ (see Theorem \ref{hess h integral bound}, statement 4). Moreover, $\int_{\delta}^{1-\delta} |\nabla d_p - \nabla h|$ can also be bounded to the correct order (see Theorem \ref{grad h geodesic bound}) for most geodesics. These estimates can then be used along with the segment inequality of Cheeger-Colding \cite[Theorem 2.11]{CC96} and \eqref{CN interpolation} to control the total integral change in distances between elements of two sets of large measure in $B_r(\gamma(s_1))$ under the flow $\Psi$. This is ultimately good enough to construct a Gromov-Hausdorff approximation using $\Psi_{s_1-s_0}$. We mention that since we are using segment inequality and integral bounds, the smaller the relative measure of the sets compared to the region where we have Hessian estimates, the worse the control we have on total distance change for those sets under $\Psi$.  


A crucial detail in this is that in order to make use of estimate \eqref{heat flow hess estimate}, it is important that most of $B_r(\gamma(t))$ stays close, on the scale of $r$, to $\gamma$ under the gradient flow for an amount of time independent of $r$ and $\gamma$. Since the control one has over distance is for sets of large relative measure and $\gamma$ is trivial in measure, one cannot guarantee using the argument outlined in the previous paragraph that most of $B_r(\gamma(t))$ does not simply drift away from $\gamma$ quickly. In \cite{CN12}, this was overcome by using \eqref{infinitesimal hess}. As mentioned previously, by smoothness, \eqref{infinitesimal hess} implies balls of sufficiently small radius depending on $\gamma$ stays close to $\gamma$ under $\Psi$ for some fixed amount of time depending only on $n$ and $\delta$. Induction with geometrically increasing radii along with the argument from the previous paragraph can then be used to guarantee large proportions of balls up to some radius independent of $\gamma$ also stay close to $\gamma$ under the flow $\Psi$.

We now outline the issues with extending this argument to the metric measure setting and the ideas we will use to resolve them:

\begin{itemize}[leftmargin=0.4cm]
	\item \eqref{CN interpolation} is essential in utilizing the Hessian and gradient estimates of the approximating function to control the geometry under the flow $\Psi$. In the smooth setting, it stems from the first variation formula along $\sigma_1$ and $\sigma_2$ and the following interpolation formula along a unit speed geodesic $\alpha$, which should be thought of as going between $\sigma_1(t)$ and $\sigma_2(t)$ for some $t$ in this application.
	\begin{equation}\label{Hessian interpolation}
		\inner{\alpha'(\tau_1)}{V} - \inner{\alpha'(\tau_0)}{V} = \int_{\tau_0}^{\tau_1} \inner{\nabla_{\alpha'(\tau)}V}{\alpha'(\tau)} \, d\tau,
	\end{equation}
	where $V$ is a vector field along $\alpha$ and $\nabla V$ is its covariant derivative. We will apply \eqref{CN interpolation} to control the integral distance change between all elements of two sets under the flow $\Psi$ and so an integral version of the formula suffices. The first variation formula for ``almost every" pair of $\sigma_1$ and $\sigma_2$ we are interested in follows easily from the first order differentiation formula for Wassersein geodesics (see \cite{G13}).

	In the direction of \eqref{Hessian interpolation}, the same formula (with obvious changes) was proved along Wasserstein geodesics with bounded density in \cite[Theorem 5.13]{GT18}. While this does most of the work, a suitable interpretation is required to obtain the integral interpolation formula \label{Hess interpolation} between two sets $S_1$ and $S_2$. To see the difficulty, one might try to decompose the set of all geodesics between $S_1$ and $S_2$ by grouping together all geodesics that start at the same $x \in S_1$. In this way, one obtains a family of Wasserstein geodesics parameterized by $x \in S_1$. However, these end at a $\delta$ measure and therefore the interpolation formula of  \cite[Theorem 5.13]{GT18} does not apply. This is, of course, expected because $\inner{\nabla d_x}{V}$ is not well-defined at $x$. The correct decomposition then is to break all the geodesics between $S_1$ and $S_2$ down the middle, parameterize the half that start in $S_2$ by the elements of $S_1$ they each go toward and vice versa. We point out that the same decomposition is used in the proof of the segment inequality. Some work then needs to be done to check the boundary terms that arise in interpolating between each of the halves match correctly. These are the contents of Section \ref{section 3}.
	\item The Hessian estimate \eqref{heat flow hess estimate} and several other estimates on the heat flow approximation of the distance function need to be shown in the $\RCD(K,N)$ setting. This simply comes down to verifying the proofs of \cite{CN12} all translate to the metric measure setting with minor adjustments. These are the contents of Section \ref{section 4}.
	\item Lastly, the argument in \cite{CN12} relies on \eqref{infinitesimal hess} to obtain estimates for small balls centred along in the interior $\gamma$ under $\Psi$ in order to start an induction process. Such an inequality is not available in the $\RCD$ setting since the Hessian is a measure-theoretic object, although progress has been made in this direction, see \cite{BC13, C14, CM17b, CM18}. Even if it were well-defined, one does not have Jacobi fields or smoothness arguments  to translate such an inequality to a statement about tangent cones or small balls along $\gamma$. This is, in many ways, the main obstruction to extending the arguments of \cite{CN12}. We will not attempt to develop all this theory. The key observation is that in fact we can do without the start of induction in radius.

	Recall that the need for this start of induction argument stems from the failure of \eqref{heat flow hess estimate} to control distance between a set of small measure and another set under $\Psi$. As such, it is possible for most of $\Psi_{t}(B_{r}(\gamma(s_1)))$ to distance from $\gamma(s_1-t)$ quickly, after which we can no longer apply \eqref{heat flow hess estimate} to control the geometry of $\Psi_{t}(B_{r}(\gamma(s_1)))$. To deal with this, consider for each $x \in B_{r}(\gamma(s_1))$ a piecewise geodesic which goes from $p$ to $x$ and then $x$ to $q$.  It was shown in \cite{CN12} that for a significant amount of $x$ (those with relatively low excess) and their corresponding piecewise geodesics, the estimate \eqref{heat flow hess estimate} still holds on the scale of $r$. The same is true for $\RCD(-(N-1),N)$ spaces, see Theorem \ref{hess h integral bound}. Using this, one can make an induction argument in time instead. Suppose for some small time $t$ most of $\Psi_{t}(B_{r}(\gamma(s_1)))$ stays close to $\gamma(s_1-t)$, after which it leaves. Due to the control we had on the geometry of $\Psi_{t}(B_{r}(\gamma(s_1)))$ in that time, we can guarantee that $\Psi_{t}(B_{r}(\gamma(s_1)))$ is still very close to one of (in fact, much of) these other piecewise geodesics with a good estimate \eqref{heat flow hess estimate}. Therefore, we can use the estimate for that piecewise geodesic for a little longer. The start of induction is trivial since the integral curves of $\Psi$ are 1-Lipschitz. In this way, we arrive at an $x \in B_{r}(\gamma(s_1))$ whose trajectory under $\Psi$ well represents the behaviour of $B_{r}(\gamma(s_1))$ under $\Psi$, in the sense that most of $B_{r}(\gamma(s_1))$ stays close to $x$ on the scale of $r$ under $\Psi$ for a definite amount of time. Multiple limiting and gluing arguments then allow for the selection of a geodesic from $p$ to $q$, perhaps different from $\gamma$, which well represents the behaviours of small balls centred in its interior under $\Psi$. The original argument of \cite{CN12} gives the required Gromov-Hausdorff approximations in the interior of such a geodesic. Notice that, analogous to \cite{CN12}, we have at this point only shown the existence of a geodesic between $p$ and $q$ which satisfies the main theorem. These are the contents of Section \ref{section 5}. 
\end{itemize}

The ideas outlined above overcome the difficulties of generalizing the arguments of \cite{CN12} to the $\RCD$ setting. To finish, we will first show that $\RCD$ spaces are non-branching before proving Theorem \ref{main theorem 1}. In order to prove non-branching, first notice that any two geodesics having the property above cannot branch. To see this, let $\gamma_1$ and $\gamma_2$ be two branching geodesics starting at some $p \in X$ which can be constructed by the methods of Section \ref{section 5}. In the interior, most of an arbitrarily small ball centred around $\gamma_1$ (resp. $\gamma_2$) must stay close to $\gamma_1$ (resp. $\gamma_2$) for some definite amount of time under the flow of $\Psi$, where the closeness is H\"older dependent on time. Moreover, it is possible to control how the volumes of balls changes along each geodesic. Combining these obesrvations with the essentially non-branching property of $\RCD(K,N)$ spaces show that there cannot be any splitting because there is simply not enough room to flow disjoint small balls around $\gamma_1$ and $\gamma_2$ into a small ball around a branching point. While we do not initially claim all geodesics can be constructed with the methods of Section \ref{section 5}, our construction does give a certain amount of freedom. For any $\delta>0$, it allows us to construct a geodesic $\gamd$ with nice properties on $[\delta,1-\delta]$ which agrees with the initial geodesic $\gamma$ at $\delta$. As it turns out, combining this with the previous observation is enough to show that in fact no pair of geodesics can branch. Theorem \ref{main theorem 1} follows easily from the results of Section \ref{section 5} and non-branching. These are the contents of Subsection \ref{subsection 6.1}. In Subsection \ref{subsection 6.2}, we generalize to the $\RCD$ setting the applications of the main result for Ricci limits outlined in \cite{CN12} using verbatim arguments. 


\subsection{Acknowledgements}
I thank Vitali Kapovitch for introducing me to this problem and sharing his many insights. The many hours of his time spent in discussing ideas, answering my questions, and editing my work were indispensible to the writing of this paper. I thank Christian Ketterer for helpful discussions and reviewing an early draft of this paper. I thank Nicola Gigli for answering several technical questions in the early stages of the paper and, along with Andrea Mondino, for many helpful comments. 

\section{Preliminaries} \label{section 2}

\subsection{Curvature-dimension condition preliminaries}

A \textit{metric measure space (m.m.s.)} is a triple $(X,d,m)$ where $(X,d)$ is a complete, separable metric space and $m$ is a nonnegative, locally finite Borel measure. As a matter of convention, $m$-measurable in this paper means measurable with respect to the completion of $(X, \mathscr{B}(X), m)$. We take the same convention for all other Borel measures as well. 

Given a complete and separable metric space $(X,d)$, we denote by $\mathcal{P}(X)$ the set of Borel probability measures and by $\mathcal{P}_2(X)$ the set of Borel probability measure with finite second moment, that is, the set of $\mu \in \mathcal{P}(X)$ where $\int_{X} d(x,x_0) d\mu(x) < \infty$ for some $x_0 \in X$. Given $\mu_1, \mu_2 \in \mathcal{P}_{2}(X)$, the \textit{$L^2$-Wasserstein distance} $W_2$ between them is defined as
\begin{equation*}
W_2^2 (\mu_1, \mu_2) := \inf\limits_{\gamma} \int_{X \times X} d^2(x,y) d\gamma(x,y),
\end{equation*}
where the infimum is taken over all $\gamma \in \mathcal{P}(X \times X)$ with $(\pi_1)_{*}(\gamma) = \mu_1$ and $(\pi_2)_{*}(\gamma) = \mu_2$. Such measures $\gamma$ are called \textit{admissible plans} for the pair $(\mu_1, \mu_2)$. $(\mathcal{P}_{2}(X), W_2)$ is called the $L^2$-Wasserstein space of $(X,d)$ and has been well-studied in the theory of optimal transportation. A $W_2$-geodesic between $\mu_0, \mu_1 \in \mathcal{P}_2(X)$ is any path $(\mu_t)_{t \in [0,1]}$ in $\mathcal{P}_2(X)$ satisfying $W_2(\mu_s,\mu_t) = |s-t|W_2(\mu_0, \mu_1)$ for any $s, t \in [0,1]$. If $(X,d)$ is a geodesic space then $(\mathcal{P}_2(X), W_2)$ is as well. A $c$-concave solution $\varphi$ to the coresponding dual problem of maximizing $\int \varphi d\mu_0+ \int \varphi^{c} d\mu_1$ is called a \textit{Kantorovich potential}. We refer to \cite{AG13} and \cite{V09} for definitions and details.   

The various notions of the classical curvature-dimension condition were first proposed independently in \cite{LV09} and \cite{S06a, S06b} and are defined as certain convexity conditions on the $L^2$-Wasserstein space of a metric measure space. We follow closely the formulations of \cite{BS10}.

Given a m.m.s. $(X,d,m)$, for any $\mu \in \mathcal{P}(X)$, the \textit{Shannon-Boltzmann entropy} is defined as 
\begin{equation*}
	\Ent_m(\cdot):\mathcal{P}(X) \to (-\infty,\infty], \; \Ent_{m}(\mu) := \int\log{\rho} \, d\mu \,\text{ , if $\mu = \rho m$ and $(\rho \log \rho)_-$ is $m$-integrable}
\end{equation*}
and $\infty$ otherwise.

\begin{de}\label{CDinf def} ($\CD(K,\infty)$ condition)
Let $K \in \mathbb{R}$. A m.m.s. $(X,d,m)$ is a $\CD(K, \infty)$ space iff for any two absolutely continuous measures $\mu_0, \mu_1 \in \mathcal{P}(X)$ with bounded support, there exists a $W_2$-geodsic $\braces{\mu_t}_{t \in [0,1]}$ such that for any $t \in [0,1]$, 
\begin{equation*}
\Ent_m(\mu_t) \leq (1-t)\Ent_m(\mu_0) + t\Ent_m(\mu_1) - \frac{K}{2}t(1-t)W^2_2(\mu_0, \mu_1).
\end{equation*}
\end{de}

The \textit{$N$-Renyi entropy} is defined as
\begin{equation*}
	S_N(\cdot|m):\mathcal{P}(X) \to (-\infty, 0], \; S_N(\mu|m):=-\int \rho^{1-\frac{1}{N}} \, dm,
\end{equation*}
if $\rho^{1-\frac{1}{N}} \in L^1(m)$, where $\mu = \rho m$, and $0$ otherwise.

Let $K \in \mathbb{R}$ and $N \in [1, \infty)$, the distortion coefficents $\sigma_{K,N}^{(t)}$ and $\tau_{K,N}^{(t)}$ are defined as follows:
\begin{align*}
 (t, \theta) \in [0,1] \times \mathbb{R}^{+} \to \sigma^{(t)}_{K,N}(\theta):=\begin{cases}
	\infty &\text{ if } K\theta^2 \geq N\pi^{2}\\
	\frac{\sin(t\theta\sqrt{K\slash N })}{\sin(\theta\sqrt{K \slash N})} &\text{ if } 0 < K\theta^2 < N\pi^2,\\
	t &\text{ if } K\theta^2 = 0,\\
	\frac{\sinh(t\theta\sqrt{K\slash N })}{\sinh(\theta\sqrt{K \slash N})} &\text{ if } 0 < K\theta^2 < 0,\\
	\end{cases}
\end{align*}
and
\begin{equation*}
\tau_{K,N}^{(t)}(\theta) := t^{\frac{1}{N}}\sigma_{K,N}^{(t)}(\theta)^{1-\frac{1}{N}}.
\end{equation*}

The standard finite dimensional \textit{curvature-dimension} condition was introduced in \cite{S06b, LV09}. 

\begin{de}\label{CD def} ($\CD(K,N)$ condition)
	Let $K \in \mathbb{R}$ and $N \in [1,\infty)$. We say that a m.m.s. $(X,d,m)$ is a $\CD(K,N)$ space if for any two absolutely continuous measures $\mu_0 = \rho_0m, \mu_1 = \rho_1m \in \mathcal{P}(X)$ with bounded support there exists a $W_2$-geodesic $\braces{\mu_t}_{t \in [0,1]}$ and an associated optimal coupling $\pi$ between $\mu_0$ and $\mu_1$ such that for any $t \in [0,1]$ and $N' \geq N$, 
\begin{equation*}
	S_N(\mu_t | m) \leq -\int \bigg( \tau^{(1-t)}_{K,N'}(d(x,y))\rho_0(x)^{-\frac{1}{N'}}+\tau^{(t)}_{K,N'}(d(x,y))\rho_1(y)^{-\frac{1}{N'}} \bigg)\, d\pi(x,y).
\end{equation*}
\end{de}

The \textit{reduced curvature-dimension} condition $\CD^{*}(K,N)$ was introduced in \cite{BS10} for its seemingly better tensorization and globalization properties. It is defined by replacing $\tau$ with $\sigma$ in Definition \ref{CD def}. The $\CD(K,N)$ and $\CD^{*}(K,N)$ conditions generalize to the metric measure setting the notion of Ricci curvature bounded below by $K$ and dimension bounded above by $N$. Examples include (possibily weighted) Riemannian manifolds \cite{S06b}, Finsler manifolds \cite{O09} and Alexandrov spaces \cite{P09}.

\begin{rem}
$\CD(K,N)$ implies $\CD(K',N')$ and $\CD^{*}(K',N')$ for all $K' \leq K$ and $N' \geq N$ as well as $\CD(K', \infty)$. A host of results that we cite were shown in the $\RCD^{*}(K,N)$ and $\RCD(K,\infty)$ setting, see \ref{RCD def} for definitions, and therefore apply in the $\RCD(K,N)$ setting. Going in the other direction, it was shown in \cite{CM16} that $\RCD^{*}(K,N)$ is equivalent to $\RCD(K,N)$ when $m(X) < \infty$. It is believed that this argument can be taken to the noncompact case. We mention that the proofs of this paper carry forward without modication to the $\RCD^*(K,N)$ setting. However, since several papers we cite use the stronger $\RCD$ assumption (though it can be checked this is not needed for the particular results we cite from them), we will do so as well to ease the burden of exposition. 
\end{rem}

It is known that if $(X,d,m)$ is $\CD(K,N)$ then $\supp(m)$ is a geodesic space which also satisfies the $\CD(K,N)$ condition. Due to this, we will always assume $X = \supp(m)$. One can check that for any $\lambda, c > 0$, if $(X,d,m)$ is $\CD(K,N)$, then $(X, \lambda d, cm)$ is $\CD(\frac{K}{\lambda^2}, N)$. $\CD(K,N)$ spaces, like their smooth counterparts, satisfy the standard Bishop-Gromov volume comparison.

\begin{thm}\label{BG volume comparison}(Bishop-Gromov volume comparison \cite[Theorem 2.3]{S06b})
Let $(X,d,m)$ be a $\CD(K,$ $N)$ space for some $K \in \mathbb{R}$ and $N \in (1, \infty)$. Then for all $x_{0}\in X$ and all $0< r< R\leq \pi \sqrt{N-1/(K \vee 0)}$ it holds:
\begin{equation}
\frac{m(B_r(x_0))}{m(B_{R}(x_0))} \geq { V_{K,N}(r, R):=}
\begin{cases}
\frac{\int_{0}^{r} \left(\sin(t \sqrt{K/(N-1)})\right)^{N-1} dt}{ \int_{0}^{R} \left(\sin(t \sqrt{K/(N-1)}) \right)^{N-1} dt} & \quad \text{if } K>0, \medskip \\
\left(\frac r  R  \right)^{N}& \quad \text{if } K=0, \medskip \\
\frac{\int_{0}^{r} \left(\sinh(t \sqrt{K/(N-1)})\right)^{N-1} dt}{ \int_{0}^{R} \left(\sinh(t \sqrt{K/(N-1)}) \right)^{N-1} dt} & \quad \text{if } K<0. 
\end{cases}
\end{equation}
\end{thm}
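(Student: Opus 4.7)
The plan is to extract a radial volume comparison directly from the Rényi-entropy form of the $\CD(K,N)$ condition, by choosing optimal transports whose source marginal concentrates at $x_0$, so that the distortion coefficients $\tau_{K,N}^{(t)}$ evaluated at the transport distances reproduce exactly the $\sin$/$\sinh$ integrands appearing in $V_{K,N}(r,R)$.

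First, fix $0 < r < R$ in the admissible range, and for small $\varepsilon > 0$ set $\mu_0^\varepsilon := \m(B_\varepsilon(x_0))^{-1}\, \m|_{B_\varepsilon(x_0)}$ as an approximation of $\delta_{x_0}$, together with the shell target $\mu_1^{a,b} := \m(A_{a,b})^{-1}\, \m|_{A_{a,b}}$, where $A_{a,b} := B_b(x_0) \setminus \overline{B_a(x_0)}$ and $0 < a < b \leq R$. I would apply Definition \ref{CD def} to the pair $(\mu_0^\varepsilon, \mu_1^{a,b})$ to obtain a $W_2$-geodesic $(\mu_t^\varepsilon)_{t \in [0,1]}$ and an associated optimal coupling $\pi^\varepsilon$ satisfying the Rényi upper bound recorded there.

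Second, I would exploit the concentration of $\mu_0^\varepsilon$ at $x_0$: on $\supp(\pi^\varepsilon)$ one has $|d(x,y) - d(x_0,y)| \leq \varepsilon$, and consequently $\mu_t^\varepsilon$ is supported near the radial region $\{z : d(x_0,z) \in (ta, tb)\}$ as $\varepsilon \downarrow 0$. Jensen's inequality applied to the concave function $\rho \mapsto \rho^{1-1/N}$ on (the absolutely continuous part of) $\mu_t^\varepsilon$ yields
\begin{equation*}
	-S_N(\mu_t^\varepsilon \,|\, \m) \leq \m(\supp \mu_t^\varepsilon)^{1/N}.
\end{equation*}
Combining this with the $\CD(K,N)$ Rényi upper bound from the previous step and sending first $\varepsilon \downarrow 0$ and then $b - a \downarrow 0$ gives an infinitesimal radial inequality: the radial ``surface area'' $h(\rho) := \tfrac{d}{d\rho}\m(B_\rho(x_0))$, interpreted distributionally, satisfies a one-dimensional distortion inequality of Bishop--Gromov type involving $\tau_{K,N}^{(t)}$.

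Third, I would integrate this one-dimensional inequality separately on $(0,r]$ and $(0,R]$ and take the ratio. Since $\tau_{K,N}^{(t)}(\theta) = t^{1/N}\sigma_{K,N}^{(t)}(\theta)^{1-1/N}$ and $\sigma_{K,N}^{(t)}$ is (after the index shift inherent in passing from transport dimension $N$ to sphere dimension $N-1$) essentially the ratio $\sin(t\theta\sqrt{K/(N-1)})/\sin(\theta\sqrt{K/(N-1)})$ or its $\sinh$ counterpart, the ratio of integrals collapses exactly to $V_{K,N}(r,R)$. The hypothesis $R \leq \pi\sqrt{(N-1)/(K\vee 0)}$ when $K > 0$ appears naturally as the condition keeping $\sigma_{K,N}^{(t)}$ finite on the relevant range.

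The main obstacle is the passage from Wasserstein/Rényi estimates to a purely radial volume comparison. Two technical steps carry the weight: (i) identifying $d(x,y)$ with $d(x_0,y)$ up to $O(\varepsilon)$ on $\supp(\pi^\varepsilon)$, which requires either a direct determination of the Kantorovich potential, essentially $-d(\cdot, x_0)$ near $x_0$, or a stability argument as $\varepsilon \downarrow 0$; and (ii) the Jensen step, which requires absolute continuity of $\mu_t^\varepsilon$ with respect to $\m$. The latter is ultimately self-referential, since it is supplied precisely by the finiteness of the $\CD(K,N)$ Rényi bound, and this self-consistency must be unraveled. Once these points are dispatched the remainder is a routine integration of the one-dimensional comparison.
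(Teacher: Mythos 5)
This theorem is cited from Sturm \cite[Theorem 2.3]{S06b} rather than proved in the paper, so there is no internal proof to compare against; your sketch follows Sturm's original strategy (approximate $\delta_{x_0}$, transport to thin annuli, invoke the R\'enyi bound with the $\tau$-coefficients, extract a shell comparison by Jensen), and the outline is sound. Two of the technical worries you raise are not real. Concern (i) is just the triangle inequality: $x \in B_\varepsilon(x_0)$ forces $|d(x,y)-d(x_0,y)| \leq \varepsilon$ on the support of any plan, with no appeal to Kantorovich potentials. Concern (ii) is not genuinely self-referential, because in Sturm's formulation $S_N$ is evaluated on the density of the absolutely continuous part of $\mu_t$, and Jensen for the concave map $\rho \mapsto \rho^{1-1/N}$ gives $\int_\Omega \rho_t^{1-1/N}\,dm \leq m(\Omega)^{1/N}\big(\int_\Omega \rho_t\,dm\big)^{1-1/N} \leq m(\Omega)^{1/N}$ using only $\int_\Omega \rho_t\,dm \leq 1$; no a priori absolute continuity of $\mu_t$ is needed.

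What your sketch does elide is the integration step. After sending $\varepsilon \downarrow 0$ and $b-a \downarrow 0$, and using $\big(\tau_{K,N}^{(t)}(\theta)\big)^N = t\,\sigma_{K,N-1}^{(t)}(\theta)^{N-1}$, what emerges is the distributional monotonicity of $s \mapsto v'(s)/\mathfrak{s}(s)$, where $v(s) := m(B_s(x_0))$ and $\mathfrak{s}(s)$ is the model shell density ($\sin^{N-1}$, $s^{N-1}$, or $\sinh^{N-1}$ according to the sign of $K$). To pass from this shell inequality to $v(r)/v(R) \geq V_{K,N}(r,R)$ you need the quotient-monotonicity lemma: if $f$ is non-increasing and $g>0$, then $r \mapsto \big(\int_0^r fg\big)/\big(\int_0^r g\big)$ is non-increasing, applied with $f = v'/\mathfrak{s}$ and $g = \mathfrak{s}$. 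This lemma, not mere integration over $(0,r]$ and $(0,R]$, is what makes the ratio collapse to $V_{K,N}(r,R)$, and it should be stated.
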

In contexts where $K$ and $N$ are clear, we will simply write $V(r,R)$ for \label{V}$V_{K,N}(r,R)$. 

For $N < \infty$, $\CD(K,N)$ spaces are locally doubling, by Theorem \ref{BG volume comparison}, and are therefore proper. They satisfy a 1-1 Poincaré inequality by \cite[Theorem 1.1]{R12}.

\subsection{First order calculus on metric measure spaces} We follow the framework for calculus on metric measure space developed by Ambrosio, Gigli and Savar\'e in \cite{AGS13, AGS14a, AGS14b, G15, G18}. Let $(X,d,m)$ be a metric measure space. Let $\lip(X)$, $\lip_{loc}(X)$, $\lip_{b}(X)$ be its class of Lipschitz, locally Lipschitz, and bounded Lipschitz functions respectively. Given $f \in \lip_{loc}(X)$, the \textit{local Lipschitz constant} (or \textit{local slope}) $\lip(f): X \to \mathbb{R}$ is defined by
\begin{equation}\label{Lipschitz constant}
	\lip(f)(x) := \limsup\limits_{y \to x} \frac{|f(y) - f(x)|}{d(y,x)}.
\end{equation}
By convention, $\lip(f)(x) := 0$ at any isolated point $x$. Given $f \in L^2(m)$, a function $g \in L^2(m)$ is called a \textit{relaxed gradient} if there exists a sequence $f_n \in \lip(X)$ and $\tilde{g} \in L^2(m)$ so that
\begin{enumerate}
	\item $f_n \to f$ in $L^2(m)$ and $\lip(f_n)$ converges weakly to $\tilde{g}$ in $L^2(m)$
	\item $g \geq \tilde{g}$ $m$-a.e..
\end{enumerate}
A \textit{minimal relaxed gradient} is a relaxed gradient that is minimal in $L^2$-norm in the family of relaxed gradients of $f$. If this family is non-empty, one can check that such a function exists and is unique $m$-a.e.. The minimal relaxed gradient is denoted by $|Df|$. The \textit{domain of the Cheeger energy} $D(\Ch ) \subseteq L^2(m)$ is the subset of $L^2$ functions with a minimal relaxed gradient. For $f \in L^2(m)$, the \text{Cheeger energy} is defined as 
   \[ \Ch(f) = \begin{cases} 
       \frac{1}{2}  \int |Df|^2 \, dm &  \text {if } f \in D(\Ch),\\
	\infty & \text{otherwise.}\\
       \end{cases}
    \]
$\Ch$ is a convex and lower semicontinuous functional on $L^2(m)$. The Cheeger energy first introduced in \cite{C99} was defined using a slightly different relaxation procedure. It is also possible to define a similar functional using the idea of \textit{minimal weak upper gradients}, see \cite[Section 5.1]{AGS14a}. It is shown in \cite[Section 6]{AGS14a} that under mild assumptions on the metric measure space, satisfied, for example, by the various curvature-dimension conditions, all these notions are equivalent.
\begin{rem}\label{lip=grad}
Let $(X,d,m)$ be an $\CD(K,N)$ space with $N < \infty$. For any Lipschitz function $f$ on $X, \lip (f) = |D f|$ $m$-a.e. This follows from \cite{C99}, where it is shown that a metric measure space satisfying a Poincaré inequality and a doubling inequality has $\lip (f) = |D f|$ $m$-a.e.. 
\end{rem}
$W^{1,2}(X) := D(\Ch)$ is a Banach space endowed with the norm $\norm{f}_{W^{1,2}(X)}^2 := \norm{f}_{L^2(m)}^2+\norm{|Df|}_{L^2(m)}^2$. We define $W^{1,2}_{Loc}(X)$ as the space of all function $f \in L^2(X, m)$ so that $gf \in W^{1,2}(X)$ for every compactly supported, Lipschitz $g$. By the strong locality property of the minimal relaxed gradient (ie. $|Dg| = |Dh| \ m$-a.e. in $\braces{g=h}$ for any $g,h \in W^{1,2}(X)$), any $f \in W^{1,2}_{Loc}(X)$ has an associated differential $|Df| \in L^2_{loc}(m)$. 

$(X,d,m)$ is said to be \textit{infinitesimally Hilbertian} if $W^{1,2}(X)$ is a Hilbert space. In this case, for $f, g \in W^{1,2}(X)$, one may define $\inner{Df}{Dg}$ using polarization: $\inner{Df}{Dg} := \frac{1}{2}(|D(f+g)|^2 - |Df|^2 - |Dg|^2)\in L^1(m)$. 

From here one can define the \textit{Laplacian}. $f \in W^{1,2}(X)$ is said to be in the \textit{domain of the Laplacian} ($f \in D(\Delta)$) if there exists $\Delta f \in L^2(m)$ so that
\begin{equation*}
	\int g \Delta f \, dm + \int \inner{Dg}{Df} \, dm = 0 \; \; \text{ for any $g \in W^{1,2}(X)$}.
\end{equation*}
Given a subspace $V \in L^2(m)$, we denote $D_{V}(\Delta) :=  \braces{f \in D(\Delta): \Delta(f) \in V}$. More generally, one may define the \textit{measure valued Laplacian}. 
\begin{de}\label{measure lap}(Measure valued Laplacian \cite[Definition 3.1.2]{G18}) The space $D(\boldsymbol{\Delta}) \subset W^{1,2}(X)$ is the space of $f \in W^{1,2}(X)$ such that there is a signed Radon measure $\mu$ satisfying
\begin{equation*}
	\int gd\mu = -\int\inner{Dg}{Df}dm \hspace{0.5cm} \forall g:X \to \mathbb{R}\text{ Lipschitz with bounded support.}
\end{equation*}
In this case the measure $\mu$ is unique and is denoted by $\boldsymbol{\Delta} f$.
\end{de}

\subsection{Tangent, cotangent, and tensor modules}

A technical framework for describing first order calculus on metric measure spaces and second order calculus on $\RCD(K,N)$ spaces was developed  by Gigli in \cite{G18}. While aspects of second order calculus can be effectively developed without this framework (see for example \cite{S14, AGS15}), \cite{G18} crucially gives constructions which generalize the notion of tensor fields. In the next few subsections, we will quickly introduce, sometimes informally, the necessary definitions given in \cite{G18} and refer to the original article for details and insights. 

Let $(X,d,m)$ be a metric measure space. The various collections of tensor fields of interest will be objects in the category of \textit{$L^p(m)$-normed $L^{\infty}(m)$-modules}. 
\begin{de}\label{LpLinfpremod}($L^p$-normed $L^\infty$-premodules \cite[Definition 1.2.1 1.2.10]{G18})
Let $p \in [0, \infty]$. Let $(\mathcal{M}, \norm{\cdot}_{\mathcal{M}}$) be a Banach space endowed with a bilinear map $L^{\infty}(m) \times \mathcal{M} \ni (f, v) \mapsto f \cdot v \in \mathcal{M}$ and a function $| \cdot | : \mathcal{M} \to L^p_{+}(m)$. We say $(\mathcal{M}, \norm{\cdot}_{\mathcal{M}}, \cdot, |\cdot|)$ is an $L^p$-normed $L^\infty$-premodule iff the following holds
\begin{enumerate}
	\item $(fg)\cdot v = f \cdot(g \cdot v)$ for all $f, g \in L^{\infty}(m)$ and $v \in \mathcal{M}$
	\item $\bold{1} \cdot v = v$ for all $v \in \mathcal{M}$ where $\bold{1}$ is the constant function equal to $1$
	\item $\norm{|v|}_{L^p(m)} = \norm{v}_{\mathcal{M}}$  for all $v \in \mathcal{M}$
	\item $|f \cdot v| = |f||v| m$-a.e. for all $f \in L^{\infty}(m)$ and $v \in \mathcal{M}$. 
\end{enumerate} 
\end{de}

We will often simply write $fv$ for $f \cdot v$ and call $|\cdot|$ the pointwise norm. If an \textit{$L^p(m)$-normed $L^\infty (m)$-premodule} satisfies additional locality and gluing properties (\cite[Definition 1.2.1]{G18}), we say it is an \textit{$L^p(m)$-normed $L^\infty(m)$-module}. One may localize such an object to some $A \in \mathscr{B}(X)$ by defining $\mathcal{M}\rvert_A := \braces{v \in \mathcal{M}\, : \, |v|= 0 \: m\text{-a.e. on}\, A^c}$, which is again canonically an $L^p(m)$-normed $L^\infty(m)$-module. 

An $L^2$-normed $L^\infty$-module which is a Hilbert space under $\norm{\cdot}_{\mathcal{M}}$ is called a \textit{Hilbert module}. In this case one can define a pointwise inner product by polarizing the pointwise norm $|\cdot|$. The prototypical example of a Hilbert module one has in mind is the collection of $L^2$ vector fields on a Riemannian manifold, where $|\cdot|$ is the Riemannian pointwise norm.

Given $L^\infty$-modules $\mathcal{M}$ and $\mathcal{N}$, we say a map $T: \mathcal{M} \to \mathcal{N}$ is a \textit{module morphism} if it is a bounded linear map between $\mathcal{M}$ and $\mathcal{N}$ as Banach spaces satisfying in addition $T(fv) = fT(v)$ for all $f \in L^{\infty}(m)$ and $v \in \mathcal{M}$. The \textit{dual module $\mathcal{M}^{*}$} is the space of all module morphisms between $\mathcal{M}$ and $L^{1}(m)$ and is an $L^{p_*}(m)$-normed $L^{\infty}(m)$-module, where $\frac{1}{p}+\frac{1}{p_{*}}=1$. A Hilbert module $\mathcal{H}$ is canonically isomoprhic to its dual.

Given two Hilbert modules $\mathcal{H}_1$ and $\mathcal{H}_2$, one can construct the Hilbert modules: the tensor product $\mathcal{H}_1 \otimes \mathcal{H}_2$ (\cite[Definition 1.5.1]{G18}) and the exterior product $\mathcal{H}_1 \Lambda \mathcal{H}_2$ (\cite[Definition 1.5.4]{G18}). 

\begin{de}\label{modulegenerator}(\cite[Definition 2.19]{G18})
Let $\mathcal{M}$ be an $L^2(m)$-normed and $V \subseteq \mathcal{M}$. $\Span(V)$ is defined as the collection of $v \in \mathcal{M}$ for which there is a Borel decomposition $(X_n)_{n \in \mathbb{N}}$ of $X$ and, for each $n \in \mathbb{N}$, collections $v_{1,n}, ... , v_{k_n, n} \in V$ and $f_{1,n}, ... ,f_{k_n, n} \in L^{\infty}(m)$ so that
\begin{equation*}
	\bold{1}_{X_n}v = \sum\limits_{i=1}^{k_n} f_{i,n}v_{i,n} \text{ , for each $n \in \mathbb{N}$,}
\end{equation*}
where $\bold{1}_{X_n}$ is the characteristic function of $X_n$. We say that $V$ genereates $\mathcal{M}$ iff $\overline{\Span{V}} = X$.
\end{de}

From this point we will assume $(X,d,m)$ is a infinitesimally Hilbertian metric measure space. We now define the tangent and cotangent modules of $(X,d,m)$.

\begin{thm}\label{tan cotan modules}(\cite[Proposition 2.2.5]{G18})
There exists a unique, up to isomorphism, Hilbert module $\mathcal{H}$ endowed with a linear map $d:W^{1,2}(X) \to \mathcal{H}$ satisfying:
\begin{enumerate}
	\item $|df| = |Df|$ $m$-a.e. for all $f \in W^{1,2}(X)$
	\item $d(W^{1,2}(X))$ generates $\mathcal{H}$.
\end{enumerate}
Such an $\mathcal{H}$ is called the \textit{cotagent module} of $(X,d,m)$ and denoted by $L^{2}(T^{*}X)$.\\
The dual of $L^{2}(T^{*}X)$ is called the \textit{tangent module} of $(X,d,m)$ and denoted by $L^{2}(TX)$. Elements of $L^{2}(TX)$ are called \textit{vector fields}.\\
We denote by $\nabla f$ the dual of $df$ (i.e. the unique element $\nabla f \in L^{2}(TX)$ so that $v(\nabla f) = \inner{v}{df}$ for all $v \in L^{2}(T^{*}X)$).
\end{thm}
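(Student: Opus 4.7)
The plan is to construct $L^2(T^*X)$ explicitly as a completion of formal combinations of differentials, and then derive uniqueness from the two defining properties. First I would form a \emph{pre-cotangent module} $\mathcal P$ whose elements are formal finite sums $\sum_{i=1}^n \chi_{A_i}\, df_i$, where $(A_i)_{i=1}^n$ is a Borel partition of $X$ and $f_i\in W^{1,2}(X)$, modulo the natural relations that must hold: linearity in $f_i$ on $A_i$, and equality of $\chi_A\,df$ with $\chi_A\,dg$ whenever $|D(f-g)|=0$ $m$-a.e. on $A$. On this space define the pointwise norm of $\sum_i \chi_{A_i}\,df_i$ to be the function equal to $|Df_i|$ on $A_i$. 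The whole construction hinges on this quantity being well defined modulo the imposed relations, which is exactly the strong locality of the minimal relaxed gradient recalled in the excerpt: $|Dg|=|Dh|$ $m$-a.e. on $\{g=h\}$.

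Next I would equip $\mathcal P$ with the obvious $L^\infty(m)$-action (first on simple functions $\sum_j \chi_{B_j}\lambda_j$ and then extend by density), and with the norm $\|v\|:=\bigl\||v|\bigr\|_{L^2(m)}$ after quotienting out elements of zero pointwise norm. Taking the completion $\mathcal H$ gives a Banach space to which the $L^\infty$-action and the pointwise norm extend by continuity, and the four conditions of Definition~\ref{LpLinfpremod}, together with the locality and gluing needed to upgrade $\mathcal P$ to a genuine $L^2$-normed $L^\infty$-module, are straightforward to check on generators and pass to the completion. The Hilbert property is inherited from infinitesimal Hilbertianity: the parallelogram identity for $\tfrac12(|D(f+g)|^2+|D(f-g)|^2)=|Df|^2+|Dg|^2$ $m$-a.e. translates pointwise to elements of $\mathcal P$, and hence the norm $\|\cdot\|$ on $\mathcal H$ satisfies it. Setting $df:=\mathbf 1\cdot df\in\mathcal H$ makes property~(1) tautological and property~(2) immediate since simple combinations of $df_i$'s are dense in $\mathcal P$.

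For uniqueness, given another pair $(\mathcal H',d')$ satisfying (1) and (2), I would define $T\colon\mathcal H\to\mathcal H'$ by $T\bigl(\sum_i\chi_{A_i}\,df_i\bigr):=\sum_i\chi_{A_i}\,d'f_i$ on the pre-module; property~(1) for both modules forces $T$ to preserve the pointwise norm, so it is well defined and extends to an isometric module morphism of the completion, while property~(2) for $\mathcal H'$ together with the dense image property makes $T$ surjective, hence an isomorphism. The tangent module $L^2(TX)$ and the gradient $\nabla f$ are then obtained by dualization, which is a general operation on $L^2$-normed $L^\infty$-modules; no additional construction is needed. The main obstacle in this plan is the well-definedness step for the pointwise norm on $\mathcal P$: every other step (Hilbert structure, module axioms, uniqueness) is soft algebra or continuation by density once that compatibility is secured via strong locality of $|D(\cdot)|$.
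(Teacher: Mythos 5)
The paper does not prove this theorem itself; it simply recalls it as \cite[Proposition 2.2.5]{G18}, and your sketch reproduces the construction given there: form the pre-cotangent module of formal sums $\sum_i\chi_{A_i}\,df_i$ with the pointwise norm well-defined by strong locality of $|D\cdot|$, complete in $L^2$, inherit the Hilbert property from infinitesimal Hilbertianity via the pointwise parallelogram identity (on a common refinement of partitions), and prove uniqueness by constructing the norm-preserving module morphism on generators. This is essentially the same approach as the cited source and the steps you identify—well-definedness via locality, module axioms on generators, isometry forcing uniqueness—are exactly the ones Gigli carries out.
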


Notice $\inner{df}{dg} = df(\nabla g) = \inner{\nabla f}{\nabla g} = \inner{Df}{Dg} \ m$-a.e. and we will use these interchangeably in the rest of the paper depending on the convention of the theorems we are quoting. For a discussion of the philosophical differences of the objects involved, see \cite[Section 2.2]{AGS14a}.

\begin{de}\label{div def}{\cite[Definition 2.3.11]{G18}}
$D(\diver) \subseteq L^2(TX)$ is the space of all vector fields $v \in L^2(TX)$ for which there exists $f \in L^2(m)$  so that for any $g \in W^{1,2}(X)$ the equality
\begin{equation*}
	\int fg \,dm = -\int dg(v)\, dm
\end{equation*}
holds. In this case $f$ is called the \textit{divergence of v} and denoted by $\diver(v)$. In particular, if $f \in D(\Delta)$, then $\nabla f \in D(\diver)$ and $\diver(\nabla f) = \Delta f$. 
\end{de}

\begin{de}\label{tensor modules}
$L^{2}((T^*)^{\otimes2}(X))$ denotes the tensor product of $L^{2}(T^{*}X)$ with itself (see \cite[Definition 1.5.1]{G18})). Similarly, $L^{2}(T^{\otimes2}(X))$ denotes the tensor product of $L^2(TX)$ with itself. We will use $|\cdot|_{\HS}$ and $\cdot : \cdot$ to denote the pointwise norm (Hilbert-Schmidt norm) and the pointwise inner product of Hilbert modules which arise from tensors. 
\end{de}
 
$L^{2}((T^*)^{\otimes2}(X))$ and $L^{2}(T^{\otimes2}(X))$ are Hilbert module duals of each other. We mention that for any element $A \in L^{2}((T^*)^{\otimes2}(X))$, we will often write $A(V,W) = A(V\otimes W)$. We will sometimes write this even when $V$ and $W$ are so that $V \otimes W$ is not in $L^{2}(T^{\otimes2}(X))$. In all these cases, $V \otimes W$ when multipled by the characteristic function of a compact set will be in $L^{2}(T^{\otimes2}(X))$ and so $A(V,W)$ is well-defined as a measurable function by locality and satisfies
\begin{equation}\label{tensor operator norm}
	A(V,W) \leq |A|_{\HS}|V||W|.
\end{equation}
We will usually have additional assumptions on $|V|$ and $|W|$ so that $A(V,W) \in L_{loc}^1(m)$.

\subsection{RCD(K,N) and Bakry-\'Emery conditions}
We now introduce the notion of $\RCD$ spaces, which are the main objects of interest for this paper. These were proposed and carefully analyzed in a series of papers including \cite{G15, EKS15, AMS19} in the finite dimensional case and \cite{AGS14b, AGMR15} in the infinite dimensional case. 

\begin{de}\label{RCD def}(\cite{AGS14b, G15})
Let $(X,d,m)$ be a metric measure space, $K \in \mathbb{R}$, and $N \in [1, \infty)$. We say $(X,d,m)$ satisfies the \textit{Riemmanian curvature-dimension condition} $\RCD(K,N)$ iff $(X,d,m)$ satisfies the $\CD(K,N)$ condition and is infinitesimally Hilbertian. Similarly one defines the \text{Riemannian curvature-dimension} conditions $\RCD^{*}(K,N)$ and $\RCD(K,\infty)$ using $\CD^{*}(K,N)$ and $\CD(K,\infty)$ respectively. 
\end{de}

The $\RCD$ condition is stable under measured Gromov-Hausdorff convergence and tensorization. Examples of $\RCD$ spaces include Ricci limits and Alexandrov spaces but non-Riemannian Finsler geometries are ruled out. We now state some equivalent formulations of the $\RCD(K,N)$ property. We will in general assume $(X,d,m)$ is infinitesimally Hilbertian in this subsection.

As in \cite{AGS15}, we define the \textit{Carr\'e du champ operator} for $f \in D_{W^{1,2}(X)}(\Delta)$ and $\varphi \in D_{L^{\infty}(m) \cap L^2(m)}(\Delta) \cap L^{\infty}(m)$ by
\begin{equation*}
	\Gamma_2(f;\varphi) := \int \frac{1}{2}|\nabla f|^2\Delta \varphi dm - \int \inner{\nabla f}{\nabla \Delta f}\varphi dm.
\end{equation*}

This enables us to state the non-smooth Bakry-\'Emery condition $\BE(K,N)$.
\begin{de}\label{BE}(Bakry-\'Emery condition \cite{AGS15, EKS15})
Let $K \in \mathbb{R}$ and $N \in [1, \infty]$. We say $(X,d,m)$ satisfies the $\BE(K,N)$ condition iff 
\begin{equation*}
	\Gamma_2(f;\varphi) \geq \frac{1}{N} \int (\Delta f)^2 \varphi dm + K \int |\nabla f|^2 \varphi dm.
\end{equation*}
\end{de}

$\BE(K,N)$ is closely related to $\CD(K,N)$. We say $(X,d,m)$ satisfies the \textit{Sobolev-to-Lipschitz property}, \cite[Definition 3.15]{GH18}, if any function $f \in W^{1,2}(X)$ with $|\nabla f| \in L^{\infty}(m)$ has a Lipschitz representatitive $\tilde{f} = f \, m$-a.e. with Lipschitz constant equal to $\esssup(|\nabla f|)$.

\begin{thm}\label{RCD BE equivalence}(\cite{AGS15,EKS15,AMS19})
Let $(X,d,m)$ be a metric measure space satisfying an exponential growth condition (see \cite[secion 3]{AGS15}), $K \in \mathbb{R}$, and $N \in (1,\infty)$. $(X,d,m)$ is $\RCD(K,N)$ iff $(X,d,m)$ is infinitesimally Hilbertian, satisfies the Sobolev-to-Lipschitz property and the $\BE(K,N)$ condition. 
\end{thm}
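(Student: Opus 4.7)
The plan is to prove the two implications separately, with the heat semigroup $(H_t)_{t\geq 0}$ serving as the bridge between the Eulerian $\BE(K,N)$ condition and the Lagrangian $\RCD(K,N)$ condition. Under infinitesimal Hilbertianity, $H_t$ is well-defined as the linear gradient flow of $\Ch$ in $L^2(m)$, and the key is to identify it with the gradient flow of $\Ent_m$ on $(\mathcal{P}_2(X), W_2)$ in the Evolution Variational Inequality (EVI) sense with dimensional parameter $N$.

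For the forward direction, I assume $\RCD(K,N)$. Following the approach of EKS15, the $\CD(K,N)$ convexity of $\Ent_m$ along $W_2$-geodesics, together with infinitesimal Hilbertianity, lets one promote the heat flow to an EVI$_{K,N}$ gradient flow of $\Ent_m$. This yields a dimensional Wasserstein contraction estimate, which via Kantorovich duality translates into a pointwise Bakry-\'Emery-type gradient estimate of the schematic form
\begin{equation*}
|\nabla H_t f|^2 + c(t,K,N)(\Delta H_t f)^2 \leq e^{-2Kt} H_t(|\nabla f|^2)
\end{equation*}
for an explicit positive coefficient $c(t,K,N)$. Differentiating at $t=0$ recovers $\BE(K,N)$. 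The Sobolev-to-Lipschitz property is then obtained from the fact that $\RCD$ spaces are length spaces combined with the heat flow regularization: any $f$ with $|\nabla f| \in L^{\infty}(m)$ is approximated in $W^{1,2}$ by $H_t f$, whose Lipschitz constant is controlled by the gradient estimate and the length-space structure.

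For the reverse direction, assume infinitesimal Hilbertianity, $\BE(K,N)$, and Sobolev-to-Lipschitz. The pointwise $\BE(K,N)$ inequality integrates in time to the same dimensional gradient estimate for $H_t$. Sobolev-to-Lipschitz upgrades this to an actual Lipschitz bound on $H_t f$, and Kantorovich duality then turns it into an EVI$_{K,N}$ contraction for the dual semigroup $\hat{H}_t$ acting on $\mathcal{P}_2(X)$. The exponential growth assumption ensures enough integrability to identify $\hat{H}_t$ globally with the EVI$_{K,N}$ gradient flow of $\Ent_m$. Finally, an adaptation of the Daneri-Savar\'e principle shows that the existence of such an EVI gradient flow forces $\CD(K,N)$-convexity of $\Ent_m$ along a $W_2$-geodesic between absolutely continuous measures, completing the implication.

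The main obstacle is the precise dimensional bookkeeping required to pass between a pointwise inequality involving $(\Delta f)^2$ and a Lagrangian inequality involving $\tau^{(t)}_{K,N}$-distortion coefficients. In the infinite-dimensional case this exchange is essentially automatic once the two gradient flows have been identified, but for finite $N$ it requires the sharp Hamilton-Jacobi-type duality and a careful use of the Hopf-Lax semigroup applied to $c$-concave Kantorovich potentials, which is the most technical piece of the argument.
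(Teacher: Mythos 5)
This theorem is stated in the paper purely as a cited background result from \cite{AGS15, EKS15, AMS19}; the paper does not give its own proof, so there is nothing internal to compare against. Your sketch does capture, in broad strokes, the Eulerian--Lagrangian dictionary that those references set up: identify the heat flow as an $EVI_{K,N}$ gradient flow of the entropy, translate this into a dimensional Bakry--Ledoux gradient estimate via Kuwada/Kantorovich duality, and differentiate to recover $\BE(K,N)$; in the converse direction, integrate $\BE(K,N)$, use Sobolev-to-Lipschitz to upgrade a.e.\ bounds to genuine Lipschitz bounds, and run a Daneri--Savar\'e-type argument to recover convexity of the entropy.

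There are, however, two imprecisions worth flagging. First, you write that the ``$\CD(K,N)$ convexity of $\Ent_m$'' yields the $EVI_{K,N}$ flow, but $\CD(K,N)$ as stated in Definition \ref{CD def} is phrased in terms of the R\'enyi entropies $S_{N'}$, not the Shannon entropy $\Ent_m$. What actually plays the EVI-characterizing role in \cite{EKS15} is the entropic condition $\CD^e(K,N)$, which is a $(K,N)$-convexity statement for $\Ent_m$ along $W_2$-geodesics; getting from $\CD(K,N)$ (or $\CD^*(K,N)$) to $\CD^e(K,N)$ is a nontrivial intermediate step that uses essential non-branching. Second, the equivalence proved in \cite{EKS15, AMS19} is strictly with $\RCD^*(K,N)$ (the reduced condition, with $\sigma$-coefficients), not with the full $\RCD(K,N)$ of Definition \ref{CD def} (with $\tau$-coefficients). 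Passing from $\CD^*(K,N)$ to $\CD(K,N)$ requires the globalization theorem of Cavalletti--Milman \cite{CM16}, whose general form (beyond finite total mass) has its own caveats, as the paper itself acknowledges in the remark following Definition \ref{RCD def}. Your proposal silently treats these two conditions as the same, which hides precisely the ``dimensional bookkeeping'' you identify in your final paragraph as the most technical piece: that bookkeeping lives in the gap between $\sigma$- and $\tau$-coefficients and in the passage between R\'enyi and Shannon entropy, not just in Hopf--Lax duality.
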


\subsection{Heat flow and Bakry-Ledoux estimates}

By applying the theory of the gradient flow of convex functionals on Hilbert spaces to $\Ch$ as in \cite{AGS14a}, one obtains for each $f \in L^2(m)$ a unique continuous curve $(H_t(f))_{t \in [0,\infty)}$ in $L^2(m)$ which is locally absolutely continuous in $(0,\infty)$ with $H_0(f) = f$ so that 
\begin{equation}\label{heat flow def}
	\frac{d}{dt} \, H_t(f) = \Delta' H_t(f) \, \text{ for a.e. $t \in (0,\infty)$,}
\end{equation}
where $\Delta' g$ is defined as the minimizer in $L^2$ energy in $\partial^-(\Ch)$ at $g$ provided it is non-empty (see \cite[Section 4.2]{AGS14a}). 

If $(X,d,m)$ is infinitesimally Hilbertian, then for any $t> 0$, $H_t(f) \in D(\Delta)$ and one has the a priori estimates
\begin{equation*}
\norm{H_t(f)}_{L^2} \leq \norm{f}_{L^2}, \; \; \norm{|DH_t(f)|}_{L^2}^2 \leq \frac{\norm{f}_{L^2}^2}{2t^2}, \; \; \norm{\Delta H_t(f)}_{L^2} \leq \frac{\norm{f}_{L^2}}{t}.
\end{equation*}
 $H_t(f)$ is linear and satisfies $\Delta(H_t(f)) = H_t(\Delta(f))$ for any $t > 0$. In particular, \eqref{heat flow def} is true for all $t \in (0, \infty)$ and 
\begin{equation*}
	H_t(f) = f + \int_{0}^{t} \Delta(H_s(f)) \, ds.
\end{equation*}

If $(X,d,m)$ is $\RCD(K,\infty)$, $H_t$ can be identified with $\mathscr{H}_t$, the gradient flow of $\Ent_m$ on $\mathcal{P}_2(X)$. Due to contraction properties coming from the $\RCD$ condition, $\mathscr{H}_t$ can be extended from $D(\Ent_m)$ to all of $\mathcal{P}_2(X)$. 

\begin{de}\label{heat kernel def}
For $t > 0$ and $x \in X$, $\mathscr{H}_t(\delta_x)$ is absolutely continuous with respect to $m$. Then $\mathscr{H}_t(\delta_x) = H_t(x,\cdot)m$, where $H_t(\cdot,\cdot)$ is the \textit{heat kernel}. 
\end{de}

$H_t(x,y)$ is symmetric and continuous in both variables. For each $f \in L^2(m)$, one has the representation formula, \cite[Theorem 6.1]{AGS14b},
\begin{equation}\label{heat kernl rep}
	H_t(f)(x) = \int f(y)H_t(x,y) \, dm.
\end{equation}

The $\RCD(K,N)$ condition implies the Bakry-Ledoux estimate, which is a finite dimensional analogue of the Bakry-\'Emery contraction estimate (\cite[Theorem 6.2]{AGS14b}). Moreover, it was shown in \cite{EKS15} that one has equivalence  in Theorem \ref{RCD BE equivalence} with the $\BE(K,N)$ condition replaced by the $(K,N)$ Bakry-Ledoux estimate.

\begin{thm}\label{Bakry-Ledoux} (Dimensional Bakry-Ledoux $L^2$ gradient-Laplacian estimate \cite[Theorem 4.3]{EKS15})
Let $(X,d,m)$ be an $\RCD(K,N)$ space for some $K \in \mathbb{R}$ and $N \in [1, \infty)$. For any $f \in W^{1,2}(X)$ and $t > 0$, 
\begin{equation*}
|\nabla(H_t(f))|^2+\frac{4Kt^2}{N(e^{2Kt}-1)}|\Delta H_t(f)|^2 \leq e^{-2Kt}H_t(|\nabla(f)|^2) \hspace{0.5cm} m\text{-a.e.}.
\end{equation*}
\end{thm}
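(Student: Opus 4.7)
The plan is to use the equivalent characterization of $\RCD(K,N)$ via the Bakry-\'Emery condition $\BE(K,N)$ from Theorem~\ref{RCD BE equivalence}, combined with a semigroup interpolation argument of Bakry-Ledoux type. The starting point is to upgrade the integrated $\BE(K,N)$ inequality to a pointwise (distributional) Bochner-type estimate
\[
\tfrac{1}{2}\Delta|\nabla g|^2 - \langle \nabla g, \nabla \Delta g\rangle \;\geq\; K|\nabla g|^2 + \tfrac{1}{N}(\Delta g)^2
\]
valid for $g$ in a sufficiently regular dense class, for instance those in the image of the heat flow. This upgrade is carried out by testing the integrated form against nonnegative functions of the shape $H_s(x,\cdot)$, using that the heat kernel regularizes enough to make such test functions admissible in Definition~\ref{BE}.

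Next, fix $t>0$, set $g_s := H_{t-s}(f)$ for $s\in[0,t]$, and define the interpolating quantity
\[
F(s) \;:=\; H_s\!\big(|\nabla g_s|^2\big).
\]
A direct differentiation using $\partial_s g_s = -\Delta g_s$, the self-adjointness of $H_s$, and the commutation $\Delta H_s = H_s \Delta$ yields $F'(s) = 2\,H_s(\Gamma_2(g_s))$. Inserting the pointwise Bochner inequality under $H_s$ gives the key differential inequality
\[
F'(s) \;\geq\; 2K\,F(s) + \tfrac{2}{N}\, H_s\!\big((\Delta g_s)^2\big).
\]
Since $\Delta$ commutes with $H_s$, the quantity $H_s(\Delta g_s)=\Delta H_t f$ is independent of $s$, so Jensen's inequality yields $H_s((\Delta g_s)^2) \geq (\Delta H_t f)^2$. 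With $F(0)=|\nabla H_t f|^2$ and $F(t)=H_t(|\nabla f|^2)$, multiplying through by the integrating factor $e^{-2Ks}$ and integrating from $0$ to $t$ gives a first, non-sharp, version of the estimate with weight $\tfrac{1-e^{-2Kt}}{NK}$ on $(\Delta H_t f)^2$.

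The main obstacle is then to promote this naive constant to the sharp $\tfrac{4Kt^2}{N(e^{2Kt}-1)}$ stated in the theorem. The improvement is achieved by upgrading the one-parameter integration to an auxiliary functional of the form
\[
A(s) \;:=\; \alpha(s)\,F(s) + \beta(s)\,H_s\!\big((\Delta g_s)^2\big),
\]
with weights $\alpha,\beta \geq 0$ to be chosen, matching the boundary data $\alpha(0)=1,\ \alpha(t)=e^{-2Kt},\ \beta(t)=0$. A direct computation using $\partial_s(\Delta g_s) = -\Delta(\Delta g_s)$ yields $\tfrac{d}{ds}\,H_s((\Delta g_s)^2) = 2\,H_s(|\nabla \Delta g_s|^2) \geq 0$, so that combined with the differential inequality for $F$ one obtains an ODE system for $(\alpha,\beta)$ whose extremal solution makes $A$ nondecreasing and extracts exactly the sharp weight on $(\Delta H_t f)^2$ at $s=0$. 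This is the delicate calculus step of the Erbar-Kuwada-Sturm argument; it is where the precise constant $\tfrac{4Kt^2}{N(e^{2Kt}-1)}$ enters, and it is the hardest part of the proof to guess from scratch. The remaining work — differentiating under $H_s$, verifying the pointwise Bochner inequality on a dense class, and extending the final estimate to all $f\in W^{1,2}(X)$ by approximation — is standard given the regularizing properties of the heat flow and the density statements recorded above.
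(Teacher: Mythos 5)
The paper does not prove this statement; it is cited directly from \cite[Theorem 4.3]{EKS15}, so there is no ``paper's own proof'' to compare against. That said, your reconstruction is worth examining on its own terms, and there is one substantive confusion in it.

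Your semigroup interpolation set-up is the standard and correct one: put $g_s := H_{t-s}f$, $F(s) := H_s(|\nabla g_s|^2)$, differentiate to get $F'(s) = 2H_s(\Gamma_2(g_s))$, insert the $\BE(K,N)$ lower bound, apply Jensen (using stochastic completeness, which holds for $\RCD(K,N)$) or alternatively the monotonicity of $s \mapsto H_s((\Delta g_s)^2)$ to replace $H_s((\Delta g_s)^2)$ by $(\Delta H_t f)^2$, and integrate the resulting ODE with integrating factor $e^{-2Ks}$. This indeed yields
\begin{equation*}
|\nabla H_t f|^2 + \frac{1-e^{-2Kt}}{NK}\,(\Delta H_t f)^2 \;\leq\; e^{-2Kt}\,H_t(|\nabla f|^2).
\end{equation*}

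The error is in your assessment of this intermediate result as ``non-sharp'' and in need of improvement. Writing $u = 2Kt$, one has
\begin{equation*}
\frac{1-e^{-2Kt}}{NK}\Big/\frac{4Kt^2}{N(e^{2Kt}-1)} \;=\; \frac{(e^{2Kt}-1)(1-e^{-2Kt})}{4K^2t^2} \;=\; \frac{e^u + e^{-u}-2}{u^2} \;=\; \left(\frac{\sinh(u/2)}{u/2}\right)^2 \;\geq\; 1,
\end{equation*}
with equality only as $K\to 0$. So the coefficient you obtain from the ``naive'' Jensen step is always \emph{at least as large} as $\tfrac{4Kt^2}{N(e^{2Kt}-1)}$. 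Since the coefficient multiplies the nonnegative quantity $(\Delta H_t f)^2$ on the left, a larger coefficient is a \emph{stronger} inequality. Your first bound therefore already implies the stated theorem immediately, and the entire last paragraph — the auxiliary functional $A(s) = \alpha(s)F(s) + \beta(s)H_s((\Delta g_s)^2)$, the ODE system for $(\alpha,\beta)$, and the claim that this ``extracts exactly the sharp weight'' — is unnecessary and conceptually inverted. You are not recovering a sharper constant; you would be discarding one. The only work remaining once you have the Jensen bound is the standard density/approximation argument to pass from test functions to all $f \in W^{1,2}(X)$, which you already flag correctly.

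(Minor point: the upgrade of the integrated $\BE(K,N)$ inequality to a usable Bochner bound on $H_{t-s}f$ is best phrased in the measure-valued language of $\boldsymbol{\Gamma}_2$, cf.\ Theorem~\ref{improved Bochner inequality}; your proposed route via testing against heat-kernel densities works but is a detour given what is already available.)
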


\begin{rem}\label{pw Bakry-Ledoux}
If $|\nabla f| \in L^{\infty}$, one can take continuous representatitves of $\Delta H_t(f)$ and $H_t(|\nabla(f)|^2)$ and identify $|\nabla(H_t(f))|$ canonically with the local Lipschitz constant of $H_t(f)$  to obtain a pointwise Bakry-Ledoux bound, see \cite[Proposition 4.4]{EKS15}).
\end{rem}

This implies the Sobolev-to-Lipschitz property by \cite[Theorem 6.2]{AGS14b}. $H_t$ also has a $L^{\infty}$-to-Lipschitz property by \cite[Theorem 6.8]{AGS14b}, where it was shown for $t > 0$ and $f \in L^2(m)$ that
\begin{equation}\label{Linf-to-Lip}
2I_{2K}(t)|\nabla H_t(f)|^2 \leq H_t(f^2) \, \text{ , $m$-a.e.},
\end{equation}
where $I_{2K}(t) := \int_0^t e^{2Ks}\, ds$. 

\subsection{Second order calculus and improved Bochner inequality}

The class of \textit{test functions} was introduced in \cite{S14} as 
\begin{equation*}
	\TestF(X):=\braces{f \in D(\Delta) \cap L^{\infty}\, :\, |Df| \in L^{\infty} \, \text{ and } \, \Delta f \in W^{1,2}(X)}.
\end{equation*}
It is known that $\TestF(X)$ is an algebra and, on $\RCD(K,\infty)$ spaces, it was shown by the results of \cite{AGS14b} mentioned in the previous subsection that the heat flow approximations of an $L^{\infty} \cap L^2$ function are test functions and so $\TestF(X)$ is dense in $W^{1,2}(X)$.

In \cite{S14, H18}, it was shown that under the $\BE(K,N)$ condition, $|\nabla f|^2 \in D(\boldsymbol{\Delta})$ for any $f \in \TestF(X)$ and so one may define $\boldsymbol{\Gamma}_2(f) := \frac{1}{2}\boldsymbol{\Delta}|\nabla f|^2-\inner{\nabla f}{\nabla \Delta f}$. One can then define a Hessian for $f \in \TestF(X)$ and show the improved Bochner inequality (see \ref{improved Bochner inequality}). In section 3 of \cite{G18}, the same calculations were carried out in the framework proposed therein. We outline the main definitions and results from there. In this subsection, we assume $(X,d,m)$ is an $\RCD(K,N)$ space. 

\begin{de}\label{W22 def}(\cite[Definition 3.3.1]{G18})
$W^{2,2}(X) \subseteq W^{1,2}(X)$ is the space of all functions $f \in W^{1,2}(X)$ for which there exists $A \in L^2((T^{*})^{\otimes 2}(X))$ so that for any $g_1, g_2, h \in \TestF(X)$ the equality
\begin{align*}
2 \int h&A(\nabla g_1, \nabla g_2) \, dm \\
	&= \int -\inner{\nabla f}{\nabla g_1}\diver(h \nabla g_2) - \inner{\nabla f}{\nabla g_2}\diver(h \nabla g_1)-h\inner{\nabla f}{\nabla \inner{\nabla g_1}{\nabla g_2}} \, dm
\end{align*}
holds. In this case $A$ is called the \textit{Hessian of $f$} and denoted by $\Hess f$. $W^{2,2}(X)$ is a Hilbert space under the norm
\begin{equation*}
	\norm{f}_{W^{2,2}(X)}^2 := \norm{f}_{L^2(m)}^2 + \norm{df}_{L^2(T^{*}X)}^2 + \norm{\Hess f}_{L^2((T^{*})^{\otimes 2}(X))}.
\end{equation*} 
\end{de}

It turns out that the test functions are contained in $W^{2,2}(X)$ and one has the improved Bochner inequality as in \cite{S14, H18}.
\begin{thm}\label{improved Bochner inequality}(Improved Bochner inequality \cite[Theorem 3.3.8]{G18})
Let $(X,d,m)$ be an $\RCD(K,N)$ space for $K \in \mathbb{R}$ and $N \in [1, \infty)$ and $f \in \TestF(X)$. Then $f \in W^{2,2}(X)$ and
\begin{equation*}
	\boldsymbol{\Gamma}_2(f)\geq \big[K|\nabla f|^2 + |\Hess (f)|_{\HS}^2 \big]m.
\end{equation*}
\end{thm}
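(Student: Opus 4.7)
The plan is to follow the strategy of Savar\'e \cite{S14} (see also Han \cite{H18}), as formulated in the module framework of \cite{G18}. The argument has three main stages.

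\textbf{Stage 1: the Bochner measure.} Applying $\BE(K,N)$ to $f \in \TestF(X)$ tested against a nonnegative $\varphi \in \TestF(X)$ gives
\begin{equation*}
\int \tfrac{1}{2}|\nabla f|^2 \Delta \varphi \, dm - \int \varphi \langle \nabla f, \nabla \Delta f\rangle \, dm \geq K \int \varphi |\nabla f|^2 \, dm.
\end{equation*}
Since $\TestF(X)$ is dense in $W^{1,2}(X)$, this identity extends and identifies $|\nabla f|^2 \in D(\boldsymbol{\Delta})$, producing the signed Radon measure $\boldsymbol{\Gamma}_2(f) := \tfrac{1}{2}\boldsymbol{\Delta}|\nabla f|^2 - \langle \nabla f, \nabla \Delta f\rangle m$ together with the crude lower bound $\boldsymbol{\Gamma}_2(f) \geq K|\nabla f|^2 m$.

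\textbf{Stage 2: Koszul candidate for the Hessian.} For $g_1, g_2 \in \TestF(X)$ define the symmetric bilinear form
\begin{equation*}
2\mathcal{H}[f](\nabla g_1, \nabla g_2) := \langle \nabla \langle \nabla f, \nabla g_1\rangle, \nabla g_2\rangle + \langle \nabla \langle \nabla f, \nabla g_2\rangle, \nabla g_1\rangle - \langle \nabla f, \nabla \langle \nabla g_1, \nabla g_2\rangle\rangle.
\end{equation*}
Using that $\TestF(X)$ is an algebra and that $\langle \nabla f, \nabla g_i\rangle \in W^{1,2}(X)$, a direct integration by parts against an arbitrary $h \in \TestF(X)$ shows $\mathcal{H}[f]$ satisfies the duality identity appearing in Definition \ref{W22 def}. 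Hence the remaining task is to extend $\mathcal{H}[f]$ to a genuine tensor in $L^2((T^*)^{\otimes 2}(X))$ with controlled pointwise Hilbert-Schmidt norm.

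\textbf{Stage 3: $L^2$ control by polarization.} Apply Stage 1 to $f + \lambda h$ for $h \in \TestF(X)$ and extract from the nonnegative quadratic form in $\lambda$ the Cauchy-Schwarz type bound
\begin{equation*}
\bigl(\Theta(f,h;\varphi)\bigr)^2 \leq \Theta(f;\varphi)\,\Theta(h;\varphi),\quad \varphi \geq 0,\ \varphi \in \TestF(X),
\end{equation*}
where $\Theta(u;\varphi) := \int \varphi \, d\boldsymbol{\Gamma}_2(u) - K\int \varphi |\nabla u|^2 \, dm$ and $\Theta(f,h;\varphi)$ denotes its polarization. Choosing $h = \sum_i a_i g_1^{(i)} g_2^{(i)}$ for $a_i, g_j^{(i)} \in \TestF(X)$, expanding $\Delta(g_1^{(i)} g_2^{(i)})$ via the Leibniz rule, and comparing with the explicit Koszul expression from Stage 2 rewrites $\Theta(f,h;\varphi)$ as an integral linear in $\mathcal{H}[f](\nabla g_1^{(i)}, \nabla g_2^{(i)})$. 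Taking the supremum of the Cauchy-Schwarz inequality over all such finite $L^{\infty}$-combinations and localizing via nonnegative test cutoffs yields the pointwise measure inequality
\begin{equation*}
|\mathcal{H}[f]|_{\HS}^2 \, m \leq \boldsymbol{\Gamma}_2(f) - K|\nabla f|^2 m,
\end{equation*}
which simultaneously produces $f \in W^{2,2}(X)$ with $\Hess f = \mathcal{H}[f]$ and the improved Bochner inequality.

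The main obstacle lies in the last step of Stage 3: passing from integral Cauchy-Schwarz estimates over finite sums of simple tensors $dg_1^{(i)} \otimes dg_2^{(i)}$ to a pointwise bound on $|\mathcal{H}[f]|_{\HS}$ as an element of $L^2((T^*)^{\otimes 2}(X))$. This identification of a sup over finite $L^{\infty}$-combinations with the module pointwise norm is precisely what the generation and gluing axioms of $L^p$-normed $L^\infty$-modules in \cite{G18} (cf.\ Definition \ref{modulegenerator}) are engineered to deliver, and this is the key technical input that makes the argument rigorous in the $\RCD(K,N)$ setting.
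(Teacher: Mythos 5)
You are proving a statement that the paper does not prove. Theorem~\ref{improved Bochner inequality} is quoted in the preliminaries directly from Gigli \cite[Theorem~3.3.8]{G18} and is cited as a black box; there is no proof of it in this paper against which to compare your argument. If the intent was to reconstruct Gigli's proof, then your three-stage sketch does capture the overall strategy of \cite{S14, H18, G18}: extract the measure-valued $\boldsymbol{\Gamma}_2$ from the Bakry--\'Emery condition, define a candidate Hessian through the Koszul-type formula, and obtain the Hilbert--Schmidt control by a polarization/Cauchy--Schwarz argument over finite $L^\infty$-linear combinations of simple covectors.

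Two cautions if you intend to flesh this out. First, in Stage~1 the passage from the distributional inequality to $|\nabla f|^2 \in D(\boldsymbol{\Delta})$ requires the representation step for positive distributions bounded below by an $L^1$ function (a Riesz-type argument), and for the measure to be signed Radon one must also use that $\boldsymbol{\Delta}|\nabla f|^2$ has a one-sided bound in terms of known $L^1$ data; this is nontrivial and should be made explicit rather than attributed to ``density of test functions.'' Second, in Stage~3 the claim that the supremum over finite sums $\sum_i a_i\, dg_1^{(i)}\otimes dg_2^{(i)}$ recovers $|\mathcal H[f]|_{\HS}$ pointwise is indeed where the real work lies: Gigli reduces to pointwise orthonormal frames by a measurable Gram--Schmidt construction inside the module (this is more than what Definition~\ref{modulegenerator} by itself supplies), and only then does the Cauchy--Schwarz inequality in $\lambda$ produce the pointwise $\HS$-norm. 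Your obstacle paragraph correctly identifies the gap, but the resolution as stated is still a gesture rather than an argument. Since the paper only cites this result, neither of these issues affects the paper, but they would need to be filled in if you wanted a standalone proof.
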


$H^{2,2}(X)$ is then defined as the closure of $\TestF(X)$ in $W^{2,2}(X)$. An approximation argument gives the following.
\begin{cor}\label{improved Bochner inequality cor}(\cite[Corollary 3.3.9]{G18})
$D(\Delta) \subseteq W^{2,2}(X)$ and for $f \in D(\Delta)$,
\begin{equation*}
	\int|\Hess f|_{\HS}^2 \, dm \leq \int \bigg[(\Delta f)^2 -K|\nabla f|^2\bigg] \, dm.
\end{equation*}
\end{cor}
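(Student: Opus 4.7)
The strategy is to integrate the improved Bochner inequality (Theorem~\ref{improved Bochner inequality}) against a sequence of cutoff functions to establish the inequality on $\TestF(X)$, and then to extend it to all of $D(\Delta)$ by heat flow approximation combined with closedness of the Hessian.

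For $f \in \TestF(X)$, Theorem~\ref{improved Bochner inequality} asserts, as measures,
$$\tfrac{1}{2}\boldsymbol{\Delta}|\nabla f|^2 \;-\; \inner{\nabla f}{\nabla \Delta f}\,m \;\geq\; \bigl[K|\nabla f|^2 + |\Hess f|_{\HS}^2\bigr]m.$$
I would pair both sides with Lipschitz cutoffs $\chi_R$ equal to $1$ on $B_R(x_0)$, vanishing off $B_{2R}(x_0)$, and satisfying $|\nabla \chi_R| \leq 2/R$. By Definition~\ref{measure lap}, the pairing of $\chi_R$ against $\boldsymbol{\Delta}|\nabla f|^2$ equals $-\int \inner{\nabla \chi_R}{\nabla |\nabla f|^2}\,dm$, which tends to $0$ as $R \to \infty$ since $|\nabla f|^2 \in W^{1,2}(X)$ for $f \in \TestF(X)$ and the $\RCD(K,N)$ Bishop--Gromov comparison of Theorem~\ref{BG volume comparison} controls the growth of the annular shells. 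The standard integration by parts, valid because $\Delta f \in W^{1,2}(X)$, gives $\int \inner{\nabla f}{\nabla \Delta f}\,dm = -\int (\Delta f)^2\,dm$. Combining these two identities with the measure inequality and sending $R \to \infty$ on the right-hand side by monotone convergence delivers the bound for every $f \in \TestF(X)$.

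To extend to a general $f \in D(\Delta)$, I would set $f_t := H_t(f)$. The $\RCD(K,N)$ regularization --- namely, the Bakry--Ledoux estimate (Theorem~\ref{Bakry-Ledoux}), the $L^\infty$-to-Lipschitz estimate~\eqref{Linf-to-Lip} applied after a preliminary truncation of $f$, and the commutation $\Delta H_t = H_t \Delta$ --- places each $f_t$ in $\TestF(X)$, with $f_t \to f$ in $W^{1,2}(X)$ and $\Delta f_t = H_t(\Delta f) \to \Delta f$ in $L^2(m)$ as $t \downarrow 0$. The inequality applied to $f_t$ then provides a uniform $L^2$ bound on $\Hess f_t$; extracting a weakly convergent subsequence in $L^2((T^*)^{\otimes 2}(X))$ and passing to the limit inside the defining identity of Definition~\ref{W22 def} identifies the weak limit with $\Hess f$, so $f \in W^{2,2}(X)$, and weak lower semicontinuity of the $L^2$-norm together with continuity of the right-hand side closes the argument.

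The main obstacle is that a general $f \in D(\Delta)$ need not be bounded, so $H_t(f)$ does not automatically lie in $\TestF(X)$; this is overcome by a preliminary truncation $(-n)\vee f \wedge n$ combined with heat flow in a way that still yields $W^{1,2}$- and Laplacian-convergence, with~\eqref{Linf-to-Lip} supplying the $L^\infty$ bound on $|\nabla H_t f_n|$ needed for membership in $\TestF(X)$. A secondary technical point --- showing that the boundary flux of $|\nabla f|^2$ through large shells vanishes in Step~1 --- reduces to matching the $\RCD(K,N)$ volume growth against the $L^2$-integrability of $\nabla |\nabla f|^2$.
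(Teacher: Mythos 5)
The paper itself does not prove this corollary; it is quoted directly from \cite[Corollary 3.3.9]{G18}, with only the remark that it follows by approximation. Your two-step plan---integrate the improved Bochner inequality against cutoffs to obtain the inequality on $\TestF(X)$, then approximate a general $f \in D(\Delta)$ by truncating, regularizing by $H_t$, and passing to a weak $L^2$ limit of the Hessians---is precisely the standard route, and the overall skeleton is sound. The truncation-plus-heat-flow extension and the identification of the weak limit with $\Hess f$ via Definition~\ref{W22 def} are unproblematic once one notices that $g \mapsto \Delta H_t(g)$ is a bounded operator on $L^2(m)$ (so one need not commute $\Delta$ with $H_t$ on the truncations, which are not in $D(\Delta)$).

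There is, however, a genuine gap in your cutoff step, and it is not the ``secondary technical point'' you treat it as. You assert that the flux term $-\int\inner{\nabla\chi_R}{\nabla|\nabla f|^2}\,dm$ vanishes as $R \to \infty$ because $\nabla|\nabla f|^2 \in L^2(m)$ and Bishop--Gromov controls the annular shells. That bound does not close: Cauchy--Schwarz gives $\tfrac{2}{R}\sqrt{m(B_{2R}\setminus B_R)}\,\|\nabla|\nabla f|^2\|_{L^2(B_{2R}\setminus B_R)}$, and the factor $\sqrt{m(B_{2R})}/R$ diverges already for $K=0$ with $N>2$, let alone for $K<0$ where volume grows exponentially. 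No amount of matching $L^2$-tails against $\RCD(K,N)$ volume growth rescues this. The correct observation is that $\nabla|\nabla f|^2 = 2\Hess f(\nabla f,\cdot)$, and since $|\Hess f|_{\HS}$ and $|\nabla f|$ both lie in $L^2(m)$ (the former because $\TestF(X)\subseteq W^{2,2}(X)$ by Theorem~\ref{improved Bochner inequality}), Cauchy--Schwarz gives $\nabla|\nabla f|^2 \in L^1(m)$; then $\bigl|\int\inner{\nabla\chi_R}{\nabla|\nabla f|^2}\,dm\bigr| \leq \tfrac{2}{R}\,\|\nabla|\nabla f|^2\|_{L^1(X\setminus B_R)} \to 0$ with no volume estimate needed at all. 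A smaller slip: the passage to the limit on the right-hand side should be dominated rather than monotone convergence when $K<0$, since the integrand $K|\nabla f|^2 + |\Hess f|^2_{\HS}$ is then not of a single sign (again one uses $|\nabla f|^2, |\Hess f|^2_{\HS}\in L^1(m)$ as dominating functions).
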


Finally, we introduce the analogue of vector fields which have a first order (covariant) derivative.
\begin{de}\label{W12(TX) def}(\cite[Definition 3.4.1]{G18})
$W^{1,2}_{C}(TX) \subseteq L^2(TX)$ is the space of all $v \in L^2(TX)$ for which there exists $T \in L^2(T^{\otimes 2}(X))$ so that for any $g_1. g_2, h \in \TestF(X)$ the equality 
\begin{equation*}
	\int hT\,:\,(\nabla g_1 \otimes \nabla g_2) \, = \int -\inner{v}{\nabla g_2}\diver{(h \nabla g_1)}-h\Hess(g_2)(v,\nabla g_2)\,dm 
\end{equation*}
holds. In this case $T$ is called the \textit{covariant derivative} of $v$ and denoted by $\nabla v$. $W^{1,2}_C(TX)$ is a Hilbert space under the norm
\begin{equation*}
	\norm{v}_{W^{1,2}_C(TX)}^2 := \norm{v}_{L^2(TX)}^2+\norm{\nabla v}_{L^2(T^{\otimes 2}(X))}^2.
\end{equation*} 
\end{de}

The class of \textit{test vector fields} is defined as 
\begin{equation*}
	\TestV(X) := \braces{\sum\limits_{i=1}^n g_i\nabla f_i \, : \, n \in \mathbb{N}, \, f_i, g_i \in \TestF(X)}.
\end{equation*}
By \cite[Theorem 3.4.2]{G18}, $\TestV(X) \subseteq W^{1,2}_C(TX)$ and so $H^{1,2}_C(TX)$ is defined to be the closure of $\TestV(X)$ in $W^{1,2}_C(TX)$. For any $f \in W^{2,2}(X)$,  $\Hess f$ and $\nabla(\nabla f)$ are dual under the duality of $L^{2}((T^*)^{\otimes2}(X))$ and $L^{2}(T^{\otimes2}(X))$.


\subsection{Non-branching and essentially non-branching spaces}

Given a geodesic metric space $(X,d)$, we define the space of constant speed geodesics
\begin{equation*}
	\Geo(X) := \braces{\gamma \in C([0,1],X)\; : \; d(\gamma(s), \gamma(t)) = |s-t|d(\gamma(0), \gamma(1))\; \forall s, t\in[0,1]}.
\end{equation*}

For each $t \in [0,1]$, $e_t: \Geo(X) \to X$ defined by $e_t(\gamma) := \gamma(t)$ denotes the evaluation map at time $t$. On a complete and separable metric space $(X,d)$, any $W_2$-geodesic has a \textit{lifting} to a measure on the space of geodesics in the following sense.

\begin{thm}\label{W2 lift}\cite[Theorem 3.2]{L07}
	Let $(\mu_t)_{t \in [0,1]}$ be a $W_2$-geodesic. Then there exists $\boldsymbol{\pi} \in \mathcal{P}(\Geo(X))$ so that
	\begin{equation*}
		(e_t)_{*}(\boldsymbol{\pi}) = \mu_t \; \; \forall t \in [0,1],
	\end{equation*}
	\begin{equation*}
		|\dot{\mu}_t|^2= \int |\dot{\gamma}_t|^2 \, d\boldsymbol{\pi}(\gamma), \; \; \text{for a.e. } t \in[0.1],
	\end{equation*}
	where $e_t(\gamma) := \gamma(t)$ is the evaluation map at time $t$.
\end{thm}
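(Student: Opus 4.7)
The plan is to invoke Lisini's superposition theorem for absolutely continuous curves in Wasserstein space over a general complete separable metric space, and then to exploit the geodesic hypothesis to upgrade the resulting AC curves to constant speed geodesics. First I would observe that a $W_2$-geodesic $(\mu_t)_{t \in [0,1]}$ is in particular absolutely continuous in $(\mathcal{P}_2(X), W_2)$ with constant metric derivative $|\dot\mu_t| = W_2(\mu_0,\mu_1) =: L$ for a.e. $t$. Lisini's theorem then produces a probability measure $\boldsymbol\pi \in \mathcal{P}(C([0,1],X))$, concentrated on absolutely continuous curves, satisfying $(e_t)_*\boldsymbol\pi = \mu_t$ for every $t \in [0,1]$ and the pointwise identity $|\dot\mu_t|^2 = \int |\dot\gamma_t|^2 \, d\boldsymbol\pi(\gamma)$ for a.e. $t$. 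This immediately gives the metric-derivative formula claimed in the statement.

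It remains to check that $\boldsymbol\pi$ is concentrated on $\Geo(X)$. For this I would combine three standard inequalities: the Cauchy--Schwarz-type bound $\int_0^1 |\dot\gamma_t|^2 \, dt \geq d(\gamma(0),\gamma(1))^2$, valid for any AC curve with equality iff $\gamma$ has constant speed and is length-minimizing, i.e. $\gamma \in \Geo(X)$; the coupling bound $\int d(\gamma(0),\gamma(1))^2 \, d\boldsymbol\pi(\gamma) \geq W_2^2(\mu_0,\mu_1)$, since $(e_0,e_1)_*\boldsymbol\pi$ is an admissible plan for $(\mu_0,\mu_1)$; and Fubini. Integrating the Lisini identity yields the chain
\begin{equation*}
L^2 = \int_0^1 |\dot\mu_t|^2 \, dt = \int\!\!\int_0^1 |\dot\gamma_t|^2 \, dt \, d\boldsymbol\pi(\gamma) \geq \int d(\gamma(0),\gamma(1))^2 \, d\boldsymbol\pi(\gamma) \geq W_2^2(\mu_0,\mu_1) = L^2,
\end{equation*}
so equality holds throughout. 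The first inequality becoming an equality forces $\gamma \in \Geo(X)$ for $\boldsymbol\pi$-a.e.\ $\gamma$, while the second forces $(e_0,e_1)_*\boldsymbol\pi$ to be an optimal plan. Hence $\boldsymbol\pi$ is supported on $\Geo(X)$, delivering the first conclusion.

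The genuine difficulty is not in the deduction above, which is essentially bookkeeping, but in the existence of the lifting $\boldsymbol\pi$ itself. I would not redo this argument but cite Lisini directly; a sketch of the strategy is to discretize $[0,1]$, use optimal plans between consecutive $\mu_{t_i}$ glued together by a Kolmogorov-type extension/disintegration to obtain probability measures on piecewise-geodesic (or piecewise-AC) paths, and then extract a weak limit as the mesh refines. Tightness of these measures on $C([0,1],X)$ is obtained from the Hölder estimate $W_2(\mu_s,\mu_t) \leq |s-t|^{1/2}\|\,|\dot\mu|\,\|_{L^2}$ and the Ascoli--Arzelà theorem, while the sharp metric-derivative identity is recovered by combining the lower semicontinuity of the kinetic energy functional $\boldsymbol\pi \mapsto \int\!\int_0^1 |\dot\gamma_t|^2 dt\, d\boldsymbol\pi$ on $\mathcal{P}(C([0,1],X))$ with the matching a priori upper bound $\int |\dot\mu_t|^2 \, dt$ built into the construction. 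Everything here is classical and available in the cited reference \cite{L07}, so the proof proper reduces to applying that theorem and running the short optimality argument above.
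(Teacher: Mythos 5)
Your proposal is correct and is the standard derivation. The paper itself gives no proof of this statement — it is quoted directly from Lisini \cite{L07} — so there is no internal argument to compare against; the route you take (invoke Lisini's superposition theorem for absolutely continuous Wasserstein curves, then run the Cauchy--Schwarz / optimal-coupling equality chain $\int_0^1 |\dot\gamma_t|^2\,dt \geq d(\gamma(0),\gamma(1))^2$ and $\int d(\gamma(0),\gamma(1))^2 d\boldsymbol{\pi} \geq W_2^2(\mu_0,\mu_1)$ with equality forcing $\boldsymbol{\pi}$-a.e.\ $\gamma$ to be a constant-speed minimizing geodesic) is exactly the textbook argument behind the cited result.
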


These are called \textit{optimal dynamical plans}. This motivates the following definition: for any $\mu_0, \mu_1 \in \mathcal{P}_2(X)$, we denote by $\OptGeo(\mu_0, \mu_1)$ the space of all optimal dynamical plans from $\mu_0$ to $\mu_1$. 

\begin{de}\label{nonbranching} 
Given two geodesics $\gamma^1 \neq \gamma^2$ on a geodesic metric space $(X,d)$. Assume $\gamma^1, \gamma^2$ are constant speed and parameterized on the unit interval. we say $\gamma^1$ and $\gamma^2$ \textit{branch} if there exists $0 < t < 1$ such that $\gamma^1_s = \gamma^2_s$ for all $s \in [0,t]$. A subset $S \subseteq \Geo(X)$ is called a \textit{set of non-branching geodesics} if there are no branching pairs in $S$. A geodesic metric space for which $\Geo(X)$ is itself a set of non-branching geodesics is called \textit{non-branching}. 
\end{de}
Many results were shown for various types of $CD$ spaces under the additional non-branching assumption. These include the local-to-global property, tensorization property and local Poincar\'e inequality, see \cite{S06a, BS10, LV07}. A weaker assumption was introduced in \cite{RS14} for which these results generalize. 

\begin{de}
A metric measure space $(X,d,m)$ is called \textit{essentially non-branching} if for any $\mu_0, \mu_1 \in \mathcal{P}_2(X)$ absolutely continuous with respect to $m$, any element of $\OptGeo(\mu_0, \mu_1)$ is concentrated on a set of non-branching geodesics. 
\end{de}

$\RCD(K,\infty)$ spaces are shown to be essentially non-branching by the results of \cite{DS08, AGS14b} and \cite{RS14}. We will frequently refer to the following theorem from \cite{GRS16} shown for finite dimensional $\RCD(K,N)$ spaces, see also \cite{R12, RS14} for related results and \cite{CM17a} for the same result in the case of essentially non-branching $\MCP(K,N)$ spaces.

\begin{thm}\label{directional BG}(\cite{GRS16}, \cite[Theorem 1.1]{CM17a})
	Let $(X,d,m)$ be an $\RCD(K,N)$ space for some $K \in \mathbb{R}$ and $N \in [1,\infty)$. If $\mu_0, \mu_1 \in \mathcal{P}_2(X)$ with $\mu_0 = \rho_0m \ll m$, then there exists a unique $\nu \in \OptGeo(\mu_0, \mu_1)$. $(e_t)_{*}(\nu) \ll m$ for any $t \in [0,1)$ and such $\nu$ is given by a unique map $S:\supp(\mu_0) \to \Geo(X)$ in the sense that $\nu = S_{*}(\mu)$. Moreover, if $\mu_0$, $\mu_1$ have bounded support and $\norm{\rho_0}_{L^{\infty}(m)} < \infty$, then 
\begin{equation}\label{directional BG eq1}
	\norm{\rho_t}_{L^{\infty}(m)} \leq \frac{1}{(1-t)^N}e^{Dt\sqrt{(N-1)K^-}}\norm{\rho_0}_{L^\infty(m)}, \; \; \forall t \in [0,1),
\end{equation} 
where $D := \diam(\supp(\mu_0) \cup \supp(\mu_1))$ and $K^- := \max\braces{-K,0}$
\end{thm}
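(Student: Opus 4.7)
My plan is to combine the $\CD(K,N)$ Renyi entropy inequality from Definition \ref{CD def} with the essentially non-branching property that holds on every $\RCD(K,N)$ space. The argument splits naturally into three steps: uniqueness of $\nu$ together with its graph structure, absolute continuity of $(e_t)_*\nu$ for $t \in [0,1)$, and the quantitative $L^\infty$ bound. For the first step, I would start with any $W_2$-geodesic from $\mu_0$ to $\mu_1$ and apply Theorem \ref{W2 lift} to obtain a dynamical plan $\nu$, then consider a Kantorovich potential $\varphi$ for the dual problem. Since $\mu_0 \ll m$ and $(X,d,m)$ is infinitesimally Hilbertian, $\nabla\varphi$ is well-defined $\mu_0$-a.e.\ in the sense of the tangent module, so the initial velocity of $\mu_0$-a.e.\ geodesic in $\supp(\nu)$ is determined by $-\nabla\varphi$. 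Essential non-branching then forces any two geodesics issuing from the same $x$ with this common initial velocity to coincide, giving both the uniqueness of $\nu$ and the Borel map $S : \supp(\mu_0) \to \Geo(X)$ with $\nu = S_*\mu_0$.

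To show $\mu_t := (e_t)_*\nu \ll m$ for $t \in [0,1)$, I would first treat the case where $\mu_1 = \rho_1 m$ has bounded density and compact support: the $\CD(K,N)$ inequality directly gives $S_N(\mu_t | m) > -\infty$ at every $t \in [0,1]$, hence $\mu_t \ll m$. For a general $\mu_1$, I would approximate by $\mu_1^n \ll m$ with bounded density, apply the previous case to get unique plans $\nu_n$ with $(e_t)_*\nu_n \ll m$, and combine $W_2$-stability with uniqueness from the first step to extract $\nu_n \to \nu$. Essential non-branching, via a restriction-and-glue argument for optimal dynamical plans (one can cut each geodesic at time $t$ and re-parameterize each half as an optimal plan between its endpoints), then shows that the limit $(e_t)_*\nu$ remains absolutely continuous for every $t \in [0,1)$.

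The pointwise bound \eqref{directional BG eq1} is the quantitative heart of the theorem, and is where the main obstacle lies. Disintegrating $\nu$ along $S$, the $\CD(K,N)$ inequality applied with $N' = N$, after dropping the nonnegative $\rho_1^{-1/N}$ contribution (which we cannot control), would yield the Jacobian-type inequality
\[ \rho_t(S(x)(t))^{-1/N} \geq \tau_{K,N}^{(1-t)}\big(d(S(x)(0), S(x)(1))\big) \, \rho_0(x)^{-1/N} \]
for $\mu_0$-a.e.\ $x$. Substituting $\tau_{K,N}^{(1-t)}(\theta) = (1-t)^{1/N}\sigma_{K,N}^{(1-t)}(\theta)^{1-1/N}$, using the elementary $\sinh$-comparison $\sigma_{K,N}^{(1-t)}(\theta) \geq (1-t)\,e^{-c(N)\,t\theta\sqrt{K^-}}$ for $\theta \leq D$ when $K < 0$ (the case $K \geq 0$ is easier and sharper), and raising to the $-N$ power gives the claimed estimate. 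The subtle point is upgrading the integral $\CD(K,N)$ condition to this pointwise Jacobian statement along the geodesic selection $S$: this is precisely where the uniqueness of $\nu$ from the first step combines with essentially non-branching, through a Cavalletti--Mondino-style disintegration of $\nu$ over its endpoint marginal, to promote the $a$ $priori$ integral inequality to a ray-by-ray statement along $\mu_0$-typical geodesics.
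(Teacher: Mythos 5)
This theorem is quoted by the paper from \cite{GRS16} and \cite[Theorem 1.1]{CM17a}; the paper offers no proof of its own, so your proposal is being measured against the arguments in those references rather than against anything in this text. Your outline follows the right general strategy, but the two decisive steps are compressed to the point where the argument does not yet go through.

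First, in the uniqueness step you argue that $\nabla\varphi$ fixes the initial velocity of $\mu_0$-a.e.\ geodesic in $\supp(\nu)$ and that ``essential non-branching then forces any two geodesics issuing from the same $x$ with this common initial velocity to coincide.'' Essential non-branching is a statement about full-measure subsets of optimal dynamical plans, not a statement that geodesics with equal initial velocity coincide; the latter is genuine non-branching, which is exactly what this paper proves later and cannot be assumed here. The actual mechanism in \cite{GRS16} (and in the earlier Rajala--Sturm work) is a mixing/restriction argument: if $\nu$ were not induced by a map, one restricts and recombines to manufacture an optimal dynamical plan whose support contains a positive-measure set of branching pairs, contradicting essential non-branching. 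The uniqueness of $\nu$ follows by the same recombination applied to two competing plans. Your Kantorovich-potential heuristic points at the right object but does not substitute for this argument.

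Second, the passage from the integral Renyi inequality to the $\mu_0$-a.e.\ Jacobian estimate is where the real work is, and ``Cavalletti--Mondino-style disintegration over the endpoint marginal'' is not an accurate description of it. What makes the localization possible is the restriction property: for any Borel $A \subseteq \supp\mu_0$ with $\mu_0(A)>0$, the renormalized restriction $\nu\rvert_{e_0^{-1}(A)}/\nu(e_0^{-1}(A))$ is again an optimal dynamical plan (cyclical monotonicity is inherited), and by the uniqueness just established it must coincide with the plan $\CD(K,N)$ produces for the restricted endpoints. One then applies the Renyi inequality to this restricted plan, lets $A$ shrink via Lebesgue differentiation, and reads off the a.e.\ Jacobian bound. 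Essential non-branching enters through this uniqueness-under-restriction loop, not through a disintegration of $\nu$. Note also that $\mu_1$ is \emph{not} assumed absolutely continuous in the $L^\infty$ part of the statement, so one cannot invoke Definition \ref{CD def} directly for the pair $(\mu_0,\mu_1)$; one must approximate $\mu_1$ by absolutely continuous targets with support in a fixed bounded set, obtain the uniform density bound at the intermediate times for the approximating plans, and pass to the limit using $W_2$-stability and the uniqueness already proved. Your approximation step in the middle paragraph is aimed only at absolute continuity, not at preserving the quantitative bound, so this needs to be said explicitly.

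Finally, a small computational point: both you and the paper's preliminaries write $\tau_{K,N}^{(t)}(\theta) = t^{1/N}\sigma_{K,N}^{(t)}(\theta)^{1-1/N}$, whereas the Bacher--Sturm definition is $\tau_{K,N}^{(t)}(\theta) = t^{1/N}\sigma_{K,N-1}^{(t)}(\theta)^{1-1/N}$. With the $\sigma_{K,N-1}$ index the $\sinh$-comparison $\sigma^{(1-t)}_{K,N-1}(\theta) \geq (1-t)e^{-t\theta\sqrt{K^-/(N-1)}}$ gives, after raising to the power $-N$, exactly the exponent $e^{t\theta\sqrt{(N-1)K^-}}$ of \eqref{directional BG eq1}; with the $\sigma_{K,N}$ index you instead produce $e^{t\theta(N-1)\sqrt{K^-/N}}$, which happens to be a sharper constant for $N>1$ and hence not what you would want if you were trying to reproduce the cited bound exactly.
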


\begin{rem}\label{a.e. unique geodesic}
In particular, this implies for any $p \in X$, there is a unique geodesic between $p$ and $x$ for $m$-a.e. $x \in X$. Using the Kuratowski and Ryll-Nardzewski measurable selection theorem, one may select constant speed geodesics $\gamma_{x,p}$ from all $x \in X$ to $p$ so that the map $X \times [0,1] \ni (x, t) \mapsto \gamma_{x,p}(t)$ is Borel. This then guarantees such a choice is unique up to a set of measure 0. Similarly, one may select constant speed geodesics $\gamma_{x,y}$ for all $x,y \in X$ so that the map $X \times X \times [0,1] \ni (x,y,t) \mapsto \gamma_{x,y}(t)$ is Borel. Using, for example, the arguments in \cite[Section 4]{C14}, the set of points $(x,y) \in X \times X$ connected by non-unique geodesics is analytic. $(m \times m)$-almost everywhere uniquness of the Borel selection then follows using Fubini's theorem. In cases where we fix geodesics in this manner, we say we take a \textit{Borel selection} of geodesics $\gamma_{x,p}$ from all $x \in X$ to $p$ (or $\gamma_{x,y}$ from all $x \in X$ to all $y \in X$).
\end{rem}

\subsection {RCD(K,N) structure theory}

We review of the structure theory of $\RCD$ spaces in this subsection. We will assume basic familiarity with pointed Gromov-Hausdorff (pGH) and pointed measure Gromov-Hausdorff (pmGH) convergence and refer to \cite{BBI14, V09, GMS15} for details. 

A notion of considerable interest for $\RCD(K,N)$ spaces is that of measured tangents. Similar objects have been well-studied in the setting of Alexandrov spaces and Ricci limits (see, for example, \cite{BGP92} and \cite{CN13} for an overview). Given a m.m.s. $(X,d,m)$, $\bar{x} \in X$ and $r \in (0,1)$, consider the \textit{normalized rescaled pointed m.m.s. (p.m.m.s.)} $(X, r^{-1}d, m^{\bar{x}}_r, \bar{x})$ where
\begin{equation*}
	m^{\bar{x}}_{r} := \bigg(\int_{B_r(\bar{x})} 1 - \frac{d(x,y)}{r} dm(y)\bigg)^{-1}m.
\end{equation*}
In what follows let $(X,d,m)$ be an $\RCD(K,N)$ space for some $K \in \mathbb{R}$ and $N \in (1, \infty)$. We define
\begin{de}\label{tangent def}(The collection of tangent spaces $\Tan(X,d,m,\bar{x})$)  Let $\bar{x} \in X$. A p.m.m.s. $(Y, d_Y, m_Y, \bar{y})$ is called a \textit{tangent (cone)} of $(X,d,m)$ at $\bar{x}$ if there exists a sequence of radii $r_i \downarrow 0$ so that $(X, r_i^{-1}d, m^{\bar{x}}_{r_i}, \bar{x})$ $\to (Y, d_Y, m_Y, \bar{y})$ as $i \to \infty$ in the pmGH topology. The collection of all tangents of $(X,d,m)$ at $\bar{x}$ is denoted $\Tan(X,d,m,\bar{x})$.
\end{de}
A standard compactness argument by Gromov shows that $\Tan(X,d,m,\bar{x})$ is non-empty for any $\bar{x} \in X$. The rescaling and stability properties of the $\RCD(K,N)$ condition under pmGH convergence (see \cite[Theorem 7.2]{GMS15} and \cite[Theorem 6.11]{AGS14b}) show that every element of $\Tan(X,d,m,\bar{x})$ is an $\RCD(0,N)$ space. 

Let $c_k = \int_{B_1(0)} 1-|x| \, d\mathscr{L^k}(x)$ be the normalization constant of the $k$-dimensional Lebesgue measure and define the \textit{$k$-dimensional regular set $\mathcal{R}_k$} by
\begin{equation*}
	\mathcal{R}_k := \{x \in X \, : \, \Tan(X,d,m,x) = \{(\mathbb{R}^k, d_{E}, c_k\mathscr{H}^k,0^k)\} \}.
\end{equation*}
Define $\mathcal{R}_{\reg} := \bigcup\limits_{k=1}^{\lfloor N \rfloor} \mathcal{R}_k$ the \textit{regular set} of $X$ and $\mathcal{S} := X \backslash \mathcal{R}_{\reg}$ the \textit{singular set}. In \cite{GMR15}, it was shown that, for $m$-a.e. $x \in X$, there exists some $k \in \mathbb{N}$, $1 \leq k \leq N$ so that $(\mathbb{R}^k, d_{E}, c_k\mathscr{H}^k, 0^k) \in \Tan(X, d, m, x)$. This was improved in \cite{MN19}, where it was shown that each $\mathcal{R}_k$ is $m$-measurable and $m(\mathcal{S})=0$. A final improvement was made in \cite{BS20} with the following theorem. 

\begin{thm}\label{unique local dimension 1} (Constancy of the dimension \cite[Theorem 3.8]{BS20}) Let $(X,d,m)$ be an $\RCD(K,N)$ m.m.s. for some $K \in \mathbb{R}$ and $1 \leq N < \infty$. Assume $X$ is not a point. There exists a unique $n \in \mathbb{N}$, $1 \leq n \leq N$ so that $m(X\backslash \mathcal{R}_{n})=0$.
\end{thm}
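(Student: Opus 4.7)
The plan is to argue by contradiction: suppose $m(\mathcal{R}_k) > 0$ and $m(\mathcal{R}_h) > 0$ for some $k < h$ in $\{1,\dots,\lfloor N \rfloor\}$, and produce two incompatible tangent cones at a single basepoint by gluing tangent structures at two different scales.

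First I would consolidate the structural inputs. By \cite{MN19} the regular sets $\mathcal{R}_k$ are $m$-measurable and partition $X$ up to an $m$-null singular set, so the essential dimension $j\colon X \to \mathbb{Z}$ defined by $j(x)=k$ on $\mathcal{R}_k$ is $m$-a.e.\ defined; the goal is to show $j$ is $m$-essentially constant. A key ingredient is the iterated tangent property for $\RCD(K,N)$ spaces: for $m$-a.e.\ $x$ and any $(Y,d_Y,m_Y,y) \in \Tan(X,d,m,x)$, every tangent of $(Y,d_Y,m_Y)$ at any basepoint lies in $\Tan(X,d,m,x)$. A second ingredient is Bishop--Gromov at a density point $x_0 \in \mathcal{R}_k$, which forces $m^{x_0}_r(B_1(x_0)) \to c_k$ as $r \downarrow 0$, thereby fixing the scaling of masses along blow-ups.

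Next I would try to realise both dimensions inside a single tangent cone. Assuming $m(\mathcal{R}_k) > 0$ and $m(\mathcal{R}_h) > 0$, a density-point argument combined with the doubling property of $(X,d,m)$ produces some $x_0 \in \mathcal{R}_k$ in the closure of $\mathcal{R}_h$. Choose $y_i \in \mathcal{R}_h$ with $\rho_i := d(x_0, y_i) \downarrow 0$. The rescaled spaces $(X,\rho_i^{-1}d, m^{x_0}_{\rho_i}, x_0)$ then pmGH-converge, up to subsequence, to a tangent $(Y,d_Y,m_Y,x_\infty) \in \Tan(X,d,m,x_0)$, together with a distinguished limit point $y_\infty \in Y$ at distance $1$ from $x_\infty$. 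Since $x_0 \in \mathcal{R}_k$, one has $Y = \mathbb{R}^k$. A further diagonal extraction at scales $s_i$ with $s_i \to 0$ much faster than $\rho_i$, combined with the fact that the $\mathbb{R}^h$ tangent at $y_i$ is seen at all sufficiently small scales, produces $\mathbb{R}^h$ as a tangent of $Y = \mathbb{R}^k$ at $y_\infty$. By the iterated tangent property this $\mathbb{R}^h$ is also a tangent of $X$ at $x_0$.

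The contradiction is then immediate: tangents of $\mathbb{R}^k$ at any point are $\mathbb{R}^k$, so iterated application of the splitting theorem of \cite{G13} to $\mathbb{R}^h$ would provide $h$ orthogonal lines through $y_\infty$ inside $\mathbb{R}^k$, forcing $h \leq k$, contrary to $k < h$. The main technical obstacle will be the diagonal extraction that realises $\mathbb{R}^h$ as a tangent of $Y$ at $y_\infty$: one needs quantitative almost-rigidity simultaneously at two scales, controlling both the convergence of $(X,\rho_i^{-1}d,m^{x_0}_{\rho_i},x_0)$ to $\mathbb{R}^k$ near $y_\infty$ and the convergence of $(X,s_i^{-1}d,m^{y_i}_{s_i},y_i)$ to $\mathbb{R}^h$, with $s_i/\rho_i$ chosen to compatibilise the two. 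This kind of two-scale propagation of pmGH-closeness under the $\RCD$ condition is precisely the technical heart of \cite{BS20}.
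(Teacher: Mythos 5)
Your approach is genuinely different from the paper's own re-derivation (Theorem \ref{unique local dimension 2}, Subsection \ref{subsection 6.2}): there, the segment inequality together with geodesic extendibility shows that for $(m \times m)$-a.e.\ pair $(x,y)$ the open geodesic $\gamma_{x,y}$ meets $\mathcal{R}_{\reg}$ in a dense subset, and Theorem \ref{main theorem 1} (H\"older continuity of tangent cones) shows that $\gamma_{x,y} \cap \mathcal{R}_k$ is relatively closed in the interior of $\gamma_{x,y}$; hence $\gamma_{x,y} \subseteq \mathcal{R}_k$ for a single $k$, forcing constancy. The blow-up/iterated-tangent argument you propose instead has a genuine gap.

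The gap is the step claiming that a diagonal extraction realises $\mathbb{R}^h$ as a tangent of $Y = \mathbb{R}^k$ at $y_\infty$. That intermediate conclusion is in fact impossible: every tangent of $\mathbb{R}^k$ at any point is $\mathbb{R}^k$, so for $h > k$ the claim is false, not merely unproved. What blocks the extraction is that the scale $\sigma_i$ below which $(X, s^{-1}d, m^{y_i}_s, y_i)$ is $\epsilon$-close to $\mathbb{R}^h$ carries no uniformity in $i$; since $x_0 \in \mathcal{R}_k$ forces the rescalings around $y_i$ at scale $\rho_i t$ (for $i$ large and $t$ small fixed) to look like $\mathbb{R}^k$ rather than $\mathbb{R}^h$, one necessarily has $\sigma_i / \rho_i \to 0$ at an uncontrolled rate, and no sequence $s_i$ can serve both purposes. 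Supplying exactly this missing uniformity is where new input is required: in \cite{CN12} and in this paper it is the H\"older estimate of Theorem \ref{main theorem 1} along the interior of a geodesic; in \cite{BS20} it is the Lusin--Lipschitz regularity of Regular Lagrangian flows, not a two-scale pmGH propagation, so the appeal to \cite{BS20} at the end of your sketch does not supply the missing ingredient. A smaller issue: the asserted existence of $x_0 \in \mathcal{R}_k \cap \overline{\mathcal{R}_h}$ does not follow from a density-point argument---disjoint positive-measure sets in a connected space need only have closures meeting in a common boundary point, which may lie in the null singular set $\mathcal{S}$ rather than in $\mathcal{R}_k$.
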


The same theorem was proved in the case of Ricci limits in \cite{CN12} from the main result there and as such also follows from Theorem \ref{main theorem 2}, see Theorem \ref{unique local dimension 2}. By \cite[Theorem 1.2]{K18}, see also \cite[Theorem 1.9]{KL18} in the case of Ricci limits, it is known that the unique $n$ in Theorem \ref{unique local dimension 1} is also the largest integer $n$ for which $\mathcal{R}_n$ is non-empty.

\subsection{Additional RCD(K,N) theory}
In this subsection we record several theorems for $\RCD(K,N)$ spaces which will be of use later.

Define the coefficients $\tilde{\sigma}_{K,N}(\cdot):[0,\infty)\to \mathbb{R}$ by
 \[
\tilde{\sigma}_{K,N}(\theta):=\left\{
\begin{array}{ll}
\theta \sqrt{\frac{K}{N}} \, {\rm cotan} \left(\theta \sqrt{\frac{K}{N}} \right),&\qquad\textrm{ if }K>0,\\
1 &\qquad\textrm{ if }K=0 ,\\
\theta \sqrt{-\frac{K}{N}} \, {\rm cotanh} \left(\theta \sqrt{-\frac{K}{N}} \right),&\qquad\textrm{ if }K<0,
\end{array}
\right.
\]
then one has the following sharp bound on the measure valued Laplacian of distance functions. 
\begin{thm}\label{lap d bound}(Laplacian comparison for the distance function \cite[Corollary 5.15]{G15}) Let $(X,d,$ $m)$ be a compact $\RCD(K,N)$ space for some $K \in \mathbb{R}$ and $N \in (1, \infty)$. For $x_0 \in
 X$ denote by $d_{x_0}:X \to [0,\infty)$ the function $x \mapsto d(x,x_0)$. Then 
\begin{equation*}
\frac{d_{x_0}^2}{2} \in D(\boldsymbol{\Delta}) \hspace{0.5cm}\text{with}\hspace{0.5cm} \boldsymbol{\Delta}\frac{d_{x_0}^2}{2} \leq N\tilde{\sigma}_{K,N}(d_{x_0})m \hspace{0.5cm} \forall x_0 \in X
\end{equation*}
and
\begin{equation*}
d_{x_0} \in D(\boldsymbol{\Delta}, X\backslash\left\{x_0\right\}) \hspace{0.5cm}\text{with}\hspace{0.5cm} \boldsymbol{\Delta}d_{x_0}\rvert_{X\backslash \left\{x_0\right\}} \leq \frac{N\tilde{\sigma}_{K,N}(d_{x_0})-1}{d_{x_0}}m \hspace{0.5cm} \forall x_0 \in X.
\end{equation*}
\end{thm}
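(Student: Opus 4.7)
The plan is to prove the statement for $f:=d_{x_0}^2/2$ first, via an optimal transport argument, and then deduce the statement for $d_{x_0}$ on $X\setminus\{x_0\}$ via a chain-rule identity. The guiding idea is that $d_{x_0}^2/2$ is, up to sign and constant, a Kantorovich potential for transport to $\delta_{x_0}$, so the $\CD(K,N)$ entropy convexity along the corresponding Wasserstein geodesic should translate into a one-sided Laplacian bound through a duality (integration by parts) argument.

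Concretely, I would fix a nonnegative Lipschitz function $\varphi$ with bounded support and let $\mu_0:=c\varphi\,m$, where $c$ is the normalizing constant. Applying Theorem \ref{directional BG}, there is a unique $\boldsymbol{\pi}\in\OptGeo(\mu_0,\delta_{x_0})$, inducing a $W_2$-geodesic $(\mu_t)$ obtained by transporting $\mu_0$ along a Borel selection of geodesics $\gamma_{x,x_0}$ to $x_0$, and $\mu_t\ll m$ for $t\in[0,1)$ with densities $\rho_t$ controlled by \eqref{directional BG eq1}. Since $d^2_{x_0}/2$ is (in the right normalization) the Kantorovich potential from $\mu_0$ to $\delta_{x_0}$, Gigli's first order differentiation formula for Wasserstein geodesics yields
\begin{equation*}
\frac{d}{dt}\bigg|_{t=0^+}\int \psi\,d\mu_t = -\int \langle \nabla\psi,\nabla(d_{x_0}^2/2)\rangle\,d\mu_0
\end{equation*}
for $\psi$ in a suitable test class, which after choosing $\psi$ to recover $\varphi$ under normalization connects the left hand side of the desired inequality
\begin{equation*}
-\int \langle\nabla\varphi,\nabla(d_{x_0}^2/2)\rangle\,dm\leq \int\varphi\cdot N\tilde{\sigma}_{K,N}(d_{x_0})\,dm
\end{equation*}
to a derivative of a transport quantity. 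On the other hand, the $\CD(K,N)$ bound (Definition \ref{CD def}) applied to $S_N(\mu_t|m)$ gives an upper bound on this entropy that is smooth in $t$; differentiating the right hand side at $t=0^+$ using the explicit expansion of $\tau^{(1-t)}_{K,N}(\theta)$ in $t$ produces precisely the coefficient $N\tilde{\sigma}_{K,N}(\theta)$. Comparing the two derivative computations, after a standard density argument that chooses $\psi$ appropriately and lets $\varphi$ range over nonnegative bounded Lipschitz functions, yields the required measure-valued Laplacian bound on $d^2_{x_0}/2$ in the sense of Definition \ref{measure lap}.

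For the second statement, I would work on $X\setminus\{x_0\}$ where $d_{x_0}>0$, and use the Leibniz/chain rule together with $|\nabla d_{x_0}|=1$ $m$-a.e.\ (which follows from the Sobolev-to-Lipschitz property since $d_{x_0}$ is $1$-Lipschitz). This gives formally $\boldsymbol{\Delta}(d_{x_0}^2/2)=d_{x_0}\,\boldsymbol{\Delta}d_{x_0}+|\nabla d_{x_0}|^2 m = d_{x_0}\,\boldsymbol{\Delta}d_{x_0}+m$, so that
\begin{equation*}
\boldsymbol{\Delta}d_{x_0}\big|_{X\setminus\{x_0\}}\leq \frac{N\tilde{\sigma}_{K,N}(d_{x_0})-1}{d_{x_0}}m,
\end{equation*}
as desired. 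I expect the main obstacle to be twofold: first, rigorously justifying the first-order differentiation formula and the identification of $d_{x_0}^2/2$ with the Kantorovich potential requires control on the density bounds from Theorem \ref{directional BG} and a careful approximation of $\psi$ by bounded Lipschitz functions compatible with the divergence side of Definition \ref{measure lap}; second, localizing the chain-rule computation near (but not at) $x_0$ so that the product $d_{x_0}\cdot\boldsymbol{\Delta}d_{x_0}$ is defined as a Radon measure on $X\setminus\{x_0\}$. Both are now standard in Gigli's framework, so the argument reduces to assembling these tools, but the CD-to-Laplacian differentiation step is the heart of the proof.
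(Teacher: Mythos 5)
The paper does not actually prove this theorem; it is quoted verbatim from \cite[Corollary 5.15]{G15}, whose argument is built on Gigli's machinery of $c$-concave functions and the superdifferentiability of the Kantorovich potential rather than on the direct entropy differentiation you propose. The route you sketch --- differentiate the $\CD(K,N)$ entropy convexity at $t=0^+$ along the $W_2$-geodesic from $\mu_0=c\varphi m$ to $\delta_{x_0}$ --- is the natural Sturm-style programme and is morally correct, but there is a genuine gap at its heart. Theorem~\ref{W2 geodesic cont equation} controls $\frac{d}{dt}\int\psi\,d\mu_t$, a \emph{linear} functional of $\mu_t$, whereas the $\CD(K,N)$ bound constrains $S_N(\mu_t\mid m)=-\int\rho_t^{1-1/N}\,dm$, which is \emph{nonlinear} in the density; ``comparing the two derivative computations, after a standard density argument that chooses $\psi$ appropriately'' does not by itself link these quantities. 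What bridges them is the pointwise convexity of $u\mapsto -u^{1-1/N}$ for $N\in(1,\infty)$, which gives
\begin{equation*}
S_N(\mu_t\mid m)-S_N(\mu_0\mid m)\geq -\Bigl(1-\tfrac{1}{N}\Bigr)\Bigl(\int\rho_0^{-1/N}\,d\mu_t -\int\rho_0^{-1/N}\,d\mu_0\Bigr);
\end{equation*}
only after combining this with the curvature-dimension upper bound on $S_N(\mu_t)$, dividing by $t$, sending $t\to 0^+$, and applying the differentiation formula with $\psi=\rho_0^{-1/N}$ does one arrive at a linear-functional bound --- tested against $g=\rho_0^{1-1/N}$ after rearrangement --- and this step is where the actual content of the proof lives. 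The further points you wave away also require work: $\rho_0^{-1/N}\notin W^{1,2}(X)$ when $\rho_0$ is a compactly supported Lipschitz density, so the differentiation formula cannot be invoked without first keeping $\rho_0$ bounded away from zero and approximating; $\delta_{x_0}$ is not absolutely continuous, so $\CD(K,N)$ cannot be tested directly against $\mu_1=\delta_{x_0}$; and differentiating $\tau^{(1-t)}_{K,N}$ in $t$ at $t=0$ does \emph{not} yield $\tilde{\sigma}_{K,N}$ --- one must use the reduced coefficient $\sigma^{(s)}_{K,N}$ from $\CD^{*}(K,N)$, whose $s$-derivative at $s=1$ is exactly $\tilde{\sigma}_{K,N}$.

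For the passage from $d_{x_0}^2/2$ to $d_{x_0}$ your chain rule runs in the wrong direction: the identity $\boldsymbol{\Delta}(d_{x_0}^2/2)=d_{x_0}\boldsymbol{\Delta}d_{x_0}+|\nabla d_{x_0}|^2m$ already presupposes $d_{x_0}\in D(\boldsymbol{\Delta},X\setminus\{x_0\})$, which is the thing you want to prove. The correct move is the measure-valued chain rule $\boldsymbol{\Delta}(\phi\circ f)=\phi''(f)|\nabla f|^2m+\phi'(f)\boldsymbol{\Delta}f$, applied with the already-established $f=d_{x_0}^2/2\in D(\boldsymbol{\Delta})$ and $\phi(u)=\sqrt{2u}$, which is $C^{1,1}$ on $[\varepsilon^2/2,\infty)$ for each $\varepsilon>0$; this localizes automatically to $X\setminus\overline{B_\varepsilon(x_0)}$, where $|\nabla f|^2=2f$ $m$-a.e.\ then delivers the stated bound. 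You are right that the localization near $x_0$ is necessary, but the direction of the chain rule is not a stylistic choice.
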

$\boldsymbol{\Delta}d_{x_0}\rvert_{X\backslash \left\{x_0\right\}}$ is defined similar to Definition \ref{measure lap}, the difference being that the test functions $g$ must be compactly supported in $X\backslash \left\{x_0\right\}$.

\begin{rem}\label{lap d bound radon}
We mention, and will use the fact, that in the case where $X$ is noncompact one can make essentially the same statement. Some small adjustments are needed since the Laplacians of $d_{x_0}$ on $X \backslash \{x_0\}$ and $d_{x_0}^2$ on $X$ are not naturally guaranteed to be signed Radon measures. To accomodate this, a weakening of the definition of the measure valued Laplacian was given in \cite[Definition 2.11, 2.12]{CM18}.  This definition allows the Laplacian to be, more generally, a Radon functional (i.e. in $(C_C(X))'$). The difference is that a Radon functional has a representation as the difference of two possibly infinite positive Radon measures by the Riesz-Markov-Kakutani representation theorem, whereas in the case of a signed Radon measure, at least one of these must be finite. It was shown in \cite[Corollary 4.17, 4.19]{CM18} that the Laplacian comparison for the distance function holds as stated with this weaker definition.  As such, we will, by a slight abuse of notation, treat the Laplacian of the distance function $d_{x_0}$ on $X \backslash \{x_0\}$ as a signed Radon measure in the few instances where we use Theorem \ref{lap d bound} in integration against compactly supported functions. Note that due to the comparison theorem, this Laplacian is locally a signed Radon measure, having at most an infinite negative part. 
\end{rem}

We will also need the Li-Yau Harnack inequality \cite{LY86} and the Li-Yau gradient inequality \cite{BG11, LY86}. These were proved for the $\RCD$ setting, in the finite measure case in \cite{GM14}, and in general in \cite{J15}.
\begin{thm}\label{LY Harnack} (Li-Yau Harnack inequality \cite{GM14, J15})
Let $(X, d, m)$ be an $\RCD(K,N)$ space for some $K \in \mathbb{R}$ and $N \in [1, \infty)$. Let $f \in L^p(m)$ for some $p \in [1,\infty)$ be non-negative. If $K \geq 0$, then for every $x, y \in X$ and $0 < s < t$ it holds that
\begin{equation*}
	(H_t f)(y) \geq (H_s f)(x) e^{\frac{d^2(x,y)}{4(t-s)e^{\frac{2Ks}{3}}}}\bigg(\frac{1-e^{\frac{2K}{3}s}}{1-e^{\frac{2K}{3}t}}\bigg)^{\frac{N}{2}}.
\end{equation*}
If instead $K < 0$, then
\begin{equation*}
	(H_t f)(y) \geq (H_s f)(x) e^{\frac{d^2(x,y)}{4(t-s)e^{\frac{2Kt}{3}}}}\bigg(\frac{1-e^{\frac{2K}{3}s}}{1-e^{\frac{2K}{3}t}}\bigg)^{\frac{N}{2}}.
\end{equation*}
\end{thm}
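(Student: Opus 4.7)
The plan is to follow the classical Li--Yau strategy: first establish a pointwise differential gradient--Laplacian estimate for positive solutions of the heat equation, and then integrate it along an appropriate space--time path to obtain the Harnack comparison. The main tools replacing the smooth Bochner identity and maximum principle are the $\BE(K,N)$ condition from Theorem \ref{RCD BE equivalence}, the pointwise Bakry--Ledoux statement of Remark \ref{pw Bakry-Ledoux}, and the Sobolev-to-Lipschitz property.

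The first step is to prove a Li--Yau gradient estimate: for any non-negative $f \in L^p(m)$ and $t>0$, there exist explicit $\varphi_K(t), \psi_K(t)$ such that
\begin{equation*}
\frac{|\nabla H_t f|^2}{(H_t f)^2} - \varphi_K(t)\,\frac{\partial_t H_t f}{H_t f} \leq \frac{N\psi_K(t)}{2t}
\end{equation*}
holds $m$-a.e. on $\{H_t f > 0\}$. The non-smooth derivation uses a time-interpolation functional $\Phi(s) := H_s\bigl(\eta_s(H_{t-s}f)\bigr)$ for $s \in [0,t]$, with a carefully chosen family of convex $\eta_s$ adapted to the desired exponents. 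Differentiating in $s$, the resulting expression involves only $|\nabla H_{t-s}f|^2$, $\Delta H_{t-s}f$ and $H_{t-s}f$, so applying $\BE(K,N)$ to $H_{t-s}f$ gives $\Phi'(s) \geq 0$; passing $s \to 0, s \to t$ and using continuity yields the pointwise inequality. This is the finite-dimensional refinement of the argument behind Theorem \ref{Bakry-Ledoux}, made rigorous in \cite{GM14, J15}.

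The second step converts the pointwise differential estimate into the Harnack comparison. Fix $x, y \in X$ and $0 < s < t$. Choose a constant speed geodesic $\alpha : [0,1] \to X$ from $x$ to $y$ together with a smooth strictly increasing reparametrization $r : [0,1] \to [s,t]$ with $r(0)=s, r(1)=t$, and (choosing continuous representatives via Sobolev-to-Lipschitz) compute
\begin{equation*}
\frac{d}{d\tau}\log H_{r(\tau)} f(\alpha(\tau)) = \dot{r}(\tau)\,\frac{\partial_t H_r f}{H_r f}(\alpha(\tau)) + \bigl\langle \nabla \log H_r f,\ \dot\alpha\bigr\rangle(\alpha(\tau)).
\end{equation*}
Bounding the inner product by $|\nabla \log H_r f|\cdot|\dot\alpha| \leq \tfrac{1}{\dot r \varphi_K(r)}|\nabla \log H_r f|^2 + \tfrac{\dot r \varphi_K(r)}{4}|\dot\alpha|^2$ and substituting the gradient estimate yields a lower bound depending only on $d(x,y), s, t, N, K$. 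Optimizing $r$ produces exactly the exponents $\tfrac{d^2(x,y)}{4(t-s)e^{2Ks/3}}$ (resp.\ $e^{2Kt/3}$ for $K<0$) and the prefactor $\bigl(\tfrac{1-e^{2Ks/3}}{1-e^{2Kt/3}}\bigr)^{N/2}$; integrating from $0$ to $1$ and exponentiating gives the stated inequality.

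The hard part is not the formal calculation but the justification of the pointwise chain rule along $\alpha$ in the metric measure setting, where $H_r f$ and $|\nabla H_r f|^2$ are a priori only $L^2$ objects and where $\alpha$ has measure zero. The way around this is to couple the heat semigroup with $W_2$-geodesics à la Kuwada: one runs the interpolation on $\mathcal P_2(X)$ between $\mathscr H_s(\delta_x)$ and $\mathscr H_t(\delta_y)$ and uses Theorem \ref{directional BG} together with the continuity of the heat kernel (Definition \ref{heat kernel def}) to pass from $m$-a.e. identities to pointwise ones. The pointwise Bakry--Ledoux statement provides the continuous representative of $|\nabla H_r f|$ needed to make the line integral rigorous, after which the two cases $K \geq 0$ and $K < 0$ in the statement follow from the two admissible choices of $\varphi_K$ in the gradient estimate.
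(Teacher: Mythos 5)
The paper does not prove this theorem: it records it (together with the companion gradient inequality, Theorem~\ref{LY grad}) as an input from \cite{GM14, J15}, so there is no internal proof here to compare against. That said, your two-step plan --- establish a Li--Yau gradient estimate via a semigroup interpolation functional controlled by $\BE(K,N)$, then integrate along a space--time geodesic using a Young split --- is the correct skeleton of the argument in those references. Since the paper already records the gradient estimate separately as Theorem~\ref{LY grad}, your Step~1 duplicates a stated input; the route consistent with the paper's organization is to take Theorem~\ref{LY grad} as given and carry out only Step~2.

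Two details in your sketch do not quite line up. In the Young split, the weight you chose, $a = (\dot r\,\varphi_K(r))^{-1}$, does not cancel the unbounded $\partial_t H_r f/H_r f$ term: the chain rule contributes it with coefficient $\dot r$ and the gradient-estimate substitution with coefficient $-a\,\varphi_K(r)$, so cancellation requires $a = \dot r/\varphi_K(r)$, not its reciprocal. More importantly, the resulting line integral produces a \emph{negative} exponent in $d(x,y)^2$ --- a Harnack comparison must decay in distance, as in the classical $K=0$ form $(s/t)^{N/2}\exp\bigl(-\tfrac{d^2(x,y)}{4(t-s)}\bigr)$ --- whereas the displayed inequality carries a $+$ sign. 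That sign appears to be a typo in the paper's transcription of \cite{GM14, J15} which you have reproduced rather than derived; your own Step~2 computation, done carefully, yields $\exp\bigl(-\tfrac{d^2(x,y)}{4(t-s)e^{2Ks/3}}\bigr)$. Finally, the concerns you raise about pointwise versus $m$-a.e.\ identities are the genuine technical ones; they are settled by continuity of the heat kernel and the pointwise Bakry--Ledoux estimate of Remark~\ref{pw Bakry-Ledoux}, which furnish continuous representatives along which the line integral makes sense. The Kuwada coupling you mention is a valid route but is not strictly necessary at that step.
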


\begin{thm}\label{LY grad} (Li-Yau gradient inequality \cite{GM14, J15})
Let $(X,d,m)$ be an $\RCD(K,N)$ space for some $K \in \mathbb{R}$ and $N \in [1, \infty)$. Let $f \in L^p(m)$ for some $p \in [1,\infty)$ be non-negative. Then for every $t > 0$ it holds that
\begin{equation*}
	|\nabla H_tf|^2 \leq e^{-\frac{2Kt}{3}}(\Delta H_tf)H_tf + \frac{NK}{3}\frac{e^{-\frac{4Kt}{3}}}{1-e^{-\frac{2Kt}{3}}}(H_tf)^2 \hspace{0.5cm} m\text{-a.e.}.
\end{equation*}
\end{thm}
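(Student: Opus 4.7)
The plan is to follow the Bakry–Qian semigroup interpolation approach, adapted to the $\RCD(K,N)$ setting. Write $u := H_t f$; by a regularization (replacing $f$ by $H_\varepsilon(f\chi_R) + \delta$ and then sending $\varepsilon \to 0$, $R \to \infty$, $\delta \to 0$) one reduces to the case where $u$ is strictly positive everywhere and $H_s u$ lies in $\TestF(X)$ for every $s > 0$, so that all second-order calculus from Section~2 is available. Strict positivity for $f \geq 0$ not identically zero follows from the heat kernel representation \eqref{heat kernl rep} together with Theorem \ref{LY Harnack}.

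The central idea is to analyze the monotonicity along $s \in [0,t]$ of an auxiliary quantity of the form
\[
\Phi(s) := H_{t-s}\!\left(a(s)\,u_s\log u_s + b(s)\,\frac{|\nabla u_s|^2}{u_s}\right), \qquad u_s := H_s f,
\]
for suitably chosen $C^1$ coefficients $a(s), b(s)$ satisfying an ODE system encoding the $\BE(K,N)$ condition. Differentiating in $s$ and using $\partial_s u_s = \Delta u_s$ produces (formally) a sum involving the Carré du champ and its iterated version $\Gamma_2$ applied to $v := \log u_s$; the improved Bochner inequality of Theorem \ref{improved Bochner inequality} applied to $v$ (in its weak, integrated form) gives
\[
\boldsymbol{\Gamma}_2(v) \geq K|\nabla v|^2\, m + \frac{1}{N}(\Delta v)^2\, m,
\]
and the dimensional term $(\Delta v)^2/N$ is precisely what generates the $NK/3$ factor after optimization of the ODE for $(a,b)$. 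Choosing $a, b$ so that the resulting differential inequality becomes $\Phi'(s) \geq 0$ and evaluating at $s = 0$ and $s = t$ yields, after rearrangement, the pointwise estimate claimed in the theorem.

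The main obstacle is the lack of pointwise second-order calculus in the $\RCD$ setting: $\log u_s$ need not lie in $\TestF(X)$, and the Laplacian chain rule is not directly applicable. The standard remedy is a two-step approximation, first composing with bounded smooth truncations of $\log$, deriving the inequality via the Hessian and covariant-derivative calculus of \cite{G18}, and passing to the limit by monotone/dominated convergence using the Bakry–Ledoux bounds of Theorem \ref{Bakry-Ledoux}. A further subtlety in the noncompact case \cite{J15} is that commutation identities like $\Delta(H_s f) = H_s(\Delta f)$ and the $\RCD$ contraction estimates must be applied only after localization via cutoffs in $\TestF(X)$ supported on large balls, invoking the locality of the minimal relaxed gradient. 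Once the inequality is established $m$-a.e., the pointwise Bakry–Ledoux statement (Remark \ref{pw Bakry-Ledoux}) together with the Sobolev-to-Lipschitz property provides continuous representatives, giving the stated pointwise conclusion.
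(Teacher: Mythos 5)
This statement is quoted in the paper as a preliminary result with a citation to \cite{GM14, J15}; the paper provides no proof of it, so there is no internal argument to compare against. Your sketch is best assessed against the proofs in those references, which do indeed follow the Bakry--Qian/Baudoin--Garofalo semigroup interpolation strategy you describe: regularize $f$, differentiate a family $\Phi(s)$ along the heat flow, invoke the improved Bochner/$\BE(K,N)$ inequality applied to $\log u_s$, and close with an ODE argument for the coefficients, with the noncompact case requiring localization (handled in \cite{J15}). So the skeleton of your proposal is correct and is the standard one.

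Two remarks on the details, though. First, the interpolating functional you write down will not straightforwardly produce the stated estimate, because the right-hand side of the theorem involves $\Delta H_t f$, and in the interpolation scheme this term enters via $H_{t-s}\bigl(u_s\,\Delta\log u_s\bigr) = \Delta H_t f - H_{t-s}\bigl(u_s|\nabla\log u_s|^2\bigr)$; the standard functional therefore pairs the Fisher-information density $u_s|\nabla\log u_s|^2 = |\nabla u_s|^2/u_s$ with a term built from $\Delta\log u_s$, and the dimensional Cauchy--Schwarz step applied to $(\Delta\log u_s)^2/N$ is what closes the ODE. Your entropy term $a(s)\,u_s\log u_s$ satisfies $(\partial_s-\Delta)(u_s\log u_s) = -|\nabla u_s|^2/u_s$, so after differentiation it merely contributes another multiple of the Fisher information and cannot produce the $\Delta H_t f$ structure by itself unless $a$ is constant, in which case it is redundant. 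Entropy functionals of this type do appear in \cite{BG11} in the guise of Perelman's $\mathcal{W}$-entropy, but that is a companion result, not the direct route to the gradient estimate quoted here. Second, the genuinely hard part in the $\RCD$ setting is exactly what you flag as a ``subtlety'': justifying the Bochner inequality for $\log u_s$ after truncation and passing to the limit, plus the localization when $m(X)=\infty$; this is where the bulk of \cite{GM14} and \cite{J15} lies, and your outline points in the right direction without carrying it out, which is acceptable as a sketch but should not be underestimated.
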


$\RCD(K,N)$ spaces also satisfy the parabolic maximum principle, see \cite[Section 3]{L18} and \cite[Section 4.1]{GH08} for full details.
\begin{de}\label{weak solutions} (\cite[Definition 3.1]{L18})
Let $(X,d,m)$ be an $\RCD(K,N)$ space with $K\in \mathbb{R}$ and $N\in (1,\infty)$. Let $I$ be an open interval in $\mathbb{R}$, $\Omega$ be an open seubset of $X$, and $g\in L^2(\Omega)$. We say  that a function $u: I\rightarrow W^{1,2}(\Omega)$ satisfies the parabolic equation
\begin{equation*}
	\frac{\partial}{\partial t}u-\Delta u\leq g,\quad\mbox{weakly in }I\times \Omega,
\end{equation*}
if for every $t \in I$, the Fr\'{e}chet derivative of $u$, denoted by $\frac{\partial}{\partial t}u$, exists in $L^2(\Omega)$  and for any nonnegative function $\psi\in W^{1,2}(\Omega)$, it holds
\begin{equation*}
	\int_\Omega \frac{\partial}{\partial t}u(t,\cdot)\psi\,dm + \mathcal{E}\big(u(t,\cdot),\psi\big)\leq \int_\Omega g\psi\,dm.
\end{equation*}
\end{de} 

\begin{thm}\label{parabolic maximum principle}(Parabolic maximum principle \cite[Lemma 3.2]{L18})
Let $(X,d,m)$ be an $\RCD(K,N)$ space with $K\in \mathbb{R}$ and $N\in (1,\infty)$. Fix $T\in (0,\infty]$ and open subset $\Omega \subseteq X$. Assume that a function $u: (0,T)\rightarrow W^{1,2}(\Omega)$, with $u_+(t,\cdot)=\max\{u(t,\cdot),0\}\in W^{1,2}(\Omega)$ for any $t\in (0,T)$, satisfies the following equation with initial value condition:
\begin{equation*}
\begin{cases}
\frac{\partial}{\partial t}u-\Delta u\leq 0,\quad &{\hbox{weakly in}}\,\ (0,T)\times\Omega,\\
u_+(t,\cdot)\rightarrow 0,\quad &{\hbox{in}}\,\ L^2(\Omega)\,\ \hbox{as}\,\ t\rightarrow0.
\end{cases}
\end{equation*}
Then $u(t,x)\leq0$ for any $t$ in $(0,T)$ and $m$-a.e. $x$ in $\Omega$.
\end{thm}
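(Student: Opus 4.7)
The plan is an energy estimate: test the weak differential inequality against $u_+(t,\cdot)$, use strong locality of the gradient to handle the Dirichlet form term, then convert the result into a monotonicity statement for $\|u_+(t,\cdot)\|_{L^2(\Omega)}^2$ which, together with the initial condition, forces $u_+ \equiv 0$.

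First, fix $t \in (0,T)$. By hypothesis $u_+(t,\cdot) \in W^{1,2}(\Omega)$ is nonnegative, so it is an admissible test function in Definition \ref{weak solutions} with $g \equiv 0$, yielding
\begin{equation*}
\int_\Omega \frac{\partial u}{\partial t}(t,\cdot)\, u_+(t,\cdot)\, dm + \mathcal{E}\bigl(u(t,\cdot),\, u_+(t,\cdot)\bigr) \leq 0.
\end{equation*}
On the infinitesimally Hilbertian space $(X,d,m)$, strong locality of the minimal relaxed gradient gives $\inner{\nabla u}{\nabla u_+} = |\nabla u_+|^2$ $m$-a.e. (on $\{u>0\}$ both sides equal $|\nabla u|^2$, and on $\{u\le 0\}$ both vanish), so $\mathcal{E}(u,u_+) = \mathcal{E}(u_+,u_+) \geq 0$. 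Next I would combine this with the chain rule in time
\begin{equation*}
\frac{d}{dt}\,\tfrac{1}{2}\|u_+(t,\cdot)\|_{L^2(\Omega)}^2 = \int_\Omega \frac{\partial u}{\partial t}(t,\cdot)\, u_+(t,\cdot)\, dm,
\end{equation*}
to obtain $\frac{d}{dt}\|u_+(t,\cdot)\|_{L^2(\Omega)}^2 \leq 0$ on $(0,T)$. Since the initial-value hypothesis says $\|u_+(t,\cdot)\|_{L^2(\Omega)} \to 0$ as $t\downarrow 0$, the monotonicity forces $\|u_+(t,\cdot)\|_{L^2(\Omega)} \equiv 0$ throughout $(0,T)$, i.e.\ $u(t,x) \leq 0$ for $m$-a.e.\ $x\in\Omega$ and all $t\in(0,T)$, which is the desired conclusion.

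The main technical obstacle is justifying the time chain rule, since $s\mapsto s_+^2$ is merely $C^1$ and $u$ is only Fr\'echet differentiable as an $L^2(\Omega)$-valued map. I would resolve this by the standard smoothing trick: approximate $s\mapsto s_+^2$ by a sequence of convex $C^2$ functions $\phi_\varepsilon$ with bounded $\phi_\varepsilon''$ and $\phi_\varepsilon \uparrow (\cdot)_+^2$, $\phi_\varepsilon'/2 \to (\cdot)_+$ pointwise. Since $\phi_\varepsilon'$ is Lipschitz, $\phi_\varepsilon'(u(t,\cdot)) \in W^{1,2}(\Omega)$ is admissible in Definition \ref{weak solutions}, and the elementary chain rule applies to $\Phi_\varepsilon(t) := \tfrac{1}{2}\int_\Omega \phi_\varepsilon(u(t,\cdot))\, dm$, giving $\Phi_\varepsilon'(t) \leq 0$. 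Passing $\varepsilon \downarrow 0$ via monotone/dominated convergence recovers the claimed monotonicity of $\|u_+(t,\cdot)\|_{L^2(\Omega)}^2$ and the initial condition transfers cleanly, completing the argument.
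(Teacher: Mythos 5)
Your energy-method argument is correct and is the standard proof of the parabolic maximum principle in the Dirichlet form setting; note, though, that the paper itself does not prove this result, but cites it directly from \cite[Lemma 3.2]{L18}, where essentially the same Caccioppoli-type argument is used. The steps all check out: $u_+(t,\cdot)$ is an admissible nonnegative test function by hypothesis; strong locality of the gradient gives $\nabla u_+ = \chi_{\{u>0\}}\nabla u$ $m$-a.e., hence $\mathcal{E}(u,u_+) = \mathcal{E}(u_+,u_+) \geq 0$; and the resulting inequality $\tfrac{d}{dt}\|u_+(t,\cdot)\|_{L^2(\Omega)}^2 \leq 0$ combined with $\|u_+(t,\cdot)\|_{L^2(\Omega)} \to 0$ and continuity of $t\mapsto u(t,\cdot)$ in $L^2$ forces $u_+\equiv 0$. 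One small remark: the smoothing detour at the end is unnecessary. The functional $F(v) := \tfrac{1}{2}\int_\Omega v_+^2\,dm$ is already Fr\'echet $C^1$ on $L^2(\Omega)$ with $DF(v)[h]=\int_\Omega v_+ h\,dm$ (the map $v\mapsto v_+$ is $1$-Lipschitz $L^2\to L^2$ and $s\mapsto \tfrac{1}{2}s_+^2$ is $C^{1,1}$, so $|F(v+h)-F(v)-\int v_+ h\,dm|\leq \tfrac{1}{2}\|h\|_{L^2}^2$). The Banach-space chain rule applied to the Fr\'echet-differentiable map $t\mapsto u(t,\cdot)$ therefore gives $\tfrac{d}{dt}F(u(t,\cdot)) = \int_\Omega u_+(t,\cdot)\,\partial_t u(t,\cdot)\,dm$ directly, with no regularization needed.
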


\subsection{Mean value and integral excess inequalities}\label{subsection 2.10}

We refer to \cite{CN12} in the smooth case, and \cite{MN19} in the RCD case, for the proofs of the statements in this subsection. We start with the existence of good cut off functions.

\begin{lem}\label{good ball cut off function} (Existence of good cut off functions \cite[Lemma 3.1]{MN19})
Let $(X,d,m)$ be an $\RCD(K,N)$ space for some $K \in \mathbb{R}$ and $N \in [1, \infty)$. Then for every $x \subset X$, for every $R > 0$ and $0 < r < R$ there exists a Lipschitz function $\psi^r: X \to \mathbb{R}$ satisfiying: 
\begin{enumerate}
	\item $0 \leq \psi^r \leq 1$ on $X$, $\psi^r \equiv 1$ on $B_{r}(x)$ and supp$(\psi) \subset B_{2r}(x)$;
	\item $r^2|\Delta \psi^r| + r|\nabla \psi^r| \leq C(K,N,R)$ $m\text{-a.e.}.$
\end{enumerate}
\end{lem}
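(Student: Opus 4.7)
The plan is to construct $\psi^{r}$ by smoothing a naive distance-based cutoff through the heat semigroup at time $t\sim r^{2}$, and then composing with a fixed smooth one-variable truncation to restore the exact boundary values. First, pick a smooth $\eta:\mathbb{R}\to[0,1]$ with $\eta\equiv 1$ on $(-\infty,5/4]$ and $\eta\equiv 0$ on $[7/4,\infty)$, and set $u_{0}(y):=\eta\!\left(d^{2}(y,x)/r^{2}\right)$. The chain rule gives $|\nabla u_{0}|\le C(K,N,R)/r$ $m$-a.e., and the Laplacian comparison of Theorem~\ref{lap d bound} together with the smoothness of $\eta$ yields the measure bound $\boldsymbol{\Delta}u_{0}\le C(K,N,R)/r^{2}\cdot m$ on $B_{2r}(x)$. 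Crucially, this Laplacian bound is only one-sided, since Theorem~\ref{lap d bound} itself is one-sided.

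Next, set $\psi:=H_{t}u_{0}$ for $t:=\epsilon(K,N,R)\,r^{2}$ with $\epsilon$ to be chosen small. The pointwise gradient bound $|\nabla\psi|\le C/r$ follows directly from the Bakry--Ledoux estimate (Theorem~\ref{Bakry-Ledoux} together with Remark~\ref{pw Bakry-Ledoux}), since $|\nabla u_{0}|^{2}\le C/r^{2}$. For the Laplacian, the commutation identity $\Delta\psi=H_{t}\boldsymbol{\Delta}u_{0}$ combined with the positivity preservation of $H_{t}$ gives the upper bound $\Delta\psi\le C/r^{2}$ $m$-a.e.\ directly; equivalently, one may apply the parabolic maximum principle of Theorem~\ref{parabolic maximum principle} to $\Delta u(s,\cdot)-C/r^{2}$ where $u(s,\cdot):=H_{s}u_{0}$. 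For the matching lower bound $\Delta\psi\ge -C/r^{2}$, which is the main obstacle, I would exploit the improved Bochner inequality of Theorem~\ref{improved Bochner inequality}: integrating it against a suitable compactly supported test function gives an $L^{2}$ control on $\Delta\psi$ in terms of $K$, $N$, and $\|\nabla\psi\|_{L^{\infty}}$, and then a Moser/De Giorgi iteration driven by the $\Gamma_{2}$-inequality on $\RCD(K,N)$ spaces promotes the $L^{2}$ control to the desired $L^{\infty}$ lower bound with the correct scaling $C/r^{2}$.

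To finish, choose $\epsilon=\epsilon(K,N,R)$ small enough that the Li--Yau Harnack inequality of Theorem~\ref{LY Harnack}, applied with $0\le u_{0}\le 1$, $u_{0}\equiv 1$ on $B_{5r/4}(x)$, and $u_{0}\equiv 0$ outside $B_{7r/4}(x)$, forces $\psi\ge 1-\delta_{0}$ on $B_{r}(x)$ and $\psi\le\delta_{0}$ outside $B_{2r}(x)$ for some fixed $\delta_{0}\in(0,1/2)$. Compose with a smooth $\chi:[0,1]\to[0,1]$ equal to $1$ on $[1-\delta_{0},1]$ and to $0$ on $[0,\delta_{0}]$, and define $\psi^{r}:=\chi\circ\psi$. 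The chain rule identity
\begin{equation*}
\Delta(\chi\circ\psi)=\chi''(\psi)|\nabla\psi|^{2}+\chi'(\psi)\Delta\psi
\end{equation*}
preserves both the gradient and the Laplacian bound at the cost of a factor $\|\chi'\|_{\infty}+\|\chi''\|_{\infty}$, while the required values and support are immediate by construction. As indicated, the main obstacle is the two-sided Laplacian bound; the Bochner/Moser iteration sketched above is the standard device for converting the one-sided Laplacian comparison for distances into an $m$-a.e.\ two-sided pointwise bound at the correct scale.
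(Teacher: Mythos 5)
Your overall strategy (smooth radial cutoff, heat flow for time $t\sim r^{2}$, compose with a fixed one-variable truncation) is sound and is essentially the approach of Mondino--Naber. However, the step you identify as the main obstacle --- the lower bound $\Delta\psi\ge -C/r^{2}$ --- is handled by a sketch that does not work as stated. Integrating the improved Bochner inequality against a test function gives $L^{2}$ control on $\Hess\psi$ and $\Delta\psi$ at scale $r$, but a Moser/De~Giorgi iteration applied to $\Delta\psi$ would require $\Delta\psi$ (or $-\Delta\psi$) to satisfy a subsolution-type differential inequality, which it does not; Moser iteration upgrades $L^{2}$ to $L^{\infty}$ for (sub)solutions of elliptic equations, not for the Laplacian of an arbitrary function with $L^{2}$ Hessian control. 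As written this step is a genuine gap.

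The gap disappears once you read the Bakry--Ledoux inequality (Theorem~\ref{Bakry-Ledoux}) more carefully: it bounds $|\Delta H_{t}(f)|^{2}$, not $\Delta H_{t}(f)$, and the coefficient $\frac{4Kt^{2}}{N(e^{2Kt}-1)}$ is strictly positive for every $K\in\mathbb{R}$ and $t>0$ (and behaves like $\frac{2t}{N}$ as $t\to 0$). Consequently, for $t=\epsilon r^{2}$ and $|\nabla u_{0}|\le C/r$, the very estimate you already invoke for the gradient bound also yields directly
\begin{equation*}
|\Delta H_{t}u_{0}|^{2}\le \frac{N(e^{2Kt}-1)}{4Kt^{2}}\,e^{-2Kt}\,H_{t}(|\nabla u_{0}|^{2})\le \frac{C(K,N,R)}{r^{4}}\quad m\text{-a.e.},
\end{equation*}
giving the two-sided bound $|\Delta\psi|\le C/r^{2}$ with no need for Bochner/Moser machinery, and in fact with no need for the Laplacian comparison step either. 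With this correction the remainder of your argument (heat-kernel estimate of Lemma~\ref{heat kernel bounds} to pin $\psi$ near $1$ on $B_{r}$ and near $0$ off $B_{2r}$ after shrinking $\epsilon$, then composition with a fixed smooth $\chi$ and the chain rule $\Delta(\chi\circ\psi)=\chi''(\psi)|\nabla\psi|^{2}+\chi'(\psi)\Delta\psi$ valid for test functions on $\RCD$ spaces) goes through and recovers the cited lemma.
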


For any subset $C$ in a metric space, we denote by $T_{r}(C)$ the $r$-tubular neighbourhood of $C$ and for $r_1 > r_0 > 0$, $A_{r_0, r_1}(C) := T_{r_1}(C) \backslash \overline{T_{r_0}(C)}$ the $(r_0, r_1)$-annular neighbourhood of $C$.

\begin{lem}\label{good cut off function} (Existence of good cut off functions on annular neighbourhoods \cite[Lemma 3.2]{MN19})
Let $(X,d,m)$ be an $\RCD(K,N)$ space for some $K \in \mathbb{R}$ and $N \in [1, \infty)$. Then for every closed subset $C \subset X$, for every $R > 0$ and $0 < 10r_0 < r_1 \leq R$ there exists a Lipschitz function $\psi: X \to \mathbb{R}$ satisfiying: 
\begin{enumerate}
	\item $0 \leq \psi \leq 1$ on $X$, $\psi \equiv 1$ on $A_{3r_0, \frac{r_1}{3}}(C)$ and supp$(\psi) \subset A_{2r_0,\frac{r_1}{2}}(C)$;
	\item $r_0^2|\Delta \psi| + r_0|\nabla \psi| \leq C(K,N,R)$ $m$-a.e. on $A_{2r_0,3r_0}(C)$;
	\item $r_1^2|\Delta \psi| + r_1|\nabla \psi| \leq C(K,N,R)$ $m$-a.e. on $A_{\frac{r_1}{3},\frac{r_1}{2}}(C)$.
\end{enumerate}
\end{lem}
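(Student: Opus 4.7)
Plan. My plan is to reduce the annular problem to the already-established ball case of Lemma~\ref{good ball cut off function} by running two separate covering constructions at the two different scales and then taking a product. Specifically, I will write $\psi=\eta_{\mathrm{in}}\cdot\eta_{\mathrm{out}}$, where $\eta_{\mathrm{in}}$ transitions from $0$ on $T_{2r_0}(C)$ to $1$ on $X\setminus T_{3r_0}(C)$ with scale-$r_0$ gradient/Laplacian bounds, and $\eta_{\mathrm{out}}$ transitions from $1$ on $T_{r_1/3}(C)$ to $0$ on $X\setminus T_{r_1/2}(C)$ with scale-$r_1$ bounds. Because the hypothesis $10r_0<r_1$ forces $3r_0<r_1/3$, the two transition annuli $A_{2r_0,3r_0}(C)$ and $A_{r_1/3,r_1/2}(C)$ are disjoint, and on each one the inactive factor is locally constant equal to $1$ with vanishing gradient. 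The Leibniz rule $\Delta(fg)=f\Delta g+2\inner{\nabla f}{\nabla g}+g\Delta f$ then reduces the asserted bounds (2) and (3) for $\psi$ to the analogous bounds for $\eta_{\mathrm{in}}$ and $\eta_{\mathrm{out}}$ separately.

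To construct $\eta_{\mathrm{in}}$, I will use a standard Vitali-type argument to select a maximal $\tfrac{r_0}{4}$-separated family $\{c_i\}_{i\in I}\subset T_{2r_0}(C)$, so that $\{B_{r_0/2}(c_i)\}_{i}$ covers $T_{2r_0}(C)$ while each $B_{r_0}(c_i)\subset T_{3r_0}(C)$. Bishop-Gromov (Theorem~\ref{BG volume comparison}), applied in balls of radius at most $R$, bounds the overlap multiplicity of the doubled family $\{B_{r_0}(c_i)\}$ by a constant $N_0=N_0(K,N,R)$. Applying Lemma~\ref{good ball cut off function} (rescaled to radius $r_0/2$) at each $c_i$ produces $\chi_i:X\to[0,1]$ with $\chi_i\equiv 1$ on $B_{r_0/2}(c_i)$, $\supp\chi_i\subset B_{r_0}(c_i)$, and $r_0^2|\Delta\chi_i|+r_0|\nabla\chi_i|\leq C(K,N,R)$. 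The locally finite sum $\zeta:=\sum_i\chi_i$ then satisfies $0\leq\zeta\leq N_0$, $\zeta\geq 1$ on $T_{2r_0}(C)$, $\supp\zeta\subset T_{3r_0}(C)$, and $r_0^2|\Delta\zeta|+r_0|\nabla\zeta|\leq N_0\, C(K,N,R)$. Fixing once and for all a $C^2$ map $\tilde\phi:\mathbb{R}\to[0,1]$ with $\tilde\phi\equiv 0$ on $(-\infty,0]$ and $\tilde\phi\equiv 1$ on $[1,\infty)$, I set $\eta_{\mathrm{in}}:=1-\tilde\phi(\zeta)$; the chain rule $\Delta(\tilde\phi\circ\zeta)=\tilde\phi'(\zeta)\Delta\zeta+\tilde\phi''(\zeta)|\nabla\zeta|^2$ (standard in the $\RCD$ framework) together with the pointwise bounds on $\zeta$ delivers the scale-$r_0$ bounds for $\eta_{\mathrm{in}}$.

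The construction of $\eta_{\mathrm{out}}$ is identical at scale $r_1/6$, using a maximal $\tfrac{r_1}{24}$-separated family in $T_{r_1/3}(C)$, and yields $\eta_{\mathrm{out}}=\tilde\phi(\zeta')$ equal to $1$ on $T_{r_1/3}(C)$, supported in $T_{r_1/2}(C)$, and satisfying the scale-$r_1$ bounds. The product $\psi:=\eta_{\mathrm{in}}\eta_{\mathrm{out}}$ is then a $[0,1]$-valued Lipschitz function equal to $1$ on $A_{3r_0,r_1/3}(C)$ and supported in $A_{2r_0,r_1/2}(C)$, and the Leibniz calculation indicated above delivers (2) and (3). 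The only structural input is the uniform overlap bound $N_0(K,N,R)$; its dependence on the upper scale $R$ is the sole source of $R$-dependence in the final constant, and this is the main subtlety — but it is a routine consequence of the local doubling provided by Bishop-Gromov once one fixes $r_0,r_1\leq R$.
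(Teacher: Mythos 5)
The paper does not supply its own proof of this lemma: it is quoted verbatim from Mondino--Naber \cite[Lemma 3.2]{MN19}, whose proof in turn reduces to the covering argument of Cheeger--Colding \cite[Theorem 6.33]{CC96} built on the single-ball cutoff of Lemma~\ref{good ball cut off function}. Your proposal is precisely that standard construction, and it is correct: the factorization $\psi=\eta_{\mathrm{in}}\eta_{\mathrm{out}}$, the maximal $\tfrac{r_0}{4}$-separated (resp.\ $\tfrac{r_1}{24}$-separated) net with Bishop--Gromov packing giving a uniform overlap multiplicity $N_0(K,N,R)$, the locally finite sum $\zeta=\sum_i\chi_i$ pinched between $\mathbf{1}_{T_{2r_0}(C)}$ and $N_0$, the reparametrization by a fixed $C^2$ profile $\tilde\phi$ together with the $\RCD$ chain rule $\Delta(\tilde\phi\circ\zeta)=\tilde\phi'(\zeta)\Delta\zeta+\tilde\phi''(\zeta)|\nabla\zeta|^2$, and the Leibniz reduction on the two transition annuli (which are indeed separated since $10r_0<r_1$ gives $3r_0<\tfrac{3}{10}r_1<\tfrac{1}{3}r_1$) are exactly what is needed, and each step is justified correctly.

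One small point worth recording explicitly: when the closed set $C$ is unbounded the index set $I$ is infinite and $\zeta$ need not lie in $L^2(m)$, so $\Delta\zeta$ should be understood in the localized sense of Remark~\ref{lip=grad} and $W^{1,2}_{\mathrm{loc}}$. Because the packing bound makes $\sum_i\chi_i$ a finite sum in a neighbourhood of any point, $\zeta$ is locally a finite combination of elements of $D(\Delta)$ with compact support, and the $m$-a.e.\ estimates in (2) and (3) are purely local, so this causes no difficulty — it is the intended reading of the lemma, as its conclusion asserts only that $\psi$ is Lipschitz with $m$-a.e.\ bounds, not that $\psi\in D(\Delta)$ globally.
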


\begin{rem}\label{pw good cut off function}
Note that the gradient bounds in~\ref{good ball cut off function} and~\ref{good cut off function} are naturally $m$-a.e.. However, since the proof  involves only~\ref{Bakry-Ledoux}, by Remark \ref{pw Bakry-Ledoux}, choosing continuous representatitves and using the local Lipschitz constant of $\psi$ for $|\nabla \psi|$, these statements can be made pointwise. 
\end{rem}

As demonstrated in \cite{CN12}, and later for the $\RCD$ setting in \cite{MN19}, several key estimates, including heat kernel bounds, the mean value, $L^1$-Harnack, and integral Abresch-Gromoll inequalities can be proved starting from the existence of good cut off functions and the Li-Yau Harnack inequality \ref{LY Harnack}. 

\begin{lem}\label{heat kernel bounds}(Heat kernel bounds \cite[lemma 3.3]{MN19})
Let $(X,d,m)$ be an $\RCD(K,N)$ space for some $K \in \mathbb{R}$, $N \in (1, \infty)$ and let $H_t(x,y)$ be the heat kernel for some $x \in X$. Then for every $R > 0$, for all $0 < r < R$ and $t \leq R^2$, 
\begin{enumerate}
	\item if $y \in B_{10\sqrt{t}}(x)$, then $\frac{C^{-1}(K,N,R)}{m(B_{10\sqrt{t}}(x))} \leq H_t(x,y) \leq \frac{C(K,N,R)}{m(B_{10\sqrt{t}}(x))}$
	\item $\int\limits_{X \backslash B_{r}(x)} H_t(x,y)dm(y) \leq C(K,N,R)r^{-2}t$.
\end{enumerate}
\end{lem}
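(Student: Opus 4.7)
The plan is to follow the by-now standard heat kernel argument of Cheeger--Naber \cite{CN12} (see also \cite{MN19}), whose only $\RCD$-specific inputs are the Li--Yau Harnack inequality (Theorem \ref{LY Harnack}), Bishop--Gromov (Theorem \ref{BG volume comparison}), stochastic completeness of $H_t$ (i.e.\ $\int_X H_t(x,y)\,dm(y)=1$, valid on any $\RCD(K,N)$ space), and the good cut off functions of Lemma \ref{good ball cut off function}. All constants below are allowed to depend on $K$, $N$, and the scale $R$.

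\textbf{Step 1 (on-diagonal behaviour).} I would first establish that $H_{t}(x,x)$ is comparable to $m(B_{\sqrt{t}}(x))^{-1}$ for $0<t\leq R^{2}$. The upper bound comes from applying the Li--Yau Harnack inequality of Theorem \ref{LY Harnack} with source point $x$ at time $t/2$ and varying $y\in B_{\sqrt{t}}(x)$ at time $t$: this yields $H_{t}(y,x)\geq c\, H_{t/2}(x,x)$ uniformly in $y\in B_{\sqrt t}(x)$. Integrating in $y$ and using stochastic completeness gives $H_{t/2}(x,x)\leq C\,m(B_{\sqrt t}(x))^{-1}$, which by doubling is equivalent to $H_{t/2}(x,x)\leq C m(B_{\sqrt{t/2}}(x))^{-1}$. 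The reverse on-diagonal lower bound is obtained similarly, first by Bishop--Gromov comparison of $m(B_{10\sqrt t}(x))$ with $m(B_{\sqrt t}(x))$, then a Harnack chain.

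\textbf{Step 2 (statement 1).} With the on-diagonal estimate in hand, for any $y\in B_{10\sqrt{t}}(x)$ apply Theorem \ref{LY Harnack} once to compare $H_{t}(x,y)$ with $H_{2t}(x,x)$ (or $H_{t/2}(x,x)$ in the opposite direction). Both factors in the Harnack inequality -- the Gaussian factor $\exp\!\bigl(d^{2}(x,y)/(4(t-s) e^{2Ks/3})\bigr)$ with $d(x,y)\leq 10\sqrt t$ and the volume ratio $\bigl((1-e^{2Ks/3})/(1-e^{2Kt/3})\bigr)^{N/2}$ -- are bounded above and below by constants depending only on $K,N,R$. Combining with Step 1 and Bishop--Gromov to replace $m(B_{\sqrt t}(x))$ by $m(B_{10\sqrt t}(x))$ up to a dimensional constant gives statement 1.

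\textbf{Step 3 (statement 2).} Using Lemma \ref{good ball cut off function}, fix a cut off $\psi^{r}:X\to[0,1]$ with $\psi^{r}\equiv 1$ on $B_{r}(x)$, $\supp(\psi^{r})\subseteq B_{2r}(x)$, and $|\Delta\psi^{r}|\leq C r^{-2}$ $m$-a.e. Then
\begin{equation*}
	\int_{X\setminus B_{r}(x)} H_{t}(x,y)\,dm(y) \;\leq\; \int_{X}(1-\psi^{r}(y))H_{t}(x,y)\,dm(y) \;=\; 1 - H_{t}\psi^{r}(x),
\end{equation*}
where the last equality uses stochastic completeness and the reproducing formula \eqref{heat kernl rep}. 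Since $\psi^{r}(x)=1$ and $s\mapsto H_{s}\psi^{r}(x)$ solves the heat equation,
\begin{equation*}
	1 - H_{t}\psi^{r}(x) \;=\; -\int_{0}^{t} H_{s}(\Delta\psi^{r})(x)\,ds \;\leq\; \int_{0}^{t}\|\Delta\psi^{r}\|_{L^{\infty}}\,ds \;\leq\; C\, r^{-2}t,
\end{equation*}
using $H_{s}\bold{1}\equiv 1$ to absorb the sign and dominate by the $L^{\infty}$ norm of $\Delta\psi^{r}$.

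The main delicacy is Step 1: one must carefully chain the Harnack inequality with parameters adapted to the scale $R$ so that the constants remain finite as $K<0$, which is simply a matter of bounding $e^{2Kt/3}$ and $1-e^{2Kt/3}$ uniformly for $0<t\leq R^{2}$. Everything else is a direct translation of the Riemannian argument, since all ingredients used -- Li--Yau Harnack, Bishop--Gromov, and existence of good cut offs with $L^{\infty}$ Laplacian bounds -- are available in the $\RCD(K,N)$ setting from the theorems already cited above.
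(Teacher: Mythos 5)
The paper does not reprove this lemma; it is quoted directly from \cite{MN19} (following \cite{CN12}), so the comparison is against that standard argument. Your Steps 2 and 3 are correct, and Step 3 is exactly the argument in the literature. The upper bound in Step 1 is also correct: applying Theorem \ref{LY Harnack} to $u(z,\tau)=H_\tau(z,x)$ with the spatial point $x$ at time $t/2$ and $y\in B_{\sqrt t}(x)$ at time $t$ gives $H_t(y,x)\geq c\,H_{t/2}(x,x)$ with $c=c(K,N,R)>0$; integrating over $y\in B_{\sqrt t}(x)$ and using $\int_X H_t(\cdot,x)\,dm\leq 1$ yields $H_{t/2}(x,x)\leq C\,m(B_{\sqrt t}(x))^{-1}$, and doubling finishes.

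However, the on-diagonal \emph{lower} bound in Step 1 is not justified, and as stated the argument cannot be completed. You write that it follows ``first by Bishop--Gromov comparison of $m(B_{10\sqrt t}(x))$ with $m(B_{\sqrt t}(x))$, then a Harnack chain.'' But the Li--Yau inequality of Theorem \ref{LY Harnack} is one-sided in time: it bounds $H$ at a \emph{later} time from below by $H$ at an earlier time and a fixed spatial point. Chaining it can never produce a lower bound for $H_t(x,x)$ from nothing; every such chain terminates at some $H_s(x,z)$ which you have no lower bound for. Bishop--Gromov controls volumes of balls, not the heat kernel. The missing ingredient is a mass-localization statement: one must first know that a definite fraction of $\int_X H_t(x,\cdot)\,dm=1$ lives on $B_{A\sqrt t}(x)$ for some $A=A(K,N,R)$. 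This is precisely your statement 2 applied with $r=A\sqrt t$ and $A$ large enough that $C A^{-2}\leq 1/2$, giving $\int_{B_{A\sqrt t}(x)}H_t(x,\cdot)\,dm\geq 1/2$. Once you have this, the Li--Yau Harnack (comparing $H_{2t}(x,x)$ against $H_t(x,y)$ for $y\in B_{A\sqrt t}(x)$) and an average over $B_{A\sqrt t}(x)$, together with Bishop--Gromov, give the on-diagonal lower bound, which Step 2 then propagates off-diagonal. So the fix is simply to reorder: prove Step 3 first (it is self-contained), then use it for the lower bound in Step 1. As written, the logical order is backwards and the lower bound is asserted without proof.

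One minor technical remark on Step 3: you need $t\mapsto H_t\psi^r(x)$ to be continuous down to $t=0$ with $H_0\psi^r(x)=\psi^r(x)=1$ \emph{pointwise} at $x$, not just in $L^2$. This is fine because $\psi^r$ is Lipschitz and bounded, and the heat semigroup on an $\RCD(K,N)$ space has the strong Feller property, but it is worth noting as it is a genuine pointwise claim at $t=0$.
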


\begin{lem}\label{MV L1H inequality}(Mean value and $L^1$-Harnack inequality \cite[Lemma 3.4]{MN19})
Let $(X,d,m)$ be an $\RCD(K,N)$ space for some $K \in \mathbb{R}$, $N \in (1, \infty)$ and let $0 < r < R$. If $u: X \times [0, r^2] \to \mathbb{R}$, $u(x,t) = u_t(x)$, is a nonnegative Borel function with compact support at each time $t$ and satisfiying $(\partial_t - \Delta)u \geq -c_0$ in the weak sense, then,
\begin{equation*}
	\fint\limits_{B_r(x)} u_0 \leq C(K, N, R)[u_{r^2}(x) + c_0r^2] \; \; \text{for $m$-a.e. }x.
\end{equation*}
More generally the following $L^1$-Harnack inequality holds
\begin{equation*}
	\fint\limits_{B_r(x)} u_0 \leq C(K,N,R)[\essinf_{y \in B_r(x)} u_{r^2}(y)+c_0r^2] \; \; \forall x \in X.
\end{equation*}
\end{lem}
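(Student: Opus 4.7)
The strategy is to test the differential inequality against the heat kernel. First reduce to the supersolution case: let $v_t := u_t + c_0 t$, so that $(\partial_t - \Delta) v_t \geq 0$ weakly on $(0, r^2) \times X$. Fix $x \in X$ and set
\[
F(t) := \int_X v_t(y)\, H_{r^2 - t}(x, y)\, dm(y), \qquad t \in [0, r^2).
\]
Since $(\tau, y) \mapsto H_{r^2-\tau}(x, y)$ is a nonnegative (formal) solution of the backward heat equation, the formal computation
\[
\frac{d}{dt} F(t) = \int_X (\partial_t v_t - \Delta v_t)(y)\, H_{r^2 - t}(x, y)\, dm(y) \geq 0
\]
should hold. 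Combined with $\int_X H_s(x, y)\, dm(y) = 1$ and continuity of the heat flow, this yields
\[
\int_X u_0(y)\, H_{r^2}(x, y)\, dm(y) \;\leq\; u_{r^2}(x) + c_0 r^2.
\]

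To make the monotonicity of $F$ rigorous, one tests Definition \ref{weak solutions} against the heat kernel in the form $\psi(y) = H_{r^2 - t}(x, y)$; since this function is not compactly supported, one first truncates with the good cutoff functions from Lemma \ref{good ball cut off function} and uses the off-diagonal heat kernel decay (Lemma \ref{heat kernel bounds}, statement 2) to control the error as the support is sent to infinity. Gaussian-type estimates (from the $\RCD$ condition) combined with the compact support of $u_t$ at each time ensure integrability and dominated convergence at every step. A standard Steklov-averaging in time then converts the weak-sense inequality on $\partial_t u$ into the monotonicity of $F$ after integration.

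Once the inequality displayed above is in hand, for $y \in B_r(x) \subset B_{10r}(x)$ the heat kernel bound in Lemma \ref{heat kernel bounds}, statement 1, together with Bishop-Gromov (Theorem \ref{BG volume comparison}) to pass from $m(B_{10r}(x))$ to $m(B_r(x))$, gives
\[
H_{r^2}(x, y) \geq \frac{C^{-1}(K,N,R)}{m(B_r(x))} \qquad \text{for all } y \in B_r(x),
\]
and restricting the integral to $B_r(x)$ immediately yields the mean value inequality for $m$-a.e.\ $x$. For the $L^1$-Harnack inequality, repeat the same argument but center the heat kernel at an arbitrary $z \in B_r(x)$ instead of at $x$; since $B_r(x) \subset B_{2r}(z) \subset B_{10r}(z)$ and Bishop-Gromov applied around $z$ gives $m(B_{10r}(z)) \leq C(K,N,R) m(B_r(x))$, the same lower bound on $H_{r^2}(z, y)$ holds for $y \in B_r(x)$ with a constant independent of $z$. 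This produces $C^{-1} \fint_{B_r(x)} u_0\, dm \leq u_{r^2}(z) + c_0 r^2$ for $m$-a.e.\ $z \in B_r(x)$, and taking the essential infimum over $z$ completes the proof. The main obstacle is the rigorous derivation of the monotonicity of $F$ in the $\RCD$ setting from only a weak differential inequality: every other step is a direct application of the heat kernel estimates and Bishop-Gromov.
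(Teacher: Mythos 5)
Your proof is essentially correct, and the overall route (comparison with the heat flow of the initial data, followed by the lower heat kernel bound from Lemma~\ref{heat kernel bounds}) is the same as that of the cited source. The genuine difference is in how the comparison $H_{r^2}(u_0)(x)\leq u_{r^2}(x)+c_0r^2$ is obtained. The paper imports this lemma from \cite[Lemma 3.4]{MN19}, whose argument applies the parabolic maximum principle -- stated here as Theorem~\ref{parabolic maximum principle} -- directly to $w_t:=H_t(u_0)-u_t-c_0t$: one checks $(\partial_t-\Delta)w\leq 0$ weakly and $w_0=0$, concludes $w_t\leq 0$ for all $t>0$ and $m$-a.e.\ point, and then sets $t=r^2$. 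You instead re-derive this comparison by testing the supersolution inequality against the backward heat kernel via $F(t):=\int v_t\,H_{r^2-t}(x,\cdot)\,dm$ and proving monotonicity of $F$. These are mathematically the same estimate, but the maximum-principle route has the technical advantage that the comparison is obtained at each fixed time slice, so no limit $t\to r^2$ is ever taken. Your route requires you to justify $\lim_{t\to r^2^-}F(t)=v_{r^2}(x)$, and ``continuity of the heat flow'' alone is not quite enough: you need $H_{r^2-t}(u_t)\to u_{r^2}$ in $L^2$ as $t\to r^2$, which does follow from the $L^2$-time-regularity built into Definition~\ref{weak solutions} plus $L^2$-strong continuity of $H_s$ at $s=0$, after which one passes to a subsequence converging $m$-a.e.\ -- consistent with the ``$m$-a.e.\ $x$'' conclusion. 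This is the kind of measure-theoretic adjustment Remark~\ref{MV L1H inequality pw} alludes to; you should state it, since the formal limit claim as written is not automatic for merely Borel $u$.

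Two smaller points. First, your monotonicity argument for $F$ is indeed the hard step and you correctly identify the needed ingredients (truncation by good cutoffs, off-diagonal decay from Lemma~\ref{heat kernel bounds}, Steklov averaging of the weak time-derivative); note that this is essentially a proof of the parabolic maximum principle in the specific case at hand, so one gains nothing in economy by bypassing Theorem~\ref{parabolic maximum principle}, which is supplied for exactly this purpose. Second, in the $L^1$-Harnack step, Bishop--Gromov should be applied around $x$ rather than $z$: since $z\in B_r(x)$ one has $B_{10r}(z)\subset B_{11r}(x)$, hence $m(B_{10r}(z))\leq V_{K,N}(r,11r)^{-1}m(B_r(x))$, which is the comparison you actually need.
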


\begin{rem}\label{MV L1H inequality pw}
Lemma 3.4 of \cite{MN19}, whose proof follows \cite[Lemma 2.1]{CN12}, treats the continuous case of $u$. However, in what follows  we will want to use this inequality for $u_{t} = |\nabla h_t(f)|$, which is not known to have a continuous representatitive. The proof of this statement for Borel $u$ follows exactly as in the continuous case with the obvious measure-theoretic adjustments. 
\end{rem}

Applying \ref{MV L1H inequality} to a function which is constant in time gives the following classical mean value inequality.

\begin{cor}\label{classical MV inequality}(Classical mean value inequality \cite[Corollary 3.5]{MN19})
Let $(X,d,m)$ be an $\RCD(K,$ $N)$ space for some $K \in \mathbb{R}$, $N \in (1, \infty)$ and let $0 < r < R$. If $u: X \to \mathbb{R}$ is a nonnegative Borel function with compact support with $u \in D(\boldsymbol{\Delta})$ and satisfies $\boldsymbol{\Delta} u \leq c_0m$ in the sense of measures, then for $0 < r \leq R$, 
\begin{equation*}
	\fint\limits_{B_r(x)} u \leq C(K, N, R)[u(x) + c_0r^2] \; \; \text{for $m$-a.e. }x.
\end{equation*}
\end{cor}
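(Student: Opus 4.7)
The plan is to deduce this from the parabolic mean value inequality in Lemma \ref{MV L1H inequality} by viewing $u$ as a time-independent function. All one has to verify is that the hypothesis $\boldsymbol{\Delta} u \leq c_0 m$ on the measure valued Laplacian translates into the weak parabolic inequality $(\partial_t - \Delta) u \geq -c_0$ required by Definition \ref{weak solutions}, after which the conclusion follows by evaluating at $t = r^2$.

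First I would unpack the measure valued inequality. By Definition \ref{measure lap}, $\boldsymbol{\Delta} u \leq c_0 m$ means that for every non-negative Lipschitz function $\psi$ with bounded support,
\begin{equation*}
	-\int \inner{\nabla \psi}{\nabla u}\, dm = \int \psi\, d\boldsymbol{\Delta} u \leq \int c_0 \psi\, dm,
\end{equation*}
which rearranges to $\mathcal{E}(u,\psi) = \int \inner{\nabla \psi}{\nabla u}\, dm \geq -\int c_0 \psi\, dm$. A standard density argument (truncate, mollify with the heat flow on a slightly larger open set $\Omega'\Supset \Omega$, and pass to the limit using that $u \in W^{1,2}(X)$) extends this inequality to all non-negative $\psi \in W^{1,2}(\Omega)$.

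Next I would set $u(t,x) := u(x)$ for every $t \geq 0$. Then $\frac{\partial}{\partial t} u \equiv 0$ exists in $L^2(\Omega)$, and the inequality of the previous paragraph reads
\begin{equation*}
	\int_\Omega \frac{\partial}{\partial t} u(t,\cdot)\psi\, dm + \mathcal{E}(u(t,\cdot),\psi) \geq -\int_\Omega c_0 \psi\, dm
\end{equation*}
for every non-negative $\psi \in W^{1,2}(\Omega)$. In other words $(\partial_t - \Delta) u \geq -c_0$ weakly on $(0,\infty)\times\Omega$ in the sense of Definition \ref{weak solutions}. The function $u$ is non-negative, Borel, and compactly supported at each time, so Lemma \ref{MV L1H inequality} applies and yields
\begin{equation*}
	\fint_{B_r(x)} u \leq C(K,N,R)\big[u_{r^2}(x) + c_0 r^2\big] = C(K,N,R)\big[u(x) + c_0 r^2\big]
\end{equation*}
for $m$-a.e.\ $x$, which is the desired inequality.

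The only mildly non-routine point is the extension in the first paragraph of the test function class from compactly supported Lipschitz functions (for which $\boldsymbol{\Delta} u \leq c_0 m$ is literally stated) to non-negative $W^{1,2}(\Omega)$ functions (needed to invoke Definition \ref{weak solutions}); this is the step I would pay closest attention to, though it is handled by a standard truncation-mollification argument using that the pairing $\psi \mapsto \int\inner{\nabla\psi}{\nabla u}\, dm - \int c_0\psi\, dm$ is continuous on $W^{1,2}(\Omega)$ and that the non-negative compactly supported Lipschitz functions are dense among non-negative elements of $W^{1,2}(\Omega)$.
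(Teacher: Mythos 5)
Your proposal is correct and follows exactly the route the paper intends: the sentence preceding the corollary states it is obtained by "applying Lemma~\ref{MV L1H inequality} to a function which is constant in time," and that is precisely what you do, including the correct sign bookkeeping from $\boldsymbol{\Delta}u \leq c_0 m$ to $\mathcal{E}(u,\psi) \geq -\int c_0\psi\,dm$. The only place I would tighten the wording is the density step: as phrased ("mollify with the heat flow on a slightly larger open set $\Omega'\Supset\Omega$") it is a little vague, and in general one cannot pass from compactly supported Lipschitz test functions to arbitrary nonnegative $\psi\in W^{1,2}(\Omega)$ if $\Omega\subsetneq X$. Here, though, $u$ has compact support and $u\in D(\boldsymbol{\Delta})\subseteq W^{1,2}(X)$, so one can simply take $\Omega=X$; then nonnegative Lipschitz functions with bounded support are dense among nonnegative $W^{1,2}(X)$ functions and the continuity of $\psi\mapsto\int\inner{\nabla\psi}{\nabla u}\,dm+\int c_0\psi\,dm$ on $W^{1,2}(X)$ gives the extension immediately, with no need to shrink or enlarge the domain.
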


This, in combination with the existence of good cut off functions and Laplacian estimates on distance functions, allows one to prove an integral Abresch-Gromoll inequality. For points $p$ and $q$ in a metric space, we define the excess function $e_{p,q}(x) := d(p,x) + d(x,q) - d(p,q)$.

\begin{thm}\label{Abresch-Gromoll} (Integral Abresch-Gromoll inequality \cite[Theorem 3.6]{MN19})
Let $(X,d,m)$ be an $\RCD(K,N)$ space for some $K \in \mathbb{R}$, $N \in (1, \infty)$; let $p,q \in X$ with $d_{p,q} := d(p,q) \leq 1$ and fix $0 < \epsilon < 1$.\\
If $x \in A_{\epsilon d_{p,q}, 2d_{p,q}}(\left\{p,q\right\})$ satisfies $e_{p,q}(x) \leq r^2d_{p,q} \leq \bar{r}(K, N, \epsilon)^2d_{p,q}$, then 
\begin{equation*}
	\fint\limits_{B_{rd_{p,q}}(x)} e_{p,q}(y) dm(y) \leq C(K,N,\epsilon)r^2d_{p,q}.
\end{equation*}
\end{thm}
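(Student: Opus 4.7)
The plan is to localize the excess $e_{p,q} = d_p + d_q - d_{p,q}$ to an annular neighborhood of $\{p,q\}$ on which the distance Laplacians are uniformly bounded above, and then to apply the classical mean value inequality (Corollary~\ref{classical MV inequality}) to the localization. The hypotheses $x \in A_{\epsilon d_{p,q}, 2 d_{p,q}}(\{p,q\})$ and $r \leq \bar r(K,N,\epsilon)$ will guarantee that $B_{rd_{p,q}}(x)$ sits well inside this annulus, so $u$ and $e_{p,q}$ coincide on the ball.

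To set this up, fix the enlarged annulus $A := A_{\epsilon d_{p,q}/3,\, 6 d_{p,q}}(\{p,q\})$. Since $d_{p,q} \leq 1$, both $d_p$ and $d_q$ lie in $[\epsilon d_{p,q}/3, 6 d_{p,q}] \subset [\epsilon/3, 6]$ on $A$, so Theorem~\ref{lap d bound} (read via Remark~\ref{lap d bound radon}) gives
\begin{equation*}
\boldsymbol{\Delta} e_{p,q} = \boldsymbol{\Delta} d_p + \boldsymbol{\Delta} d_q \leq \frac{C(K,N,\epsilon)}{d_{p,q}}\, m \quad \text{on } A.
\end{equation*}
Next invoke Lemma~\ref{good cut off function} with $r_0 = \epsilon d_{p,q}/3$ and $r_1 = 6 d_{p,q}$ (so $10 r_0 < r_1$ since $\epsilon < 1$) to produce a cutoff $\psi$ with $\psi \equiv 1$ on $A_{\epsilon d_{p,q}, 2 d_{p,q}}(\{p,q\})$, $\supp\psi \subset A$, and pointwise bounds $|\nabla\psi| \leq C(K,N,\epsilon)/d_{p,q}$, $|\Delta\psi| \leq C(K,N,\epsilon)/d_{p,q}^2$.

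Setting $u := \psi\, e_{p,q}$ (nonnegative, Lipschitz, compactly supported in $A$), the product rule for measure-valued Laplacians together with $|\nabla e_{p,q}| \leq 2$ and $e_{p,q} \leq 12\, d_{p,q}$ on $\supp\psi$ gives
\begin{equation*}
\boldsymbol{\Delta} u = \psi\, \boldsymbol{\Delta} e_{p,q} + 2\inner{\nabla\psi}{\nabla e_{p,q}}m + e_{p,q}(\Delta\psi)\, m \leq \frac{C(K,N,\epsilon)}{d_{p,q}}\, m.
\end{equation*}
Choosing $\bar{r}(K,N,\epsilon)$ small enough that $B_{rd_{p,q}}(x) \subset \{\psi \equiv 1\}$ for every $r \leq \bar{r}$ and every admissible $x$, so that $u \equiv e_{p,q}$ on $B_{rd_{p,q}}(x)$, Corollary~\ref{classical MV inequality} applied to $u$ yields
\begin{equation*}
\fint_{B_{rd_{p,q}}(x)} e_{p,q} \leq C(K,N)\Big[e_{p,q}(x) + \frac{C(K,N,\epsilon)}{d_{p,q}}(rd_{p,q})^2\Big] \leq C(K,N,\epsilon)\, r^2 d_{p,q},
\end{equation*}
where the final inequality uses $e_{p,q}(x) \leq r^2 d_{p,q}$ and $d_{p,q} \leq 1$.

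The main technical point I expect to require care is the product-rule upper bound for $\boldsymbol{\Delta} u$, since $e_{p,q}$ is only in the weakened sense of Remark~\ref{lap d bound radon} a Radon functional, with a potentially infinite negative part near $\{p,q\}$. Because $\supp \psi \subset A$ stays a definite distance from both $p$ and $q$, the comparison from Theorem~\ref{lap d bound} holds as a genuine signed Radon measure on $\supp \psi$, every term in the product rule is finite there, and only the one-sided upper inequality is needed in order to invoke Corollary~\ref{classical MV inequality}.
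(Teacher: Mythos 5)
You take the standard route: localize the excess $e_{p,q}$ on an annular neighbourhood of $\{p,q\}$ via a good cutoff, push an upper Laplacian bound through the product rule, and apply the classical mean value inequality, Corollary~\ref{classical MV inequality}. This is exactly the argument behind \cite[Theorem 3.6]{MN19}, which the paper cites without reproducing; so your proposal is essentially the cited proof. Two bookkeeping points need adjustment before it closes.

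First, with $r_0 = \epsilon d_{p,q}/3$ and $r_1 = 6d_{p,q}$, Lemma~\ref{good cut off function} guarantees $\psi \equiv 1$ only on $A_{3r_0,\, r_1/3}(\{p,q\}) = A_{\epsilon d_{p,q},\, 2d_{p,q}}(\{p,q\})$, which is the \emph{same} annulus required to contain $x$. An admissible $x$ may sit arbitrarily close to the inner boundary of that annulus, so no positive $\bar{r}$ then forces $B_{rd_{p,q}}(x) \subset \{\psi \equiv 1\}$. Take instead, say, $r_0 \leq \epsilon d_{p,q}/6$ and $r_1 \geq 9 d_{p,q}$, so that $\psi \equiv 1$ on the strictly larger annulus $A_{\epsilon d_{p,q}/2,\, 3d_{p,q}}(\{p,q\})$; then $\bar r(K,N,\epsilon) \leq \epsilon/2$ does the job, and the rest of your Laplacian and gradient estimates go through unchanged with these parameters. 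Second, Corollary~\ref{classical MV inequality} is stated only for $m$-a.e.~$x$, while the theorem asserts the bound for each admissible $x$. Since $u = \psi\, e_{p,q}$ is Lipschitz, this is handled either by invoking the $L^1$-Harnack form of Lemma~\ref{MV L1H inequality}, which holds for every $x$, together with $\essinf_{B_{rd_{p,q}}(x)} u \leq u(x)$ by continuity, or by approximating $x$ with good points and passing to the limit. With these small fixes the argument is correct and matches the cited one.
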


Combined with Bishop-Gromov volume comparison, this immediately implies the classical Abresch-Gromoll inequality \cite{AG90}.
\begin{cor}\label{classical Abresch-Gromoll} (Classical Abresch-Gromoll inequaltiy \cite[Corollary 3.7]{MN19})
Let $(X,d,m)$ be an $\RCD(K,N)$ space for some $K \in \mathbb{R}$, $N \in (1, \infty)$; let $p,q \in X$ with $d_{p,q} := d(p,q) \leq 1$ and fix $0 < \epsilon < 1$.\\
If $x \in A_{\epsilon d_{p,q}, 2d_{p,q}}(\left\{p,q\right\})$ satisfies $e_{p,q}(x) \leq r^2d_{p,q} \leq \bar{r}(K, N, \epsilon)^2d_{p,q}$, then there exists $\alpha(N) \in (0,1)$ such that
\begin{equation*}
	e_{p,q}(y) \leq C(K,N,\epsilon) r^{1+\alpha(N)}d_{p,q} \, , \, \forall y \in B_{rd_{p,q}}(x).
\end{equation*}
\end{cor}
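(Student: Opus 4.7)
The plan is to derive the pointwise estimate from the integral Abresch-Gromoll inequality of Theorem~\ref{Abresch-Gromoll} by exploiting the 2-Lipschitz property of $e_{p,q}$ together with Bishop-Gromov volume comparison (Theorem~\ref{BG volume comparison}). Throughout write $d = d_{p,q}$. Since $d(p,\cdot)$ and $d(\cdot,q)$ are 1-Lipschitz, the excess $e_{p,q}$ is 2-Lipschitz; hence if $e_{p,q}(y_0) = h$ for some $y_0 \in B_{rd}(x)$, then $e_{p,q}(y) \geq h/2$ on the ball $B_{h/4}(y_0)$.

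After possibly shrinking $\bar{r}$, apply Theorem~\ref{Abresch-Gromoll} on the concentric ball of radius $2rd$ to obtain
\[
\fint_{B_{2rd}(x)} e_{p,q}\,dm \leq C(K,N,\epsilon)\, r^2 d.
\]
We split into two cases. If $h \geq 8rd$, then the 2-Lipschitz property gives $e_{p,q} \geq h/2$ on all of $B_{rd}(x)$, so the integral bound directly yields $h \leq 2C r^2 d$, which is stronger than the desired estimate. Otherwise $h < 8rd$, so $h/4 < 2rd$ and $B_{h/4}(y_0) \subseteq B_{2rd}(x) \subseteq B_{3rd}(y_0)$. Bishop-Gromov applied at $y_0$ gives (using $rd \leq 1$ and the volume comparison constant absorbed into $c(K,N)$)
\[
\frac{m(B_{h/4}(y_0))}{m(B_{2rd}(x))} \geq \frac{m(B_{h/4}(y_0))}{m(B_{3rd}(y_0))} \geq c(K,N)\left(\frac{h}{rd}\right)^{N}.
\]

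Combining these two inputs,
\[
4C(K,N,\epsilon)\, r^2 d \;\geq\; \fint_{B_{2rd}(x)} e_{p,q}\,dm \;\geq\; \frac{h}{2}\cdot c(K,N)\left(\frac{h}{rd}\right)^{N},
\]
which rearranges to $h^{N+1} \leq C'(K,N,\epsilon)\, r^{N+2}\, d^{N+1}$, i.e.\ $h \leq C''(K,N,\epsilon)\, r^{1+\alpha(N)}\, d$ with $\alpha(N) = 1/(N+1) \in (0,1)$. Since $y_0 \in B_{rd}(x)$ was arbitrary, this is exactly the claimed pointwise bound. The main (minor) obstacle is bookkeeping: one must apply the integral inequality on a slightly enlarged ball so that $B_{h/4}(y_0)$ is captured, and separate the regime $h \gtrsim rd$ where the $L^\infty$ bound follows from the $L^1$ bound essentially for free; neither issue is substantive and both are absorbed into the constants $\bar{r}$ and $C$.
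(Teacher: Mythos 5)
Your approach is exactly what the paper intends (it cites \cite{MN19} and simply remarks that the classical inequality follows from the integral one plus Bishop--Gromov), and the argument is substantively correct, yielding $\alpha(N)=1/(N+1)$. One small arithmetic slip: under $h<8rd$ you have $h/4<2rd$, but for $z\in B_{h/4}(y_0)$ with $y_0\in B_{rd}(x)$ this only gives $d(z,x)<h/4+rd<3rd$, so the inclusion should read $B_{h/4}(y_0)\subseteq B_{3rd}(x)\subseteq B_{4rd}(y_0)$ (or split at $h<4rd$ instead); this only shifts the constants and does not affect the exponent or the conclusion.
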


\section{Differentiation formulas for Regular Lagrangian flows}\label{section 3}

\subsection{Regular Lagrangian flow}

In what follows, we will always be on some $\RCD(K,N)$ space $(X,d,m)$ for $K \in \mathbb{R}$ and $N \in [1,\infty)$. In \cite{CN12}, the crucial idea is to understand the geometric properties of the gradient flow with respect to heat flow approximations of the distance function. We will do the same with the Regular Lagrangian flow which was first introduced by Ambrosio in \cite{A04} on $\mathbb{R}^d$ and generalized to the metric measure setting by Ambrosio-Trevisan in \cite{AT14}. The setup is quite general and we refer to \cite{AT14} for full details. We will be interested in applying this theory specificially for vector fields in the $L^2$ tangent module of an RCD space.

\begin{de}\label{L^2 vector fields}(Time-dependent $L^2$ vector fields)
Let $T > 0$. $V: [0,T] \to L^{2}(TX)$ is a time-dependent $L^2$ vector field iff the map $[0,T]\ni t\mapsto V_t \in L^2(TX)$ is Borel.\\
$V$ is bounded iff 
\begin{equation*}
	\norm{V}_{L^{\infty}} := \norm{|V|}_{L^\infty([0,T]\times X)} < \infty.
\end{equation*}
$V \in L^1([0,T],L^2(TX))$ iff
\begin{equation*}
	 \int\limits_{0}^{T} \norm{V_t}_{L^2(TX)} dt < \infty.
\end{equation*}
\end{de}

\begin{de}\label{RLF}(Regular Lagrangian flow)
Given a time-dependent $L^2$ vector field $(V_t)$. A Borel map $F:[0,T] \times X \to X$ is a \textit{Regular Lagrangian flow (RLF)} to $V_t$ iff the following holds:
	\begin{enumerate}
	\item $F_0(x) = x$ and $[0,T] \ni t \mapsto F_t(x)$ is continuous for every $x \in X$;
	\item For every $f \in \TestF(X)$ and $m$-a.e. $x \in X$, $t \mapsto f(F_t(x))$ is in $W^{1,1}([0,T])$ and 
		\begin{equation}\label{RLF eq1} 
			\frac{d}{dt}f(F_t(x))= df(V_t)(F_t(x)) \; \; \text{ for a.e. } t \in [0,T];
		\end{equation}
	\item There exists a constant $C := C(V)$ so that $(F_t)_{*}m \leq Cm$ for all $t$ in $[0,T]$. 
	\end{enumerate}
\end{de}

\begin{rem}\label{RLF W12loc}
In the case where $V \in L^1([0,T],L^2(TX))$ and $F$ is an RLF of $V$,  using a standard Fubini's theorem argument and that $\TestF(X)$ is dense in $W^{1,2}(X)$, we have for every $f \in W^{1,2}_{loc}(X)$, $t \mapsto f(F_t(x))$ is in $W^{1,1}([0,T])$ and 
\begin{equation}
	\frac{d}{dt}f(F_t(x))= df(V_t)(F_t(x)) \; \; \text{ for a.e. } t \in [0,T].
\end{equation}
\end{rem}

\cite{AT14} gives the existence and uniqueness of RLFs to $(V_t)$ in a certain class of vector fields. We use the following weaker formulation of their result and note that only a bound on the symmetric part of $\nabla V_t$ is needed. 
\begin{thm}\label{RLF existence}(Existence and uniqueness of Regular Lagrangian flow \cite{AT14})
Let $(V_t) \in L^1([0,T],$ $L^2(TX))$ satisfy $V_t\in D(div)$ for a.e. $t \in [0,T]$ with
\begin{equation*}
	div(V_t) \in L^1([0,T],L^2(m)) \; \;  \;(div(V_t))^- \in L^1([0,T], L^{\infty}(m)) \; \; \; \nabla V_t \in L^1([0,T],L^2(T^{\otimes 2}X)).
\end{equation*}
There exists a unique, up to $m$-a.e. equality, RLF $(F_t)_{t \in [0,T]}$ for $(V_t)$. The bound
\begin{equation}\label{RLF existence eq1}
	(F_t)_{*}(m) \leq exp(\int\limits_{0}^{t} \norm{\diver(V_s)^-}_{L^{\infty}(m)}\,ds)m
\end{equation}
holds for every $t \in [0,T]$.
\end{thm}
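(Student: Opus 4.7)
The plan is to follow the DiPerna--Lions--Ambrosio strategy as adapted by Ambrosio--Trevisan to the metric measure setting: existence of the RLF is extracted from solutions of the continuity equation $\partial_t \mu_t + \diver(V_t \mu_t) = 0$ via a superposition principle, while uniqueness is reduced to uniqueness of nonnegative solutions of this continuity equation by means of a commutator/renormalization argument in which the Sobolev control $\nabla V_t \in L^2$ plays the central role.

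For existence I would first mollify $V_t$ both in space (using the heat semigroup, or rather applying $H_\epsilon$ to its potentials so that the regularized field lies in $\TestV(X)$) and in time (by convolution against a mollifier on $[0,T]$) to obtain smooth approximations $V_t^\epsilon$. For such fields, classical ODE theory produces flows $F_t^\epsilon$, and the densities $\rho_t^\epsilon = \tfrac{d(F_t^\epsilon)_*m}{dm}$ solve the transport equation, which by Gronwall gives the uniform bound $\|\rho_t^\epsilon\|_\infty \le \exp(\int_0^t \|\diver(V_s)^-\|_{L^\infty}\,ds)$. Compactness on $C([0,T],X)$ at the level of laws on trajectories (using the equiboundedness of $|V_t^\epsilon|$ in $L^1_tL^2_x$ and tightness from the measure-preserving bound) produces a limit measure $\boldsymbol{\eta}$ on curves, and the evaluation $F_t(x) = \gamma_t$ where $\gamma$ is drawn from the disintegration of $\boldsymbol{\eta}$ against its initial distribution yields the candidate RLF; the Borel regularity in $(t,x)$ requires a measurable selection of such a disintegration, and the bound \eqref{RLF existence eq1} is inherited in the limit.

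The main obstacle is uniqueness, and this is where the hypothesis $\nabla V_t \in L^1([0,T],L^2(T^{\otimes 2}X))$ is essential. The strategy is to prove that any nonnegative weak solution $\rho_t$ of $\partial_t \rho_t + \diver(\rho_t V_t) = 0$ with $\rho_0 \equiv 0$ vanishes identically. The key ingredient is a commutator estimate: setting $\rho_t^\epsilon := H_\epsilon \rho_t$, one needs
\[
  C_\epsilon(t) \;:=\; H_\epsilon\bigl(\diver(\rho_t V_t)\bigr) - \diver\bigl(H_\epsilon(\rho_t)\, V_t\bigr) \;\longrightarrow\; 0
\]
in $L^1([0,T], L^1_{\mathrm{loc}}(m))$ as $\epsilon \to 0$. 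Expanding $C_\epsilon$ via the second-order calculus recalled in Section 2 (in particular the identity dual to $\Hess$ on test vector fields), the only terms that survive in the limit are controlled by the symmetric part of $\nabla V_t$ together with $\diver(V_t)$, which explains the remark of the authors that only $\sym(\nabla V_t)$ really enters. Once $C_\epsilon \to 0$ is established, the renormalization step $\partial_t \beta(\rho_t^\epsilon) + \diver(\beta(\rho_t^\epsilon) V_t) = (\beta(\rho_t^\epsilon) - \rho_t^\epsilon \beta'(\rho_t^\epsilon)) \diver(V_t) + \text{(vanishing terms)}$ with $\beta(s) = s^2 \wedge M$ yields, after integration and Gronwall using the hypothesis $(\diver V_t)^- \in L^1_tL^\infty_x$, that $\|\rho_t\|_{L^2} \equiv 0$. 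Uniqueness of the RLF then follows from uniqueness at the PDE level by an application of the superposition principle in the $\RCD$ setting, and the explicit compressibility estimate \eqref{RLF existence eq1} is obtained by applying renormalization to $\rho_t = \tfrac{d(F_t)_*m}{dm}$ and Gronwall, exactly as in the approximation step.
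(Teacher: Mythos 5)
The theorem you are proving is stated in the paper as a citation to Ambrosio--Trevisan \cite{AT14}; the paper itself provides no proof, so the natural benchmark is the argument in that reference. Your account of the uniqueness half is essentially correct and captures the decisive ingredients: the commutator $C_\epsilon = H_\epsilon(\diver(\rho_t V_t)) - \diver(H_\epsilon(\rho_t) V_t)$ must be shown to vanish in $L^1_t L^1_{loc}$, the $\BE(K,\infty)$/Bochner machinery is used to control the error by the symmetric part of $\nabla V_t$ (which is why, as the paper notes, only $\sym(\nabla V_t)$ is really needed), and the truncated renormalization $\beta(s)=s^2\wedge M$ together with Gronwall and $(\diver V_t)^-\in L^1_t L^\infty_x$ forces any nonnegative solution of the continuity equation with vanishing initial datum to be zero. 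Likewise, your derivation of the compressibility estimate \eqref{RLF existence eq1} by renormalizing the density and applying Gronwall is the right one.

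There is, however, a genuine gap in your existence step. You propose to mollify $V_t$ (in space via the heat semigroup, in time by convolution) to obtain regularized fields $V_t^\epsilon$ and then appeal to ``classical ODE theory'' to produce flows $F_t^\epsilon$. On a general $\RCD(K,N)$ space this step is unavailable: an element of $L^2(TX)$, even a test vector field in $\TestV(X)$, is an element of an abstract $L^2$-normed $L^\infty$-module, not a section of a pointwise tangent bundle, so there is no Cauchy--Lipschitz/Picard--Lindel\"of theorem to invoke and hence no pointwise-defined $F_t^\epsilon$ whose pushforward density you can compute. The machinery of the ambient Euclidean or Riemannian case simply does not transfer. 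In \cite{AT14} existence proceeds entirely at the PDE level: one first constructs solutions of the continuity equation (for instance by a viscous/parabolic regularization $\partial_t \rho_t + \diver(\rho_t V_t) = \sigma\Delta\rho_t$ followed by passage to the limit $\sigma\downarrow 0$, with the density bound established directly at this level), and only \emph{afterwards} does one pass to a measure on curves by the superposition principle --- which in the metric measure setting is itself a nontrivial extension of Ambrosio's Euclidean theorem and must be proved, not merely quoted. The RLF is then extracted by disintegrating that measure against its initial marginal and using uniqueness of the continuity equation to show the disintegration is (essentially) concentrated on a single curve, yielding a Borel map. So the superposition principle is doing much more work than your sketch suggests: it is not a compactness device at the level of approximate flows, but the very mechanism that converts a Eulerian solution into a Lagrangian one, because there is no Lagrangian object to begin with.
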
 

\begin{rem}\label{RLF local volume bound}
	It was pointed out to the author by Nicola Gigli that the estimate \eqref{RLF existence eq1} can be localized for any $S \in \mathscr{B}(X)$,
	\begin{equation*}
		(F_t)_{*}(m\rvert_S) \leq exp(\int\limits_{0}^{t} \norm{\diver(V_s)^-}_{L^{\infty}((F_s)_*(m \rvert_S))}\, ds)m.
	\end{equation*}
	This follows from \cite[(4-22)]{AT14} choosing $\beta(z) := z^p$ for $p \to \infty$. 
\end{rem}


For $(F_t)$ an RLF to some $(V_t)$, we will be interested in expressing  $\frac{d}{dt} d(F_{t}(x),F_{t}(y))$ in two ways: using $V_t$ in a first order variation formula and $\nabla V_t$ in a second order formula, see \eqref{Hessian interpolation}, which we show in Subection \ref{subsection 6.3}.

\begin{prop}\label{first order differentiation} (First order differentiation formula along RLFs)
Let $T > 0$ and $U, V \in L^1([0,T],$ $L^2(TX))$. If $(F_t),(G_t)$ are the Regular Lagrangian flows of $(U_t), (V_t)$ respectively, then for $m$-a.e. $x,y \in X$, $d(F_t(x),G_t(y)) \in W^{1,1}([0,T])$ and 
\begin{equation*}
	\frac{d}{dt} d(F_t(x),G_t(y))= \inner{\nabla d_{G_t(y)}}{U_t}(F_t(x)) + \inner{\nabla d_{F_t(x)}}{V_t}(G_t(y)) \; \; \text{ for a.e. $t \in [0,T].$}
\end{equation*}
\end{prop}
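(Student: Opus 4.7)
The strategy I would pursue is to lift both flows to a single Regular Lagrangian flow on the product space $X \times X$ and then apply the RLF chain rule (Remark \ref{RLF W12loc}) to the distance function viewed as a globally Lipschitz function on the product. First, by the tensorization property of the $\RCD$ condition, $(Y, d_Y, m_Y) := (X \times X, d_X \oplus_2 d_X, m \otimes m)$ is an $\RCD(K, 2N)$ space. In Gigli's framework its tangent module decomposes, at $m_Y$-a.e. $(x_0, y_0)$, as an orthogonal sum of the two factor tangent modules, with pointwise norm $|(u, v)|^2 = |u|^2 + |v|^2$. The pair $W_t := (U_t, V_t)$ therefore defines a time-dependent vector field on $Y$ lying in $L^1([0,T], L^2(TY))$, whose divergence and covariant derivative inherit the respective bounds on $U_t$ and $V_t$, so that Theorem \ref{RLF existence} applies on $Y$.

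Next, I would verify that the product map $H_t(x, y) := (F_t(x), G_t(y))$ is the (unique) Regular Lagrangian flow of $W_t$ on $Y$. Conditions (1) and (3) of Definition \ref{RLF} are immediate from the corresponding properties of $(F_t)$ and $(G_t)$. Condition (2) for test functions of the product form $\varphi(x)\psi(y)$ with $\varphi, \psi \in \TestF(X)$ follows from the Leibniz rule together with the individual RLF identities \eqref{RLF eq1} for $(F_t)$ and $(G_t)$; since linear combinations of such tensor products are dense in $\TestF(Y)$ in the $W^{1,2}$ topology, this extends to all test functions on $Y$ by the a priori bound $(H_t)_*m_Y \leq C m_Y$ and a standard approximation.

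Now the distance function $d_X\co Y \to \mathbb{R}$ is $\sqrt{2}$-Lipschitz with respect to $d_Y$, so $d_X \in W^{1,2}_{loc}(Y)$. Under the splitting of the product tangent module, and using the tensorization of Cheeger energies on $Y$ together with Fubini, the gradient of $d_X$ at $(x_0, y_0)$ decomposes as
\[
    \nabla d_X(x_0, y_0) \;=\; \bigl( \nabla d_{y_0}(x_0),\, \nabla d_{x_0}(y_0) \bigr)
\]
for $m_Y$-a.e. $(x_0, y_0)$, since $\nabla d_{y_0}$ is the gradient of $d_X$ along the slice $X \times \{y_0\}$ and analogously for the other factor. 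Applying Remark \ref{RLF W12loc} on $Y$ to $d_X$ along $H_t$ then gives, for $m_Y$-a.e. $(x, y)$, that $t \mapsto d(F_t(x), G_t(y))$ is in $W^{1,1}([0,T])$ with derivative
\[
    \langle \nabla d_X, W_t \rangle(H_t(x,y)) \;=\; \inner{\nabla d_{G_t(y)}}{U_t}(F_t(x)) + \inner{\nabla d_{F_t(x)}}{V_t}(G_t(y))
\]
for a.e. $t$. A final Fubini argument extracts the stated conclusion for $m \otimes m$-a.e. pair $(x,y)$, hence for $m$-a.e. $x$ and $m$-a.e. $y$.

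The main obstacle I anticipate is the bookkeeping needed to verify the product claims rigorously within Gigli's modules framework: that $(F_t, G_t)$ is the Regular Lagrangian flow of $W_t = (U_t, V_t)$ on $Y$ (which requires the tensor structure of test functions, divergences, and symmetric covariant derivatives), and that the gradient of $d_X$ splits in the claimed way at almost every point. Both should be routine consequences of the $\RCD$ tensorization theorem, but they have to be stated carefully; the alternative of working directly on $X$ by separately differentiating in each variable runs into the difficulty that both endpoints move simultaneously, making the two-sided chain rule awkward without the product lift.
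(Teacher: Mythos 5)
Your proposal follows essentially the same route as the paper: pass to the product space via tensorization, show $(F_t,G_t)$ is the RLF of the paired vector field $W_t = (U_t, V_t)$ (which the paper delegates to \cite[Proposition A.2]{BS20}), and then apply the RLF chain rule to the Lipschitz function $d_X$ on $X\times X$, whose gradient splits as $(\nabla d_{y_0},\nabla d_{x_0})$. One small correction: since $|W_t|^2(x,y)=|U_t|^2(x)+|V_t|^2(y)$ is not integrable over $X\times X$ when $m(X)=\infty$, $W_t$ lies only in $L^1([0,T],L^2_{loc}(T(X\times X)))$ rather than $L^2$, so the appeal to Theorem~\ref{RLF existence} on $Y$ should be dropped (it is not needed anyway, since $(F_t,G_t)$ is given) and one should work in the locally-$L^2$ RLF framework of \cite{GR18}, as the paper does.
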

\begin{proof}
	It is known that $\RCD(K, \infty)$ spaces have the tensorization of Cheeger energy property from \cite[Theorem 6.17]{AGS14b} and the density of the product algebra property from \cite[Proposition A.1]{BS20}, see also \cite[Defintion 3.8, 3.9]{GR18} for definitions.  Consider the vector field $(W_t)$ defined by requiring, for all $f \in W^{1,2}(X \times X)$,
	\begin{equation*}
		\inner{W_t}{\nabla f}(x,y) = \inner{U_t}{\nabla f_y}(x) + \inner{V_t}{\nabla f_x}(y),
	\end{equation*}
for $(m \times m)$-a.e. $(x,y) \in X \times X$. The tensorization of Cheeger energy is used implicitly in this definition and the vector field is naturally in $L^1([0,T], L^2_{loc}(T(X \times X)))$. We refer to \cite{GR18} for a rigorous treatment of locally $L^2$ vector fields and the corresponding theory of RLFs. We mention a slightly more careful, alternative definition of $(W_t)$ was also given in \cite[Proposition 3.7, Theorem 3.13]{GR18}, where the expected decomposition of the module $L^0(T^*(X \times X))$ was shown for spaces with tensorization of Cheeger energy and density of the product algebra properties. By \cite[Proposition A.2]{BS20}, $(F_t, G_t)$ is an RLF of $(W_t)$, from which the proposition follows by definition of $(W_t)$.
\end{proof}

\subsection{Continuity equation}

We give a brief summary of the theory of continuity equations in this section. These are intimately related to Regular Lagrangian flows but provide a more convenient language for the discussion of local flows in cases where RLFs, which as defined are of a global nature, may not exist. 

\begin{de}\label{bounded compressions}{Curves of bounded compression \cite[Definition 2.3.21]{G18}}
We say a curve $(\mu_t)_{t \in [0,T]} \subseteq \mathcal{P}_2(X)$ is a curve of bounded compression iff 
\begin{enumerate}
	\item It is $W_2$-continuous;
	\item For some $C > 0$, $\mu_t \leq Cm$ for every $t \in [0,T]$. 
\end{enumerate}
\end{de}

\begin{de}\label{continuity equation}(Solutions of continuity equation \cite{GH15}, \cite[Definition 2.3.22]{G18})
Let $(\mu_t)_{t \in [0,T]}$ $\subseteq \mathcal{P}_2(X)$ be a curve of bounded compression and $V \in L^1([0,T],$$L^2(TX))$. We say that $(\mu_t, V_t)$ solves the continuity equation
\[
\frac{d}{d t}\mu_t+\diver(V_t\mu_t)=0
\]
iff, for every $f \in W^{1,2}(X)$, the map $t\mapsto\int f\,d\mu_t$ is absolutely continuous and satisfies
\begin{equation}\label{continuity equation eq1}
\frac{d}{d t}\int f\,d \mu_t=\int  df(V_t)\,d\mu_t \; \; \text{ for a.e. }t\in[0,T]. 
\end{equation}
\end{de}

\begin{rem}\label{continuity equation notation}
By abuse of notation we will sometimes say $(\mu_t)$ solves the continuity equation $\frac{d}{d t}\mu_t+\diver(V_t\mu_t)=0$ for some vector field $(V_t)$ which is only locally $L^2$, for example, $V_t = -\nabla d_p$ for some $p \in X$. In this case, $(\mu_t)$ is always compactly supported for every $t$ and it is understood that we cut off the vector field $V_t$ outside of this support. 
\end{rem}

As shown in \cite{AT14}, RLFs are very closely related to the solutions of continuity equations; they can be thought of as realizations of these solutions as maps on the space itself. 

\begin{thm}\label{continuity equation = RLF}(\cite{AT14})
Let $(V_t)$ satisfy the conditions of \ref{RLF existence} and $(F_t)$ be the corresponding unique Regular Lagrangian flow. If $\mu_0 \in \mathcal{P}_2(X)$ with bounded density, then $\mu_t := (F_t)_{*}(\mu_0)$ is a distributional solution of the continuity equation $\frac{d}{d t}\mu_t+\diver(V_t\mu_t)=0$. 
\end{thm}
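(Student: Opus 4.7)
The plan is to verify directly the three requirements: bounded compression of $(\mu_t)$, absolute continuity of $t\mapsto \int f\,d\mu_t$, and the differential identity \eqref{continuity equation eq1}. First, since $\mu_0 = \rho_0 m$ with $\rho_0\in L^\infty(m)$, the push-forward bound for the RLF (Theorem \ref{RLF existence}, estimate \eqref{RLF existence eq1}) gives $\mu_t = (F_t)_{*}\mu_0 \leq \|\rho_0\|_{L^\infty}(F_t)_*m \leq C\,m$ uniformly in $t\in[0,T]$, which yields the uniform density bound. For $W_2$-continuity, I would combine pointwise continuity $t\mapsto F_t(x)$ (part 1 of Definition \ref{RLF}) with dominated convergence applied to $d^2(F_t(x),F_{t_0}(x))$ against $\mu_0$, using a uniform bound for second moments coming from $V\in L^1([0,T],L^2(TX))$ and the $W_2$-absolute continuity of $t\mapsto\mu_t$ that follows from the continuity equation heuristic (alternatively, one can establish $W_2$-continuity at the end by reading it off from \eqref{continuity equation eq1}).

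Next, I would prove the identity \eqref{continuity equation eq1} first for $f\in\TestF(X)$. From the RLF property, for $m$-a.e. $x$ the map $t\mapsto f(F_t(x))$ lies in $W^{1,1}([0,T])$ with $\frac{d}{dt}f(F_t(x)) = df(V_t)(F_t(x))$, and hence
\begin{equation*}
	f(F_t(x)) - f(x) = \int_0^t df(V_s)(F_s(x))\,ds
\end{equation*}
for $m$-a.e. $x$, and in particular for $\mu_0$-a.e. $x$ since $\mu_0\ll m$. Integrating against $\mu_0$ and swapping the $s$-integral with the $x$-integral by Fubini is justified by
\begin{equation*}
	\int_0^T\!\!\int|df(V_s)(F_s(x))|\,d\mu_0(x)\,ds = \int_0^T\!\!\int|df(V_s)|\,d\mu_s\,ds \leq C\|df\|_{L^\infty}\int_0^T\|V_s\|_{L^2(TX)}\,ds < \infty,
\end{equation*}
using $f\in\TestF(X)$ so $|df|\in L^\infty$, together with $\mu_s\leq C m$ and $V\in L^1([0,T],L^2(TX))$. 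This gives $\int f\,d\mu_t-\int f\,d\mu_0 = \int_0^t\int df(V_s)\,d\mu_s\,ds$, from which absolute continuity of $t\mapsto\int f\,d\mu_t$ and \eqref{continuity equation eq1} follow.

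Finally, I would extend to arbitrary $f\in W^{1,2}(X)$ by density of $\TestF(X)$ in $W^{1,2}(X)$ (guaranteed in the $\RCD(K,N)$ setting). Given $f_n\in\TestF(X)$ with $f_n\to f$ in $W^{1,2}(X)$, the bounded density bound $\mu_t\leq Cm$ yields, via Cauchy--Schwarz, $\int f_n\,d\mu_t\to \int f\,d\mu_t$ uniformly in $t$, while
\begin{equation*}
	\int_0^T\!\!\int|df_n(V_s)-df(V_s)|\,d\mu_s\,ds \leq C\Bigl(\int_0^T\|V_s\|_{L^2}^2\,ds\Bigr)^{1/2}\Bigl(\int_0^T\|d(f_n-f)\|_{L^2}^2\,ds\Bigr)^{1/2}\to 0.
\end{equation*}
Passing to the limit in the integrated form preserves the identity and absolute continuity, so \eqref{continuity equation eq1} holds for all $f\in W^{1,2}(X)$.

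The main subtlety I anticipate is the Fubini exchange in the second step, which needs the locally $L^2$ nature of $V_t$ (not uniform-in-$t$ boundedness) to be reconciled with the integrability of $df(V_s)$ against the product measure $d\mu_s\,ds$; once the uniform bound $\mu_s\leq C m$ is combined with $|df|\in L^\infty$ and $V\in L^1([0,T],L^2)$ this is routine, so no serious obstruction remains. No additional structural properties of $(F_t)$ beyond Definition \ref{RLF} are used, which matches the statement of Theorem \ref{continuity equation = RLF}.
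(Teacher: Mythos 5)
The paper does not prove this statement: it is quoted from Ambrosio--Trevisan \cite{AT14} and used as a black box (the theorem header carries the citation, and Remark \ref{continuity equation existence} immediately after clarifies that the surrounding existence/uniqueness theory all lives in \cite{AT14}). So there is no ``paper's own proof'' to compare against; what you have written is a reconstruction of the argument in \cite{AT14}. The overall strategy is the right one: push-forward bound for the density, test-function identity by integrating the RLF ODE along trajectories and Fubini, then approximation. A few remarks on where care is needed.

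Your Fubini estimate for $f\in\TestF(X)$ is correct, but implicitly uses Cauchy--Schwarz on the probability measure $\mu_s$: one has
\begin{equation*}
\int |V_s|\,d\mu_s \;\le\; \|V_s\|_{L^2(\mu_s)} \;\le\; \sqrt{C}\,\|V_s\|_{L^2(m)},
\end{equation*}
and this is where $\mu_s\le Cm$ and $\mu_s(X)=1$ are both needed; worth making explicit. In the final density estimate you apply Cauchy--Schwarz in the time variable and bound by $\bigl(\int_0^T\|V_s\|_{L^2}^2\,ds\bigr)^{1/2}$, but the hypothesis is only $V\in L^1([0,T],L^2(TX))$, so $\int_0^T\|V_s\|_{L^2}^2\,ds$ need not be finite. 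This step fails as written. It is easily repaired, and the fix is actually simpler than what you wrote: since $\|d(f_n-f)\|_{L^2(m)}$ does not depend on $s$, just pull it out to get
\begin{equation*}
\int_0^T\!\!\int|d(f_n-f)(V_s)|\,d\mu_s\,ds \;\le\; C\,\|d(f_n-f)\|_{L^2(m)}\int_0^T\|V_s\|_{L^2(m)}\,ds \;\longrightarrow\; 0,
\end{equation*}
which uses exactly the given integrability of $V$.

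On $W_2$-continuity: you are too casual for my taste, but the claim does hold, and the needed uniform second-moment bound really does follow from $V\in L^1([0,T],L^2)$ --- the right tool is Minkowski's integral inequality, not Cauchy--Schwarz in time. Indeed, $d(F_t(x),x)\le\int_0^T|V_s|(F_s(x))\,ds$ for $m$-a.e.\ $x$ by Proposition \ref{RLF speed bound}, and
\begin{equation*}
\Bigl\|\int_0^T|V_s|(F_s(\cdot))\,ds\Bigr\|_{L^2(\mu_0)} \;\le\; \int_0^T\|V_s\|_{L^2(\mu_s)}\,ds \;\le\; \sqrt{C}\int_0^T\|V_s\|_{L^2(m)}\,ds<\infty,
\end{equation*}
which gives an $L^2(\mu_0)$ dominating function and hence both $\mu_t\in\mathcal P_2(X)$ and, by dominated convergence combined with continuity of $t\mapsto F_t(x)$, the desired $W_2$-continuity. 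I would drop the alternative suggestion to ``read off $W_2$-continuity from \eqref{continuity equation eq1}'': the paper's Definition \ref{continuity equation} requires $(\mu_t)$ to be a curve of bounded compression (hence $W_2$-continuous) \emph{before} the PDE identity even makes sense, so that route is at best circular unless you import the metric-speed estimate from \cite{GH15} wholesale, at which point you are no longer proving this theorem but citing the machinery it belongs to.
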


\begin{rem}\label{continuity equation existence}
The existence and uniqueness of solutions to $\frac{d}{d t}\mu_t+\diver(V_t\mu_t)=0$ starting at some $\mu_0$ of bounded density is proved in \cite{AT14} for $(V_t)$ satisfying the conditions of Theorem \ref{RLF existence}. In fact, the existence and uniqueness of RLFs in \ref{RLF existence} is shown in part by using the existence and uniqueness on the level of continuity equations combined with a superposition principle.
\end{rem}

RLFs from vector fields with a two-sided divergence bound are $m$-a.e. invertible. To be precise,

\begin{prop}\label{RLF inverse}
Let $(V_t) \in L^1([0,T],L^2(TX))$ satisfy $V_t\in D(div)$ for a.e. $t \in [0,T]$ with
\begin{equation*}
	div(V_t) \in L^1([0,T],L^2(m))\; \; \; div(V_t) \in L^1([0,T], L^{\infty}(m))\; \; \; \nabla V_t \in L^1([0,T],L^2(T^{\otimes 2}X)).
\end{equation*}
Let $(F_t)$ be the unique RLF of $(V_t)_{t \in [0,T]}$ and $(G_t)$ be the unique RLF of $(-V_{T-t})_{t \in [0,T]}$. For $m$-a.e. $x \in X$ and any $0 \leq t \leq T$,
\begin{equation*}
	G_{t}(F_T(x)) = F_{T-t}(x).
\end{equation*}
\end{prop}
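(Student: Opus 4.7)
The plan is to double the time interval and construct a single vector field on $[0, 2T]$ whose Regular Lagrangian flow, by uniqueness in Theorem \ref{RLF existence}, can be written in two different ways that together give the claim. Concretely, I would define $W:[0,2T]\to L^2(TX)$ by $W_s := V_s$ on $[0,T]$ and $W_s := -V_{2T-s}$ on $[T,2T]$. All hypotheses of Theorem \ref{RLF existence} transfer directly from $V$ to $W$, because reversal only flips signs of $\diver$ and $\nabla$, and the two-sided $L^\infty$ bound on $\diver V_t$ guarantees that both $(\diver W_s)^+$ and $(\diver W_s)^-$ lie in $L^1([0,2T], L^\infty(m))$. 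Hence $W$ admits a unique RLF on $[0,2T]$.

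Next I would display two candidates for this RLF. Let $H_s(x) := F_s(x)$ on $[0,T]$ and $H_s(x) := G_{s-T}(F_T(x))$ on $[T,2T]$. Let $H'_s(x) := F_s(x)$ on $[0,T]$ and $H'_s(x) := F_{2T-s}(x)$ on $[T,2T]$. Both satisfy $H_0 = H'_0 = \mathrm{id}$ and are continuous in $s$; the two pieces glue at $s=T$ because $G_0 = \mathrm{id}$ and $F_{2T-T} = F_T$. For each $f \in \TestF(X)$, the chain of identities arising from the RLF property of $F$ and $G$ yields the correct derivative $df(W_s)$ on both halves of $[0,2T]$; the sign flip on $[T,2T]$ coming from differentiating $s\mapsto F_{2T-s}$ is exactly absorbed by the sign flip in $W$. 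For the compression bound, $H$ requires composing the bounds for $F$ and $G$, giving $(H_s)_*(m)\leq C_F C_G\, m$; $H'$ reduces to the bound for $F$.

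Uniqueness in Theorem \ref{RLF existence} then forces $H_s(x) = H'_s(x)$ for $m$-a.e.\ $x$, for every $s \in [0,2T]$. Specializing to $s = T+t$ gives $G_t(F_T(x)) = F_{T-t}(x)$ for $m$-a.e.\ $x$, which is the stated identity. To obtain a single null set handling all $t\in[0,T]$ at once, I would combine $m$-a.e.\ uniqueness at countably many rational times with the continuity in $s$ that is built into the definition of an RLF.

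I expect the main technical point, rather than a real obstacle, to be the careful verification that $H$ and $H'$ genuinely satisfy Definition \ref{RLF} on the full interval: the piecewise-defined curves must be absolutely continuous in time across $s = T$ (not merely on each subinterval), the ODE identity must hold for a.e.\ $s \in [0,2T]$ and $m$-a.e.\ $x$ with the single vector field $W$, and the compression bound must survive the composition. The gluing at $s=T$ is precisely where the concatenated $W$ earns its keep, since it is there that a direct uniqueness argument for reversed flows (without passing to a doubled interval) would require inverting $F_T$, which is exactly the output we are trying to produce.
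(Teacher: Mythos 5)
Your proof is correct and uses exactly the paper's central device: concatenate $V$ with its time reversal to form $W$ on the doubled interval $[0,2T]$ and invoke uniqueness to identify the reversed flow with the inverse. The only implementation difference is that the paper runs the uniqueness argument at the level of continuity-equation solutions (defining $\mu_t$ and $\mu'_t$ both solving the CE for $W$, cf.\ Remark \ref{continuity equation existence}) and then converts back to the maps, whereas you invoke RLF uniqueness from Theorem \ref{RLF existence} directly on the two candidate flows $H$ and $H'$; since RLF uniqueness in \cite{AT14} is itself deduced from CE uniqueness plus superposition, this is the same ingredient consumed one layer up, and your route has the small advantage that $H=H'$ $m$-a.e.\ already gives the identity for every $t\in[0,T]$ simultaneously, making the closing rational-time argument optional.
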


\begin{proof}
	We first show $G_{T}(F_T(x)) = x$ for $m$-a.e. $x \in X$. Define the time-dependent $L^2$ vector field $(W_t)_{t \in [0,2T]}$ by
\begin{align*}
W_t :=\begin{cases}
	V_t &\text{ if } 0\leq t \leq T\\
	-V_{2T-t} &\text{ if } T < t \leq 2T.
	\end{cases}
\end{align*}
For any $\mu$ with compact support and bounded density, $(\mu_t)_{t \in [0,2T]}$ defined by
\begin{align*}
\mu_t :=\begin{cases}
	(F_t)_{*}(\mu) &\text{ if } 0\leq t \leq T\\
	(G_{t-T})_{*}((F_T)_{*}(\mu)) &\text{ if } T < t \leq 2T
	\end{cases}
\end{align*}
solves the continuity equation $\frac{d}{d t}\mu_t+\diver(W_t\mu_t)=0$ by Theorem \ref{continuity equation = RLF}. This in particular means $(\mu_t)_{t \in [0,T]}$ solves the continuity equation $\frac{d}{d t}\mu_t+\diver(V_t\mu_t)=0$ on $[0,T]$. It is then easy to check by definition that
\begin{align*}
\mu'_t :=\begin{cases}
	\mu_t &\text{ if } 0\leq t \leq T\\
	\mu_{2T-t} &\text{ if } T < t \leq 2T
	\end{cases}
\end{align*}
solves the continuity equation $\frac{d}{d t}\mu'_t+\diver(W_t\mu'_t)=0$ as well. By uniqueness, see Remark \ref{continuity equation existence}, $(G_{T})_{*}((F_T)_{*}(\mu)) = \mu$. Since this is true for any $\mu$ with compact support and bounded density, we conclude $G_{T}(F_T(x)) = x$ for $m$-a.e. $x \in X$. 

By the same argument for each $t$ in the countable set $\mathbb{Q} \cap [0,T]$, we have for $m$-a.e. $x \in X$ and any $t \in \mathbb{Q} \cap [0,T]$, 
\begin{equation*}
	G_{t}(F_T(x)) = F_{T-t}(x).
\end{equation*}

The proposition follows by continuity of $G_t(x)$ and $F_t(x)$ in $t$ for all $x \in X$. 
\end{proof}

We recall the following result fom \cite{G13} which in particular implies $W_2$-geodesics with uniformly bounded densities are solutions of continuity equations.

\begin{thm}\label{W2 geodesic cont equation}(\cite[Theorem 1.1]{GT18})
Let $\mu_t$ be a $W_2$-geodesic with compact support and $\mu_t \leq Cm$ for every $t \in [0,1]$ and some $C>0$. If $f \in W^{1,2}(X)$ then the map $[0,1]\ni t \mapsto \int f d\mu_t$ is $C^1([0,1])$ and
\begin{equation*}
	\frac{d}{dt} \int f d\mu_t = \int \inner{\nabla f}{\nabla \phi_t} \, d\mu_t,
\end{equation*}
where $\phi_t$ is any function such that for some $s \neq t$, $s \in [0,1]$, the function $-(s-t)\phi_t$ is a Kantorovich potential from $\mu_t$ to $\mu_s$.
\end{thm}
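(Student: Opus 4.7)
The plan is to lift the $W_2$-geodesic to an optimal dynamical plan, differentiate the integral under the lift along each individual geodesic, and then identify the pointwise $t$-derivative with $\inner{\nabla f}{\nabla\phi_t}$ via Kantorovich duality. By Theorem \ref{W2 lift} there exists $\boldsymbol{\pi}\in\mathcal{P}(\Geo(X))$ with $(e_t)_*\boldsymbol{\pi}=\mu_t$, so $\int f\,d\mu_t=\int f(\gamma_t)\,d\boldsymbol{\pi}(\gamma)$. For $f\in\lip_b(X)$ the map $t\mapsto f(\gamma_t)$ is absolutely continuous with $\bigl|\tfrac{d}{dt}f(\gamma_t)\bigr|\leq \lip(f)(\gamma_t)\,|\dot\gamma_t|$, and since $|\dot\gamma_t|$ is the constant $W_2(\mu_0,\mu_1)$ on each geodesic, dominated convergence (using $\mu_t\leq Cm$ to control $\lip(f)(\gamma_t)$) lets one differentiate under $\boldsymbol{\pi}$, giving $\tfrac{d}{dt}\int f\,d\mu_t=\int\tfrac{d}{dt}f(\gamma_t)\,d\boldsymbol{\pi}(\gamma)$.

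The heart of the proof is to show that for $\boldsymbol{\pi}$-a.e.\ $\gamma$ and a.e.\ $t$, $\tfrac{d}{dt}f(\gamma_t)=\inner{\nabla f}{\nabla\phi_t}(\gamma_t)$. Here I would use the Kantorovich duality hypothesis: if $\psi=-(s-t)\phi_t$ is a Kantorovich potential from $\mu_t$ to $\mu_s$, then the support of $(e_t,e_s)_*\boldsymbol{\pi}$ is $c$-cyclically monotone with $c=\frac{d^2}{2}$, and $\psi$ is a $c$-concave function attaining equality in the Kantorovich dual on $\supp((e_t,e_s)_*\boldsymbol{\pi})$. Applying the Hopf--Lax subsolution characterization of such potentials (and the infinitesimally Hilbertian calculus developed earlier in this section) forces, for $\boldsymbol{\pi}$-a.e.\ $\gamma$, the metric derivative of $f\circ\gamma$ at $t$ to be realized by the pairing of $df$ with $\nabla\phi_t$ at $\gamma_t$. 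This is the analogue of the infinitesimal Brenier map on the $\RCD$ space.

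To pass from Lipschitz $f$ to arbitrary $f\in W^{1,2}(X)$, I would approximate $f$ by a sequence $f_n\in\TestF(X)\cap\lip_b(X)$ in $W^{1,2}$-norm; the uniform density bound $\mu_t\leq Cm$ yields $\int f_n\,d\mu_t\to\int f\,d\mu_t$ and, via Cauchy--Schwarz together with the uniform pointwise bound $|\nabla\phi_t|\leq W_2(\mu_0,\mu_1)$ $\mu_t$-a.e.\ (which follows from the Lipschitz constant of a $c$-concave function), $\int\inner{\nabla f_n}{\nabla\phi_t}\,d\mu_t\to\int\inner{\nabla f}{\nabla\phi_t}\,d\mu_t$ uniformly in $t$. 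This transfers both the AC differentiation formula and its right-hand side to $f$. The $C^1$ assertion then reduces to continuity of $t\mapsto\int\inner{\nabla f}{\nabla\phi_t}\,d\mu_t$, which I would establish by verifying weak continuity of the momentum $\nabla\phi_t\mu_t$ in the sense of distributions (its action on test functions is essentially constant in $t$ for test functions whose gradient is constant along the flow) and using a density/equicontinuity argument anchored on $\TestF(X)$.

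The main obstacle is the identification step: on an $\RCD$ space the ``velocity of the geodesic'' is only defined $\mu_t$-a.e.\ on $X$, not intrinsically on an individual geodesic $\gamma$. Translating the Kantorovich duality condition, which is a statement about $c$-cyclical monotonicity of pairs of endpoints, into a pointwise infinitesimal assertion along $\boldsymbol{\pi}$-a.e.\ curve requires essentially the full Hopf--Lax subsolution machinery together with the infinitesimally Hilbertian structure; this is precisely the content of the cited work and is the step I would expect to absorb most of the technical effort.
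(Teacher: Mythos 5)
The paper quotes this result without proof (it is Theorem~1.1 of Gigli--Tamanini and, as the surrounding text notes, originates with Gigli's splitting-theorem paper), so your proposal is judged against the published argument rather than anything internal to this paper.

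You identify the right toolkit --- lifting to an optimal dynamical plan, differentiating under $\boldsymbol{\pi}$ for Lipschitz data, metric Brenier and Kantorovich duality for the velocity, a density argument for general $W^{1,2}$ functions --- and you honestly flag where the difficulty sits. But the sketch stops being a proof exactly there. Asserting that Hopf--Lax subsolution analysis and infinitesimal Hilbertianity ``force'' $\tfrac{d}{dt}f(\gamma_t)=\inner{\nabla f}{\nabla\phi_t}(\gamma_t)$ for $\boldsymbol{\pi}$-a.e.\ $\gamma$ is a restatement of the theorem along geodesics, not a derivation from the named tools. What metric Brenier actually yields is the \emph{equality} $|\nabla\phi_t|(\gamma_t)=|\dot\gamma_t|=d(\gamma_0,\gamma_1)$ for $\boldsymbol{\pi}$-a.e.\ $\gamma$, so the ``uniform pointwise bound $|\nabla\phi_t|\leq W_2(\mu_0,\mu_1)$'' in your density step is wrong as stated: the pointwise speed $d(\gamma_0,\gamma_1)$ can exceed the $L^2$-average $W_2(\mu_0,\mu_1)$, and the correct uniform bound is in terms of the diameter of the (compact) supports. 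More fundamentally, speed control alone does not fix the sign and direction of the velocity along each geodesic; that step in the published proof uses the horizontal/vertical-derivative comparison (compute $\tfrac{d}{ds}\int\phi_s\,d\mu_s$ once by first variation of the measure and once by Hopf--Lax monotonicity of the potential, then polarize to general $f$), and it leans on essential non-branching to make the per-geodesic velocity well-defined $\mu_t$-a.e.\ --- an assumption you never invoke even though your whole strategy is per-geodesic.

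The $C^1$ upgrade is likewise asserted rather than argued. The parenthetical about test functions ``whose gradient is constant along the flow'' does not describe a reduction that I can follow; the actual continuity of $t\mapsto\int\inner{\nabla f}{\nabla\phi_t}\,d\mu_t$ goes through joint continuity of the Hopf--Lax evolution and stability in $t$ of the associated potentials under the uniform compression bound. Both the central identification and the $C^1$ step remain genuine gaps in the proposal.
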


The corollary below then follows by making the same type of arguments as in \cite[Section 7]{AT14}.

\begin{cor}\label{W2 geodesic cont equation cor}
Let $p \in X$ and $f \in W^{1,2}(X)$ fixing a representatitive. For $m$-a.e. $x \in X$, the map $t \mapsto f(\gamma_{x,p}(t))$ is in $W_{loc}^{1,1}([0,d_{x,p}))$ and 
\begin{equation*}
	\frac{d}{dt} f(\gamma_{x,p}(t)) = -df(\nabla d_p)(\gamma_{x,p}(t)) \; \; \text{for a.e. } t \in [0,d_{x,p}),
\end{equation*}
where $\gamma_{x,p}$ is a unit speed geodesic from $x$ to $p$. 
\end{cor}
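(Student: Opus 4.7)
The plan is to derive the corollary from Theorem~\ref{W2 geodesic cont equation} applied to the $W_2$-geodesic from a suitably chosen $\mu_0$ to $\delta_p$, and then to disintegrate the resulting integral identity along the lifted plan. Fix a Borel selection $x \mapsto \gamma_{x,p}$ of unit speed geodesics from $x$ to $p$ (Remark~\ref{a.e. unique geodesic}), and let $\tilde{\gamma}_{x,p}(t) := \gamma_{x,p}(td_{x,p})$ denote its constant speed parameterization on $[0,1]$. For any nonnegative $\rho_0 \in L^\infty(m)$ with compact support in $X \setminus \{p\}$ and $\int \rho_0\, dm = 1$, Theorem~\ref{directional BG} yields a unique $W_2$-geodesic $(\mu_t)_{t \in [0,1]}$ from $\mu_0 := \rho_0 m$ to $\delta_p$ which lifts to $\pi = S_*\mu_0 \in \mathcal{P}(\Geo(X))$ with $S(x) := \tilde{\gamma}_{x,p}$, and $\mu_t$ has uniformly bounded density on every $[0, 1-\epsilon]$.

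Next I would identify the potential $\phi_t$ in Theorem~\ref{W2 geodesic cont equation}. A direct duality check (using that the optimal plan from $\mu_t$ to $\mu_s$ is concentrated on pairs $(\tilde{\gamma}_{x,p}(t), \tilde{\gamma}_{x,p}(s))$, with cost $\tfrac{1}{2}(s-t)^2 d_{x,p}^2 = \tfrac{(s-t)}{2(1-t)} d_p^2(\tilde{\gamma}_{x,p}(t)) - \tfrac{(s-t)}{2(1-s)} d_p^2(\tilde{\gamma}_{x,p}(s))$) shows that the locally Lipschitz function $\phi_t := -d_p^2/[2(1-t)]$ satisfies $-(s-t)\phi_t = \tfrac{s-t}{2(1-t)} d_p^2$ being a Kantorovich potential from $\mu_t$ to $\mu_s$, and consequently
\begin{equation*}
\nabla \phi_t(\tilde{\gamma}_{x,p}(t)) = -d_{x,p}\,\nabla d_p(\tilde{\gamma}_{x,p}(t)).
\end{equation*}
Applying Theorem~\ref{W2 geodesic cont equation} on every $[0, 1-\epsilon]$, integrating in time, and swapping the order of integration via Fubini gives
\begin{equation*}
\int \bigl[f(\tilde{\gamma}_{x,p}(t_2)) - f(\tilde{\gamma}_{x,p}(t_1))\bigr]\,d\mu_0(x) = -\int \int_{t_1}^{t_2} d_{x,p}\, df(\nabla d_p)(\tilde{\gamma}_{x,p}(t))\, dt\, d\mu_0(x)
\end{equation*}
for all $0 \leq t_1 < t_2 < 1$. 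Changing variable to arclength $\tau = td_{x,p}$, then varying $\rho_0 = 1_A/m(A)$ over a countable generating algebra of Borel sets $A \subset X \setminus \{p\}$ of finite positive measure together with $t_1, t_2$ over rationals, yields for $m$-a.e.\ $x$ the pointwise identity
\begin{equation*}
f(\gamma_{x,p}(\tau_2)) - f(\gamma_{x,p}(\tau_1)) = -\int_{\tau_1}^{\tau_2} df(\nabla d_p)(\gamma_{x,p}(\tau))\, d\tau
\end{equation*}
for all rational $0 \leq \tau_1 < \tau_2 < d_{x,p}$, from which the $W^{1,1}_{loc}$ regularity and the stated derivative formula follow.

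The main subtlety will be the Fubini step: since $df(\nabla d_p)$ is only defined $m$-a.e., one needs the map $\tau \mapsto df(\nabla d_p)(\gamma_{x,p}(\tau))$ to be well defined and locally integrable for $\mu_0$-a.e.\ $x$. This reduces to showing that the pushforward of $m \otimes \mathcal{L}^1$ by $(x,\tau) \mapsto \gamma_{x,p}(\tau)$ is absolutely continuous on every compact set disjoint from $p$, which follows from the $L^\infty$ density bound in Theorem~\ref{directional BG} applied to $\mu_t = (e_t)_*\pi$ on each $[0, 1-\epsilon]$. A related minor point is that one cannot attempt the argument by realizing an RLF for $-\nabla d_p$ directly, since this vector field is only locally $L^2$ with a one-sided divergence bound (Theorem~\ref{lap d bound}); the Wasserstein/continuity-equation route above circumvents this.
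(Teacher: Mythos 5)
Your proposal follows essentially the same route as the paper's proof: both rest on Theorem~\ref{W2 geodesic cont equation}, applied to the $W_2$-geodesic from a bounded-density $\mu_0$ to $\delta_p$ in the constant-speed parameterization, and both identify the Kantorovich potential $\phi_t = -d_p^2/(2(1-t))$, giving $\nabla\phi_t(\tilde{\gamma}_{x,p}(t)) = -d_{x,p}\nabla d_p(\tilde{\gamma}_{x,p}(t))$; your computation of the potential and of the gradient at interior points is correct, and your remark about why one cannot instead invoke a global RLF for $-\nabla d_p$ is accurate.

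There is, however, a gap in the last step. You pass from the integral identity to a pointwise identity valid, for $m$-a.e.\ $x$, at all \emph{rational} pairs $\tau_1 < \tau_2$, and then assert ``from which the $W^{1,1}_{loc}$ regularity and the stated derivative formula follow.'' For a general $f \in W^{1,2}(X)$ the Borel function $t \mapsto f(\gamma_{x,p}(t))$ need not be continuous, so agreement with an absolutely continuous primitive on a countable dense set does \emph{not} imply agreement for a.e.\ $t$, and hence does not by itself yield membership in $W^{1,1}_{loc}$. This is precisely why the paper splits into two cases: for $f \in \TestF(X)$ one takes the Lipschitz representative, so $f(\tilde{\gamma}_{x,p}(\cdot))$ is continuous and the dense-pair identity suffices; the general $f \in W^{1,2}(X)$ is then handled by approximation with a diagonalization plus Borel--Cantelli plus Fubini argument. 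Your direct route can be repaired without appealing to test functions: fix one endpoint (say $t_1 = 0$) and perform Fubini jointly in $(x,t_2)$ over $X \times [0,1)$. For each fixed $t_2$ the identity holds for $m$-a.e.\ $x$; the Borel set of exceptional $(x,t_2)$ is therefore $(m \times \mathcal{L}^1)$-null, so for $m$-a.e.\ $x$ the identity holds for a.e.\ $t_2$, which is exactly the a.e.\ agreement with an absolutely continuous function that $W^{1,1}_{loc}$ requires. With this fix, your argument becomes a slightly more direct alternative to the paper's two-step scheme; as written, the conclusion is not justified.
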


\begin{proof}
By Remark \ref{a.e. unique geodesic}, we take a Borel selection of $\gamma_{x,p}$ which is unique for $m$-a.e. $x$. For each $x \in X$, let $\tilde{\gamma}_{x,p}:[0,1] \to X$ be the constant speed reparameterization of $\gamma$.

First consider a Lipschitz representative of $f \in \TestF(X)$. Clearly $f(\tilde{\gamma}_{x,p}(t))$ is continuous on $t \in [0,1]$ for each $x$. We show
\begin{enumerate}
	\item for $m$-a.e. $x$,  $\frac{-d(\tilde{\gamma}_{x,p}(t),p)}{1-t}\inner{\nabla f}{\nabla d_p}(\tilde{\gamma}_{x,p}(t)) \in L^1_{loc}([0,1));$
	\item for $m$-a.e. $x$, $f(\tilde{\gamma}_{x,p}(b))-f(\tilde{\gamma}_{x,p}(a))= \int_{a}^{b}  \frac{-d(\tilde{\gamma}_{x,p}(t),p)}{1-t}\inner{\nabla f}{\nabla d_p}(\tilde{\gamma}_{x,p}(t))\, dt$ for any $0 \leq a < b < 1$.
\end{enumerate}

For any $\mu$ with compact support and bounded density with respect to $m$, define $\mu_t := (\tilde{\gamma}_{\cdot,p}(t))_{*}(\mu)$ for $t \in [0,1]$. $(\mu_t)_{t \in [0,1]}$ is a $W_2$-geodesic. By \ref{directional BG}, for any $\delta > 0$, $(\mu_t)_{t \in [0, 1-\delta]}$ is of uniformly bounded density. By Theorem \ref{W2 geodesic cont equation}, the map $[0,1-\delta] \ni t \mapsto \int f d\mu_t$ is in  $C^1([0,1-\delta])$ and 
\begin{equation}\label{W2 geodeisc cont equation cor eq1}
	\frac{d}{dt} \int f d\mu_t = \int \frac{-d(x,p)}{1-t}\inner{\nabla f}{\nabla d_p}(x) d\mu_t(x). 
\end{equation}

Fix a representative of $\inner{\nabla f}{\nabla d_p} \in L^2(m)$.

Proof of 1:   The map $t \mapsto \int \frac{-d(x,p)}{1-t}\inner{\nabla f}{\nabla d_p}(x) \, d\mu_t(x)$ is in $L^1_{loc}([0,1))$ since $\inner{\nabla f}{\nabla d_p}(x) \in L^1_{loc}(m)$ and $\mu_t$ has uniformly bounded support and locally unifomly bounded density on $[0,1)$. By Fubini's theorem, this implies for $\mu$-a.e. $x$, $\frac{-d(\tilde{\gamma}_{x,p}(t),p)}{1-t}\inner{\nabla f}{\nabla d_p}(\tilde{\gamma}_{x,p}(t)) \in L^1_{loc}([0,1))$. Since this is true starting at any measure $\mu$ with compact support and bounded density with respect to $m$, statement 1 follows.

Proof of 2: By continuity of $f(\tilde{\gamma}_{x,p}(t))$ in $t$, it is enough to show that, for $m$-a.e. $x$, 
\begin{equation*}
	f(\tilde{\gamma}_{x,p}(\frac{k}{n}))-f(\tilde{\gamma}_{x,p}(\frac{k-1}{n}))= \int_{\frac{k-1}{n}}^{\frac{k}{n}}  \frac{-d(\tilde{\gamma}_{x,p}(t),p)}{1-t}\inner{\nabla f}{\nabla d_p}(\tilde{\gamma}_{x,p}(t))\, dt
\end{equation*}
for any $n \in \mathbb{N}$ and $1 \leq k \leq n-1$. Assume this is not the case, then there exists some $n$, $k$ and a bound set $S$ with $0 < m(S) < \infty$ so that for each $x \in S$, without loss of generality, $f(\tilde{\gamma}_{x,p}(\frac{k}{n}))-f(\tilde{\gamma}_{x,p}(\frac{k-1}{n}))> \int_{\frac{k-1}{n}}^{\frac{k}{n}}  \frac{-d(\tilde{\gamma}_{x,p}(t),p)}{1-t}\inner{\nabla f}{\nabla d_p}(\tilde{\gamma}_{x,p}(t))\, dt$. Applying \eqref{W2 geodeisc cont equation cor eq1} to a part of the Wasserstein geodesic from the normalization of $m\rvert_{S}$ to $\delta_p$ gives a contradiction. 

The general case of $f \in W^{1,2}(X)$ then follows by an approximation argument. Choose a sequence $f_i \in \TestF(X)$ converging to $f$ in $W^{1,2}(X)$. Using a diagonalization argument with Borel-Cantelli lemma and Fubini's theorem, there exists some subsequence $f_i$ so that for $m$-a.e. $x \in X$, $f_i(\tilde{\gamma}_{x,p}(t)) \to f(\tilde{\gamma}_{x,p}(t))$ in $L^1_{loc}([0,1))$ as functions of $t$. 

For any $\mu$ with compact support and bounded density, and $\mu_t$ defined as before, we also have
\begin{equation*}
	\int \frac{-d(x,p)}{1-t}\inner{\nabla f_i}{\nabla d_p}(x) \, d\mu_t(x) \to \int \frac{-d(x,p)}{1-t}\inner{\nabla f}{\nabla d_p}(x)\, d\mu_t(x) \; \; \text{ in } \; L^1_{loc}([0,1)).
\end{equation*}
Another diagonalization argument with Borel-Cantelli lemma and Fubini's theorem gives a further subsequence $f_i$ so that for $m$-a.e. $x \in X$,  
\begin{equation*}
	\frac{-d(\tilde{\gamma}_{x,p}(t),p)}{1-t}\inner{\nabla f_i}{\nabla d_p}(\tilde{\gamma}_{x,p}(t)) \to\frac{-d(\tilde{\gamma}_{x,p}(t),p)}{1-t}\inner{\nabla f}{\nabla d_p}(\tilde{\gamma}_{x,p}(t)) \; \; \text{ in } \; L^1_{loc}([0,1)).
\end{equation*}
Combining these with statements $1$ and $2$, we have for any $f  \in W^{1,2}(X)$, for $m$-a.e. $x \in X$, the map $t \mapsto f(\tilde{\gamma}_{x,p}(t))$ is in $W_{loc}^{1,1}([0,1))$ and 
\begin{equation*}
	\frac{d}{dt} f(\tilde{\gamma}_{x,p}(t)) = \frac{-d(\tilde{\gamma}_{x,p}(t),p)}{1-t}df(\nabla d_p)(\tilde{\gamma}_{x,p}(t)) \; \; \text{for a.e. } \; t \in [0,1).
\end{equation*}

The corollary then follows by a reparameterization of $\tilde{\gamma}_{x,p}$.
\end{proof}

We will be particularly interested in the following type of object: Let $p \in X$ and $\mu \in \mathcal{P}_2(X)$ be of bounded density with respect to $m$. Take a Borel selection (\ref{a.e. unique geodesic}) of unit speed geodesics $\gamma_{x, p}$ from all $x \in X$ to $p$ and define $T := d(\supp(\mu), p)$. For $0 \leq t \leq T$, define $\mu_t := (\gamma_{\cdot, p}(t))_{*}(\mu)$. $(\mu_t)$ defined this way are more naturally considered $L^1$-Wasserstein geodesics and are well-studied in the theory of needle decomposition of $\RCD$ spaces, see \cite{BC13, C14, CM17b}. We record some properties of these objects which will be needed later.

\begin{thm}\label{-dp grad flow properties}
Let $0 < \delta < T$ and $\mu \leq Am$ for some $A > 0$. Let $(\mu_t)_{t \in [0,T-\delta]}$ be as defined in the previous paragraph and $D := \sup_{x \in \supp(\mu)} d(x,p) \leq \bar{D}$. Then
	\begin{enumerate}
		\item $(\mu_t)_{t \in [0,T-\delta]}$ is a $W_2$-geodesic;
		\item There exists $C(K,N,\bar{D},\delta)$ so that $\mu_t \leq A(1+Ct)^Nm$ for all $t \in [0,T-\delta]$. In particular, the densities of $(\mu_t)_{t \in [0,T-\delta]}$ are uniformly bounded with respect to $m$;
		\item $(\mu_t)_{t \in [0,T-\delta]}$ solves the continuity equation 
			\begin{equation*}
				\frac{d}{d t}\mu_t+\diver(-\nabla d_p \mu_t)=0.
			\end{equation*}
	\end{enumerate}
\end{thm}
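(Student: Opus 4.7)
My plan is to establish (1), (2), and (3) in that order: (1) from a direct coupling argument, (2) by combining (1) with the directional Bishop-Gromov bound in Theorem \ref{directional BG}, and (3) by a Fubini-type argument built on Corollary \ref{W2 geodesic cont equation cor}, with the density bound from (2) justifying integrability.

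For (1), the natural coupling $(x, \gamma_{x,p}(t))_{*}\mu$ gives the upper bound $W_2(\mu_s, \mu_t)^2 \leq \int d(\gamma_{x,p}(s), \gamma_{x,p}(t))^2\, d\mu(x) = (t-s)^2$ for $s, t \in [0,T-\delta]$, since the geodesics are unit speed and $T-\delta < d(x,p)$ for every $x \in \supp(\mu)$. For the matching lower bound, I would use $d_p$ as a 1-Lipschitz test function: since $\int d_p\,d\mu_t = \int d(x,p)\,d\mu(x) - t$ depends linearly on $t$,
\begin{equation*}
W_2(\mu_s, \mu_t) \geq W_1(\mu_s, \mu_t) \geq \Big|\int d_p\, d(\mu_s - \mu_t)\Big| = |t-s|.
\end{equation*}
Hence $W_2(\mu_s,\mu_t) = |t-s|$ throughout $[0, T-\delta]$, so $(\mu_t)$ is a constant-speed $W_2$-geodesic of unit speed.

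For (2), I would pick $\delta' := \delta/2$ and apply (1) on $[0, T-\delta']$ to identify $s \mapsto \mu_{s(T-\delta')}$ as the unique $W_2$-geodesic from $\mu$ to $\mu_{T-\delta'}$ parameterized on $[0,1]$. Since $\mu$ has density bounded by $A$ and both endpoints have compact support contained in $\overline{B_{\bar{D}}(p)}$, Theorem \ref{directional BG} gives
\begin{equation*}
\|\rho_{s(T-\delta')}\|_{L^\infty(m)} \leq \frac{A}{(1-s)^N} \, e^{2\bar{D} s\sqrt{(N-1)K^-}}, \qquad s \in [0,1).
\end{equation*}
For $t \in [0, T-\delta]$, $s = t/(T-\delta')$ satisfies $1 - s \geq (\delta/2)/(T-\delta')$, so the factor $(1-s)^{-N}$ equals $\bigl(1 + t/(T-\delta'-t)\bigr)^N$ which is dominated by $(1 + 2t/\delta)^N$. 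Absorbing the exponential (uniformly bounded on the bounded interval $[0, T-\delta]$) by enlarging the polynomial constant yields $\mu_t \leq A(1+Ct)^N m$ for some $C = C(K, N, \bar{D}, \delta)$.

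For (3), I would fix $f \in W^{1,2}(X)$ and apply Corollary \ref{W2 geodesic cont equation cor}: for $m$-a.e., hence $\mu$-a.e. (since $\mu \leq Am$), $x \in X$, the map $t \mapsto f(\gamma_{x,p}(t))$ is absolutely continuous on $[0, T-\delta]$ with derivative $-df(\nabla d_p)(\gamma_{x,p}(t))$ for a.e.\ $t$. Integrating against $\mu$ and exchanging the $dt$ and $d\mu$ integrals via Fubini---justified by the uniform density bound from (2) together with $df(\nabla d_p) \in L^2_{loc}(m)$ and the fact that $\supp(\mu_s)$ lies in a fixed compact set away from $p$---yields
\begin{equation*}
\int f\, d\mu_t - \int f\, d\mu = -\int_0^t \int df(\nabla d_p)\, d\mu_s\, ds,
\end{equation*}
which is the continuity equation in the sense of Definition \ref{continuity equation} (with the locally-$L^2$ convention of Remark \ref{continuity equation notation}). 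The main delicate point I expect is the calibration of constants in (2): folding the exponential factor from Theorem \ref{directional BG} into the polynomial form $A(1 + Ct)^N$ is routine but relies essentially on $t$ remaining in the bounded interval $[0, T-\delta]$, so that $C$ is permitted to depend on $K$, $N$, $\bar{D}$ and $\delta$.
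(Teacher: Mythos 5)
Your proof is correct and, for parts (1) and (2), takes a genuinely different (and more self-contained) route than the paper. The paper's proof of (1) cites \cite[Lemma~4.4]{C14}, where one shows that the set $\{(x,\gamma_{x,p}(T-\delta)):x\in\supp\mu\}$ is $d$-monotone and then upgrades $d$-monotonicity to $d^2$-monotonicity; for (2) it cites \cite[Section~9]{BC13}, which derives the density bound along $L^1$-transport rays. You instead prove (1) directly: the coupling $(\gamma_{\cdot,p}(s),\gamma_{\cdot,p}(t))_*\mu$ gives $W_2(\mu_s,\mu_t)\leq|t-s|$, and testing $W_1$-duality against the $1$-Lipschitz function $d_p$ (whose push-forward integral is affine in $t$) gives the matching lower bound, so $(\mu_t)$ is a unit-speed $W_2$-geodesic. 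Then (2) follows by running (1) on the slightly larger interval $[0,T-\delta/2]$, reparameterizing to $[0,1]$, and applying Theorem~\ref{directional BG} directly to the $L^2$-geodesic; the $(1-s)^{-N}$ factor becomes $(1+2t/\delta)^N$ on $[0,T-\delta]$, and the exponential $e^{2\bar D s\sqrt{(N-1)K^-}}$ is absorbed into $(1+Ct)^N$ on the bounded interval $t\in[0,T-\delta]$ using concavity of $\log(1+Ct)$ at the endpoints. This avoids the $L^1$-needle-decomposition machinery of \cite{BC13} entirely, at the (small) price of re-deriving the $L^2$-geodesic property before the density bound rather than after; one must be a little careful that the $W_2$-geodesic property is established on a window strictly larger than $[0,T-\delta]$, as you do with $\delta'=\delta/2$, so that $s=t/(T-\delta')$ stays bounded away from $1$.

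Your argument for (3) matches the paper: integrate the pointwise differentiation formula of Corollary~\ref{W2 geodesic cont equation cor} against $\mu$, exchange $ds$ and $d\mu$ via Fubini, and use the uniform density bound from (2) plus $\langle\nabla f,\nabla d_p\rangle\in L^1_{loc}(m)$ to see that $s\mapsto\int df(\nabla d_p)\,d\mu_s$ is bounded, hence $t\mapsto\int f\,d\mu_t$ is Lipschitz and satisfies the continuity equation in the sense of Definition~\ref{continuity equation} with the cutoff convention of Remark~\ref{continuity equation notation}. The one clarification worth adding: the ``$m$-a.e.\ $x$'' conclusion of Corollary~\ref{W2 geodesic cont equation cor} transfers to ``$\mu$-a.e.\ $x$'' precisely because $\mu\leq Am$, and for such $x\in\supp\mu$ one has $d(x,p)\geq T>T-\delta$, so $[0,T-\delta]$ sits inside $[0,d_{x,p})$ and $W^{1,1}_{loc}$ there implies absolute continuity on the closed interval.
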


\begin{proof}
Statement 1 was proved in \cite[Lemma 4.4]{C14}, where $d$-monotonicity of $\left\{(x,\gamma_{x,p}(T-\delta)): x \in \supp{\mu} \right\}$ is used to show its $d^2$-montonicity.
Statement 2 was proved in \cite[Section 9]{BC13}, see also\cite[Section 3.2]{CM18} for a discussion. 
Statements 1 and 2 give that $(\mu_t)$ is of bounded compression.  Statement 3 then follows from Corollary \ref{W2 geodesic cont equation cor}. 
\end{proof}

In order to have terminology which includes the globally defined RLFs as well as the type of locally defined flows such as the example above, we will use the following definition.
\begin{de}\label{local flow def}
Let $S \in \mathscr{B}(X)$ with $m(S) > 0$ and $(V_t)_{t \in [0,T]} \in L^1([0,T],L^2(TX))$. A Borel map $F : [0,T] \times S \to X$ is a \textit{local flow of $V_t$ from $S$} if the following holds:
\begin{enumerate}
	\item $F_0(x) = x$ and $[0,T] \ni t \mapsto F_t(x)$ is continuous for every $x \in S$;
	\item For every $f \in \TestF(X)$ and $m$-a.e. $x \in S$, $t \mapsto f(F_t(x))$ is in $W^{1,1}([0,T])$ and 
		\begin{equation}\label{local flow eq1} 
			\frac{d}{dt}f(F_t(x))= df(V_t)(F_t(x)) \text{ for a.e. } t \in [0,T];
		\end{equation}
	\item There exists a constant $C := C(V,S)$ so that $(F_t)_{*}(m \rvert_S) \leq Cm$ for all $t$ in $[0,T]$. 
\end{enumerate}
\end{de}
As before, by abuse of notation we will often say $(F_t)$ is the local flow of $(V_t)$ from $S$ for some vector field $V$ which is only locally $L^2$. In this case, it is understood that $F_t(S)$ is essentially bounded for each $t$ and we cut off $V_t$ outside of this region. 

\begin{rem}\label{local flow examples}
We will be primarily intersted in the following examples:
\begin{enumerate}
	\item For any $p \in X$ and bounded $S \in \mathscr{B}(X)$, $F_t(x) := \gamma_{x,p}(t)$ defined on $(t,x) \in  [0,T-\delta] \times S$, where $T := \essinf_{x \in S}d(x,p)$, $\delta > 0$, and $\gamma_{x,p}$ is a unit speed geodesic from $x$ to $p$ is Borel selected (see \ref{a.e. unique geodesic}), is a local flow of $-\nabla d_p$ from $S$ by Corollary \ref{W2 geodesic cont equation cor} and Theorem \ref{-dp grad flow properties}. 
	\item The restriction of any RLF onto some $S \in \mathscr{B}(X)$ is a local flow of the corresponding $(V_t)$ from $S$ by definition.
\end{enumerate}
\end{rem}

The following differentiation formula follows by the same argument for RLFs in Proposition \ref{first order differentiation}.

\begin{prop}\label{first order differentiation flow} (First order differentiation formula for distance along local flows)
Let $T > 0$. If $(F_t)_{t \in [0,T]},$ $(G_t)_{t \in [0,T]}$ are local flows of $(U_t), (V_t)$ from $S_1$ and $S_2$ respectively, then for $(m \times m)$-a.e. $(x,y) \in S_1 \times S_2$, $d(F_t(x),G_t(y)) \in W^{1,1}([0,T])$ and 
\begin{equation*}
	\frac{d}{dt} d(F_t(x),G_t(y))= \inner{\nabla d_{G_t(y)}}{U_t}(F_t(x)) + \inner{\nabla d_{F_t(x)}}{V_t}(G_t(y)) \; \; \text{ for a.e. $t \in [0,T].$}
\end{equation*}
\end{prop}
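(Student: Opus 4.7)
The plan is to mirror the proof of Proposition \ref{first order differentiation} for RLFs, with appropriate localization to handle the fact that our flows are only defined on the sets $S_1, S_2$. First, using the tensorization of Cheeger energy and density of the product algebra properties for $\RCD(K,\infty)$ spaces (which hold on $X$ by the results cited in the proof of Proposition \ref{first order differentiation}), I would construct a time-dependent, locally $L^2$ vector field $(W_t)$ on $X \times X$ characterized by
\begin{equation*}
\inner{W_t}{\nabla f}(x,y) = \inner{U_t}{\nabla f_y}(x) + \inner{V_t}{\nabla f_x}(y)
\end{equation*}
for all $f \in W^{1,2}(X \times X)$ and $(m \otimes m)$-a.e. $(x,y)$. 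This is exactly the vector field appearing in the proof of Proposition \ref{first order differentiation}.

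Second, I would verify that the product map $(F_t, G_t) : [0,T] \times (S_1 \times S_2) \to X \times X$ is a local flow (in the sense of Definition \ref{local flow def} on the product m.m.s.) of $(W_t)$ from $S_1 \times S_2$. Continuity in $t$ follows immediately from the continuity of $F_t$ and $G_t$. The compression bound follows from
\begin{equation*}
(F_t, G_t)_{*}\bigl((m \rvert_{S_1}) \otimes (m \rvert_{S_2})\bigr) = (F_t)_{*}(m \rvert_{S_1}) \otimes (G_t)_{*}(m \rvert_{S_2}) \leq C_1 C_2 \, (m \otimes m),
\end{equation*}
using the compression constants from the two defining local flows. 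For the chain rule condition, density of the product algebra reduces the verification to test functions of the form $f(x,y) = h(x)k(y)$ with $h, k \in \TestF(X)$, where Leibniz combined with \eqref{local flow eq1} for $F_t, G_t$ separately yields
\begin{equation*}
\frac{d}{dt}\bigl[h(F_t(x)) k(G_t(y))\bigr] = k(G_t(y))\, dh(U_t)(F_t(x)) + h(F_t(x))\, dk(V_t)(G_t(y)),
\end{equation*}
which coincides with $df(W_t)(F_t(x), G_t(y))$ by the very definition of $W_t$.

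Third, I would apply this product local flow to the function $d : X \times X \to \mathbb{R}$. Since $d$ is $1$-Lipschitz it lies in $W^{1,2}_{loc}(X \times X)$, so the localized analog of Remark \ref{RLF W12loc} (obtained by a standard cutoff argument, noting that for each fixed $(x,y) \in S_1 \times S_2$ the trajectory $t \mapsto (F_t(x), G_t(y))$ has bounded image) yields, for $(m \otimes m)$-a.e. $(x,y) \in S_1 \times S_2$, that $t \mapsto d(F_t(x), G_t(y))$ belongs to $W^{1,1}([0,T])$ with derivative $\inner{W_t}{\nabla d}(F_t(x), G_t(y))$. Identifying the partial gradient of $d$ at $(x_0, y_0)$: in the first slot with $\nabla d_{y_0}(x_0)$ and in the second slot with $\nabla d_{x_0}(y_0)$ (which follows from the tensorization of Cheeger energy applied to $d(\cdot, y_0)$ and $d(x_0, \cdot)$), the definition of $W_t$ yields the desired formula. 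The main delicate point is the verification that $(F_t, G_t)$ is a local flow of $W_t$; this is the analog of invoking \cite[Proposition A.2]{BS20} in the RLF case, and requires care to ensure that the null sets arising from each component flow do not obstruct the product statement, which is handled by a Fubini argument once the two individual chain rules hold off respective null sets in $S_1$ and $S_2$.
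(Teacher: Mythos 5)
Your proposal is correct and follows essentially the same route the paper takes: it defines the product vector field $(W_t)$ on $X\times X$ via tensorization of the Cheeger energy, checks that $(F_t,G_t)$ is a local flow of $(W_t)$ from $S_1\times S_2$ (the analog of invoking \cite[Proposition A.2]{BS20}), and then applies the chain rule to the Lipschitz function $d$. The paper's own proof simply cites the argument of Proposition~\ref{first order differentiation} and remarks that $(F_t,G_t)$ is a local flow of $(W_t)$ in the $L^2_{loc}$ sense of \cite{GR18}, so you have merely spelled out the details left implicit there.
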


We mention that if $(W_t)$ is defined as in proposition \ref{first order differentiation} from $(U_t)$ and $(V_t)$, then it is straightforward to check using the arguments of \cite{BS20} that $(F_t, G_t)$ is a local flow of $(W_t)$ from $S_1 \times S_2$. Again, $(W_t)$ here naturally belongs in $L^1([0,T],L^2_{loc}(T(X \times X)))$ so Definition \ref{local flow def} needs to be altered to allow for this. We refer to \cite{GR18} for relevant definitions. 

The next proposition gives control on the metric speeds of the curves $t \mapsto F_t(x)$ of a local flow $F$. As pointed out in \cite[(A.22)]{GT18}, it follows from a similar argument as in \cite[Theorem 2.3.18]{G18} after a small adjustment since we do not a priori assume the absolute continuity of the curves $F_\cdot(x)$.
\begin{prop}\label{RLF speed bound}
Let $V \in L^1([0,T],L^2(TX))$  and let $(F_t)$ be a local flow of $(V_t)$ from $S$. For $m$-a.e. $x \in S$ the curve $t \mapsto F_t(x)$ is absolutely continuous and its metric speed $ms_t(F_{\cdot}(x))$ at time $t$ satisfies
\begin{equation*}
	ms_t(F_{\cdot}(x))=|V_t|(F_t(x)) \; \; \text{ for a.e. } t \in [0,T].
\end{equation*}
\end{prop}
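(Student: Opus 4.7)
The plan is to prove the two inequalities $ms_t(F_\cdot(x)) \leq |V_t|(F_t(x))$ and $ms_t(F_\cdot(x)) \geq |V_t|(F_t(x))$ separately, each via a countable family of functions. Absolute continuity of $t \mapsto F_t(x)$ will fall out of the first step; this is the adjustment needed relative to \cite[Theorem 2.3.18]{G18}, where absolute continuity is part of the hypothesis and comes bundled with the continuity-equation framework.

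For the upper bound and absolute continuity, I would first fix a countable dense subset $\{y_n\}_{n \in \mathbb{N}} \subseteq X$ and apply the $W^{1,2}_{loc}$ extension of \eqref{local flow eq1} (which passes to local flows by the same Fubini/density argument sketched in Remark \ref{RLF W12loc}) to each distance function $d_{y_n} := d(\cdot, y_n)$. Since $|\nabla d_{y_n}| \leq 1$ and the family is countable, intersecting the countably many $m$-null exceptional sets yields a single $m$-null $N \subseteq S$ such that for every $x \in S \setminus N$ and every $n$, the map $t \mapsto d(F_t(x), y_n)$ lies in $W^{1,1}([0,T])$ with $\bigl|\tfrac{d}{dt} d(F_t(x), y_n)\bigr| \leq |V_t|(F_t(x))$ for a.e. $t$. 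The metric identity $d(F_s(x), F_t(x)) = \sup_n |d(F_s(x), y_n) - d(F_t(x), y_n)|$ then upgrades these individual bounds to
\[
d(F_s(x), F_t(x)) \leq \int_s^t |V_\tau|(F_\tau(x))\, d\tau.
\]
Integrability of $\tau \mapsto |V_\tau|(F_\tau(x))$ for $m$-a.e. $x \in S$ follows from $V \in L^1([0,T], L^2(TX))$ combined with the compression bound $(F_t)_*(m\rvert_S) \leq Cm$ via Fubini (localizing $S$ to pieces of finite measure and applying Cauchy--Schwarz). Both absolute continuity and the inequality $ms_t(F_\cdot(x)) \leq |V_t|(F_t(x))$ follow.

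For the reverse inequality, I would select a countable family $\{g_n\} \subseteq \TestF(X)$ with the property that for every $v \in L^2(TX)$,
\[
|v|(x) = \sup_n \frac{dg_n(v)(x)}{|\nabla g_n|(x)} \quad \text{for } m\text{-a.e. } x,
\]
with the convention $0/0 = 0$. Combining Remark \ref{lip=grad}, which identifies $|\nabla g_n|$ with $\lip(g_n)$ off an $m$-null set (pushed to an $m$-null set under $F_t$ by the compression bound), with the absolute continuity of $F_\cdot(x)$ established above, I obtain for each $n$, for $m$-a.e.~$x$, and a.e. $t \in [0,T]$:
\[
|dg_n(V_t)(F_t(x))| = \Bigl|\tfrac{d}{dt} g_n(F_t(x))\Bigr| \leq \lip(g_n)(F_t(x)) \cdot ms_t(F_\cdot(x)) = |\nabla g_n|(F_t(x)) \cdot ms_t(F_\cdot(x)).
\]
Taking the supremum over the countable index $n$ and using the displayed identity for the pointwise norm then yields $|V_t|(F_t(x)) \leq ms_t(F_\cdot(x))$.

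The hard part I anticipate is the selection of the countable family $\{g_n\}$ realizing the pointwise module norm as a supremum uniformly in $v$; this requires combining separability of $L^2(T^*X)$ with the module-generation property of $d\TestF(X)$ from Theorem \ref{tan cotan modules} and a standard diagonalization, which is carried out in Gigli's framework in \cite[Section 2.3]{G18}. Once this family is in hand the remainder of the argument is a routine application of the local flow condition to the two countable families of functions, with the metric-space identity $d(x,y) = \sup_n |d(x,y_n) - d(y,y_n)|$ handling the upper bound and the sup-realization of the pointwise norm handling the lower bound.
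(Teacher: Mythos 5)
Your proof correctly fills in the argument that the paper delegates to \cite[(A.22)]{GT18} and \cite[Theorem 2.3.18]{G18}: absolute continuity and the upper bound by testing the flow identity against distance functions $d_{y_n}$ for a countable dense set $\{y_n\}$, and the lower bound by testing against a countable family of test functions whose normalized differentials realize the pointwise module norm $m$-a.e. This is the same proof strategy as the cited results, and the countable-family lemma you flag as the hard part is indeed the technical crux; it can be realized, for example, by taking the $\mathbb{Q}$-linear span of a countable $W^{1,2}$-dense subset of $\TestF(X)$ and using the module-generation property together with a Borel-partition/rational-approximation argument.
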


\subsection{Second order interpolation formula}\label{subsection 6.3} The proof of a second order interpolation formula for the distance function (see \eqref{Hessian interpolation} and \eqref{CN interpolation}) along flows requires the results of \cite{GT18}. The hard work is done there and their result immediately implies an analogous second order interpolation formula (Theorem \ref{Wasserstein second order}) for the Wasserstein distance. It is our goal to pass this fromula from Wasserstein distance to distance on the space itself.

For the rest of the subsection we will always be in the setting of some $\RCD(K,N)$ space $(X,d,m)$ with $K \in \mathbb{R}$ and $N \in [1,\infty)$. We fix a Borel selection (\ref{a.e. unique geodesic}) of constant speed geodesics $\tilde{\gamma}_{x,y}$ from all $x\in X$ to all $y\in X$ parameterized on the unit interval. We denote by $\gamma_{x,y}$ the unit speed reparameterization of $\tgam_{x,y}$ to $[0,d(x,y)]$. We start with the following formulation of the main result from \cite{GT18}.

\begin{thm}\label{Wasserstein second order}(\cite[Theorem 5.13]{GT18})
Let $\mu_0, \mu_1 \in \mathcal{P}_2(X)$ be compactly supported and satisfy $\mu_0, \mu_1 \leq Cm$ for some $C > 0$. Let $(\mu_t)$ be the unique $W_2$-geodesic connecting $\mu_0$ to $\mu_1$. For every $t \in [0,1]$, let $\phi_t$ be any function so that for some $s \neq t$, $s \in [0,1]$, the function $-(s-t)\phi_t$ is a Kantorovich potential from $\mu_t$ to $\mu_s$. For any $V \in H^{1,2}_{C}(TX)$, the map $[0,1] \ni t \mapsto \int \inner{V}{\nabla \phi_t} d\mu_t$ is in $C^1([0,1])$ and
\begin{equation}\label{Wasserstein second order eq1}
	\frac{d}{dt}  \bigg(\int \inner{V}{\nabla \phi_t} d\mu_t\bigg) = \int \big(\nabla V : (\nabla \phi_t \otimes \nabla \phi_t)\big) \, d\mu_t \; \; \text{  for all $t \in [0,1]$}.
\end{equation}
\end{thm}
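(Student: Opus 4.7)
\smallskip

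\noindent\textbf{Proof plan.} The plan is to reduce, by linearity and density, to the case of a single simple test vector field, then match the two sides by combining the continuity equation satisfied by $(\mu_t)$ with the Hamilton-Jacobi equation satisfied by the Kantorovich potentials $\phi_t$. Since $H^{1,2}_C(TX)$ is by definition the closure of $\TestV(X)$, and since Theorem \ref{directional BG} bounds the densities of $(\mu_t)$ uniformly in $t$, both sides of \eqref{Wasserstein second order eq1} depend continuously on $V$ in the $H^{1,2}_C$ topology (the pointwise-$m$-a.e.\ identities defining $\nabla V$ pass to $\mu_t$ because $\mu_t \leq C'm$). So it suffices to prove the formula for $V=g\nabla f$ with $f,g\in\TestF(X)$.

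For such $V$, the Leibniz rule for the covariant derivative in Gigli's framework yields $\nabla V = dg\otimes \nabla f + g\,\Hess f$, so
\begin{equation*}
\nabla V : (\nabla\phi_t\otimes\nabla\phi_t) = \inner{\nabla g}{\nabla\phi_t}\inner{\nabla f}{\nabla\phi_t} + g\,\Hess f(\nabla\phi_t,\nabla\phi_t).
\end{equation*}
On the other side, write $u_t := g\inner{\nabla f}{\nabla\phi_t}$ and use Theorem \ref{W2 geodesic cont equation} (with the velocity field $\nabla\phi_t$ of the Wasserstein geodesic) formally as
\begin{equation*}
\frac{d}{dt}\int u_t\,d\mu_t = \int \partial_t u_t\,d\mu_t + \int\inner{\nabla u_t}{\nabla\phi_t}\,d\mu_t.
\end{equation*}
The Lax-Oleinik / Hamilton-Jacobi equation gives $\partial_t\phi_t = -\tfrac12|\nabla\phi_t|^2$ along the geodesic, hence $\partial_t\nabla\phi_t = -\Hess\phi_t(\nabla\phi_t,\cdot)$, and in particular $\partial_t u_t = -g\,\Hess\phi_t(\nabla\phi_t,\nabla f)$. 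Expanding $\nabla u_t$ with the symmetric identity $\inner{\nabla\inner{\nabla f}{\nabla\phi_t}}{W} = \Hess f(W,\nabla\phi_t) + \Hess\phi_t(W,\nabla f)$ gives
\begin{equation*}
\inner{\nabla u_t}{\nabla\phi_t} = \inner{\nabla g}{\nabla\phi_t}\inner{\nabla f}{\nabla\phi_t} + g\,\Hess f(\nabla\phi_t,\nabla\phi_t) + g\,\Hess\phi_t(\nabla\phi_t,\nabla f).
\end{equation*}
Adding the two contributions, the $\Hess\phi_t$ pieces cancel and what remains is exactly $\nabla V : (\nabla\phi_t\otimes\nabla\phi_t)$ integrated against $\mu_t$, which is the desired formula. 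Continuity in $t$ of the right-hand side follows from the $W_2$-continuity of $\mu_t$ together with the $L^2(m)$-regularity of the relevant tensors and the uniform density bound, yielding the $C^1$ conclusion.

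The main obstacle, and the heart of \cite[Theorem 5.13]{GT18}, is making the two formal manipulations above rigorous: $\phi_t$ is only Lipschitz and $c$-concave, so $\Hess\phi_t$ does not exist as an $L^2$ tensor on $X$, and $u_t$ is not a valid test function for Theorem \ref{W2 geodesic cont equation}. The fix relies crucially on the bounded-density hypothesis on $(\mu_t)$: one lifts the problem to an optimal dynamical plan $\boldsymbol{\pi}\in\OptGeo(\mu_0,\mu_1)$, works with $\phi_s - \phi_t$ in place of $(s-t)\partial_t\phi$ to bypass the absence of a pointwise time derivative, and uses the Bakry-Ledoux / Hamilton-Jacobi regularization combined with the fact that $\Hess\phi_t(\nabla\phi_t,\nabla\phi_t)$ is well-defined $\mu_t$-a.e.\ by the second-order calculus of Section~3 of \cite{G18}. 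The two $\Hess\phi_t(\nabla\phi_t,\nabla f)$ terms that would individually be problematic only need to be interpreted as each other's negatives in the summed expression, so the identity holds despite the lack of a pointwise meaning for $\Hess\phi_t$ alone.
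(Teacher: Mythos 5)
The paper does not prove this theorem: it cites it verbatim as \cite[Theorem 5.13]{GT18} and uses it as a black box, so there is no ``paper's own proof'' to compare against. Your attempt is therefore a blind reconstruction of the Gigli-Tamanini argument, and it should be judged as such.

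Your formal computation is correct and is indeed the right heuristic skeleton. Reducing by density to $V=g\nabla f$, applying the Leibniz rule $\nabla(g\nabla f)=dg\otimes\nabla f+g\,\Hess f$, differentiating $\int g\inner{\nabla f}{\nabla\phi_t}\,d\mu_t$ formally along the geodesic using the continuity equation plus the Hamilton-Jacobi equation $\partial_t\phi_t=-\tfrac12|\nabla\phi_t|^2$, and observing that the two $\Hess\phi_t(\nabla\phi_t,\nabla f)$ terms cancel, does reproduce the right-hand side of \eqref{Wasserstein second order eq1}. You are also right that the entire difficulty is in making this rigorous when $\phi_t$ is merely Lipschitz and $c$-concave.

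Where your account falls short of a proof is precisely that last step. You describe the resolution as ``Bakry-Ledoux / Hamilton-Jacobi regularization combined with the fact that $\Hess\phi_t(\nabla\phi_t,\nabla\phi_t)$ is well-defined $\mu_t$-a.e.\ by the second-order calculus of Section 3 of \cite{G18}.'' The second clause is not accurate: Gigli's $W^{2,2}$ calculus does not directly give meaning to $\Hess\phi_t$ for a general Kantorovich potential, and the cancellation cannot be interpreted termwise. The actual mechanism in \cite{GT18} is an entropic regularization of optimal transport (the Schr\"odinger problem), which replaces the Kantorovich potentials by genuinely smooth functions $\varphi^\varepsilon_t,\psi^\varepsilon_t$ solving regularized Hamilton-Jacobi-Bellman equations, yields uniform Hessian and Laplacian estimates via $\RCD$ theory, and then passes $\varepsilon\to 0$. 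Your write-up acknowledges that ``the heart of'' the theorem is the rigorization but then asserts rather than performs it, and the specific regularization scheme you name is not the one that makes the argument go through. As a self-contained proof this is therefore incomplete, though as a reading guide to the formal content of \cite[Theorem 5.13]{GT18} it is correct.

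One small further remark: for the density/reduction step you need $\sup_{t\in[0,1]}\|d\mu_t/dm\|_{L^\infty}<\infty$. Theorem \ref{directional BG} as stated gives a bound that degenerates as $t\to 1$; the uniform two-sided bound for a $W_2$-geodesic between two measures of bounded density is a separate result (Rajala \cite{R12}), which you should invoke explicitly.
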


The next lemma follows from the previous theorem.

\begin{lem}\label{second order lemma}
Let $p \in X$ and $\nu \leq Cm$ be a nonnegative, compactly supported measure. For any $V \in H^{1,2}_C(TX)$,
\begin{align}\label{second order lemma eq1}
\begin{split}
	\int \inner{V}{\nabla d_p}(x) \, d\nu(x) \, - \int \inner{V}{\nabla d_p}&(\tgam_{p,x}(\frac{1}{2})) \, d\nu(x) =\\ &\int_{\frac{1}{2}}^{1}\bigg( \int d(p,x) \big(\nabla V :(\nabla d_p \otimes \nabla d_p)\big)(\tgam_{p,x}(t)) \, d \nu(x) \bigg) \, dt,
\end{split}
\end{align}
where $\tgam_{x,y}$ is as defined in the beginning of this subsection.
\end{lem}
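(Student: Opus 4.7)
My plan is to apply Theorem \ref{Wasserstein second order} to the $W_2$-geodesic that moves the reweighted measure $\nu' := d_p^{-1}\nu$ along the second half of the geodesics $\tgam_{p,\cdot}$ emanating from $p$, and to read off \eqref{second order lemma eq1} from the resulting integrated identity. The reweighting by $d_p^{-1}$ is forced: without it the pushforward would leave an extraneous factor of $d(p,x)$ on both sides. To legitimize dividing by $d_p$, I first reduce to the case where $\supp\nu$ is bounded away from $p$ by truncating $\nu$ to $\{d_p \geq \epsilon\}$ and letting $\epsilon \downarrow 0$; both sides of \eqref{second order lemma eq1} depend continuously on this truncation via dominated convergence, since $|V|, |\nabla V|_{\HS} \in L^2(m)$ and $\nu \leq Cm$. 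After this reduction, $\nu' \leq C'm$ for some $C'$ and is compactly supported.

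Set $\bar{\mu}_s := (\tgam_{p,\cdot}((1+s)/2))_*\nu'$ for $s \in [0,1]$. This is the second half, reparameterized to $[0,1]$, of the trivially optimal $W_2$-geodesic $t\mapsto (\tgam_{p,\cdot}(t))_*\nu'$ from $\delta_p$ to $\nu'$, and hence is itself a $W_2$-geodesic; $\bar{\mu}_1 = \nu'$ has bounded density by construction, while $\bar{\mu}_0$ has bounded density by Theorem \ref{directional BG} applied at time $1/2$ to the time-reversed $W_2$-geodesic from $\nu'$ to $\delta_p$. The velocity field of $(\bar\mu_s)$ at $\bar{\mu}_s$-a.e.\ $y = \tgam_{p,x}(\tfrac{1+s}{2})$ equals $\tfrac{d(p,x)}{2}\nabla d_p(y) = \tfrac{d(p,y)}{1+s}\nabla d_p(y) = \nabla\phi_s(y)$ for the explicit choice $\phi_s := \tfrac{d_p^2}{2(1+s)}$. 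By Theorem \ref{W2 geodesic cont equation} the gradient of any admissible Kantorovich-type potential agrees $\bar{\mu}_s$-a.e.\ with the velocity field, so this explicit $\phi_s$ can be used in place of the formal Kantorovich-based potential in Theorem \ref{Wasserstein second order}, since both sides of that theorem depend on $\phi_s$ only through $\nabla\phi_s$ in $L^2(\bar{\mu}_s)$.

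Theorem \ref{Wasserstein second order} then gives $\tfrac{d}{ds}\int\inner{V}{\nabla\phi_s}\,d\bar{\mu}_s = \int \nabla V : (\nabla\phi_s \otimes \nabla\phi_s)\,d\bar{\mu}_s$. Using the pushforward $d\bar{\mu}_s = (\tgam_{p,\cdot}(\tfrac{1+s}{2}))_*(d_p^{-1}d\nu)$ together with $\nabla\phi_s(\tgam_{p,x}(\tfrac{1+s}{2})) = \tfrac{d(p,x)}{2}\nabla d_p$, the factor $d(p,x)$ from $\nabla\phi_s$ cancels one copy of $d(p,x)^{-1}$ from the reweighting on each side, reducing the identity to
\begin{equation*}
\frac{d}{ds}\left[\tfrac{1}{2}\int\inner{V}{\nabla d_p}(\tgam_{p,x}(\tfrac{1+s}{2}))\,d\nu(x)\right] = \tfrac{1}{4}\int d(p,x)\,\nabla V : (\nabla d_p \otimes \nabla d_p)(\tgam_{p,x}(\tfrac{1+s}{2}))\,d\nu(x).
\end{equation*}
Integrating from $s=0$ to $s=1$ and substituting $t = (1+s)/2$ gives \eqref{second order lemma eq1}. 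The main difficulty is the identification and justification of the explicit $\phi_s$; beyond that, the argument is a direct pushforward computation and change of variables.
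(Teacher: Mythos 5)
Your proof is essentially correct, and it takes a genuinely different route from the paper. The paper first establishes a pointwise $m$-a.e.\ identity,
\begin{equation*}
\inner{V}{\nabla d_p}(x)-\inner{V}{\nabla d_p}(\tgam_{p,x}(\tfrac12)) = \int_{\frac12}^{1} d(p,x)\big(\nabla V:(\nabla d_p\otimes\nabla d_p)\big)(\tgam_{p,x}(t))\,dt,
\end{equation*}
by a contradiction argument: if it failed on a set $S$ of positive measure, take $\mu = m(S)^{-1}m|_S$, multiply both sides by $d(p,x)$, integrate against $\mu$, and observe that Theorem \ref{Wasserstein second order} applied to the geodesic $t\mapsto(\tgam_{p,\cdot}(t))_*\mu$ on $[\tfrac12,1]$ (whose natural potential is $\phi_t = d_p^2/(2t)$, carrying an extra factor of $d(p,x)$ built in) forces equality. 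The lemma for general $\nu$ then follows by integration. You instead avoid the pointwise step entirely by reweighting $\nu$ by $d_p^{-1}$ so that the extraneous $d(p,x)$ factor in the Wasserstein potential is exactly cancelled, and then apply Theorem \ref{Wasserstein second order} directly to the reparameterized half-geodesic. Your identification of the explicit potential $\phi_s = d_p^2/(2(1+s))$, and the observation that Theorem \ref{Wasserstein second order} only sees $\nabla\phi_s$ restricted to $\bar\mu_s$-a.e.\ points, are both correct and are the crux of the argument; the final substitution $t=(1+s)/2$ recovers \eqref{second order lemma eq1} exactly. The cost of your route is the need to first truncate away a neighbourhood of $p$ (since $d_p^{-1}\nu$ would otherwise be unbounded) and pass to the limit; the cost of the paper's route is establishing the intermediate pointwise version, which in exchange yields a slightly stronger statement (one that is not, however, used elsewhere). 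Two small points you should make explicit: $\nu'$ must be normalized to a probability measure before invoking Theorem \ref{Wasserstein second order}, and the domination needed for the limit $\epsilon\downarrow 0$ on the right-hand side is supplied by the segment inequality \ref{segment inequality} applied to $|\nabla V|_{\HS}$ (your statement that $|\nabla V|_{\HS}\in L^2$ alone is not quite the dominant; it is $L^2$ together with the uniform density bound from \ref{directional BG} along the family $\{\tgam_{p,x}(t)\}$, which is exactly what the segment inequality packages).
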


Note that although $\nabla d_p \otimes \nabla d_p$ is not in $L^2(T^{\otimes 2}(X))$, it is locally (i.e. it is after multiplication by the characteristic function of any compact set). Therefore, by the locality properties of the objects involved, $\nabla V : (\nabla d_p \otimes \nabla d_p)$ is well-defined and is in $L^2(m)$.

\begin{proof}
Fix reprsentatives for $\inner{V}{\nabla d_p}$ and $\nabla V:(\nabla d_p \otimes \nabla d_p) \in L^2(m)$. 

We claim for $m$-a.e. $x \in X$, 
\begin{equation}\label{second order lemma eq2}
	\inner{V}{\nabla d_p}(x)-\inner{V}{\nabla d_p}(\tgam_{p,x}(\frac{1}{2})) = \int_{\frac{1}{2}}^{1}d(p,x)(\nabla V :(\nabla d_p \otimes \nabla d_p)\big)(\tgam_{p,x}(t)) \, dt.
\end{equation}
The right side integral is finite for $m$-a.e. $x$ by using a Fubini's theorem argument along with Theorem \ref{directional BG} \eqref{directional BG eq1}.

Suppose \eqref{second order lemma eq2} does not hold for $m$-a.e. $x \in X$, then without loss of generality we may assume there exists a bounded set $S$ with $0<m(S)<\infty$ so that 
\begin{equation}\label{second order lemma eq3}
	\inner{V}{\nabla d_p}(x)-\inner{V}{\nabla d_p}(\tgam_{p,x}(\frac{1}{2})) > \int_{\frac{1}{2}}^{1}d(p,x)(\nabla V :(\nabla d_p \otimes \nabla d_p)\big)(\tgam_{p,x}(t)) \, dt
\end{equation}
for each $x \in S$. 
Let $\mu := \frac{1}{m(S)}m\rvert_{S}$. Multiplying both sides of \eqref{second order lemma eq3} by $d(p,x)$ and integrating with respect to $\mu$, we immediately contradict Theorem \ref{Wasserstein second order}. Therefore, \eqref{second order lemma eq2} holds and so the lemma follows for any $\nu$ with compact support and bounded density. 
\end{proof} 

To proceed we state the segment inequality for $L^1(m)$ functions, first introduced by Cheeger-Colding in \cite[Theorem 2.11]{CC96}. This has been established for the metric measure setting in \cite{R08} but we will give a self-contained proof since the decomposition procedure for the family of geodesics used in the proof will be used again in the proof of Proposition \ref{second order interpolation}.

\begin{thm}\label{segment inequality}(Segment inequality for $L^1$ functions on RCD spaces)
Let $(X,d,m)$ be an $\RCD(K,N)$ space with $K \in \mathbb{R}$ and $N \in [1,\infty)$. Let $\bar{R} > 0$. Let $f \in L_{loc}^{1}(X)$ be nonnegative and $\mu \leq A(m \times m)$ be a nonnegative measure on $X \times X$ supported on $B_{R}(p) \times B_{R}(p)$ for some $0 < R \leq \bar{R}$ and $p \in X$. Then 
\begin{align}\label{segment inequality eq1}
\begin{split}
\int_{0}^{1} \bigg(\int f(\tgam_{x,y}(t))&d(x,y) \, d\mu(x,y)\bigg) \, dt \\
	&\leq AC(K,N,\bar{R})R[m(\pi_1(\supp(\mu)))+m(\pi_2(\supp(\mu)))] \int_{B_{2R}(p)} f(z) \, dm(z),
\end{split}
\end{align}
where $\pi_1, \pi_2$ are projections onto the first and the second coordinate respectively and $\tgam_{x,y}$ is as defined in the beginning of this subsection.
\end{thm}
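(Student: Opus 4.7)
The plan is to carry out the classical Cheeger--Colding midpoint decomposition, which is the combinatorial point reused later in Proposition \ref{second order interpolation}. Since $x, y \in B_R(p)$ forces $d(\tgam_{x,y}(t), p) \leq \min(t, 1-t)\,d(x,y) + R \leq 2R$, the image of $\tgam_{x,y}$ is contained in $B_{2R}(p)$; together with $d(x,y) \leq 2R$, this lets me split the integral in \eqref{segment inequality eq1} at $t = 1/2$ as $I = I_1 + I_2$ and pull out the factor $d(x,y) \leq 2R$ in each half at the outset.

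For $I_1 := \int_0^{1/2}\int d(x,y)\,f(\tgam_{x,y}(t))\,d\mu(x,y)\,dt$, I would write $\mu = h\cdot (m \times m)$ with $0 \leq h \leq A$, apply Fubini to fix $y$, and study the pushforward
\begin{equation*}
\nu^y_t := \bigl(\tgam_{\cdot,y}(t)\bigr)_*\bigl(h(\cdot,y)\,m\rvert_{B_R(p)}\bigr).
\end{equation*}
For $m$-a.e.\ $y$, Remark \ref{a.e. unique geodesic} guarantees that the Borel-selected $\tgam_{\cdot,y}$ coincides $m$-a.e.\ in $x$ with the unique optimal transport map, so $(\nu^y_t)_{t \in [0,1]}$ is, up to normalization, the $W_2$-geodesic from $h(\cdot,y)\,m\rvert_{B_R(p)}$ to a Dirac mass at $y$. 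Theorem \ref{directional BG} applied to this geodesic (with $D \leq 4\bar R$) then gives
\begin{equation*}
\|\nu^y_t\|_{L^\infty(m)} \leq \frac{A}{(1-t)^N}\,e^{4\bar R\, t\sqrt{(N-1)K^-}} \leq A\, C(K,N,\bar R) \qquad \forall\, t \in [0,1/2],
\end{equation*}
and $\supp \nu^y_t \subseteq B_{2R}(p)$ by the geodesic containment above. Hence
\begin{equation*}
\int f(\tgam_{x,y}(t))\,h(x,y)\,dm(x) \;=\; \int f\,d\nu^y_t \;\leq\; A\,C(K,N,\bar R) \int_{B_{2R}(p)} f\,dm.
\end{equation*}
Integrating in $y$ against $\mathbf{1}_{\pi_2(\supp\mu)}\,dm$, in $t$ over $[0,1/2]$, and absorbing the factor $2R$, yields $I_1 \leq A\,C(K,N,\bar R)\,R\,m(\pi_2(\supp\mu))\int_{B_{2R}(p)} f\,dm$.

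The estimate for $I_2$ is entirely symmetric under the swap $x \leftrightarrow y$, $t \leftrightarrow 1-t$: on $[1/2,1]$ I fix $x$ instead and push $h(x,\cdot)\,m\rvert_{B_R(p)}$ along $y \mapsto \tgam_{x,y}(t)$, obtaining the same density bound via the reparameterization $s = 1-t \in [0,1/2]$ and the contribution $A\,C(K,N,\bar R)\,R\,m(\pi_1(\supp\mu))\int_{B_{2R}(p)} f\,dm$. Adding $I_1 + I_2$ delivers exactly \eqref{segment inequality eq1}.

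The only technical point requiring care will be the measurable-selection step: for $m$-a.e.\ fixed $y$ I need that the globally Borel-chosen family $\tgam_{\cdot,y}$ agrees $m$-a.e.\ in $x$ with the unique optimal dynamical plan from the normalization of $h(\cdot,y)\,m\rvert_{B_R(p)}$ to $\delta_y$, so that Theorem \ref{directional BG} can be invoked on the $y$-slice. This reduces, via the $(m \times m)$-a.e.\ uniqueness of geodesics from Remark \ref{a.e. unique geodesic} and a Fubini argument, to $m$-a.e.-in-$y$ uniqueness along $x$-slices, and the exceptional set of $y$'s where this fails is negligible and contributes nothing to $I_1$ because $h \leq A$.
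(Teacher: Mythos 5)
Your argument is correct and follows essentially the same route as the paper's own proof: split the time integral at $t=1/2$, disintegrate $\mu$ into slices, push the slice measure forward along the geodesics toward the "far" endpoint, and bound the density of the pushforward via the directional Bishop--Gromov estimate of Theorem~\ref{directional BG}, using the $m$-a.e.\ uniqueness of geodesics from Remark~\ref{a.e. unique geodesic} to identify the Borel selection with the optimal map on each slice. The only cosmetic difference is that you fix $y$ on $[0,1/2]$ and $x$ on $[1/2,1]$, while the paper presents the two halves in the opposite order, and you are somewhat more explicit about the measurable-selection and normalization details that the paper leaves implicit.
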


\begin{proof}
By Radon-Nikodym theorem, $\mu = g (m \times m)$ for some compactly supported $g \in L^{\infty}(X \times X, m \times m)$. We denote $g^{1}_{x}(\cdot) := g(x, \cdot)$ and $g^{2}_{y}(\cdot) := g(\cdot, y)$. By Fubini's Theorem, $\norm{g^{1}_x}_{L^{\infty}} \leq A$ for $m$-a.e. $x$ and similarly, $\norm{g^{2}_y}_{L^{\infty}} \leq A$ for $m$-a.e. $y$.

For each $t \in [\frac{1}{2}, 1]$ and $m$-a.e. $x \in \pi_1(\supp(\mu))$, by Theorem \ref{directional BG} \eqref{directional BG eq1}, 
\begin{equation}\label{segment inequality eq3}
	(\tgam_{x,\cdot}(t))_{*}(g^{1}_{x} m) \leq AC(K,N,\bar{R})m\rvert_{B_{2R}(p)}.
\end{equation}
Similarly for $t \in [0,\frac{1}{2}]$ and $m$-a.e. $y \in \pi_2(\supp(\mu))$,
\begin{equation}\label{segment inequality eq5}
	(\tgam_{\cdot,y}(t))_{*}(g^{2}_{y} m) \leq AC(K,N,\bar{R})m\rvert_{B_{2R}(p)}.
\end{equation}
We conclude 
\begin{align*}
&\hspace{0.5cm} \int_{0}^{1} \bigg(\int f(\tgam_{x,y}(t))d(x,y) \, d\mu(x,y)\bigg) \, dt\\
&\leq 2R\int_{0}^{1} \bigg(\int f(\tgam_{x,y}(t)) \, d\mu(x,y)\bigg) \, dt\\
&= 2R\bigg(\int_{\frac{1}{2}}^{1} \bigg(\int f(\tgam_{x,y}(t)) \, d\mu(x,y)\bigg) \, dt +\int_{0}^{\frac{1}{2}} \bigg(\int f(\tgam_{x,y}(t)) \, d\mu(x,y)\bigg) \, dt \bigg)  \\
&\leq AC(K,N,\bar{R})R[m(\pi_1(\supp(\mu)))+m(\pi_2(\supp(\mu)))] \int_{B_{2R}(p)} f(z) \, dm(z),
\end{align*}
where \eqref{segment inequality eq3}, \eqref{segment inequality eq5} and Fubini's theorem was used in the last line. 

\end{proof}

We now prove the main interpolation formula of this subsection. The main idea is to use Lemma \ref{second order lemma} along with the decomposition procedure from the proof of the semgent inequality. Notice that the right side of \eqref{second order interpolation eq1} makes sense due to the segment inequality.
\begin{prop}\label{second order interpolation}(Second order interpolation formula)
Let $\mu \leq C (m \times m)$ be a nonnegative and compactly supported measure on $X \times X$. Let $V \in H^{1,2}_C(TX)$. Then
\begin{equation}\label{second order interpolation eq1}
	\int \big(\inner{V}{\nabla d_x}(y) + \inner{V}{\nabla d_y}(x) \big)\, d\mu(x,y)= \int_{0}^{1} \int d(x,y) (\nabla V : \nabla d_x \otimes \nabla d_x)(\tgam_{x,y}(t))\, d\mu(x,y) \, dt,
\end{equation}
where $\tgam_{x,y}$ is as defined in the beginning of this subsection.
\end{prop}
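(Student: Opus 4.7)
The plan is to mimic the decomposition procedure used in the proof of the segment inequality (Theorem \ref{segment inequality}), splitting each geodesic $\tgam_{x,y}$ at its midpoint and applying Lemma \ref{second order lemma} to each half: once with $x$ playing the role of the base point $p$, and once with $y$. The resulting boundary terms at the midpoint cancel because, at an interior point of a (generically unique) minimizing geodesic, $\nabla d_x$ and $\nabla d_y$ are unit vectors tangent to that geodesic in opposite directions.

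Concretely, disintegrate $\mu$ over its first marginal as $\mu = \int \mu_x \, d(\pi_1)_{*}\mu(x)$. Since $\mu \leq C(m \times m)$, the disintegrated measures satisfy $\mu_x \leq Cm$ for $(\pi_1)_*\mu$-a.e.\ $x$, so Lemma \ref{second order lemma} may be applied with $p = x$ and $\nu = \mu_x$, and then integrated against $(\pi_1)_*\mu$ to give
\begin{equation*}
    \int \inner{V}{\nabla d_x}(y) \, d\mu(x,y) - \int \inner{V}{\nabla d_x}(\tgam_{x,y}(\tfrac{1}{2})) \, d\mu(x,y) = \int_{1/2}^{1} \int d(x,y) (\nabla V : \nabla d_x \otimes \nabla d_x)(\tgam_{x,y}(t)) \, d\mu(x,y) \, dt.
\end{equation*}
The analogous identity obtained by disintegrating over the second marginal and using $p=y$ reads
\begin{equation*}
    \int \inner{V}{\nabla d_y}(x) \, d\mu(x,y) - \int \inner{V}{\nabla d_y}(\tgam_{y,x}(\tfrac{1}{2})) \, d\mu(x,y) = \int_{1/2}^{1} \int d(y,x) (\nabla V : \nabla d_y \otimes \nabla d_y)(\tgam_{y,x}(t)) \, d\mu(x,y) \, dt.
\end{equation*}
Adding these two identities recovers the LHS of \eqref{second order interpolation eq1} up to the midpoint correction
\begin{equation*}
    \int \big(\inner{V}{\nabla d_x}(\tgam_{x,y}(\tfrac{1}{2})) + \inner{V}{\nabla d_y}(\tgam_{y,x}(\tfrac{1}{2}))\big)\, d\mu(x,y),
\end{equation*}
which must be shown to vanish; simultaneously, the two RHS integrals, each over $t \in [\tfrac{1}{2}, 1]$, must be recombined into the single integral over $[0,1]$ appearing on the RHS of \eqref{second order interpolation eq1}.

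Both issues reduce to the same underlying fact: for $(m \times m)$-a.e.\ $(x,y)$ and a.e.\ $s \in (0,1)$, the geodesic $\tgam_{x,y}$ is unique and its velocity at $\tgam_{x,y}(s)$ is simultaneously identified with $-\nabla d_x$ and with $+\nabla d_y$ at that point, so that $\nabla d_x(\tgam_{x,y}(s)) = -\nabla d_y(\tgam_{x,y}(s))$ in $L^2(m)$ sense. This is the main technical point in the RCD setting and follows from interpreting the family $(\tgam_{\cdot,x}(t))_*(\nu)$ as a $W_2$-geodesic solving the continuity equation with velocity $-\nabla d_x$ (Theorem \ref{-dp grad flow properties}), and symmetrically from the $y$ side, combined with a Fubini argument analogous to the one used in the proof of Lemma \ref{second order lemma} to pass from the $W_2$-a.e.\ statement to an $(m \times m)$-a.e.\ statement. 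Granted this, the midpoint correction vanishes; moreover, using $\tgam_{y,x}(t) = \tgam_{x,y}(1-t)$ $(m \times m)$-a.e.\ (a consequence of $(m \times m)$-a.e.\ geodesic uniqueness, Remark \ref{a.e. unique geodesic}) and the resulting identity $\nabla d_x \otimes \nabla d_x = \nabla d_y \otimes \nabla d_y$ on the interior of $\tgam_{x,y}$, the substitution $s = 1 - t$ converts the second RHS integral over $[1/2, 1]$ into $\int_{0}^{1/2}\int d(x,y)(\nabla V : \nabla d_x \otimes \nabla d_x)(\tgam_{x,y}(s)) d\mu(x,y)\, ds$, which combines with the first half to yield the full integral over $[0,1]$. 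All integrability is controlled by the bound $|\nabla V : \nabla d_x \otimes \nabla d_x| \leq |\nabla V|_{\HS} \in L^2(m)$ combined with the segment inequality (Theorem \ref{segment inequality}). The hard part is the rigorous $(m\times m)$-a.e.\ justification of $\nabla d_x = -\nabla d_y$ along the interior of $\tgam_{x,y}$; this is the sole place where the lack of smoothness in the RCD setting forces genuine measure-theoretic care.
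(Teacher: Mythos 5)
Your decomposition of the double integral and the reduction to a midpoint cancellation match the paper's proof almost exactly: the paper also splits $[0,1]$ at $1/2$, applies Lemma~\ref{second order lemma} once with $p=x$ and once with $p=y$ (writing both halves in the $\tgam_{x,y}$ parameterization rather than switching to $\tgam_{y,x}$, but that is equivalent by Remark~\ref{a.e. unique geodesic}), and then reduces the claim to two facts: the cancellation of the boundary terms at $\tgam_{x,y}(1/2)$, and the identification of the $[0,1/2]$ integrand with $\nabla d_x \otimes \nabla d_x$ in place of $\nabla d_y \otimes \nabla d_y$. Both of these are exactly what the paper isolates as \eqref{second order interpolation eq4}--\eqref{second order interpolation eq5} and proves in Lemma~\ref{grad d_p = -grad d_q} and Remark~\ref{grad d_p = -grad d_q cor}.

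The gap is in your proposed justification of the key fact that (in integrated form) $\nabla d_x = -\nabla d_y$ on the interior of $\tgam_{x,y}$. You suggest deriving it by viewing $(\tgam_{x,\cdot}(t))_*(\mu_x)$ as a $W_2$-geodesic with velocity proportional to $\nabla d_x$, and ``symmetrically from the $y$-side.'' But the two families of Wasserstein geodesics that you would invoke are genuinely different: fixing $x$ and pushing $\mu_x$ along $\tgam_{x,\cdot}$ produces one curve of measures with velocity field $\propto \nabla d_x$ ($\mu_t^{(x)}$-a.e.), while fixing $y$ and pushing $\mu_y$ along $\tgam_{\cdot,y}$ produces a different curve of measures with velocity $\propto -\nabla d_y$ ($\mu_t^{(y)}$-a.e.). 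Essential uniqueness of the velocity of a $W_2$-geodesic only lets you compare two velocity fields for the \emph{same} curve of measures, so no direct identification of $\nabla d_x$ with $-\nabla d_y$ at intermediate points follows from the continuity equation. This is precisely what makes the identity nontrivial in the non-smooth setting. The paper's mechanism for producing a genuinely pointwise comparison is completely different: it uses Remark~\ref{lip=grad} to write $\inner{\nabla f}{\nabla d_x}$ as a polarization of local Lipschitz constants, which are defined pointwise (not merely $m$-a.e.), and then proves the pointwise identity $\lip(f+d_x)(z) = \lip(f-d_y)(z)$ at interior points $z$ of $\tgam_{x,y}$ in Lemma~\ref{Lipschitzconstantinteriorpoint}, whose proof rests on the classical Abresch--Gromoll excess estimate (Corollary~\ref{classical Abresch-Gromoll}). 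One then pushes back to the measure-theoretic statement via Fubini and the absolute continuity of the interpolants (Theorem~\ref{directional BG}), and finally approximates $V$ by gradients $\nabla f$. Without this Abresch--Gromoll input --- or some substitute for it --- I don't see how your argument closes; you should supply the pointwise cancellation mechanism rather than expect the $W_2$-geodesic interpretation alone to deliver it.
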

\begin{proof}
Let $\mu = g(m \times m)$ for compactly supported $g \in L^{\infty}(X \times X, m \times m)$ with $g^{1}_{x}(\cdot) := g(x, \cdot)$ and $g^{2}_{y}(\cdot) := g(\cdot, y)$. $\norm{g^{1}_x}_{L^{\infty}} \leq C$ for $m$-a.e. $x$ and $\norm{g^{2}_y}_{L^{\infty}} \leq C$ for $m$-a.e. $y$. We have 
\begin{align}\label{second order interpolation eq2}
\begin{split}
	&\hspace{0.5cm} \int \inner{V}{\nabla d_x}(y) \, d\mu(x,y) - \int \inner{V}{\nabla d_x}(\tgam_{x,y}(\frac{1}{2})) \, d\mu(x,y)\\
	&= \int \int \bigg( \inner{V}{\nabla d_x}(y) - \inner{V}{\nabla d_x}(\tgam_{x,y}(\frac{1}{2})) \bigg) \, d(g^{1}_{x}m)(y) \, dm(x) \; \; \text{. by Fubini's theorem}\\
	&= \int \int_{\frac{1}{2}}^{1}\bigg( \int d(x,y) \big(\nabla V :(\nabla d_x \otimes \nabla d_x)\big)(\tgam_{x,y}(t)) \, d(g^{1}_{x}m)(y) \bigg) \, dt \, dm(x) \; \; \text{ , by \ref{second order lemma}}\\
	&= \int_{\frac{1}{2}}^{1} \bigg( \int \bigg( d(x,y)\big(\nabla V :(\nabla d_x \otimes \nabla d_x)\big)(\tgam_{x,y}(t)) \bigg) d\mu(x,y)\, \bigg) \,dt \; \; \text{, by Fubini's theorem.}
\end{split}
\end{align}
Making the analogous argument on $t \in [0,\frac{1}{2}]$, we obtain
\begin{align}\label{second order interpolation eq3}
\begin{split}
	&\hspace{0.5cm} \int \inner{V}{\nabla d_y}(x) \, d\mu(x,y) - \int \inner{V}{\nabla d_y}(\tgam_{x,y}(\frac{1}{2})) \, d\mu(x,y)\\
	&=\int_{0}^{\frac{1}{2}} \bigg( \int \bigg( d(x,y)\big(\nabla V :(\nabla d_y \otimes \nabla d_y)\big)(\tgam_{x,y}(t)) \bigg) d\mu(x,y)\, \bigg) \, dt.
\end{split} 
\end{align}
The claim then follows if
\begin{equation}\label{second order interpolation eq4}
	\int \inner{V}{\nabla d_x}(\tgam_{x,y}(\frac{1}{2})) \, d\mu(x,y) + \int \inner{V}{\nabla d_y}(\tgam_{x,y}(\frac{1}{2})) \, d\mu(x,y) = 0
\end{equation}
and
\begin{align}\label{second order interpolation eq5}
\begin{split}
	&\hspace{0.5cm} \int_{0}^{\frac{1}{2}} \bigg( \int \bigg( d(x,y)\big(\nabla V :(\nabla d_y \otimes \nabla d_y)\big)(\tgam_{x,y}(t)) \bigg) d\mu(x,y)\, \bigg) \, dt\\
	&= \int_{0}^{\frac{1}{2}} \bigg( \int \bigg( d(x,y)\big(\nabla V :(\nabla d_x \otimes \nabla d_x)\big)(\tgam_{x,y}(t)) \bigg) d\mu(x,y)\, \bigg) \, dt,\\
\end{split}
\end{align}
which we show in Lemma \ref{grad d_p = -grad d_q} and Remark \ref{grad d_p = -grad d_q cor}.
\end{proof}

The following two lemmas show that, in a measure-theoretic sense, $\nabla d_p$ and $\nabla d_q$ in the interior of a geodesic between $p$ and $q$ ``point in opposite directions".

\begin{lem}\label{Lipschitzconstantinteriorpoint} 
	Let $p,q \in X$. Let $\gamma_{p,q}:[0,1] \to X$ be a constant speed geodesic from $p$ to $q$ and let $z = \gamma(t_0)$ for some $t_0 \in (0,1)$. Let $f$ be a locally Lipschitz function, then $\lip(f+d_p)(z) = \lip(f-d_q)(z)$.
\end{lem}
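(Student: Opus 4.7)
The plan is to reduce the equality to the statement that the excess function $e_{p,q}(x) := d(p,x)+d(x,q)-d(p,q)$ has vanishing local Lipschitz constant at $z$, and then apply the classical Abresch--Gromoll inequality (Corollary~\ref{classical Abresch-Gromoll}).

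First I would note that $z$ lies on a minimizing geodesic between $p$ and $q$, so $e_{p,q}(z)=0$, while $e_{p,q} \geq 0$ everywhere by the triangle inequality. For any $y \in X$, a direct computation gives
\begin{equation*}
(f+d_p)(y)-(f+d_p)(z) \;=\; \bigl[(f-d_q)(y)-(f-d_q)(z)\bigr] + e_{p,q}(y).
\end{equation*}
Taking absolute values, dividing by $d(y,z)$, and passing to the $\limsup$ as $y \to z$ yields
\begin{equation*}
\lip(f+d_p)(z) \;\leq\; \lip(f-d_q)(z) + \limsup_{y \to z}\frac{e_{p,q}(y)}{d(y,z)},
\end{equation*}
and the symmetric inequality (swapping the roles of the two functions) follows in the same way. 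Hence it suffices to prove $\limsup_{y \to z} e_{p,q}(y)/d(y,z) = 0$.

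To establish this, I would apply Corollary~\ref{classical Abresch-Gromoll} at the base point $x=z$ with $\epsilon := \tfrac{1}{2}\min(t_0, 1-t_0) > 0$. Since $d(z,p) = t_0\, d_{p,q}$ and $d(z,q) = (1-t_0)\, d_{p,q}$, the point $z$ lies in the annular neighbourhood $A_{\epsilon d_{p,q},\, 2 d_{p,q}}(\{p,q\})$. The hypothesis $e_{p,q}(z) \leq r^2 d_{p,q}$ is trivially satisfied for every $r \in (0, \bar{r}(K,N,\epsilon)]$ because $e_{p,q}(z) = 0$, so the corollary gives $e_{p,q}(y) \leq C(K,N,\epsilon)\, r^{1+\alpha(N)}\, d_{p,q}$ for every $y \in B_{r d_{p,q}}(z)$. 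Specializing $r = d(y,z)/d_{p,q}$ for $y$ close enough to $z$ (and using $e_{p,q} \geq 0$) produces
\begin{equation*}
\frac{e_{p,q}(y)}{d(y,z)} \;\leq\; C(K,N,\epsilon) \left( \frac{d(y,z)}{d_{p,q}} \right)^{\alpha(N)} \;\xrightarrow[y\to z]{}\; 0,
\end{equation*}
which is exactly the needed vanishing. The normalization $d_{p,q}\leq 1$ appearing in Corollary~\ref{classical Abresch-Gromoll} is not restrictive, since one may first rescale the metric by $1/d_{p,q}$.

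No serious obstacle arises beyond the algebraic identity that expresses $(f+d_p)-(f-d_q)$ as $e_{p,q}$ plus a constant. Once this identity is in hand, Abresch--Gromoll does all the real work; the only subtle point is checking that the interior hypothesis of the corollary holds, which is automatic since $z$ is a strictly interior point of $\gamma_{p,q}$.
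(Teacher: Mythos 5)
Your proof is correct and follows essentially the same route as the paper: both reduce the claim to the algebraic identity $(f+d_p)-(f-d_q) = e_{p,q} + \text{const}$ together with the classical Abresch--Gromoll inequality giving $e_{p,q}(y) \leq C\,d(y,z)^{1+\alpha}$ near the interior point $z$. You are somewhat more explicit than the paper about verifying the hypotheses of Corollary~\ref{classical Abresch-Gromoll} (choice of $\epsilon$, membership in the annular region, and the rescaling to handle $d_{p,q}$), but the underlying argument is identical.
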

\begin{proof}
By classical Abresch-Gromoll inequality \ref{classical Abresch-Gromoll}, for $x$ in a sufficiently small neighbourhood of $z$, $e_{p,q}(x) \leq Cd(x,z)^{1+\alpha}$. Therefore, for any such $x$, 
\begin{align*}
&\hspace{0.5cm}\frac{|(f(x)+d_p(x))-(f(z)+d_p(z))|}{d(x,z)}-\frac{|(f(x)-d_q(x))-(f(z)-d_q(z))|}{d(x,z)}\\  
&\leq \frac{Cd(x,z)^{1+\alpha}}{d(x,z)}=Cd(x,z)^{\alpha}.
\end{align*}
This shows that $\lip(f+d_p)(z) = \lip(f-d_q)(z)$ since $Cd(x,z)^{\alpha} \to 0$ as $d(x,z) \to 0$.
\end{proof}

\begin{lem}\label{grad d_p = -grad d_q}
In the notations of \ref{second order interpolation}, for any $t \in (0,1)$,
\begin{equation*}
	\int \inner{V}{\nabla d_x}(\tgam_{x,y}(t)) \, d\mu(x,y) + \int \inner{V}{\nabla d_y}(\tgam_{x,y}(t)) \, d\mu(x,y) = 0.
\end{equation*}
\end{lem}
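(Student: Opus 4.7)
The plan is to first establish the identity for $V=\nabla h$ with $h\in\TestF(X)$ at Lebesgue almost every $t$, then extend to general $V\in H^{1,2}_C(TX)$ by density and linearity, and finally remove the ``almost every $t$'' restriction via a continuity argument in $t$. At the heart lies the measure-theoretic cancellation $\nabla d_x=-\nabla d_y$ at interior points of a geodesic from $x$ to $y$, which is the infinitesimal counterpart of Lemma \ref{Lipschitzconstantinteriorpoint}.

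For $h\in\TestF(X)$, I apply Corollary \ref{W2 geodesic cont equation cor} twice. With target $p=x$ and starting point $y$ it gives, for $(m\times m)$-a.e.\ $(x,y)$, $\tfrac{d}{ds}h(\gamma_{y,x}(s))=-\inner{\nabla h}{\nabla d_x}(\gamma_{y,x}(s))$ for a.e.\ $s\in(0,d(x,y))$; with target $p=y$ and starting point $x$ it gives $\tfrac{d}{ds}h(\gamma_{x,y}(s))=-\inner{\nabla h}{\nabla d_y}(\gamma_{x,y}(s))$ for a.e.\ $s$. By Remark \ref{a.e. unique geodesic} the geodesic is $(m\times m)$-a.e.\ unique, so $\gamma_{y,x}(s)=\gamma_{x,y}(d(x,y)-s)$; the chain rule identifies $\tfrac{d}{ds}h(\gamma_{y,x}(s))=-\tfrac{d}{du}\big|_{u=d(x,y)-s}h(\gamma_{x,y}(u))$, and comparing the two expressions yields $\inner{\nabla h}{\nabla d_x+\nabla d_y}(\gamma_{y,x}(s))=0$ for Lebesgue a.e.\ $s$. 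Reparameterizing via $\tgam_{x,y}(t)=\gamma_{y,x}((1-t)d(x,y))$ and using $\mu\le C(m\times m)$, Fubini gives that for Lebesgue a.e.\ $t\in(0,1)$ and $\mu$-a.e.\ $(x,y)$, $\inner{\nabla h}{\nabla d_x+\nabla d_y}(\tgam_{x,y}(t))=0$. Multiplying by $g(\tgam_{x,y}(t))$ for any $g\in\TestF(X)$ and summing in the definition of $\TestV(X)$ gives the vanishing for all test vector fields $V\in\TestV(X)$, at a.e.\ $t$.

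Next I extend to $V\in H^{1,2}_C(TX)$. Define $I(t,V):=\int\big(\inner{V}{\nabla d_x}+\inner{V}{\nabla d_y}\big)(\tgam_{x,y}(t))\,d\mu(x,y)$. For each fixed $t\in(0,1)$, the pushforward $\nu^x_t:=(\tgam_{x,\cdot}(t))_*(g^1_x m)$ has density bounded uniformly in $x$ by Theorem \ref{directional BG} applied to the $W_2$-geodesic from $g^1_x m$ to $\delta_x$ (and symmetrically for the $\nabla d_y$ term with $g^2_y m$). Together with $|\nabla d_x|=|\nabla d_y|=1$ $m$-a.e., Cauchy--Schwarz and Fubini give $|I(t,V)|\le C(t)\|V\|_{L^2(TX)}$. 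Since $\TestV(X)$ is dense in $H^{1,2}_C(TX)$, a diagonal argument across a sequence of approximating test vector fields yields $I(t,V)=0$ for Lebesgue a.e.\ $t\in(0,1)$ and every $V\in H^{1,2}_C(TX)$.

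Finally, I upgrade from a.e.\ $t$ to every $t$ by showing $t\mapsto I(t,V)$ is continuous on $(0,1)$. On any $[\epsilon,1-\epsilon]$ the densities of $\nu^x_t$ are uniformly bounded in both $x$ and $t$, and continuity of $t\mapsto\tgam_{x,y}(t)$ for each fixed $(x,y)$ yields weak convergence $\nu^x_{t_n}\to\nu^x_t$ whenever $t_n\to t$. An $L^1(m)$-approximation of $\inner{V}{\nabla d_x}$ by continuous compactly supported functions upgrades weak convergence to convergence of integrals against $\inner{V}{\nabla d_x}$, and a further application of dominated convergence in $x$ gives continuity of $I(\cdot,V)$. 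Together with the a.e.\ vanishing this forces $I(t,V)=0$ for every $t\in(0,1)$. The main technical obstacle is precisely this last upgrade: Corollary \ref{W2 geodesic cont equation cor} is intrinsically an almost-everywhere statement in the geodesic parameter, and the continuity argument required to pin down every $t$ depends essentially on the non-collapsing density bound of Theorem \ref{directional BG}, which is where the $\RCD(K,N)$ hypothesis enters.
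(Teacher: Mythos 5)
Your argument is correct but takes a genuinely different route from the paper. The paper's proof fixes $t\in(0,1)$ once and for all and derives the cancellation pointwise: it writes $\inner{\nabla f}{\nabla d_x}$ and $\inner{\nabla f}{-\nabla d_y}$ via polarization of local Lipschitz constants (using Remark \ref{lip=grad}) and then invokes Lemma \ref{Lipschitzconstantinteriorpoint} — itself a consequence of the classical Abresch--Gromoll inequality \ref{classical Abresch-Gromoll} — to conclude $\lip(f+d_x)=\lip(f-d_y)$ at every interior point of a geodesic from $x$ to $y$. This yields the identity directly for each fixed $t$, and the integration step then only needs the bounded-density fact (Theorem \ref{directional BG}) to make the pointwise statement survive composition with $\tgam_{x,y}(t)$. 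Your proof instead replaces the geometric Abresch--Gromoll input with the first-order differentiation formula of Corollary \ref{W2 geodesic cont equation cor}, applied once toward $x$ and once toward $y$, and the geodesic time-reversal symmetry from Remark \ref{a.e. unique geodesic}; the chain rule then delivers the same cancellation, but only for Lebesgue a.e.\ $t$, after which a continuity-in-$t$ step (relying again on Theorem \ref{directional BG} to control the pushforward densities uniformly on compact subintervals of $(0,1)$) is required to recover the statement for every $t$. The paper's approach buys directness — no a.e.-to-everywhere upgrade — at the cost of invoking the Abresch--Gromoll machinery; yours is more dynamical and arguably more self-contained, since Corollary \ref{W2 geodesic cont equation cor} is already needed elsewhere, but the continuity step and the joint-measurability bookkeeping in the final dominated-convergence argument deserve to be spelled out carefully if this proof is to replace the one in the text.
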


\begin{proof}
Fix $t \in (0,1)$. Let $\mu = g(m \times m)$ for compactly supported $g \in L^{\infty}(X \times X, m \times m)$ with $g^{1}_{x}(\cdot) := g(x, \cdot)$ and $g^{2}_{y}(\cdot) := g(\cdot, y)$. $\norm{g^{1}_x}_{L^{\infty}} \leq C$ for $m$-a.e. $x$ and $\norm{g^{2}_y}_{L^{\infty}} \leq C$ for $m$-a.e. $y$.

We first prove the claim for $V = \nabla f$ where $f$ is locally Lipschitz. The general claim then follows by approximation. We observe the following:
\begin{enumerate}
	\item By Remark \ref{lip=grad}, for each $x \in X$,
	 \begin{align*}
	 \langle \nabla f, \nabla d_x \rangle &= \frac{|\nabla (f+d_x)|^2 - |\nabla f|^2 - |\nabla d_x|^2}{2}\\
					&= \frac{(\lip(f+d_x))^2 - (\lip (f))^2- (\lip (d_x))^2}{2} \; \; m-\text{a.e..}
	\end{align*}
\item Similarly, for each $y \in X$,
	\begin{align*}
		 \langle \nabla f, -\nabla d_y \rangle &= \frac{|\nabla (f-d_y)|^2 - |\nabla f|^2 - |-\nabla d_y|^2}{2}\\
					&= \frac{(\lip (f-d_y))^2 - (\lip (f))^2- (\lip (-d_y))^2}{2} \; \; m-\text{a.e..}
	\end{align*}
\item $(\tgam_{x,\cdot}(t))_{*}(g^{1}_{x}m)$ is of bounded density w.r.t. $m$ for $m$-a.e. $x$ by Theorem \ref{directional BG} \eqref{directional BG eq1}.
\end{enumerate}
Lemma \ref{Lipschitzconstantinteriorpoint} and Fubini's Theorem then gives the claim for $\nabla f$,
\begin{align*}
	&\hspace{0.5cm}  \int \inner{\nabla f}{\nabla d_x}(\tgam_{x,y}(t)) \, d\mu(x,y) \\
	&= \int \bigg(\int \inner{\nabla f}{\nabla d_x}(\tgam_{x,y}(t)) \, d(g^{1}_{x}m)(y) \, \bigg) \, dm(x)\\
	&= \int \bigg(\int \frac{(\lip(f+d_x))^2 - (\lip (f))^2- (\lip (d_x))^2}{2}(\tgam_{x,y}(t)) \, d(g^{1}_{x}m)(y) \, \bigg) \, dm(x) \text{ , by observations 1 and 3}\\
	&= \int \bigg(\int \frac{(\lip(f-d_y))^2 - (\lip (f))^2- (\lip (-d_y))^2}{2}(\tgam_{x,y}(t)) \, d(g^{1}_{x}m)(y) \, \bigg) \, dm(x) \text{ , by \ref{Lipschitzconstantinteriorpoint} and observ. 3}\\
	&=  \int \inner{\nabla f}{-\nabla d_y}(\tgam_{x,y}(t)) \, d\mu(x,y) \text{ , by observations 2 and 3.}
\end{align*}
\end{proof}

\begin{rem}\label{grad d_p = -grad d_q cor}
\ref{grad d_p = -grad d_q} implies \eqref{second order interpolation eq5} as well. This is because Lemma \ref{second order lemma} is true for any interval $[a,b] \subseteq (0,1]$ and so one can use the same argument as in Proposition \ref{second order interpolation} to say that, for $0 < s < \frac{1}{2}$, 
\begin{align*}
\begin{split}
	&\hspace{0.5cm} \int_{s}^{\frac{1}{2}} \bigg( \int \bigg( d(x,y)\big(\nabla V :(\nabla d_y \otimes \nabla d_y)\big)(\tgam_{x,y}(t)) \bigg) d\mu(x,y)\, \bigg) dt\\
	&= \int \inner{V}{\nabla d_y}(\tgam_{x,y}(\frac{1}{2})) \, d\mu(x,y) - \int \inner{V}{\nabla d_y}(\tgam_{x,y}(s)) \, d\mu(x,y)\\
	&= -\int \inner{V}{\nabla d_x}(\tgam_{x,y}(\frac{1}{2})) \, d\mu(x,y) + \int \inner{V}{\nabla d_y}(\tgam_{x,y}(s)) \, d\mu(x,y)\\
	&= \int_{s}^{\frac{1}{2}} \bigg( \int \bigg( d(x,y)\big(\nabla V :(\nabla d_x \otimes \nabla d_x)\big)(\tgam_{x,y}(t)) \bigg) d\mu(x,y)\, \bigg) dt.
\end{split}
\end{align*}
Taking a limit $s \to 0$ gives \eqref{second order interpolation eq5}.
\end{rem}

The second order interpolation formula \ref{second order interpolation} and first order differentiation formula for distance along local flows \ref{first order differentiation flow} immediately give an integral version of \eqref{CN interpolation}. They also give the following related estimate which will be used heavily in Section \ref{section 5}. Let $U$, $V \in L^{1}([0,T], L^{2}(TX))$ be bounded (see Definition \ref{L^2 vector fields}) and $S_1, S_2$ be bounded sets of positive measure. Let $(F_t)_{t \in [0,T]}$, $(G_t)_{t \in [0,T]}$ be local flows of $U$, $V$ from $S_1$, $S_2$ respectively. Let $r > 0$ and for each $t \in [0,T]$, define $dt^{F,G}_{r}(t): S_1 \times S_2 \to [0,r]$ the \textit{distance distortion on scale $r$ at $t$} by
\begin{equation}\label{distortion def}
	dt^{F,G}_r(t)(x,y) := \min\braces{r, \max\limits_{0 \leq \tau \leq t}|d(x,y)-d(F_\tau(x), G_\tau(y))|}.
\end{equation} 
Define $\Gamma^{F,G}_r(t) := \braces{(x,y) \in S_1 \times S_2 \, : \, dt^{F,G}_r(t)(x,y) < r}.$
The terminology and definition of the distance distortion function comes from \cite{KW11}, where it was used in a similar way as in this paper to analyze the geometry of gradient flows. 
\begin{prop}\label{distortion bound}
Let $W \in H^{1,2}_{C}(TX)$. The map 
$t \mapsto \int_{S_1 \times S_2} dt^{F,G}_r(t)(x,y) \, d(m \times m)(x,y)$ is Lipschitz on $[0,T]$ and satisfies
\begin{align*}
	&\hspace{0.5cm}\frac{d}{dt} \int\limits_{S_1 \times S_2} dt^{F,G}_r(t)(x,y) \, d(m \times m)(x,y) \\
	&\leq \int_{\Gamma^{F,G}_r(t)} \big(|U_t - W|(F_t(x)) + |V_t - W|(G_t(y)) \big)\, d(m \times m)(x,y)\\
	&\hspace{2cm}+ \int_{0}^{1} \int_{\Gamma^{F,G}_r(t)} d(F_t(x),G_t(y)) |\nabla W|_{\HS}(\tgam_{F_t(x),G_t(y)}(s)) \, d(m \times m)(x,y) \, ds
\end{align*}
for a.e. $t \in [0,T]$, where $\tgam_{\cdot,\cdot}$ is as defined in the beginning of this subsection.
\end{prop}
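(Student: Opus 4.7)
The plan is to combine the first order differentiation formula (Proposition \ref{first order differentiation flow}) with the second order interpolation formula (Proposition \ref{second order interpolation}), where $W$ is added and subtracted inside the two gradient terms. First I would fix $(x,y) \in S_1 \times S_2$ and observe that $t \mapsto d(F_t(x), G_t(y))$ is Lipschitz with Lipschitz constant $L := \norm{U}_{L^\infty} + \norm{V}_{L^\infty}$, using Proposition \ref{RLF speed bound} to bound the metric speed of the flow curves. Consequently $h(t) := |d(x,y) - d(F_t(x), G_t(y))|$ is $L$-Lipschitz, its running maximum $M(t) := \max_{0 \leq \tau \leq t} h(\tau)$ is nondecreasing and $L$-Lipschitz, and $dt^{F,G}_r(t)(x,y) = \min\{r, M(t)\}$ is $L$-Lipschitz uniformly in $(x,y)$; this already gives the Lipschitz continuity of $t \mapsto \int dt^{F,G}_r(t) \, d(m\times m)$ since $m(S_1)m(S_2) < \infty$. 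A standard pointwise check gives, for a.e.\ $(x,y)$,
\begin{equation*}
\frac{d}{dt} dt^{F,G}_r(t)(x,y) \leq \mathbf{1}_{\Gamma^{F,G}_r(t)}(x,y) \cdot \bigg|\frac{d}{dt} d(F_t(x), G_t(y))\bigg| \quad \text{for a.e. } t,
\end{equation*}
since $dt^{F,G}_r$ is constant off $\Gamma^{F,G}_r(t)$ and $M'(t) \leq |h'(t)|$ a.e.\ on $\Gamma^{F,G}_r(t)$.

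Next, I would swap differentiation and integration by using the uniform $L$-Lipschitz bound to apply dominated convergence to the difference quotient (equivalently, use Fubini on the integral identity $dt_r(t_2) - dt_r(t_1) = \int_{t_1}^{t_2} dt_r'(\tau) \, d\tau$). Combined with Proposition \ref{first order differentiation flow}, this yields, for a.e.\ $t$,
\begin{equation*}
\frac{d}{dt}\int dt^{F,G}_r(t) \, d(m\times m) \leq \int_{\Gamma^{F,G}_r(t)} \big|\inner{\nabla d_{G_t(y)}}{U_t}(F_t(x)) + \inner{\nabla d_{F_t(x)}}{V_t}(G_t(y))\big| \, d(m \times m).
\end{equation*}
Adding and subtracting $W$ in each inner product and using $|\nabla d_{\cdot}| \leq 1$ $m$-a.e., the contribution from $\inner{\nabla d_{G_t(y)}}{U_t - W}(F_t(x)) + \inner{\nabla d_{F_t(x)}}{V_t - W}(G_t(y))$ is bounded by $|U_t - W|(F_t(x)) + |V_t - W|(G_t(y))$, producing the first term of the claimed inequality.

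For the remaining terms $\inner{\nabla d_{G_t(y)}}{W}(F_t(x)) + \inner{\nabla d_{F_t(x)}}{W}(G_t(y))$, I would apply Proposition \ref{second order interpolation}. To handle the absolute value, partition $\Gamma^{F,G}_r(t) = A_+ \sqcup A_-$ according to the sign of this sum and consider the pushforward measures $\mu_\pm := (F_t, G_t)_* (m\times m \restriction_{A_\pm})$ on $X \times X$. By the local flow volume bound (item 3 of Definition \ref{local flow def}), $\mu_\pm \leq C^2 (m \times m)$ and is compactly supported, so Proposition \ref{second order interpolation} applies to each. Taking absolute values of the resulting identities, using the pointwise bound $|\nabla W : \nabla d_{x'} \otimes \nabla d_{x'}| \leq |\nabla W|_{\HS} |\nabla d_{x'}|^2 \leq |\nabla W|_{\HS}$, summing the $\pm$ contributions, and pulling the change of variables $(x', y') = (F_t(x), G_t(y))$ back through $\tgam_{x',y'} \leftrightarrow \tgam_{F_t(x), G_t(y)}$ yields the second term of the claimed bound.

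The main obstacle is the bookkeeping in this last step: verifying that $\mu_\pm$ are bounded density on $X \times X$ so that Proposition \ref{second order interpolation} genuinely applies, and that the change of variables produces exactly $d(F_t(x), G_t(y))|\nabla W|_{\HS}(\tgam_{F_t(x), G_t(y)}(s))$ inside the integral. The Fubini/swap-of-derivative argument in the middle paragraph is routine given the uniform Lipschitz bound, and the pointwise distortion estimate in the first paragraph is elementary, so the second-order step carries the real content.
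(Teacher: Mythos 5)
Your proof is correct and follows the same overall architecture as the paper's: reduce to a pointwise derivative bound via Lipschitz continuity and dominated convergence, invoke the first-order differentiation formula, add and subtract $W$, and apply the second-order interpolation formula to the pushforward measure before unwinding via the segment-inequality-style change of variables. There is one place where you genuinely diverge, and it is worth flagging: you control $\frac{d}{dt}dt^{F,G}_r$ by $\mathbf{1}_{\Gamma^{F,G}_r(t)}\big|\frac{d}{dt}d(F_t(x),G_t(y))\big|$, whereas the paper claims the stronger pointwise bound $\frac{d}{dt}dt^{F,G}_r \leq \big(\frac{d}{dt}d(F_t(x),G_t(y))\big)_+$ and then integrates only over $\tilde\Gamma^{F,G}_r(t)$. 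Your version is in fact the one that is unambiguously true: if $d(F_t(x),G_t(y))$ has already fallen \emph{below} $d(x,y)$ and is still decreasing, the running maximum of $|d(x,y)-d(F_\tau(x),G_\tau(y))|$ is increasing even though $\frac{d}{dt}d(F_t(x),G_t(y))<0$, so the positive part alone does not dominate. The absolute-value route forces the sign decomposition you describe (your $A_+\sqcup A_-$, partitioned by the sign of the $W$-contribution), which is a minor extra step — one applies Proposition~\ref{second order interpolation} once per piece and adds — but it closes the gap cleanly. The remaining points you raise as obstacles (that $\mu_\pm \leq C^2(m\times m)$ via Fubini and two applications of the compression bound in Definition~\ref{local flow def}, and that the pushforward change of variables preserves the Borel selection $\tgam$) go through exactly as you expect.
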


\begin{proof}
First fix representatives for all involved measure-theoretic objects. For any $(x, y) \in S_1 \times S_2$, $dt^{F,G}_r(t)(x,y)$ is continuous, monontone non-decreasing and bounded between $0$ and $r$ as a function of $t \in [0,T]$. Therefore, $dt^{F,G}_r(t)(x,y)$ is differentiable for a.e. $t \in [0,T]$ and $\frac{d^+}{dt}dt^{F,G}_r(t)(x,y) = 0$ for all $t$ where $(x,y) \notin \Gamma^{F,G}_r(t)$. 

Furthermore, by boundedness of $U$, $V$ and Proposition \ref{RLF speed bound}, $F_{\cdot}(x)$, $G_{\cdot}(y)$ are uniformly Lipschitz curves for $m$-a.e. $x \in S_1$ and $m$-a.e. $y \in S_2$ respectively. In particular, the functions $[0,T] \ni t \mapsto d(F_t(x), G_t(y))$ are uniformly Lipschitz for $(m \times m)$-a.e. $(x,y) \in S_1 \times S_2$.  The same is true for the functions $t \mapsto dt^{F,G}_r(t)(x,y)$. Therefore, $t \mapsto \int_{S_1 \times S_2} dt^{F,G}_r(t)(x,y) \, d(m \times m)(x,y)$ is Lipschitz on $[0,T]$ as well. 

By Proposition \ref{first order differentiation flow}, for $(m \times m)$-a.e.$(x,y) \in S_1 \times S_2$, $d(F_t(x),G_t(y)) \in W^{1,1}([0,T])$ and 
\begin{equation}\label{distortion bound eq1}
	\frac{d}{dt} d(F_t(x), G_t(y)) = \inner{\nabla d_{G_t(y)}}{U_t}(F_t(x)) + \inner{\nabla d_{F_t(x)}}{V_t}(G_t(y)).
\end{equation}
At any point of differentiability for both $d(F_t(x), G_t(y))$ and $dt^{F,G}_r(t)(x,y)$, it is clear from definition that $\frac{d}{dt} dt^{F,G}_r(t)$ $(x,y) \leq (\frac{d}{dt} d(F_t(x), G_t(y)))_+$

For any $t \in [0,T]$, let $\tilde{\Gamma}^{F,G}_r(t) := \braces{(x,y)\, : \, \inner{\nabla d_{G_t(y)}}{U_t}(F_t(x)) + \inner{\nabla d_{F_t(x)}}{V_t}(G_t(y)) \geq 0} \cap \Gamma^{F,G}_r(t)$. Then $\mu_t := (F_t, G_t)_*((m \times m)\rvert_{\tilde{\Gamma}^{F,G}_r(t)})$ is compactly supported by Proposition \ref{RLF speed bound} and has bounded density with respect to $(m \times m)$ by definition \ref{local flow def} of a local flow. By the second order interpolation formula \ref{second order interpolation}, 
\begin{equation}\label{distortion bound eq2}
	\int \big(\inner{W}{\nabla d_y}(x) + \inner{W}{\nabla d_x}(y) \big)\, d\mu_t(x,y)= \int_{0}^{1} \int d(x,y) (\nabla W : \nabla d_x \otimes \nabla d_x)(\tgam_{x,y}(s))\, d\mu_t(x,y) \, ds.
\end{equation}

Therefore, 
\begin{align}\label{distortion bound eq3}
\begin{split}
	&\hspace{0.5cm}\int \big(\inner{U_t}{\nabla d_y}(x) + \inner{V_t}{\nabla d_x}(y) \big)\, d\mu_t(x,y)\\
	&= \int \big(\inner{U_t - W}{\nabla d_y}(x) + \inner{V_t - W}{\nabla d_x}(y) \big)\, d\mu_t(x,y)\\
	&\hspace{3cm}+ \int_{0}^{1} \int d(x,y) (\nabla W : \nabla d_x \otimes \nabla d_x)(\tgam_{x,y}(s))\, d\mu_t(x,y) \, ds  \; \; \text{ , by \eqref{distortion bound eq2}}\\
	& \leq \int \big(|U_t - W|(x) + |V_t - W|(y) \big)\, d\mu_t(x,y)\\
	&\hspace{3cm}+ \int_{0}^{1}\int d(x,y) |\nabla W|_{\HS}(\tgam_{x,y}(s)) \, d\mu_t(x,y) \, ds \; \; \text{ , since $|\nabla d_x|,|\nabla d_y| = 1$ $m$-a.e.}\\
	&\leq \int_{\Gamma^{F,G}_r(t)} \big(|U_t - W|(F_t(x)) + |V_t - W|(G_t(y)) \big)\, d(m \times m)(x,y)\\
	&\hspace{3cm}+ \int_{0}^{1}  \int_{\Gamma^{F,G}_r(t)} d(F_t(x),G_t(y))|\nabla W|_{\HS}(\tgam_{F_t(x),G_t(y)}(s)) \, d(m \times m)(x,y) \, ds,
\end{split}
\end{align}
by definition of $\mu_t = (F_t, G_t)_*((m \times m)\rvert_{\tilde{\Gamma}^{F,G}_r(t)})$, $\tilde{\Gamma}^{F,G}_r(t) \subseteq \Gamma^{F,G}_r(t)$ and the fact that all integrands are positive.

To conlcude, we have for a.e. $t \in [0,T]$, for $(m \times m)$-a.e. $(x,y) \in S_1 \times S_2$, $d(F_t(x), G_t(y))$ and $dt^{F,G}_r(t)(x,y)$ are both differentiabile in $t$. For any such $t$,
\begin{align*}
	&\hspace{0.5cm}\frac{d}{dt} \int\limits_{S_1 \times S_2} dt^{F,G}_r(t)(x,y) \, d(m \times m)(x,y) \\
	&=  \int\limits_{S_1 \times S_2} \frac{d}{dt} dt^{F,G}_r(t)(x,y) \, d(m \times m)(x,y) \; \; \text{ , by DCT since $dt_r$ is uniformly Lipschitz for a.e. $(x,y)$}\\
	&= \int\limits_{\Gamma^{F,G}_r(t)} \frac{d}{dt} dt^{F,G}_r(t)(x,y) \, d(m \times m)(x,y)\\
	&\leq \int\limits_{\Gamma^{F,G}_r(t)} (\frac{d}{dt} d(F_t(x), G_t(y)))_+ \, d(m \times m)(x,y)\\
	&= \int\limits_{\tilde{\Gamma}^{F,G}_r(t)} \inner{\nabla d_{G_t(y)}}{U_t}(F_t(x)) + \inner{\nabla d_{F_t(x)}}{V_t}(G_t(y))  \, d(m \times m)(x,y) \; \; \text{ , by \eqref{distortion bound eq1} and definition of $\tilde{\Gamma}$}\\
	&\leq \int_{\Gamma^{F,G}_r(t)} \big(|U_t - W|(F_t(x)) + |V_t - W|(G_t(y)) \big)\, d(m \times m)(x,y)\\
	&\hspace{2cm}+ \int_{0}^{1}  \int_{\Gamma^{F,G}_r(t)} d(F_t(x),G_t(y))|\nabla W|_{\HS}(\tgam_{F_t(x),G_t(y)}(s)) \, d(m \times m)(x,y) \, ds \; \; \text{ , by \eqref{distortion bound eq3}.}
\end{align*}
\end{proof}

\section{Estimates on the heat flow approximations of distance and excess functions}\label{section 4}
Here we collect estimates on the heat flow approximations of distance and excess functions, all of which were established in \cite{CN12}. All their arguments translate directly to the $\RCD$ setting due to the availability of the improved Bochner inequality, the Li-Yau Harnack and gradient inequalities, and the various estimates of Subsection \ref{subsection 2.10}. We record their proofs for the sake of completeness, making minor regularity and measure-theoretic adjustments as needed.

In this section we fix $(X,d,m)$ an $\RCD(-(N-1),N)$ space for $N \in (1,\infty)$, $0 < \delta < \frac{1}{2}$, and two points $p, q \in X$ with $d(p,q) \leq 1$. Any time we use $c$ it is always a constant depending only on $N$ and $\delta$ unless specified otherwise. We fix the following notations:
\begin{enumerate}
	\item $d_{p,q} = d(p,q) \leq 1$ and for any $\epsilon > 0$, $d_{\epsilon} := \epsilon d_{p,q}$.
	\item $d^-(x) := d(p,x)$.
	\item $d^+(x) := d(p,q)-d(x,q)$.
	\item $e(x) := d(p,x)+d(x,q)-d(p,q) = d^-(x) - d^+(x).$
\end{enumerate}

We will consider these functions multiplied by some appropriate cut off functions. Let $\psi^{\pm}: X \to \mathbb{R}$ be the good cut off functions as in \ref{good cut off function} satisfying
\begin{align}
\psi^{-} = \left\{ \begin{array}{rl}
 1&\mbox{ on }A_{\frac{\delta}{8} d_{p,q},8 d_{p,q}}(p) \notag\\
  0 &\mbox{ on }X\setminus A_{\frac{\delta}{16} d_{p,q},16 d_{p,q}}(p)
       \end{array} \right. ,\,
\psi^{+} = \left\{ \begin{array}{rl}
 1&\mbox{ on }A_{\frac{\delta}{8} d_{p,q},8 d_{p,q}}(q) \notag\\
  0 &\mbox{ on }X\setminus A_{\frac{\delta}{16} d_{p,q},16 d_{p,q}}(q)
       \end{array} \right. \, .\notag
\end{align}

Let $\psi := \psi^{+}\psi^{-}$, $e_0 := \psi e$ and $h^{\pm}_0 := \psi d^{\pm}$. We denote
\begin{enumerate}
	\setcounter{enumi}{4} 
	\item $h_t^{\pm} := H_{t}(h_0^{\pm})$ and $e_t := H_{t}(e_0)$. 
	\item $X_{r,s} := A_{rd_{p,q}, sd_{p,q}}(p) \cap A_{rd_{p,q}, sd_{p,q}}(q)$.
\end{enumerate}
By definition $e_0 = e$, $h^{\pm}_0 = h^{\pm}$ on $X_{\frac{\delta}{8},8}$ and $e_t = h^-_t - h^+_t$ by uniqueness of heat flow.

We will always take the continuous representative whenever possible. This in particular applies to $e_t, h^{\pm}_t, \Delta e_t,$ and $\Delta h^{\pm}_t$ for $t > 0$. We remark that since  $h_t^\pm$ and $e_t$ are Lipschitz, one can also take the local Lipschitz constant as the representatives of $|\nabla h_t^\pm|$ and $|\nabla e_t|$ by \ref{lip=grad}. These have a sufficiently nice continuity property, see Lemma \ref{lip constant nice}, which makes most of our $m$-a.e. statements about $|\nabla h_t^\pm|$ and $|\nabla e_t|$ pointwise and ease certain measure-theoretic difficulties in the arguments for this section.  

\begin{lem}\label{lip constant nice}
Let $(X,d,m)$ be an $\RCD(K,N)$ space for $K \in \mathbb{R}$ and $N \in [1, \infty)$. Let $f: X \to \mathbb{R}$ be a Lipschitz function. Fix $U \subseteq X$ open and $x \in U$. Then 
\begin{equation*}
	\lip(f)(x) \leq \esssup_{U} \lip(f).
\end{equation*}
\end{lem}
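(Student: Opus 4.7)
The plan is to prove the lemma by establishing the pointwise bound $|f(y) - f(x)| \leq L\cdot d(y,x)$ for all $y$ sufficiently close to $x$, where $L := \esssup_U \lip(f)$, and then taking $\limsup_{y\to x}$. We may assume $L < \infty$, else the inequality is trivial. By Remark \ref{lip=grad}, $|Df| = \lip(f)$ $m$-a.e., so $|Df| \leq L$ $m$-a.e. on $U$.

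Fix $r > 0$ with $B_{3r}(x) \subset U$. For $y \in B_r(x)$ and $s \in (0, r/2)$, consider the uniform measures $\mu_s^x := m\rvert_{B_s(x)}/m(B_s(x))$ and $\mu_s^y := m\rvert_{B_s(y)}/m(B_s(y))$. By Theorem \ref{directional BG}, there is a unique $W_2$-geodesic $(\mu_t)_{t \in [0,1]}$ between $\mu_s^y$ and $\mu_s^x$, whose intermediate measures have uniformly bounded density with respect to $m$, and whose lifting $\nu \in \OptGeo(\mu^y_s, \mu^x_s)$ is supported on geodesics $\gamma$ with $\gamma(0) \in B_s(y)$ and $\gamma(1) \in B_s(x)$. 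In particular, each such $\gamma$ has length at most $d(x,y)+2s < 3r$, so $\supp(\mu_t) \subset B_{3r}(x) \subset U$ for every $t$.

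Next, I apply Theorem \ref{W2 geodesic cont equation} to a Lipschitz cutoff $\tilde f := \eta f$, where $\eta$ is a good cutoff from Lemma \ref{good ball cut off function} equal to $1$ on $B_{3r}(x)$ and supported in $U$; this yields
\begin{equation*}
\int f\, d\mu^x_s - \int f\, d\mu^y_s = \int \tilde f\, d\mu^x_s - \int \tilde f\, d\mu^y_s = \int_0^1 \int \inner{\nabla \tilde f}{\nabla \phi_t}\, d\mu_t\, dt.
\end{equation*}
The duality between $\phi_t$ and the lifted geodesic $\gamma$, namely that $-(s-t)\phi_t$ is a Kantorovich potential driving the displacement $\gamma(t)\mapsto \gamma(s)$, identifies $|\nabla \phi_t|(\gamma(t))$ with the speed $d(\gamma(0),\gamma(1))$ of $\gamma$. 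Consequently $|\nabla \phi_t| \leq d(x,y) + 2s$ holds $\mu_t$-a.e., and together with $|\nabla \tilde f| = |\nabla f| \leq L$ on $\supp(\mu_t) \subset B_{3r}(x)$, the Cauchy-Schwarz inequality gives
\begin{equation*}
\bigg|\int f\, d\mu^x_s - \int f\, d\mu^y_s\bigg| \leq L\big(d(x,y) + 2s\big).
\end{equation*}
Letting $s \to 0^+$ and using continuity of $f$, I will conclude $|f(x) - f(y)| \leq L\cdot d(x,y)$ for all $y \in B_r(x)$; taking $\limsup_{y \to x} |f(y)-f(x)|/d(y,x)$ then gives $\lip(f)(x) \leq L$.

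The main obstacle will be justifying the pointwise identification $|\nabla \phi_t|(\gamma(t)) = d(\gamma(0), \gamma(1))$; this is a standard fact from optimal transport on $\RCD$ spaces but requires a careful appeal to the relationship between Kantorovich potentials and velocity fields along Wasserstein geodesics. An alternative route that bypasses this identification is to invoke Corollary \ref{W2 geodesic cont equation cor} directly and integrate $f$ along the individual unit-speed geodesics from $B_s(y)$ to $x$ (using Fubini against the density bound from Theorem \ref{directional BG} to convert the $m$-a.e. gradient bound on $U$ into an a.e. bound along the geodesics), but the outcome is the same.
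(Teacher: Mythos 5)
Your proposal is correct in conception, and it would yield the stronger pointwise Lipschitz bound $|f(y)-f(x)|\leq L\,d(y,x)$ near $x$, but it takes a noticeably heavier route than the paper's proof and leaves a real gap that the paper avoids entirely.

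The gap you flag is genuine: the step $|\nabla\phi_t|\leq d(x,y)+2s$ $\mu_t$-a.e.\ is the pointwise Kantorovich--gradient identification (the ``metric Brenier'' statement: $|\nabla\phi_t|(\gamma_t)$ equals the length of the transport geodesic through $\gamma_t$). It is a true fact on $\RCD$ spaces, but it is not among the results quoted in the paper, and Theorem~\ref{W2 geodesic cont equation} only supplies the integral formula, not a pointwise bound on $|\nabla\phi_t|$. You would either have to import an additional reference (e.g.\ the metric Brenier theorem of Ambrosio--Gigli--Savar\'e) or pass through the weaker $L^2$ identity $\int|\nabla\phi_t|^2\,d\mu_t = W_2(\mu^x_s,\mu^y_s)^2$ together with Cauchy--Schwarz in $L^2(\mu_t)$; the latter actually suffices for your estimate and would close the gap without new machinery, since you only need the $L^1(\mu_t)$ bound $\int |\nabla\tilde f||\nabla\phi_t|\,d\mu_t\le L\,W_2(\mu^x_s,\mu^y_s)$.

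The paper's proof is substantially simpler and avoids Wasserstein geodesics and Kantorovich potentials altogether. Instead of smearing both $x$ and $y$ over small balls, it fixes $y$ near $x$ realizing $\frac{|f(y)-f(x)|}{d(y,x)}\ge \lip(f)(x)-\tfrac{\epsilon}{2}$, picks $B_r(y)\subset U$ on which the quotient stays above $\lip(f)(x)-\epsilon$, and then uses the elementary upper gradient inequality $|f(z)-f(x)|\le \int_0^1 d(z,x)\,\lip(f)(\gamma_{z,x}(\tau))\,d\tau$ for each $z\in B_r(y)$. Averaging over $z$ and invoking $(\gamma_{\cdot,x}(\tau))_*m\ll m$ for $\tau<1$ from Theorem~\ref{directional BG} converts the $m$-a.e.\ bound $\lip(f)\le L$ on $U$ into the integral estimate and yields $\lip(f)(x)-\epsilon\le L$. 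Your ``alternative route'' via Corollary~\ref{W2 geodesic cont equation cor} is essentially this same argument, except that the paper uses the upper gradient inequality (valid for every $z$) rather than the $m$-a.e.\ differentiation formula, which keeps the bookkeeping lighter. If you want a self-contained proof at the level of the paper's toolkit, you should switch to this upper-gradient route.
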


\begin{proof}
	For any $\epsilon > 0$, there exists $y \in U$ so that $\frac{|f(y) - f(x)|}{d(y,x)} \geq \lip(f)(x) - \frac{\epsilon}{2}$. By continuity of $f$, there exists $r>0$ so that $B_{r}(y) \subseteq U$ and for any $z \in B_{r}(y)$, 
$\frac{|f(y) - f(x)|}{d(y,x)} \geq \lip(f)(x) - \epsilon$. Let $\gamma_{z,x}:[0,1] \to X$ be a constant speed geodesic from $z$ to $x$. The local Lipschitz constant is an upper gradient of $f$, see \cite[Remark 2.7]{AGS14a}, and therefore,
\begin{equation*}
	\fint_{B_r(y)}|f(z) - f(x)| \, dm(z) \leq \fint_{B_r(y)}\int_{0}^{1} d(z,x)\lip(f)(\gamma_{z,x}(s))\,ds \, dm(z). 
\end{equation*}
Since we know $\fint_{B_r(y)} |f(z) - f(x)| \geq \lip(f)(x) - \epsilon$ and for each $s < 1$, $(\gamma_{\cdot,x}(s))_{*}(m)$ is absolutely continuous w.r.t. $m$ by Theorem \ref{directional BG}, we conclude $\esssup_{U} \lip(f) \geq \lip(f)(x) - \epsilon$. 
\end{proof}

We proceed with our estimates for $e_t$ and $h^\pm_t$.

\begin{lem}\label{lap cutoffed bound} There exists a constant $c(N, \delta)$ such that for all $t>0$,
\begin{equation}
\Delta h_t^-, -\Delta h_t^+, \Delta e_t \leq \frac{c(N,\delta)}{d_{p,q}}.
\end{equation}
\end{lem}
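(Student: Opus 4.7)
The plan is to first control the measure valued Laplacian of the cut-off initial data $h_0^\pm$ and $e_0$, and then propagate this bound to positive times via the parabolic maximum principle.

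First I would compute $\boldsymbol{\Delta} h_0^- = \boldsymbol{\Delta}(\psi d^-)$ using the Leibniz rule for the measure valued Laplacian, writing
\begin{equation*}
	\boldsymbol{\Delta}(\psi d^-) = (\Delta \psi)\, d^- \, m + 2 \inner{\nabla \psi}{\nabla d^-} \, m + \psi\, \boldsymbol{\Delta} d^-.
\end{equation*}
Since $\psi$ is supported in $A_{\tfrac{\delta}{16}d_{p,q},\, 16 d_{p,q}}(p)$, the first two terms are bounded in absolute value by $c(N,\delta)/d_{p,q}$ on each annulus by the scaling in Lemma~\ref{good cut off function}: near $p$ we have $|\Delta \psi| \lesssim (\delta d_{p,q})^{-2}$ and $|\nabla \psi| \lesssim (\delta d_{p,q})^{-1}$, while on the outer annulus the scale is $d_{p,q}$ and $d^- \leq 16 d_{p,q}$. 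For the third term, Theorem~\ref{lap d bound} (in the form of Remark~\ref{lap d bound radon}, since $\psi$ vanishes in a neighbourhood of $p$) gives
\begin{equation*}
	\psi\, \boldsymbol{\Delta} d^- \leq \psi\, \frac{N\tilde{\sigma}_{-(N-1),N}(d^-) - 1}{d^-}\, m \leq \frac{c(N,\delta)}{d_{p,q}}\, m
\end{equation*}
on $\supp(\psi)$, where we used $d^- \in [\tfrac{\delta}{16}d_{p,q}, 16 d_{p,q}]$ and $d_{p,q} \leq 1$ to absorb $\tilde{\sigma}$ into the constant. The cut-off makes the negative part a finite Radon measure, so the whole expression is a genuine signed Radon measure and $h_0^- \in D(\boldsymbol{\Delta})$ with $\boldsymbol{\Delta} h_0^- \leq \tfrac{c(N,\delta)}{d_{p,q}} m$.

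The same bookkeeping for $-\boldsymbol{\Delta} h_0^+ = \boldsymbol{\Delta}(\psi d(\cdot,q)) - \boldsymbol{\Delta}(\psi d_{p,q})$ (using $d^+ = d_{p,q} - d(\cdot,q)$ and that $d_{p,q}$ is a constant) gives $-\boldsymbol{\Delta} h_0^+ \leq \tfrac{c(N,\delta)}{d_{p,q}} m$, and adding the two estimates yields $\boldsymbol{\Delta} e_0 \leq \tfrac{c(N,\delta)}{d_{p,q}} m$ since $e_0 = h_0^- - h_0^+$ on $\supp(\psi)$ (modulo a constant multiple of $\psi$, which contributes a term of the same order).

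Next I would propagate the bound to all $t > 0$. For $s > 0$ the function $u_s := H_s(h_0^-)$ lies in $D(\Delta)$ and, using that $H_s$ intertwines with the Laplacian in the sense that $\Delta H_s f = H_s(\boldsymbol{\Delta} f)$ whenever the right-hand side makes sense, one obtains $\Delta u_s \leq \tfrac{c}{d_{p,q}}$ $m$-a.e.\ (with $c = c(N,\delta)$). Applying the parabolic maximum principle of Theorem~\ref{parabolic maximum principle} to $w(t,\cdot) := \Delta H_t(h_0^-) - \tfrac{c}{d_{p,q}}$, which is a weak subsolution of the heat equation with $w_+(s,\cdot) \to 0$ in $L^2$ as $s \downarrow 0$ (by the previous step applied at each small $s$), gives $\Delta H_t h_0^- \leq \tfrac{c(N,\delta)}{d_{p,q}}$ $m$-a.e.\ for all $t > 0$. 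Taking the continuous representative of $\Delta h_t^-$ upgrades this to a pointwise inequality, and the same argument handles $-\Delta h_t^+$ and $\Delta e_t$.

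The main technical obstacle is the interplay between the measure valued Laplacian and the heat semigroup: $\boldsymbol{\Delta} h_0^-$ has only a one-sided bound as a measure (its negative part, coming from $\psi\,\boldsymbol{\Delta} d^-$, need not be absolutely continuous), so I cannot argue by direct $L^\infty$ estimates on $\Delta h_0^-$. The cleanest workaround is to first regularize by a small amount of heat flow, deduce the one-sided functional inequality $\Delta H_s h_0^- \leq c/d_{p,q}$ for small $s>0$, and only then invoke the parabolic maximum principle; passing $s \downarrow 0$ is routine since the cut-off ensures all initial data lie in $L^2(m)$.
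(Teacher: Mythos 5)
Your treatment of the initial data $\boldsymbol{\Delta} h_0^\pm$, $\boldsymbol{\Delta} e_0$ is correct and matches the paper: Leibniz rule, the scaling of the good cut-off on each annulus, and the Laplacian comparison (Theorem~\ref{lap d bound} / Remark~\ref{lap d bound radon}) together give the one-sided bound $\boldsymbol{\Delta} h_0^- \leq \tfrac{c(N,\delta)}{d_{p,q}} m$ and its analogues. The difference lies in how you propagate this to $t>0$, and here your argument contains a logical redundancy that suggests some confusion about what carries the load.

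You first assert the ``intertwining'' $\Delta H_s h_0^- = H_s(\boldsymbol{\Delta} h_0^-)$ and immediately conclude $\Delta H_s h_0^- \leq c/d_{p,q}$ \emph{for every} $s>0$ --- which is already the statement of the lemma. The subsequent appeal to the parabolic maximum principle then re-derives the same thing using that bound as its hypothesis: to verify $w_+(s,\cdot)\to 0$ in $L^2$ you invoke ``the previous step applied at each small $s$'', but that step is available for all $s$, not just small $s$, so Theorem~\ref{parabolic maximum principle} is doing no work. Moreover the intertwining identity, which is actually the heart of the argument, is stated without justification. The paper makes it precise via the heat kernel representation: with $e_0$ compactly supported,
\begin{align*}
\Delta e_t(x) &= \int \Delta_x H_t(x,y)\, e_0(y)\, dm(y) = \int \Delta_y H_t(x,y)\, e_0(y)\, dm(y) = \int H_t(x,y)\, d\boldsymbol{\Delta} e_0(y),
\end{align*}
where the second equality uses symmetry of $H_t$ and the third is the definition of the measure-valued Laplacian tested against (a cut-off of) $H_t(x,\cdot)$; positivity of $H_t$ then gives the upper bound directly. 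This is the clean way to handle your (correctly identified) obstacle that $\boldsymbol{\Delta} h_0^-$ is only a one-sided measure bound with possibly singular negative part: the heat kernel integral discards the negative part for free. I would drop the maximum-principle detour entirely and instead spell out the heat-kernel manipulation.
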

\begin{proof}
We show the claim for $e_t$; the proof is analogous for others. By Laplacian comparison theorem for the distance function \ref{lap d bound}, see also Remark \ref{lap d bound radon}, and the definition $\psi$, $e_0 \in D(\boldsymbol{\Delta})$ with
\begin{equation}
\boldsymbol{\Delta}e_0 = \Delta\psi e m + \inner{\nabla \psi}{\nabla e}m+ \psi\boldsymbol{\Delta}e \leq \frac{c(N,\delta)}{d_{p,q}}m.
\end{equation}
We know $e_{t}(x) = \int H_t(x,y)e_0(y)dm(y)$. For $t > 0$, $e_t \in D(\Delta)$ and 
\begin{align}
\begin{split}
	\Delta e_t &= \int \Delta_x H_t(x,y)e_0(y) \, dm(y)\\
		&= \int \Delta_y H_t(x,y)e_0(y) \, dm(y) \; \; \text{ , by symmetry of $H_t$}\\
		&= \int H_t(x,y) d\boldsymbol{\Delta} e_0 (y) \; \; \text{ , since $e_0$ is compactly supported}\\
		&\leq \frac{c(N,\delta)}{d_{p,q}}.
\end{split}
\end{align}
\end{proof}

\begin{lem}\label{excess pw bound} There exists a constant $c(N, \delta)$ such that for all $0 < \epsilon \leq \bar{\epsilon}(N,\delta)$ and $x \in X_{\frac{\delta}{4},5}$ the following holds:
	\begin{enumerate}
		\item \label{excess pw bound 1}$|e_{d_{\epsilon}^2}(y)| \leq c\big(\epsilon^{2}d_{p,q} + e(x)\big)$ for every $y \in B_{10d_{\epsilon}}(x);$
		\item \label{excess pw bound 2}$|\nabla e_{d_{\epsilon}^2}|(y) \leq c\bigg(\epsilon+\frac{\epsilon^{-1}e(x)}{d_{p,q}}\bigg)$ for $m$-a.e. $y \in B_{10d_{\epsilon}}(x);$
		\item \label{excess pw bound 3}$ |\Delta e_{d_\epsilon^2}(y)| \leq c\bigg(\frac{1}{d_{p,q}} + \frac{\epsilon^{-2}e(x)}{d_{p,q}^2}\bigg)$ for every $y \in B_{10d_{\epsilon}}(x);$
		\item \label{excess pw bound 4}$\fint_{B_{d_\epsilon}(y)}|\Hess e_{d_\epsilon^2}|_{\HS}^2 \leq c\bigg(\frac{1}{d_{p,q}}+\frac{\epsilon^{-2}e(x)}{d_{p,q}^2}\bigg)^2$ for every $y \in B_{10d_{\epsilon}}(x)$.
	\end{enumerate}
\end{lem}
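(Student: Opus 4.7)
I would prove the four estimates in the stated order, each building on the previous, with statement (1) as the foundation. For statement (1), I would start from the heat kernel representation $e_{d_\epsilon^2}(y) = \int H_{d_\epsilon^2}(y,z)\, e_0(z)\, dm(z)$ and split the integral dyadically over the annuli $A_k = B_{2^{k+1}d_\epsilon}(y)\setminus B_{2^k d_\epsilon}(y)$. On the near-diagonal piece I would use the pointwise heat-kernel upper bound from Lemma~\ref{heat kernel bounds}(1) with volume doubling to reduce the contribution to a constant times $\fint_{B_{C d_\epsilon}(x)} e\, dm$, and then control this average by $c(\epsilon^2 d_{p,q} + e(x))$ via the integral Abresch--Gromoll inequality (Theorem~\ref{Abresch-Gromoll}) applied at $x$; in the regime where $e(x) > c\epsilon^2 d_{p,q}$ one chooses the larger ball $B_{\sqrt{e(x)d_{p,q}}}(x)$ in Abresch--Gromoll and passes back via doubling. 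On each far annulus I would combine the $L^1$ heat-kernel tail estimate of Lemma~\ref{heat kernel bounds}(2) with integral Abresch--Gromoll on the growing ball $B_{C2^k d_\epsilon}(x)$; the decay of the heat-kernel mass on far annuli dominates the at-most-quadratic growth of the Abresch--Gromoll average, and summing the geometric series yields the claimed bound.

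For statement (2), I would apply the Li--Yau gradient inequality (Theorem~\ref{LY grad}) to the nonnegative function $e_0$ at time $d_\epsilon^2$. Combined with $\Delta e_{d_\epsilon^2} \leq c/d_{p,q}$ from Lemma~\ref{lap cutoffed bound}, this simplifies to
\[
|\nabla e_{d_\epsilon^2}|^2(y) \leq \tfrac{c}{d_{p,q}}\, e_{d_\epsilon^2}(y) + \tfrac{c}{d_\epsilon^2}\, e_{d_\epsilon^2}(y)^{2}.
\]
Inserting the bound from statement (1) and using the elementary inequality $\sqrt{e(x)/d_{p,q}} \leq \epsilon + \epsilon^{-1} e(x)/d_{p,q}$ yields the desired gradient bound on $B_{10 d_\epsilon}(x)$ after taking square roots.

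For statement (3), the upper bound $\Delta e_{d_\epsilon^2} \leq c/d_{p,q}$ is immediate from Lemma~\ref{lap cutoffed bound}; for the matching lower bound on $-\Delta e_{d_\epsilon^2}$, I would exploit the half-step of heat flow on $[d_\epsilon^2/2, d_\epsilon^2]$. Applying the dimensional Bakry--Ledoux inequality (Theorem~\ref{Bakry-Ledoux}, in the pointwise form of Remark~\ref{pw Bakry-Ledoux}) to $f = e_{d_\epsilon^2/2}$ over time $d_\epsilon^2/2$ gives
\[
|\Delta e_{d_\epsilon^2}|^2(y) \leq \tfrac{c}{d_\epsilon^2}\, H_{d_\epsilon^2/2}\bigl(|\nabla e_{d_\epsilon^2/2}|^2\bigr)(y),
\]
into which I would substitute statement (2) applied at time $d_\epsilon^2/2$ for the part of the heat-kernel mass lying in a neighbourhood of $y$, with the global Bakry--\'Emery bound $|\nabla e_{d_\epsilon^2/2}| \leq C$ handling the tail.

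For statement (4), I would integrate the improved Bochner inequality (Corollary~\ref{improved Bochner inequality cor}) against $\phi^2$, where $\phi$ is the good cutoff from Lemma~\ref{good ball cut off function} with $\phi \equiv 1$ on $B_{d_\epsilon}(y)$, $\supp \phi \subset B_{2d_\epsilon}(y)$, $|\nabla \phi| \leq c/d_\epsilon$, and $|\Delta \phi| \leq c/d_\epsilon^2$. Two applications of integration by parts convert the $\boldsymbol{\Delta}|\nabla e_{d_\epsilon^2}|^2$ and $\inner{\nabla e_{d_\epsilon^2}}{\nabla \Delta e_{d_\epsilon^2}}$ terms into $(\Delta e_{d_\epsilon^2})^2$, $|\nabla e_{d_\epsilon^2}|^2$, and cross terms involving $\nabla \phi^2$ and $\Delta \phi^2$; bounding each of these pointwise using the estimates from statements (2) and (3), then dividing by $m(B_{d_\epsilon}(y))$ and invoking doubling, yields the claimed average Hessian bound. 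The principal obstacle is statement (1): the Lipschitz bound gives only $e(z) \leq e(x) + O(\epsilon d_{p,q})$ on $B_{10d_\epsilon}(x)$, while the target is $O(\epsilon^2 d_{p,q})$, so the extra factor of $\epsilon$ must come entirely from the heat kernel's averaging combined with integral Abresch--Gromoll, and one must arrange the dyadic split so that the far-annulus contributions sum without introducing a logarithmic or $\epsilon^{-1}$ loss.
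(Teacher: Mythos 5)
Your argument for statement (2) is correct and in fact slightly cleaner than the paper's: you observe that the Li--Yau gradient inequality plus the upper Laplacian bound from Lemma~\ref{lap cutoffed bound} alone yield $|\nabla e_{d_\epsilon^2}|^2 \leq \tfrac{c}{d_{p,q}}e_{d_\epsilon^2} + \tfrac{c}{d_\epsilon^2}e_{d_\epsilon^2}^2$, and the AM--GM step to absorb $\sqrt{e(x)/d_{p,q}}$ closes the argument without ever needing statement (3). Your treatment of statement (4) is essentially the paper's. The trouble lies in statements (1) and (3), which are the ones where you diverge from the paper's route, and in both cases the divergence introduces a genuine gap.

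For statement (1), the paper's argument is much shorter and avoids the heat kernel representation entirely: it first integrates $\Delta e_t \leq c/d_{p,q}$ in time to obtain the pointwise bound $e_{2d_\epsilon^2}(x) \leq e(x) + c\epsilon^2 d_{p,q}$ at the centre $x$, and then applies the Li--Yau Harnack inequality (Theorem~\ref{LY Harnack}) with $s = d_\epsilon^2$, $t = 2d_\epsilon^2$ to transfer this to every $y \in B_{10d_\epsilon}(x)$ at the cost of a constant factor. Your dyadic/Abresch--Gromoll scheme has two quantitative failures. First, the step ``passes back via doubling'': when $\epsilon^2 d_{p,q} \ll e(x) \ll \epsilon d_{p,q}$ the Abresch--Gromoll ball $B_{\sqrt{e(x)d_{p,q}}}(x)$ has radius much larger than $d_\epsilon$, and Bishop--Gromov gives only $m(B_{\sqrt{e(x)d_{p,q}}})/m(B_{Cd_\epsilon}) \lesssim (\sqrt{e(x)/d_{p,q}}/\epsilon)^N$, an $\epsilon$-unbounded factor; the Lipschitz bound $\fint_{B_{Cd_\epsilon}(x)}e \leq e(x) + C\epsilon d_{p,q}$ does not rescue this regime because $\epsilon d_{p,q}$ is not $O(e(x) + \epsilon^2 d_{p,q})$ there. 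Second, the far-annulus sum: the only tail estimate in the paper's toolkit is the $L^1$ bound $\int_{X\setminus B_r(y)}H_t(y,\cdot)\,dm \leq Cr^{-2}t$ (Lemma~\ref{heat kernel bounds}(2)), which on $A_k$ gives mass $\lesssim 4^{-k}$; combined with either the Lipschitz control $e_0 \lesssim e(x) + 2^{k+1}d_\epsilon$ or the classical Abresch--Gromoll $e \lesssim (2^k\epsilon)^{1+\alpha}d_{p,q}$ (with $\alpha < 1$), the sum over $k$ produces $O(\epsilon d_{p,q})$ or $O(\epsilon^{1+\alpha}d_{p,q})$, not the required $O(\epsilon^2 d_{p,q})$. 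Without a Gaussian off-diagonal heat-kernel upper bound (which the paper never invokes), the dyadic scheme loses a power of $\epsilon$.

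For statement (3), the paper's proof is again pointwise and local: Li--Yau gradient at time $d_\epsilon^2$ together with $|\nabla e_{d_\epsilon^2}|^2 \geq 0$ and $e_{d_\epsilon^2} > 0$ gives the lower bound $\Delta e_{d_\epsilon^2}(y) \geq -\tfrac{c}{d_\epsilon^2}e_{d_\epsilon^2}(y)$, and then statement (1) finishes. Your Bakry--Ledoux route cannot close because the heat-kernel average $H_{d_\epsilon^2/2}(|\nabla e_{d_\epsilon^2/2}|^2)(y)$ does not localize to the region where your intermediate statement (2) is available: at time $t = d_\epsilon^2/2$ the mass outside any ball $B_{Kd_\epsilon}(y)$ is bounded only by $C/K^2$, which is a constant, not small. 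That tail is therefore of order one, and the only control you have there is the global bound $|\nabla e_{d_\epsilon^2/2}| \leq C(N,\delta)$, so the tail contributes $\sim C/d_\epsilon^2 = C/(\epsilon^2 d_{p,q}^2)$ to $|\Delta e_{d_\epsilon^2}|^2$, which is $\epsilon^{-2}$ larger than the target $c/d_{p,q}^2$ when $e(x)$ is small. The Li--Yau gradient inequality is the right tool here precisely because it gives a pointwise lower bound on the Laplacian with no averaging, hence no tail to control.
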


\begin{proof}
	$e_{t}(x) = e_0(x) + \int\limits_{0}^{t} \Delta e_s(x) ds$ pointwise by definition of the heat flow and the continuity $e_s$. \\
	By Lemma \ref{lap cutoffed bound}, 
	\begin{equation}\label{excess pw bound eq1}
	e_{t}(x) \leq e_0(x) +  \frac{c}{d_{p,q}}t = e(x) + \frac{c}{d_{p,q}}t. 
	\end{equation}
	Setting $s = d_{\epsilon}^2$, $t = 2d_{\epsilon}^2$ and $y \in B_{10d_{\epsilon}}(x)$ in the statement of the Li-Yau Harnack inequality, \ref{LY Harnack}, we conclude 
\begin{align}
\begin{split}
e_{d_\epsilon^2}(y) &\leq c(N)e_{2d_\epsilon^2}(x) \text{ , by \ref{LY Harnack} and $d_{p,q} \leq 1$}\\
	&\leq c(e(x)+\epsilon^2d_{p,q}) \text{ , by \eqref{excess pw bound eq1}}.
\end{split}
\end{align}
This proves statement \ref{excess pw bound 1} of the lemma.

To prove \ref{excess pw bound 3} of the lemma, first notice that we need only establish a lower bound on $\Delta e_{d_\epsilon^2}(y)$ since \ref{lap cutoffed bound} already gives us the desired upper bound. This is an application of the Li-Yau gradient inequality \ref{LY grad} and statement \ref{excess pw bound 1}. The bound holds pointwise even though \ref{LY grad} holds only a.e. due to the existence of a continuous representatitive of $\Delta e_t$.

Statement \ref{excess pw bound 2} of the lemma follows from another application of Li-Yau gradient inequality \ref{LY grad} along with the bounds from statements \ref{excess pw bound 1} and \ref{excess pw bound 3}.

For the last statement take good cut off function $\phi$ supported on $B_{2d_\epsilon}(y)$ with $\phi \equiv 1$ on $B_{d_\epsilon}(y)$. We have
\begin{align}\label{excess pw bound eq2}
\begin{split}
\int\limits_{X} (\Delta e_{d_\epsilon^2})^2\phi dm &= -\int\limits_{X} \inner{\nabla e_{d_\epsilon^2}}{\nabla (\Delta e_{d_\epsilon^2}\phi)} dm\\
 &= -\int\limits_{X} \inner{\nabla e_{d_\epsilon^2}}{\nabla\Delta e_{d_\epsilon^2}}\phi dm  - \int\limits_{X} \inner{\nabla e_{d_\epsilon^2}}{\nabla \phi}\Delta e_{d_\epsilon^2} dm.
\end{split}
\end{align}
Integrating $\phi$ with $|\Hess e_{d_\epsilon^2}|_{\HS}^2$ and applying the improved Bochner inequality \ref{improved Bochner inequality}, we get
\begin{align}
\begin{split}
	&\hspace{0.5cm} \int\limits_{B_{d_\epsilon}} |\Hess e_{d_\epsilon^2}|_{\HS}^2 \, dm \leq \int\limits_{X} |\Hess e_{d_\epsilon^2}|_{\HS}^2 \phi \, dm\\ &\leq \int\limits_{X} \frac{1}{2}\phi d\boldsymbol{\Delta}|\nabla e_{d_\epsilon^2}|^2- \int\limits_{X} \inner{\nabla e_{d_\epsilon^2}}{\nabla \Delta e_{d_\epsilon^2}}\phi \, dm  + \int\limits_{X} (N-1)|\nabla e_{d_\epsilon^2}|^2\phi\,  dm \; \;\text{, by \ref{improved Bochner inequality}}\\
	&\leq \int\limits_{X} \frac{1}{2}\Delta\phi |\nabla e_{d_\epsilon^2}|^2 dm +\int\limits_{X} (\Delta e_{d_\epsilon^2})^2\phi dm + \int\limits_{X} |\nabla e_{d_\epsilon^2}||\nabla \phi|\Delta e_{d_\epsilon^2} dm + \int\limits_{X} (N-1)|\nabla e_{d_\epsilon^2}|^2\phi dm \;\text{, by \eqref{excess pw bound eq2}.}
\end{split}
\end{align}
Applying to this computation properties \ref{excess pw bound 1} - \ref{excess pw bound 3} of the lemma, property 2 of good cut off functions \ref{good ball cut off function} and Bishop-Gromov volume comparison \ref{BG volume comparison}, we obtain statement \ref{excess pw bound 4} of the lemma.
\end{proof}

Next we prove estimates on the heat flow approximation of the distance functions.

\begin{lem}\label{h pw bound}
There exists $c(N,\delta)$ such that for every $\epsilon \leq \bar{\epsilon}(N,\delta)$ and $x \in X_{\frac{\delta}{4},5}$,
\begin{equation*}
	|h^{\pm}_{d_\epsilon^2} - d^{\pm}|(x) \leq c\big(\epsilon^2d_{p,q}+e(x)\big).
\end{equation*}
\end{lem}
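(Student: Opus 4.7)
The plan is to combine the one-sided Laplacian bounds from Lemma \ref{lap cutoffed bound} with the identity $e_t = h^-_t - h^+_t$ (which holds by linearity of the heat flow since $e_0 = h^-_0 - h^+_0$) and the fact that $e_t \geq 0$ everywhere. Note first that $X_{\frac{\delta}{4},5} \subset X_{\frac{\delta}{8},8}$, so on $X_{\frac{\delta}{4},5}$ the cutoff $\psi = \psi^+\psi^-$ is identically $1$; hence $h^\pm_0(x) = d^\pm(x)$ and $e_0(x) = e(x) = d^-(x) - d^+(x)$ at the point $x$ in the statement. Moreover, $e_0 = \psi e \geq 0$, so the heat-kernel representation $e_t(x) = \int H_t(x,y) e_0(y) \, dm(y)$ together with positivity of $H_t$ yields $e_t \geq 0$ pointwise.

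Next I would obtain the easy one-sided inequalities by the same integration trick used in the proof of Lemma \ref{excess pw bound}: write pointwise
\begin{equation*}
h^\pm_t(x) = h^\pm_0(x) + \int_0^t \Delta h^\pm_s(x) \, ds,
\end{equation*}
which is valid since $h^\pm_0 \in D(\boldsymbol{\Delta})$ with a one-sided measure-valued Laplacian bound as in Lemma \ref{lap cutoffed bound} and since $\Delta h^\pm_s$ admits a continuous representative for $s>0$. Using $\Delta h^-_s \leq c/d_{p,q}$ and $\Delta h^+_s \geq -c/d_{p,q}$, integrating up to $t = d_\epsilon^2$ gives
\begin{equation*}
h^-_{d_\epsilon^2}(x) - d^-(x) \leq c\epsilon^2 d_{p,q}, \qquad h^+_{d_\epsilon^2}(x) - d^+(x) \geq -c\epsilon^2 d_{p,q}.
\end{equation*}

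To close the remaining directions, I would use $e_{d_\epsilon^2}(x) \geq 0$ to chain the two inequalities together via $e_t = h^-_t - h^+_t$:
\begin{align*}
h^-_{d_\epsilon^2}(x) &\geq h^+_{d_\epsilon^2}(x) \geq d^+(x) - c\epsilon^2 d_{p,q} = d^-(x) - e(x) - c\epsilon^2 d_{p,q},\\
h^+_{d_\epsilon^2}(x) &\leq h^-_{d_\epsilon^2}(x) \leq d^-(x) + c\epsilon^2 d_{p,q} = d^+(x) + e(x) + c\epsilon^2 d_{p,q},
\end{align*}
where the last equality in each line uses $d^- = d^+ + e$ at $x \in X_{\frac{\delta}{4},5}$. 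Combined with the previous one-sided estimates, this yields $|h^\pm_{d_\epsilon^2} - d^\pm|(x) \leq c(\epsilon^2 d_{p,q} + e(x))$ as required.

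There is no genuine obstacle in this argument; the only subtlety is justifying the pointwise integration formula for $h^\pm_t$, which is handled exactly as in the proof of Lemma \ref{lap cutoffed bound}, and confirming that the only input estimate beyond Lemma \ref{lap cutoffed bound} that is needed is the positivity $e_t \geq 0$ (so one does not even require the finer pointwise bounds on $e_{d_\epsilon^2}$ from Lemma \ref{excess pw bound}).
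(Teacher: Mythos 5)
Your proof is correct, and it takes a slightly more elementary route than the paper. The paper closes the remaining directions using the identity $h^-_{d_\epsilon^2}-d^- = (h^+_{d_\epsilon^2}-d^+) + (e_{d_\epsilon^2}-e)$ together with the two-sided bound $|e_{d_\epsilon^2}(x)| \leq c(\epsilon^2 d_{p,q}+e(x))$ from statement 1 of Lemma \ref{excess pw bound}, which in turn relies on the Li-Yau Harnack inequality. You observe that, for this lemma, the full pointwise bound on $|e_{d_\epsilon^2}|$ is unnecessary: only the sign $e_{d_\epsilon^2}\geq 0$ is actually used, and that follows immediately from $e_0\geq 0$ and positivity of the heat kernel, without invoking Harnack. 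The rest (the pointwise integration formula, the one-sided Laplacian bounds from Lemma \ref{lap cutoffed bound}, the linearity identity $e_t = h^-_t - h^+_t$, and the algebra $d^- = d^+ + e$) is exactly as in the paper. The trade-off is that your argument is self-contained modulo Lemma \ref{lap cutoffed bound}, whereas the paper's version front-loads Lemma \ref{excess pw bound}, which it needs anyway for the subsequent estimates. One small clarification worth making in your write-up: the identity $d^-(x) = d^+(x)+e(x)$ holds at \emph{every} $x\in X$ by definition of the excess, not only on $X_{\frac{\delta}{4},5}$; the restriction to $X_{\frac{\delta}{4},5}$ is only needed to guarantee that the cutoff $\psi\equiv 1$ there so that $h^\pm_0(x)=d^\pm(x)$, and more importantly so that the $\Delta h^\pm_t$ bounds from Lemma \ref{lap cutoffed bound} apply.
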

\begin{proof}
From the Laplacian bounds in \ref{lap cutoffed bound}, for $x \in X_{\frac{\delta}{4},5}$,
\begin{equation}\label{h pw bound eq1}
	h^-_{d_\epsilon^2}(x) - d^-(x) =  \int_{0}^{d_{\epsilon}^2} \Delta h^- _t(x) \, dt \leq c\epsilon^2 d_{p,q},
\end{equation}
and
\begin{equation}\label{h pw bound eq2}
	d^+(x) - h^+_{d_\epsilon^2}(x) = \int_{0}^{d_{\epsilon}^2} -\Delta h^+ _t(x) \, dt \leq c\epsilon^2 d_{p,q}.
\end{equation}
To obtain bounds in the other direction, we note that
\begin{equation*}
	h^-_{d_\epsilon^2}-d^-(x) = h^+_{d_\epsilon^2}-d^+(x) + e_{d_\epsilon^2}(x) - e(x).
\end{equation*}
We conclude using this with the bound $|e_{d_{\epsilon}^2}(x)| \leq c\big(\epsilon^{2}d_{p,q} + e(x)\big)$ from statement 1 of Lemma \ref{excess pw bound} and bounds \eqref{h pw bound eq1} or \eqref{h pw bound eq2}.
\end{proof}

We will end up wanting to establish appropriate gradient and Hessian bounds along curves that are close to being a geodesic between $p$ and $q$. This requires the following definition.
\begin{de}\label{epsilon geodesic}
A unit speed, piecewise geodesic curve $\sigma$ between $p$ and $q$ is called an $\epsilon$-geodesic between $p$ and $q$ if $||\sigma|-d_{p,q}| \leq \epsilon^2d_{p,q}$, where $|\sigma|$ is the length of $\sigma$. 
\end{de}
\begin{rem}\label{epsilon geodesic excess}
Notice that $x$ lies on an $\epsilon$-geodesic iff $e(x) \leq \epsilon^2d_{p,q}$.
\end{rem}
The previous lemma \ref{h pw bound} can now be restated in terms of $\epsilon$-geodesics.
\begin{cor}\label{h pw bound cor}
There exists $c(N,\delta)$ such that for every $\epsilon$-geodesic between $p$ and $q$ with $\epsilon \leq \bar{\epsilon}(N,\delta)$, and $\frac{\delta}{3}\leq t \leq 1 - \frac{\delta}{3}$,
\begin{equation*}
	|h^{\pm}_{d_\epsilon^2} - d^{\pm}|(\sigma(t)) \leq c(\epsilon^2d_{p,q}).
\end{equation*}
\end{cor}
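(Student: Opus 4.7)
The plan is to derive the corollary as an immediate consequence of Lemma \ref{h pw bound} once we verify two facts about $\sigma(t)$: (i) it lies in the annular region $X_{\delta/4, 5}$ so that the lemma applies, and (ii) its excess $e(\sigma(t))$ is controlled by $\epsilon^2 d_{p,q}$.

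Fact (ii) is already packaged: by Remark \ref{epsilon geodesic excess}, every point lying on an $\epsilon$-geodesic automatically satisfies $e(x) \leq \epsilon^2 d_{p,q}$. This handles the excess term appearing on the right-hand side of Lemma \ref{h pw bound}.

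For (i), I would exploit the piecewise-geodesic structure of $\sigma$. Writing $s$ for the arc-length from $p$ along $\sigma$ to the point $\sigma(t)$, the triangle inequality along the piecewise-geodesic segments gives $d(p,\sigma(t))\leq s$ and $d(\sigma(t),q)\leq |\sigma|-s$. Combining with the ambient triangle inequality $d(p,\sigma(t))+d(\sigma(t),q)\geq d_{p,q}$ and the near-minimizing bound $|\sigma|\leq (1+\epsilon^2)d_{p,q}$ yields the matching lower bounds
\begin{equation*}
    d(p,\sigma(t))\;\geq\; d_{p,q} - (|\sigma|-s)\;\geq\; s-\epsilon^2 d_{p,q},
\end{equation*}
and symmetrically $d(\sigma(t),q)\geq (|\sigma|-s)-\epsilon^2 d_{p,q}$. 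Under the parameterization of the corollary, the hypothesis $\frac{\delta}{3}\leq t \leq 1-\frac{\delta}{3}$ forces both $s$ and $|\sigma|-s$ to exceed $\tfrac{\delta}{3}d_{p,q}$ (up to a negligible multiplicative correction from the piecewise-geodesic reparameterization), so for $\epsilon \leq \bar\epsilon(N,\delta)$ small enough the $\epsilon^2 d_{p,q}$ error is absorbed and both quantities exceed $\frac{\delta}{4}d_{p,q}$. The upper bounds $d(p,\sigma(t)),\,d(\sigma(t),q)\leq |\sigma|\leq 2d_{p,q}<5d_{p,q}$ are immediate from $d_{p,q}\leq 1$ and $\epsilon$ small. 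Hence $\sigma(t)\in X_{\delta/4,5}$.

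With (i) and (ii) in hand, Lemma \ref{h pw bound} applies to $x=\sigma(t)$ and gives
\begin{equation*}
    |h^{\pm}_{d_\epsilon^2}-d^{\pm}|(\sigma(t))\;\leq\; c(N,\delta)\bigl(\epsilon^2 d_{p,q}+e(\sigma(t))\bigr)\;\leq\; 2c(N,\delta)\,\epsilon^2 d_{p,q},
\end{equation*}
which is the desired estimate after renaming the constant. There is no genuine analytic obstacle in this corollary—it is a repackaging of Lemma \ref{h pw bound} in the language of $\epsilon$-geodesics; the only care needed is the routine arc-length bookkeeping to confirm $\sigma(t)\in X_{\delta/4,5}$.
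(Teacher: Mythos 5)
Your proof is correct and follows essentially the same route as the paper's: apply Lemma \ref{h pw bound} after checking (i) $e(\sigma(t))\leq \epsilon^2 d_{p,q}$ via Remark \ref{epsilon geodesic excess} and (ii) $\sigma(t)\in X_{\delta/4,5}$ for $\epsilon$ small via the unit-speed/near-minimizing structure. The paper states (ii) without detail (just the constraint $\epsilon<\sqrt{\delta/12}$), whereas you spell out the triangle-inequality bookkeeping; the content is the same.
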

\begin{proof}
This follows from the \ref{h pw bound} since
\begin{enumerate}
	\item $e(\sigma(t)) \leq \epsilon^2d_{p,q}$
	\item Since $\sigma$ is unit speed and an $\epsilon$-geodesic, as long as $\epsilon < \sqrt{\frac{\delta}{12}}$, we will have $\sigma(t) \in  X_{\frac{\delta}{4},5}.$
\end{enumerate}
\end{proof}

We establish an upper bound on the norm of the gradient of $h^{\pm}_t$ for $x \in X_{\frac{\delta}{5},6}$. 

\begin{lem}\label{grad h pw bound}
There exists $c(N,\delta)$ such that for $\epsilon \leq \bar{\epsilon}(N,\delta)$ and $m$-a.e. $x \in X_{\frac{\delta}{5},6}$, 
\begin{equation*}
	|\nabla h^{\pm}_{d_\epsilon^2}| \leq 1 + cd_{\epsilon}^2.
\end{equation*}
\end{lem}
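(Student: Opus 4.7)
The plan is to use the Bakry--Ledoux gradient estimate (Theorem~\ref{Bakry-Ledoux}) with $K=-(N-1)$, applied to $h_0^\pm=\psi d^\pm$ at time $t=d_\epsilon^2$, and to control the resulting average $H_{d_\epsilon^2}(|\nabla h_0^\pm|^2)$ by exploiting that $\psi\equiv 1$ on a macroscopic neighbourhood of every $x\in X_{\delta/5,6}$.

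I would first record two pointwise bounds on $|\nabla h_0^\pm|$. Since $d^\pm$ is $1$-Lipschitz, Remark~\ref{lip=grad} combined with the Leibniz rule gives $|\nabla h_0^\pm|=|\nabla d^\pm|\le 1$ $m$-a.e.\ on $\{\psi\equiv 1\}\supseteq X_{\delta/8,8}$. On all of $\supp(\psi)\subseteq X_{\delta/16,16}$ one has the crude estimate $|\nabla h_0^\pm|\le C(N,\delta)$ using $|d^\pm|\le 16\,d_{p,q}$ together with the cut-off gradient bound $|\nabla\psi|\le C(N,\delta)/d_{p,q}$ supplied by Lemma~\ref{good cut off function} (applied with $r_0=\delta d_{p,q}/16$ and $r_1=16 d_{p,q}$). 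Note that $h_0^\pm\in W^{1,2}(X)$ since it is a compactly supported Lipschitz function, so Bakry--Ledoux applies.

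Next, for $x\in X_{\delta/5,6}$ I would fix the splitting radius $r=\delta d_{p,q}/40$, so that $B_r(x)\subseteq X_{\delta/8,8}\subseteq\{\psi\equiv 1\}$, and decompose
\[
H_{d_\epsilon^2}\bigl(|\nabla h_0^\pm|^2\bigr)(x)\le \int_{B_r(x)} 1\cdot H_{d_\epsilon^2}(x,y)\,dm(y)+ C(N,\delta)^2\int_{X\setminus B_r(x)} H_{d_\epsilon^2}(x,y)\,dm(y).
\]
The first term is bounded by $1$ since $H_{d_\epsilon^2}(x,\cdot)$ is a probability density; the second term is controlled by Lemma~\ref{heat kernel bounds}(2) by $C(N,\delta)^2\cdot C r^{-2} d_\epsilon^2$, which the choice of $r$ reduces to a quantity of order $c(N,\delta)\,\epsilon^2$. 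Folding in the prefactor $e^{2(N-1)d_\epsilon^2}\le 1+C(N)d_\epsilon^2$ from Bakry--Ledoux and taking square roots (valid for $\epsilon\le\bar\epsilon(N,\delta)$) then yields the stated bound.

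The only delicacy is the calibration of the splitting radius: $r$ must be at least proportional to $d_{p,q}$ so that $B_r(x)\subset\{\psi\equiv 1\}$ (to exploit the sharp bound $|\nabla h_0^\pm|\le 1$ there), while at the same time the polynomial tail $r^{-2}d_\epsilon^2$ must remain of the correct order. The choice $r\sim \delta d_{p,q}$ reconciles both requirements because the $d_{p,q}$-dependence cancels between $r^{-2}$ and $d_\epsilon^2$, producing a constant depending only on $N$ and $\delta$. No other step should present real difficulty.
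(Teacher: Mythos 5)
Your proposal is correct and follows the same route as the paper: reduce via Bakry--Ledoux to bounding $H_{d_\epsilon^2}(|\nabla h_0^\pm|^2)$, use the a.e.\ bounds $|\nabla h_0^\pm|=1$ on $X_{\delta/8,8}$ and $|\nabla h_0^\pm|\le c(N,\delta)$ on the transition annulus $X_{\delta/16,16}\setminus X_{\delta/8,8}$ (the latter from the cut-off bound $|\nabla\psi|\le C/(\delta d_{p,q})$ against $|d^\pm|\le 16 d_{p,q}$), and then split the heat-kernel integral using the mass-one fact on a ball $B_r(x)\subset X_{\delta/8,8}$ and the tail bound $\int_{X\setminus B_r(x)}H_t(x,\cdot)\,dm\le C r^{-2}t$ on the complement. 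The paper writes the Bakry--Ledoux input as a first-power estimate $|\nabla h_t^\pm|\le e^{2(N-1)t}H_t(|\nabla h_0^\pm|)$, whereas you square, apply the $L^2$ Bakry--Ledoux of Theorem~\ref{Bakry-Ledoux}, and take roots; both routes give the same order.

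One small calibration point worth flagging. With $r\sim\delta d_{p,q}$ (the largest radius for which $B_r(x)\subset X_{\delta/8,8}$ when $x\in X_{\delta/5,6}$) and $t=d_\epsilon^2=\epsilon^2 d_{p,q}^2$, the tail term is $r^{-2}t\sim(\delta d_{p,q})^{-2}\epsilon^2 d_{p,q}^2=c(\delta)\epsilon^2$, exactly as you observe. So the argument actually produces $|\nabla h_{d_\epsilon^2}^\pm|\le 1+c(N,\delta)\epsilon^2$, which is slightly weaker than the stated $1+c(N,\delta)d_\epsilon^2$ when $d_{p,q}<1$ (since $d_\epsilon^2=\epsilon^2 d_{p,q}^2\le\epsilon^2$). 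The paper's own computation, which arrives at the form $c(\delta/8)^{-2}t+1$, has the identical feature (a factor $d_{p,q}^{-2}$ is tacitly absorbed), so this is not a gap in your proposal relative to the paper; it does mean one should be alert downstream (e.g.\ in Theorem~\ref{grad h integral bound}) to whether the $d_\epsilon^2$ or $\epsilon^2$ form is what is actually being invoked.
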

\begin{proof}
By Bakry-Ledoux estimate \ref{Bakry-Ledoux}, for any $t > 0$,
\begin{equation}\label{grad h ae bound eq1}
	|\nabla h^{\pm}_t| \leq e^{2(N-1)t}H_t(|\nabla h^{\pm}_0|) \;\text{ $m$-a.e..}
\end{equation}
By definition of $h^{\pm}_0 = \psi d^{\pm}$, we have the following a.e. bounds on $|\nabla h^{\pm}_0|$.
\begin{enumerate}
	\item $|\nabla h^{\pm}_0| = 1$ in $X_{\frac{\delta}{8},8}$
	\item $|\nabla h^{\pm}_0| = 0$ in $X \backslash X_{\frac{\delta}{16},16}$
	\item In $X_{\frac{\delta}{16},16} \backslash X_{\frac{\delta}{8},8},$ 
	\begin{equation}\label{grad h ae bound eq2}
 		|\nabla h^{\pm}_0|\; =\; |\nabla \psi||d^{\pm}|+|\psi||\nabla d^{\pm}| \; \leq \; \frac{c(N)}{\delta d_{p,q}}|d^{\pm}|+1 \; \leq \; c(N,\delta).
	\end{equation}
\end{enumerate}
	Finally, for any $x \in X_{\frac{\delta}{2},4}$,
	\begin{align}\label{grad h ae bound eq3}
	\begin{split}
	H_t(|\nabla h^{\pm}_0|)(x) &= \int\limits_{X} H_t(x,y)|\nabla h^{\pm}_0|(y)\,dm(y)\\
					&= \int_{X_{\frac{\delta}{16},16} \backslash X_{\frac{\delta}{8},8}} H_t(x,y)|\nabla h^{\pm}_0|(y)\,dm(y) + \int_{X_{\frac{\delta}{4},8}}	H_t(x,y)|\nabla h^{\pm}_0|(y)\,dm(y)\\
				&\leq c(N,\delta) \int_{X_{\frac{\delta}{16},16} \backslash X_{\frac{\delta}{8},8}} \nabla H_t(x,y)\,dm(y) + 1 \; \; \text{, by \eqref{grad h ae bound eq2}}\\
				&\leq c(N, \delta) (\frac{\delta}{8})^{-2}t + 1,
	\end{split}
	\end{align}
where the last lines uses statement 2 of the heat kernel bounds \ref{heat kernel bounds}.

	The lemma follows by combining \eqref{grad h ae bound eq1} and \eqref{grad h ae bound eq3}, with $t = d_{\epsilon}^2$ for small $\epsilon$.
\end{proof}


We will now establish some integral bounds on $|\nabla h^{\pm}_t|$. Roughly, we want to apply the $L^1$-Harnack inequality \ref{MV L1H inequality} to $|\nabla h^{\pm}|$. To this effect, we give some regularity of the heat flow in the time parameter.

\begin{lem}\label{H_t time regularity}
Let $(X,d,m)$ be an $\RCD(K,N)$ space for some $K \in \mathbb{R}$, $N \in [1, \infty)$. Let $f \in L^2(m)$.
\begin{enumerate}
	\item $H_t(f) \in C^{1}\big((0,\infty), L^2(m)\big)$ and 
	\begin{equation}\label{h_t time regularity eq1}
		\frac{d}{dt} H_t(f) = \Delta H_t(f) \; \; \forall t > 0;
	\end{equation}
	\item $H_t(f) \in C^0\big((0,\infty), W^{1,2}(X)\big)$;
	\item $H_t(f) \in C^{1}\big((0,\infty), W^{1,2}(X)\big)$ and in particular $|\nabla H_t(f)|^2 \in C^{1}((0,\infty), L^1(m)\big)$ with
		\begin{equation}\label{h_t time regularity eq2}
		\frac{d}{dt} |\nabla H_t(f)|^2 =2\inner{\nabla H_t(f)}{\nabla \Delta H_t(f)} \; \; \forall t > 0.
		\end{equation}
	If $f$ or $|\nabla f|$ is in $L^{\infty}(X, m)$, then $|\nabla H_t(f)|^2 \in C^{1}((0,\infty), L^2(m)\big)$ and the same formula holds. 
\end{enumerate}
\end{lem}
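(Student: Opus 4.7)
The plan is to build up from (1) to (3) by bootstrapping on the semigroup identity $H_t(f) = H_{t-s}(H_s(f))$, valid for any $0<s<t$, which exploits the fact that $H_s(f)$ lies in $D(\Delta^n)$ for every $n \geq 0$ once $s>0$. For part (1), since $(X,d,m)$ is infinitesimally Hilbertian, $-\Delta$ is a nonnegative self-adjoint operator on $L^2(m)$ and $H_t=e^{t\Delta}$ is a strongly continuous contraction semigroup. Either by spectral calculus, or by bootstrapping the almost-everywhere identity \eqref{heat flow def} using the semigroup property from an arbitrary $s>0$ together with the a priori $L^2$-estimates listed immediately after \eqref{heat flow def}, one obtains that $t \mapsto H_t(f)$ is smooth from $(0,\infty)$ into $L^2(m)$, yielding in particular \eqref{h_t time regularity eq1}.

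For (2), I would exploit self-adjointness by expanding
\begin{equation*}
\| |\nabla H_t(f) - \nabla H_s(f)| \|_{L^2}^2 = \| |\nabla H_t(f)| \|_{L^2}^2 + \| |\nabla H_s(f)| \|_{L^2}^2 - 2\int \langle \nabla H_t(f), \nabla H_s(f)\rangle\, dm.
\end{equation*}
Integration by parts via Definition \ref{div def} (allowed since $H_\tau(f) \in D(\Delta)$ for $\tau>0$) rewrites the cross term as $-\int H_t(f)\Delta H_s(f)\, dm$, which by (1) converges to $-\int H_s(f)\Delta H_s(f)\, dm = \||\nabla H_s(f)|\|_{L^2}^2$ as $t\to s$. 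The same manipulation shows $\||\nabla H_t(f)|\|_{L^2}^2$ has the same limit. Combined with $L^2$-continuity of $H_t(f)$ itself, this gives $W^{1,2}$-continuity.

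For (3), the Bochner identity
\begin{equation*}
\frac{H_{t+h}(f) - H_t(f)}{h} - \Delta H_t(f) = \frac{1}{h}\int_t^{t+h}\bigl(\Delta H_\tau(f) - \Delta H_t(f)\bigr)\,d\tau
\end{equation*}
in $L^2(m)$ (justified by (1)) reduces the $C^1$-in-$W^{1,2}$ statement to $W^{1,2}$-continuity of $\tau \mapsto \Delta H_\tau(f)$. But fixing $s\in (0,t)$, one has $\Delta H_\tau(f) = H_{\tau - s}(\Delta H_s(f))$ with $\Delta H_s(f) \in L^2(m)$, so (2) applied with initial datum $\Delta H_s(f)$ gives the desired continuity. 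This establishes $H_t(f) \in C^1((0,\infty), W^{1,2}(X))$. For the $|\nabla H_t(f)|^2$ statement, the algebraic identity
\begin{equation*}
|\nabla H_{t+h}(f)|^2 - |\nabla H_t(f)|^2 = \langle \nabla H_{t+h}(f) - \nabla H_t(f), \nabla H_{t+h}(f) + \nabla H_t(f)\rangle
\end{equation*}
combined with Cauchy-Schwarz in $L^2$ transfers $W^{1,2}$-continuity to $L^1(m)$-continuity, and dividing by $h$ and passing to the limit yields \eqref{h_t time regularity eq2} in $L^1(m)$.

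For the $L^2$ upgrade when $f$ or $|\nabla f|$ is in $L^\infty(m)$, I would invoke Theorem \ref{Bakry-Ledoux} in the latter case and the $L^\infty$-to-Lipschitz estimate \eqref{Linf-to-Lip} in the former to conclude that $\||\nabla H_t(f)|\|_{L^\infty(m)}$ is locally uniformly bounded on $(0,\infty)$; then the Cauchy-Schwarz step in the previous paragraph upgrades from $L^1$ to $L^2$ by estimating one factor in $L^\infty$ and the other in $L^2$. The main technical burden throughout is (3): one must differentiate under $\nabla$, and the only obstruction is passing from the $L^2$-level formula $\frac{d}{dt}H_t(f) = \Delta H_t(f)$ to the same formula at the $W^{1,2}$-level. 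This is resolved precisely by the shift trick $H_t(f) = H_{t-s}(H_s(f))$, which trades regularity in the initial datum for $W^{1,2}$-continuity of the correct quantity at positive times, after which everything else follows from linearity and standard semigroup bookkeeping.
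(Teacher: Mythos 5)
Your proposal is correct and takes essentially the same route as the paper: part (1) via the integral representation $H_t(f) = H_{t'}(f) + \int_{t'}^t \Delta H_s(f)\,ds$ together with continuity of $\Delta H_s(f)$ (or equivalently spectral calculus for the self-adjoint $-\Delta$); part (2) by expanding $\||\nabla H_t(f)-\nabla H_s(f)|\|_{L^2}^2$ and integrating by parts against $\Delta$; part (3) by the shift trick $\Delta H_\tau(f) = H_{\tau-\epsilon}(\Delta H_\epsilon(f))$ to feed $\Delta H_\epsilon(f) \in L^2(m)$ back into statement (2), then FTC at the $W^{1,2}$-level and polarization with Cauchy--Schwarz for the $L^1$ (resp.\ $L^2$ under the $L^\infty$ hypotheses via Bakry--Ledoux / $L^\infty$-to-Lipschitz) continuity of $|\nabla H_t(f)|^2$. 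This is precisely the structure of the paper's proof, so no further comparison is needed.
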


\begin{proof}
Since $H_t(f) = H_{t'}(f) + \int_{t'}^{t} \Delta H_s(f) ds$ for $t > t' > 0$ and $\Delta H_s(f) \in C^0\big((0,\infty), L^2(m)\big)$, statement 1 follows by fundamental theorem of calculus.

For $t > 0$,
\begin{align*}
	\lim\limits_{s \to 0} \int\limits_{X}|\nabla H_{t+s}(f)-\nabla H_{t}(f)|^2 &= \lim\limits_{s \to 0} \int\limits_{X} (H_{t+s}(f)\Delta H_{t+s}(f)) + 2(H_{t+s}(f)\Delta H_{t}(f)) - (H_{t}(f)\Delta H_{t}(f))\\
	&= 0 \text{ , since all terms involved are continuous from $s \to L^2(m)$.}
\end{align*}
We already know $H_t(f) \in C^0\big((0,\infty), L^{2}(X)\big)$, so statement 2 follows. 

Applying statement 2 to $\Delta H_{\epsilon}(f)$ for arbitrarily small positive $\epsilon$, we see that $\Delta H_t(f) \in C^0((0,\infty),$ $W^{1,2}(X))$. The first part of statement 3 then follows by applying fundamental theorem of calculus to $H_t(f) = H_{t'}(f) + \int_{t'}^{t} \Delta H_s(f) ds$ viewed as a $W^{1,2}(X)$-valued Bochner integral. The second part follows by a direct computation. Notice that if $f$ or $|\nabla f|$ is in $L^{\infty}(X, m)$, then $|\nabla h_t(f)| \in L^{\infty}$ by $L^{\infty}$-to-Lipschitz regularization \eqref{Linf-to-Lip} or by Bakry-Ledoux estimate \ref{Bakry-Ledoux}.
\end{proof}

\begin{lem}\label{grad h weak solution}
Let $\phi \in D(\Delta)$ be nonngative, compactly supported, time independent with $|\phi|, |\nabla \phi|, |\Delta \phi| \leq K_1$. If $h$ is the heat flow of some $h_0 \in L^2(m) \cap L^{\infty}(m)$ and $|\nabla h| \leq K_2$ on $\braces{\phi > 0}$, then $(\frac{\partial}{\partial t}-\Delta) [\phi^2 |\nabla h|^2] \leq c(N, K_1, K_2)$ weakly in $(0, \infty) \times X$ as in Definition \ref{weak solutions}.
\end{lem}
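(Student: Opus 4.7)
The plan is to verify Definition \ref{weak solutions} directly by testing against a nonnegative $\psi \in W^{1,2}(X)$. Set $v := |\nabla h|^2$ and $u := \phi^2 v$. Since $h_0 \in L^2 \cap L^\infty$, the $L^\infty$-to-Lipschitz regularization \eqref{Linf-to-Lip} combined with Lemma \ref{H_t time regularity} and the semigroup identity $h_t = H_{t/2}(h_{t/2})$ place $h_t \in \TestF(X)$ for every $t > 0$. Hence $v \in D(\boldsymbol{\Delta})$ with $\partial_t v = 2 \inner{\nabla h}{\nabla \Delta h}$ in $L^2(m)$, and the improved Bochner inequality \ref{improved Bochner inequality} (with $K = -(N-1)$) is available. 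The Leibniz rule for the Laplacian further gives $\phi^2 \in D(\Delta)$ with $\Delta \phi^2 = 2 \phi \Delta \phi + 2 |\nabla \phi|^2 \in L^\infty(m)$, so $|\Delta \phi^2| \leq 4 K_1^2$, and by strong locality of the gradient both $\phi \Delta \phi$ and $|\nabla \phi|^2$ vanish $m$-a.e. on $\{\phi = 0\}$. In particular $u \in C^1((0,\infty), L^2(m))$, so the Fr\'echet derivative required in the definition exists.

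The core idea is to rigorously implement the formal pointwise identity
\[
(\partial_t - \Delta) u \;=\; \phi^2 (\partial_t v - \Delta v) \;-\; (\Delta \phi^2) v \;-\; 2 \inner{\nabla \phi^2}{\nabla v}.
\]
Expanding $\int \inner{\nabla u}{\nabla \psi} = \int \phi^2 \inner{\nabla v}{\nabla \psi} + \int v \inner{\nabla \phi^2}{\nabla \psi}$ and applying the definition of $\boldsymbol{\Delta} v$ against the test function $\phi^2 \psi \geq 0$, the improved Bochner inequality yields an upper bound on the first piece whose $\inner{\nabla h}{\nabla \Delta h}$ contribution exactly cancels $\int \psi \, \partial_t u$ while producing a negative $-2 \int \phi^2 \psi |\Hess h|_{\HS}^2$. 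For the second piece, the global $L^\infty$ bound on $|\nabla h|$ from \eqref{Linf-to-Lip} gives $v \psi \in W^{1,2}(X)$, so one can integrate by parts against $\phi^2 \in D(\Delta)$ to obtain
\[
\int v \inner{\nabla \phi^2}{\nabla \psi} \;=\; -\!\int v \psi \, \Delta \phi^2 \;-\; \int \psi \inner{\nabla \phi^2}{\nabla v}.
\]

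After this reorganization, every surviving term is supported on $\{\phi > 0\}$ where the hypothesis $|\nabla h| \leq K_2$ applies. The term $-\int v \psi \, \Delta \phi^2$ is dominated by $4 K_1^2 K_2^2 \int \psi$, while the residual $-2 \int \psi \inner{\nabla \phi^2}{\nabla v}$ is handled via $|\nabla v| \leq 2 |\Hess h|_{\HS} |\nabla h|$ combined with Young's inequality
\[
8 K_2 \, \psi \phi |\nabla \phi| |\Hess h|_{\HS} \;\leq\; 2 \psi \phi^2 |\Hess h|_{\HS}^2 \;+\; 8 K_1^2 K_2^2 \, \psi,
\]
so that the positive Hessian contribution is exactly absorbed by the negative $-2 \int \phi^2 \psi |\Hess h|_{\HS}^2$ coming from Bochner, and the remainder is linear in $\int \psi$. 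The main technical delicacy is precisely this cross-term integration by parts: since $\psi$ lies only in $W^{1,2}(X)$, the derivative on $\psi$ must be transferred onto $\phi^2$, which is where one needs both $\phi^2 \in D(\Delta)$ with $L^\infty$ Laplacian and $v \psi \in W^{1,2}$ (provided by the global gradient bound from $L^\infty$-to-Lipschitz rather than just the local hypothesis on $\supp \phi$). Collecting all bounds produces $\int \psi \, \partial_t u + \int \inner{\nabla u}{\nabla \psi} \leq c(N, K_1, K_2) \int \psi$, which is the desired weak inequality.
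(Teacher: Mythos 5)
Your proposal is correct and follows essentially the same route as the paper's proof: improved Bochner inequality, the chain rule bound $|\nabla|\nabla h_t|^2| \leq 2|\Hess h_t|_{\HS}|\nabla h_t|$ from \cite[Proposition 3.3.22]{G18}, Young's inequality to absorb the cross term into the positive $\phi^2|\Hess h_t|^2$ term using $|\nabla h_t|\leq K_2$ on $\{\phi>0\}$, and Lemma~\ref{H_t time regularity} for the time derivative. The only structural difference is cosmetic: the paper computes $\boldsymbol{\Delta}(\phi^2|\nabla h_t|^2)$ directly via the Leibniz rule as a signed measure and reads off the weak inequality, while you unfold the same estimate tested against $\psi\in W^{1,2}(X)$. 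One spot to tighten: your assertion that $v\psi\in W^{1,2}(X)$ follows from the global gradient bound is not immediate, since $|\nabla v|\leq 2|\Hess h_t|\,|\nabla h_t|$ has the Hessian only in $L^2$, so $\psi\nabla v$ need not be in $L^2$ for unbounded $\psi$; this is most cleanly handled by first taking $\psi$ bounded (for which $v\psi$ is a product of bounded $W^{1,2}$ functions) and passing to general $\psi$ by truncation, or by working at the measure level as the paper does, where only Lipschitz test functions of bounded support are needed.
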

\begin{proof}
	Let $t>0$, $h_t \in \TestF(X)$ by the $L^\infty$-to-Lipschitz regularization property of $H_t$ \eqref{Linf-to-Lip}. Let $f \in \TestF(X)$. By \cite[Proposition 3.3.22]{G18}, $\inner{\nabla h_t}{\nabla h_t} \in W^{1,2}(X)$ and 
\begin{equation*}
	\inner {\nabla \inner{\nabla h_t}{\nabla h_t}}{\nabla f} = 2\Hess(h_t)(\nabla h_t, \nabla f) \; \; m\text{-a.e..}
\end{equation*}
Therefore, $|\nabla \inner{\nabla h_t}{\nabla h_t}{\nabla f}| \leq 2|\Hess(h_t)|_{\HS}|\nabla h_t||\nabla f| \, m$-a.e.. 

We then have, for any $\epsilon > 0$ and $m$-a.e.,
\begin{align*}
	4\phi|\inner{\nabla|\nabla h_t|^2}{\nabla \phi}| &\leq 8\phi|\Hess(h_t)|_{\HS}|\nabla h_t||\nabla \phi| \; \; \text{ , by above and the density of $\TestF(X)$ in $W^{1,2}(X)$.}\\
	&\leq 4\epsilon\phi^2|\Hess(h_t)|_{\HS}^2|\nabla h_t|^2 + \frac{4}{\epsilon}|\nabla \phi|^2.
\end{align*}
Choosing $\epsilon>0$ small so that $4\epsilon K_2^2 < 2$, 
\begin{align*}
	\boldsymbol{\Delta}(\phi^2|\nabla h_t|^2) &= \phi^2 \boldsymbol{\Delta}|\nabla h_t|^2 + (2\inner{\nabla|\nabla h_t|^2}{\nabla \phi^2}+|\nabla h_t|^2\Delta\phi^2)m\\
	&\geq( 2 \phi^2|\Hess(h_t)|_{\HS}^2 + 2\phi^2\inner{\nabla \Delta h_t}{\nabla h_t} - 2(N-1)\phi^2|\nabla h_t|^2 + 4\phi\inner{\nabla|\nabla h_t|^2}{\nabla \phi}+|\nabla h_t|^2\Delta \phi^2)m\\
	&\geq (2 \phi^2\inner{\nabla \Delta h_t}{\nabla h_t}-c)m,
\end{align*}
where the improved Bochner inequality \ref{improved Bochner inequality} is used for line 2 and the previous estimate with $\epsilon$ was used for line 3. 

Finally, by Lemma \ref{H_t time regularity}, $\frac{d}{dt} \phi^2|\nabla h_t|^2 = 2\phi^2\inner{\nabla \Delta h_t}{\nabla h}$. This lets us conclude. 
\end{proof}

\begin{thm}\label{grad h integral bound}
There exists a constant $c(N,\delta)$ such that for all $\epsilon \leq \bar{\epsilon}(N,\delta)$,
\begin{enumerate}
	\item if $x \in X_{\frac{\delta}{2},3}$ with $e(x) \leq \epsilon^2d_{p,q}$ then $\fint_{B_{10d_\epsilon}(x)} ||\nabla h^{\pm}_{d_{\epsilon}^2}|-1|\leq c\epsilon;$
	\item if $\sigma$ is an $\epsilon$-geodesic connecting $p$ and $q$, then $\int\limits_{\frac{\delta}{2} d_{p,q}}^{(1-\frac{\delta}{2}) d_{p,q}} \fint_{B_{10d_\epsilon}(\sigma(s))} ||\nabla h^{\pm}_{d_{\epsilon}^2}|-1|\leq c\epsilon^2d_{p,q}.$
\end{enumerate}
\end{thm}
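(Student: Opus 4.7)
The plan is to exploit the pointwise upper bound $|\nabla h^\pm_{d_\epsilon^2}| \leq 1 + cd_\epsilon^2$ from Lemma \ref{grad h pw bound} to reduce the problem to controlling only the negative part $(1 - |\nabla h^\pm_{d_\epsilon^2}|)_+$, since the positive part contributes at most $cd_\epsilon^2 \leq c\epsilon$. The key intermediate estimate is the following pointwise-on-geodesic bound, valid for any $\epsilon$-geodesic $\sigma$ and any $t_0 < t_1$ in $[\delta d_{p,q}/2, (1-\delta/2) d_{p,q}]$:
\begin{equation*}
\int_{t_0}^{t_1} \big||\nabla h^\pm_{d_\epsilon^2}|(\sigma(t)) - 1\big| \, dt \leq c\epsilon^2 d_{p,q}.
\end{equation*}
This follows from the fact that $t \mapsto h^\pm_{d_\epsilon^2}(\sigma(t))$ is Lipschitz with $\frac{d}{dt} h^\pm_{d_\epsilon^2}(\sigma(t)) \leq |\nabla h^\pm_{d_\epsilon^2}|(\sigma(t))$ almost everywhere, so integrating yields $h^\pm_{d_\epsilon^2}(\sigma(t_1)) - h^\pm_{d_\epsilon^2}(\sigma(t_0)) \leq \int_{t_0}^{t_1} |\nabla h^\pm_{d_\epsilon^2}|(\sigma(t)) \, dt$; by Corollary \ref{h pw bound cor} the LHS equals $(t_1 - t_0) + O(\epsilon^2 d_{p,q})$, while the integrand is at most $1 + cd_\epsilon^2$, and these two estimates together force the claimed integral control of $(1-|\nabla h^\pm|)_+$.

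For statement 2 I would upgrade the geodesic bound to the full ball integral via a flow/sweeping argument. Let $q' := \sigma((1-\delta/4)d_{p,q})$ and $S := B_{10 d_\epsilon}(q')$, and consider the local flow $F_s(z) := \gamma_{z,p}(s)$ of $-\nabla d^-$ from $S$ (well-defined by Remark \ref{local flow examples} and Theorem \ref{-dp grad flow properties}). For each $z \in S$, $s \mapsto F_s(z)$ is a unit speed geodesic segment between endpoints close to $p$ and $q$, so by the Abresch--Gromoll argument it concatenates (with a tail near $q$) into an $\epsilon'$-geodesic with $\epsilon'^2 \sim e(z)/d_{p,q} + \epsilon^2$. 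Applying the pointwise estimate to each such geodesic and then integrating in $z$, Fubini gives
\begin{equation*}
\int_{s_0}^{s_1}\!\int_S \big||\nabla h^\pm_{d_\epsilon^2}|(F_s(z)) - 1\big|\, dm(z)\,ds \leq c\,m(S)\cdot \bigg(\epsilon^2 d_{p,q} + \fint_S e\,dm\bigg) \leq c\epsilon^2 d_{p,q}\cdot m(S),
\end{equation*}
where the integral Abresch--Gromoll inequality (Theorem \ref{Abresch-Gromoll}) supplies the average-excess bound. Pushing forward via $F_s$, the density bound in Theorem \ref{-dp grad flow properties} and Bishop--Gromov volume comparison convert $\int_S f(F_s(z))\,dm(z)$ into a comparable integral over $F_s(S)$; since $F_s(S)$ essentially contains $B_{10 d_\epsilon}(\sigma(t))$ for $t = (1-\delta/4)d_{p,q} - s$, reparameterizing in $t$ yields statement 2.

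For statement 1 I would combine the pointwise-on-geodesic bound with the parabolic $L^1$-Harnack inequality. The improved Bochner inequality (Theorem \ref{improved Bochner inequality}) applied to $h^\pm_t$ yields $(\partial_t - \Delta)(1 - |\nabla h^\pm_t|^2) \geq -c$ in the weak sense on the relevant region; defining $\tilde u_s(y) := 1 + cd_\epsilon^2 - |\nabla h^\pm_{d_\epsilon^2 + s}|^2(y)$ gives a nonnegative supersolution (after localizing via a good cutoff from Lemma \ref{good ball cut off function} to handle compact support). Lemma \ref{MV L1H inequality} with $r = 10 d_\epsilon$ then produces
\begin{equation*}
\fint_{B_{10 d_\epsilon}(x)} \tilde u_0 \leq C\Big(\essinf_{B_{10 d_\epsilon}(x)} \tilde u_{r^2} + c\epsilon^2\Big).
\end{equation*}
Because $e(x) \leq \epsilon^2 d_{p,q}$, the point $x$ lies on an $\epsilon$-geodesic $\sigma$ (Remark \ref{epsilon geodesic excess}), and applying the pointwise-on-geodesic bound at the slightly later time $d_\epsilon^2 + r^2 = 101\,d_\epsilon^2$ to the segment of $\sigma$ inside $B_{10d_\epsilon}(x)$ produces some $\sigma(t) \in B_{10d_\epsilon}(x)$ with $|\nabla h^\pm_{101 d_\epsilon^2}|(\sigma(t)) \geq 1 - c\epsilon$, bounding the $\essinf$ term and yielding the desired ball estimate.

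The main obstacle is the extension step in statement 2: one must show that the tube swept by the flow $F_s$ genuinely approximates the union of balls $\bigcup_t B_{10 d_\epsilon}(\sigma(t))$ with volume distortion bounded independently of $\epsilon$, which requires carefully combining the density bound of Theorem \ref{-dp grad flow properties} with Bishop--Gromov to preserve the sharp $\epsilon^2 d_{p,q}$ factor (rather than only the weaker $\epsilon d_{p,q}$ one would get by integrating statement 1 naively).
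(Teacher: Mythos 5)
Your proposed derivation of the pointwise-on-geodesic bound (using the upper gradient property of $|\nabla h^\pm_{d_\epsilon^2}|$ together with Corollary~\ref{h pw bound cor} and the upper bound from Lemma~\ref{grad h pw bound}) is essentially what the paper does — see \eqref{grad h integral bound eq5}--\eqref{grad h integral bound eq6}, where the same idea is executed with $w_{2d_\epsilon^2}=1+c''d_\epsilon^2-|\nabla h^-_{2d_\epsilon^2}|^2$ and Cauchy--Schwarz instead of $||\nabla h^\pm|-1|$. Your treatment of statement~1 (parabolic $L^1$-Harnack applied to a nonnegative weak supersolution built from the improved Bochner inequality, with the $\essinf$ term controlled by the on-geodesic value) also matches the paper, modulo two details: the cutoff must be the \emph{annular} cutoff of Lemma~\ref{good cut off function} rather than the ball cutoff of Lemma~\ref{good ball cut off function}, since $w_t$ is nonnegative only on the annular region $X_{\delta/5,6}$ and the support must avoid $p$ and $q$; and producing a single geodesic point with $w$ small does not immediately bound the essential infimum — that step explicitly needs Lemma~\ref{lip constant nice} (the paper invokes it at precisely this juncture).

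For statement~2, however, your flow/sweeping mechanism has a genuine gap at the step you flag. Pushing forward by $F_s$ via Theorem~\ref{-dp grad flow properties} yields only an \emph{upper} density bound $(F_s)_*(m|_S)\leq Cm$, which gives $\int_S f(F_s(z))\,dm(z)\leq C\int_{F_s(S)} f\,dm$ — i.e., it lets you pass from the flowed integral \emph{up} to an ambient integral, not the other way around. To conclude
\[
\fint_{B_{10d_\epsilon}(\sigma(t))}\big||\nabla h^\pm_{d_\epsilon^2}|-1\big|\,dm\;\lesssim\;\int_S \big||\nabla h^\pm_{d_\epsilon^2}|(F_s(z))-1\big|\,dm(z)\Big/m(S),
\]
you would need $(F_s)_*(m|_S)\geq c\,m|_{B_{10d_\epsilon}(\sigma(t))}$, a \emph{lower} density bound on the ball, which is not available — and is in fact essentially the content of the hard induction in Section~\ref{section 5} of the paper. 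Geodesic flow towards $p$ compresses transversally and points with larger excess drift away from $\sigma$, so there is no a priori reason $F_s(S)$ covers $B_{10d_\epsilon}(\sigma(t))$ even up to a set of small measure; Bishop--Gromov and the upper compressibility bound control the total mass of $F_s(S)$ but not its location. The paper sidesteps this entirely: it applies the $L^1$-Harnack inequality at every center $\sigma(s)$ (not just at $q'$), converting $\fint_{B_{10d_\epsilon}(\sigma(s))} w_{d_\epsilon^2}$ into $w_{2d_\epsilon^2}(\sigma(s))+c\epsilon^2 d_{p,q}$ via Lemma~\ref{lip constant nice}, and then integrates the resulting pointwise bound along $\sigma$ using your intermediate geodesic estimate (\eqref{grad h integral bound eq7}). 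This avoids any flow-coverage question and is the step you should replace your argument with.
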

\begin{proof}
We prove the theorem in the case of $h_t^-$. The $h_t^+$ case is similar. We will take the local Lipschitz constant representatitves for $|\nabla h^\pm_t|$. All statements made will be for $\epsilon$ sufficiently small depending on $N$ and $\delta$ so we will forgo repeating this. 

By Lemma \ref{grad h pw bound}, we choose $c'(N,\delta)$ so that $|\nabla h_t^{-}| \leq 1+c't^2$ for all $x \in X_{\frac{\delta}{5},6}$ and $t \leq \epsilon'(N,\delta)^2d_{p,q}^2$. This means there exists $c''(N,\delta)$ so that 
\begin{equation*}
	w_t := 1 + c''t - |\nabla h_t^-|^2 \geq 0 \; \text{on} \;X_{\frac{\delta}{5},6}.
 \end{equation*}
Let $\phi = \phi^+\phi^-$, where $\phi^{\pm}$ are annular good cutoff functions (\ref{good cut off function}) around $p$ and $q$ respectively so that $\phi = 1$ on $X_{\frac{\delta}{4},5}$ and $\phi = 0$ on $X\backslash X_{\frac{\delta}{5},6}$. By Lemma \ref{grad h weak solution},
\begin{equation*}
	(\partial_t - \Delta)|\phi^2w_t| \geq -c \; \text{ weakly in $(0,d_{\epsilon'}^2) \times X$.}
\end{equation*}
Applying the $L^1$-Harnack inequality \ref{MV L1H inequality}, we have, for $x \in X_{\frac{\delta}{3},4}$,
\begin{equation}\label{grad h integral bound eq1}
	\fint_{B_{10d_\epsilon}(x)} w_{d_\epsilon^2} \leq c[\essinf_{B_{10d_\epsilon}(x)} w_{2d_{\epsilon}^2}+d_{\epsilon}^2].
\end{equation}
We will show that the right side is sufficiently small. Let $e(x) \leq \epsilon^2d_{p,q}$ and let $\gamma(t)$ be a unit speed geodesic from $x$ to $p$. By Corollary \ref{h pw bound cor}, 
\begin{align}\label{grad h integral bound eq2}
\begin{split}
	&\hspace{0.5cm} |h_{2d_\epsilon^2}^-(x) - h_{2d_\epsilon^2}^-(\gamma(10d_\epsilon)) - 10d_\epsilon|\\
	&\leq |h_{2d_\epsilon^2}^-(x) - d^-(x)| +|h_{2d_\epsilon^2}^-(\gamma(10d_\epsilon)) - d^-(\gamma(10d_\epsilon))| + |d^-(x) - d^-(\gamma(10d_\epsilon)) - 10d_\epsilon|\\
	&\leq c\epsilon^2d_{p,q}.
\end{split}
\end{align}
The local Lipschitz constant $|\nabla h_{2d_\epsilon^2}^-|$ is an upper gradient, \cite[Remark 2.7]{AGS14a}. Therefore,
\begin{equation}\label{grad h integral bound eq3}
	|h_{2d_\epsilon^2}^-(\sigma) - h_{2d_\epsilon^2}^-(\gamma(10d_\epsilon))| \leq \int_{0}^{10d_\epsilon} |\nabla h_{2d_\epsilon^2}^-|(\gamma(s)) \, ds.
\end{equation}
We have 
\begin{align}\label{grad h integral bound eq4}
\begin{split}
	\int_{0}^{10d_\epsilon} w_{2d_\epsilon^2}(\gamma(s))\, ds &= \int_{0}^{10d_\epsilon} \big( 1+cd_\epsilon^2-|\nabla h^-_{2d_\epsilon^2}|^2(\gamma(s))\big) \, ds\\
		& \leq 10d_{\epsilon}+cd_\epsilon^3-\frac{1}{10d_\epsilon}(\int_{0}^{10d_\epsilon} |\nabla h^-_{2d_\epsilon^2}|(\gamma(s)) \, ds)^2 \; \; \text{ , by Cauchy-Schwarz}\\
		&\leq  10d_{\epsilon}+cd_\epsilon^3-\frac{1}{10d_\epsilon}(h_{2d_\epsilon^2}^-(x) - h_{2d_\epsilon^2}^-(\gamma(10d_\epsilon)))^2 \; \; \text{ , by \eqref{grad h integral bound eq3}}\\
		&\leq 10d_{\epsilon}+cd_\epsilon^3-\frac{1}{10d_\epsilon}(10d_\epsilon - c\epsilon^2d_{p,q})^2 \; \; \text{ , by \eqref{grad h integral bound eq2}}\\
		&\leq c\epsilon \; \; \text{ , if $\epsilon < 1$}
\end{split}
\end{align}
In particular, there exists $s\in[0,10d_\epsilon]$ so that $w_{2d_\epsilon^2}(\gamma(s)) \leq c\epsilon$. Applying Lemma \ref{lip constant nice} to $|\nabla h_t^{\pm}|$ and $\gamma(s) \in \overline{B_{10d_{\epsilon}}(x)}$, we conclude $\essinf\limits_{B_{10d_\epsilon}(x)} w_{2d_{\epsilon}^2} \leq c\epsilon$ and so statement 1 is proved by \eqref{grad h integral bound eq1}. 

By Corollary \ref{h pw bound cor}, arguing as in \eqref{grad h integral bound eq2} ,
\begin{equation}\label{grad h integral bound eq5}
	|h_{2d_\epsilon^2}^-\bigg(\sigma\big((1-\frac{\delta}{2}) d_{p,q}\big)\bigg) - h_{2d_\epsilon^2}^-\bigg(\sigma\big(\frac{\delta}{2} d_{p,q}\big)\bigg) - (1-\delta) d_{p,q}| \; \leq \; c\epsilon^2d_{p,q}.
\end{equation}
Arguing as in \eqref{grad h integral bound eq3} and \eqref{grad h integral bound eq4},
\begin{equation}\label{grad h integral bound eq6}
	\int_{\frac{\delta}{2} d_{p,q}}^{(1-\frac{\delta}{2})d_{p,q}} w_{2d_\epsilon^2}(\sigma(s))  \, ds \leq c\epsilon^2d_{p,q}.
\end{equation}
Finally,
\begin{align}\label{grad h integral bound eq7}
\begin{split}
	\int_{\frac{\delta}{2} d_{p,q}}^{(1-\frac{\delta}{2}) d_{p,q}} \bigg( \fint_{B_{10d_{\epsilon}}(\sigma(s))} ||\nabla h^-_{d_\epsilon^2}|^2 - 1| \bigg) \, ds  &\leq \int_{\frac{\delta}{2} d_{p,q}}^{(1-\frac{\delta}{2})d_{p,q}} \bigg( \fint_{B_{10d_{\epsilon}}(\sigma(s))} w_{d_\epsilon^2}+c\epsilon^2d_{p,q} \bigg)\, ds\\
	&\leq c \int_{\frac{\delta}{2} d_{p,q}}^{(1-\frac{\delta}{2}) d_{p,q}} \bigg(\essinf_{B_{10d_\epsilon^2}(\sigma(s))} w_{2d_\epsilon^2} + c\epsilon^2d_{p,q}\bigg) \, ds \; \; \text{, by \eqref{grad h integral bound eq1}}\\
	&\leq c \int_{\frac{\delta}{2} d_{p,q}}^{(1-\frac{\delta}{2})d_{p,q}} \bigg( w_{2d_\epsilon^2}(\sigma(s)) + c\epsilon^2d_{p,q}\bigg) \, ds \; \; \text{, by Lemma \ref{lip constant nice}}\\
	&\leq c\epsilon^2d_{p,q}.
\end{split}
\end{align}
\end{proof}

We now prove the main Hessian estimate for $h_t^{\pm}$. 
\begin{thm}\label{hess h integral bound}
There exists a constant $c(N,\delta)$ such that for any $0 < \epsilon \leq \bar{\epsilon}(N,\delta)$, any $x \in X_{\frac{\delta}{2},3}$ with $e(x) \leq \epsilon^2d_{p,q}$, or any $\epsilon$-geodesic $\sigma$ connecting $p$ and $q$, there exists $r \in [\frac{1}{2}, 2]$ with 
\begin{enumerate}
	\item $|h^\pm_{rd_\epsilon^2} - d^\pm| \leq c\epsilon^2d_{p,q};$
	\item $\fint_{B_{d_\epsilon}(x)}||\nabla h^\pm_{rd_\epsilon^2}|^2-1|\leq c\epsilon;$
	\item $\int_{\frac{\delta}{2} d_{p,q}}^{(1-\frac{\delta}{2}) d_{p,q}} \bigg( \fint_{B_{d_\epsilon}(\sigma(s))}||\nabla h^\pm_{rd_\epsilon^2}|^2-1| \bigg) \, ds \leq c\epsilon^2d_{p,q};$
	\item $\int_{\frac{\delta}{2} d_{p,q}}^{(1-\frac{\delta}{2}) d_{p,q}} \bigg( \fint_{B_{d_\epsilon}(\sigma(s))} |\Hess h^{\pm}_{rd_\epsilon^2}|^2\bigg) \, ds \leq \frac{c}{d_{p,q}^2}$. 
\end{enumerate}
\end{thm}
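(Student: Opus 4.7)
Items (1), (2), and (3) follow immediately from Corollary \ref{h pw bound cor}, Lemma \ref{grad h pw bound}, and Theorem \ref{grad h integral bound} respectively. Inspection of those earlier proofs shows that their bounds depend on $t$ only through the scale $d_\epsilon^2$, so the same estimates hold for all $t \in [d_\epsilon^2/2, 2d_\epsilon^2]$ after adjusting constants. In particular (1)--(3) will hold for \emph{every} $r \in [1/2, 2]$, so the role of the pigeonhole in $r$ is entirely to produce the $r$ needed for (4).

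For (4), first derive a spatial integrated Hessian bound via the improved Bochner inequality. Apply Theorem \ref{improved Bochner inequality} to $h_t^\pm \in \TestF(X)$, tested against $\psi^2$, where $\psi = \psi^+\psi^-$ is the cutoff fixed at the start of this section. Integration by parts together with $|\nabla \psi|, |\Delta \psi| \leq c/d_{p,q}, c/d_{p,q}^2$ from Lemma \ref{good cut off function}, $|\Delta h_t^\pm| \leq c/d_{p,q}$ from Lemma \ref{lap cutoffed bound}, and $|\nabla h_t^\pm| \leq c$ on $\supp \psi$ from Lemma \ref{grad h pw bound} yields
\[
\int \psi^2 |\Hess h_t^\pm|_{\HS}^2 \, dm \leq \frac{c \, m(\supp \psi)}{d_{p,q}^2}
\]
uniformly in $t$ on the scale of $d_\epsilon^2$. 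A time-integrated variant, exploiting the identity $\frac{d}{dt}\int \psi^2|\nabla h_t^\pm|^2 dm = -2\int \psi^2(\Delta h_t^\pm)^2 dm - 2\int \langle\nabla \psi^2, \nabla h_t^\pm\rangle \Delta h_t^\pm \, dm$ to control the boundary terms in Bochner, will allow the $t$-pigeonhole in the final step.

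To convert this spatial estimate into the line integral bound in (4), invoke the segment inequality (Theorem \ref{segment inequality}) with a product probability measure $\mu$ supported on small balls near $p$ and $q$, chosen so that the near-minimizing structure of the $\epsilon$-geodesic $\sigma$ guarantees that the tube $T_{d_\epsilon}(\sigma)$ is essentially covered by the minimizing geodesics $\tilde\gamma_{x,y}$ between pairs in $\supp \mu$. Applied to $\psi^2 |\Hess h_t^\pm|_{\HS}^2$, the segment inequality converts the spatial bound into the desired $\int_{\delta d_{p,q}/2}^{(1-\delta/2)d_{p,q}} \fint_{B_{d_\epsilon}(\sigma(s))}|\Hess h_t^\pm|^2 \leq c/d_{p,q}^2$. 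A Fubini/pigeonhole argument in $t \in [d_\epsilon^2/2, 2d_\epsilon^2]$ finally selects $r \in [1/2, 2]$ at which (4) holds at $t = rd_\epsilon^2$ for both signs simultaneously; (1)--(3) at this $r$ are provided by the first paragraph.

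The main obstacle is this final conversion from the spatial integrated Hessian bound to the line integral along the specific $\epsilon$-geodesic $\sigma$. A naive Fubini application overcounts by roughly $d_\epsilon \, m(\supp \psi)/m(B_{d_\epsilon})$, which is of order $d_{p,q}/\epsilon^{N-1}$ and blows up as $\epsilon \to 0$; this forces the use of the segment inequality rather than a direct volume computation. Correspondingly, the choice of $\mu$ and the identification of (the tube of) $\sigma$ with an average over near-minimizing geodesics between $\supp \pi_1(\mu)$ and $\supp \pi_2(\mu)$ is the delicate point, and this is precisely where the $\epsilon$-geodesic hypothesis on $\sigma$ enters in an essential way.
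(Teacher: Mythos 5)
Your handling of statements (1)--(3) is correct and matches the paper, including the observation that the pigeonhole in $r$ is only needed for (4). However, your approach to statement (4) has two gaps. First, the claimed spatial estimate $\int \psi^2 |\Hess h_t^\pm|_{\HS}^2 \, dm \leq c\, m(\supp \psi)/d_{p,q}^2$ uniformly in $t$ cannot be extracted from the Bochner inequality with the global cutoff $\psi$: the term $-\int\psi^2\langle\nabla h_t^\pm,\nabla\Delta h_t^\pm\rangle\,dm$ equals $\int\psi^2(\Delta h_t^\pm)^2\,dm + \int\Delta h_t^\pm\langle\nabla\psi^2,\nabla h_t^\pm\rangle\,dm$, and $(\Delta h_t^\pm)^2$ has no useful pointwise bound (Lemma~\ref{lap cutoffed bound} is one-sided). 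Converting it into $\frac12\partial_t|\nabla h_t^\pm|^2$ and pigeonholing in $t$ against a bump $\alpha$ supported at scale $d_\epsilon^2$ costs a factor $|\alpha'|\sim d_\epsilon^{-2}$, so one gets at best $c\,m(\supp\psi)/d_\epsilon^2$, worse than your claim by $\epsilon^{-2}$. Second, and more fundamentally, the segment inequality (Theorem~\ref{segment inequality}) runs in the wrong direction: it gives an \emph{upper} bound on a $\mu$-average of line integrals by a spatial integral, whereas to dominate the tube average $\int\fint_{B_{d_\epsilon}(\sigma(s))}|\Hess h_t^\pm|^2\,ds$ by its left-hand side you would need a uniform \emph{lower} bound on the density of $(e_t)_*\mu$ on $B_{d_\epsilon}(\sigma(s))$; Theorem~\ref{directional BG} only gives an upper bound, and no lower density bound is available in this setting. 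Indeed, the estimate you are trying to prove is exactly the kind of line-integral Hessian bound that is later fed \emph{into} the segment inequality elsewhere in the paper (e.g.\ in Lemma~\ref{main lemma}), so this usage is circular.

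The idea you are missing is to localize the Bochner inequality at the scale $d_\epsilon$ rather than $d_{p,q}$: take a cutoff $\phi$ supported on $B_{3d_\epsilon}(\sigma(s))$ with $d_\epsilon|\nabla\phi|, d_\epsilon^2|\Delta\phi|\leq c(N)$, together with a time bump $\alpha$ supported on $[\tfrac14 d_\epsilon^2, 4d_\epsilon^2]$. The cancellations $\int\Delta\phi\,dm = 0$ and $\int_0^\infty\alpha'(t)\,dt = 0$ let one replace $|\nabla h_t^\pm|^2$ by $|\nabla h_t^\pm|^2-1$ in both boundary terms, and the resulting bound on $\int_{d_\epsilon^2/2}^{2d_\epsilon^2}\fint_{B_{d_\epsilon}(\sigma(s))}|\Hess h_t^\pm|^2\,dt$ is of the form $c\int_{d_\epsilon^2/4}^{4d_\epsilon^2}\fint_{B_{3d_\epsilon}(\sigma(s))}\big(d_\epsilon^2 + d_\epsilon^{-2}\,\big||\nabla h_t^\pm|^2-1\big|\big)\,dt$. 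Integrating over $s$ and invoking statement~(2) of Theorem~\ref{grad h integral bound} --- the $\epsilon$-geodesic integral gradient estimate bounding $\int_{\delta d_{p,q}/2}^{(1-\delta/2)d_{p,q}}\fint_{B_{10d_\epsilon}(\sigma(s))}\big||\nabla h_t^\pm|^2-1\big|\,ds$ by $c\epsilon^2 d_{p,q}$ --- gives the required $c/d_{p,q}^2$ after a $t$-pigeonhole. That gradient estimate is the quantitative form of the $\epsilon$-geodesic hypothesis, and it is entirely absent from your argument.
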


\begin{proof}
Statement 1 follows from Lemma \ref{h pw bound} and statements 2 and 3 follow from Theorem \ref{grad h integral bound} with Bishop-Gromov. Note that any $r \in [\frac{1}{2}, 2]$ works in the first 3 statements. 

Using \ref{good ball cut off function}, we fix, for each $s \in (\frac{\delta}{2} d_{p,q}, (1-\frac{\delta}{2})d_{p,q})$, good cut off function $\phi$ with $\phi \equiv 1$ on $B_{d_\epsilon}(\sigma(s))$, vanishing outside of $B_{3d_\epsilon}(\sigma(s))$, and $d_\epsilon |\nabla \phi|$, $d_\epsilon^2 |\Delta \phi| \leq c(N)$. Similarly, fix $\alpha(t)$ a smooth function in time so that $0 \leq \alpha(t) \leq 1$, $\alpha(t) \equiv 1$ for $t \in [\frac{1}{2}d_\epsilon^2, 2d_\epsilon^2]$, vanishing for $t$ outside of $[\frac{1}{4}d_\epsilon^2, 4d_\epsilon^2]$, and satisfying $|\alpha'| \leq 10d_\epsilon^{-2}$. 

Applying the improved Bochner inequality \ref{improved Bochner inequality} to $h_t^{\pm}$, we obtain, for each $s, t$
\begin{align}\label{hess h integral bound eq1} \begin{split}
	& \int \alpha(t)\phi|\Hess h^{\pm}_t|^2\, dm \leq \int \alpha(t) \phi d(\boldsymbol{\Delta}|\nabla h^\pm_t|^2)+ 2\int\alpha(t)\phi\bigg((N-1)|\nabla h^{\pm}_t|^2  - \inner{\nabla h^\pm_t}{\nabla \Delta h^\pm_t}\bigg) \, dm\\
	&= \int \alpha(t) (|\nabla h^\pm_t|^2-1)\Delta(\phi) \, dm + 2(N-1)\int\alpha(t)\phi|\nabla h^{\pm}_t|^2 \, dm - \int \alpha(t)\phi \partial_t(|\nabla h^\pm_t|^2)\, dm.
\end{split} \end{align}
In the last line, we used the definition of the Laplacians along with the fact that $\int \Delta\phi dm = 0$ for the first term and Lemma \ref{H_t time regularity} for the third term. 
Integrating in time using integration by parts and $\int_{0}^{\infty} \alpha'(t)dt = 0$ on the third term of the previous line, 
\begin{align}\label{hess h integral bound eq2} 
\begin{split}
	&\hspace{0.5cm} \int_0^\infty \int \alpha(t)\phi|\Hess h^{\pm}_t|^2\, dm \, dt\\
	&\leq \int_0^\infty \bigg(\int \alpha(t) (|\nabla h^\pm_t|^2-1)\Delta(\phi) \, dm + 2(N-1)\int\alpha(t)\phi|\nabla h^{\pm}_t|^2 \, dm + \int \alpha'(t)\phi (|\nabla h^\pm_t|^2-1)\, dm \bigg)\, dt.
\end{split} 
\end{align}
Using what we know about $\phi$ and $\alpha$ and using Bishop-Gromov in line 2 of the following, we obtain
\begin{align}\label{hess h integral bound eq3}
\begin{split}
	&\hspace{0.5cm} \int_{\frac{1}{2}d_\epsilon^2}^{2d_\epsilon^2} \fint_{B_{d_\epsilon}(\sigma(s))} |\Hess h^{\pm}_t|^2\, dm \, dt\\
	&\leq \int_{\frac{1}{4}d_\epsilon^2}^{4d_\epsilon^2} \bigg(\fint_{B_{3d_\epsilon}(\sigma(s))} \bigg((|\nabla h^\pm_t|^2-1)\Delta(\phi) + 2(N-1) |\nabla h^{\pm}_t|^2  + \alpha'(t)(|\nabla h^\pm_t|^2-1)\bigg)\, dm \bigg)\, dt\\
	&\leq \int_{\frac{1}{4}d_\epsilon^2}^{4d_\epsilon^2} \bigg(\fint_{B_{3d_\epsilon}(\sigma(s))}2(N-1)+ cd_\epsilon^{-2} ||\nabla h_t^{\pm}|^2-1| \,dm \bigg) \, dt.
\end{split}
\end{align}

Integrating across $\sigma$ for $s \in [\frac{\delta}{2} d_{p,q}, (1-\frac{\delta}{2})d_{p,q}]$, 
\begin{align}\label{hess h integral bound eq4}
\begin{split}
	&\hspace{0.5cm} \int_{\frac{1}{2}d_\epsilon^2}^{2d_\epsilon^2} \bigg( \int_{\frac{\delta}{2} d_{p,q}}^{(1-\frac{\delta}{2}) d_{p,q}}\fint_{B_{d_\epsilon}(\sigma(s))} |\Hess h^{\pm}_t|^2\, dm \, ds \bigg) \,dt\\ 
	&\leq cd_\epsilon^{-2}\int_{\frac{1}{4}d_\epsilon^2}^{4d_\epsilon^2} \bigg(\int_{\frac{\delta}{2} d_{p,q}}^{(1-\frac{\delta}{2}) d_{p,q}} \fint_{B_{3d_\epsilon}(\sigma(s))}cd_\epsilon^2+ ||\nabla h_t^{\pm}|^2-1| \,dm \, ds\bigg) \, dt\\
	&\leq c\epsilon^2d_{p,q} \; \; \text{, by statement 2 of Theorem \ref{grad h integral bound}.}
\end{split}
\end{align}
Therefore, statement 4 holds for some $r \in [\frac{1}{2},2]$ and $t = rd_\epsilon^2$.
\end{proof}

\begin{lem}\label{grad h geodesic bound}
Let $\epsilon \leq \bar{\epsilon}(N,\delta)$. Let $\gamma_{x,p}$ be any unit speed geodesic from $x \in X$ to $p$. Then for $m$-a.e. $x \in X_{\frac{\delta}{2},3}$ and any $0 \leq t_1 < t_2 \leq d_{x,p}-\frac{\delta}{2}$, the following estimates hold:
\begin{enumerate}
	\item $\int_0^{d_{x,p}-\frac{\delta}{2}}||\nabla h_{d_\epsilon^2}^-|^2-1|(\gamma_{x,p}(s)) \, ds \leq \frac{c(N,\delta)}{d_{p,q}}(e(x)+d_\epsilon^2);$
	\item $\int_0^{d_{x,p}-\frac{\delta}{2}}|\inner{\nabla h_{d_\epsilon^2}^-}{\nabla d^-}-1|(\gamma_{x,p}(s)) \, ds\leq \frac{c(N,\delta)}{d_{p,q}}(e(x)+d_\epsilon^2);$
	\item $\int_{t_1}^{t_2}|\nabla h_{d_\epsilon^2}^- - \nabla d^-|(\gamma_{x,p}(s))\, ds \leq  \frac{c(N,\delta)\sqrt{t_2-t_1}}{\sqrt{d_{p,q}}}(\sqrt{e(x)}+d_\epsilon).$
\end{enumerate}
\end{lem}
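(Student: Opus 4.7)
The plan is to combine the approximation $|h^-_{d_\epsilon^2} - d^-| \leq c(e + d_\epsilon^2)$ from Lemma \ref{h pw bound}, the gradient bound $|\nabla h^-_{d_\epsilon^2}| \leq 1 + cd_\epsilon^2$ on $X_{\delta/5,6}$ from Lemma \ref{grad h pw bound}, and the derivative identity $\frac{d}{ds} h^-_{d_\epsilon^2}(\gamma_{x,p}(s)) = -\inner{\nabla h^-_{d_\epsilon^2}}{\nabla d^-}(\gamma_{x,p}(s))$, which holds a.e. in $s$ for $m$-a.e. $x$ by Corollary \ref{W2 geodesic cont equation cor} applied to $f = h^-_{d_\epsilon^2} \in W^{1,2}(X)$. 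I would first split into two cases based on the size of $e(x)$ relative to $\delta d_{p,q}$. In the regime $e(x) \leq (\delta/2)d_{p,q}$, a triangle inequality argument using the two lower bounds $d(q,\gamma_{x,p}(s)) \geq d(q,x) - s$ and $d(q,\gamma_{x,p}(s)) \geq d_{p,q} - (d_{x,p} - s)$ shows that $\gamma_{x,p}(s) \in X_{\delta/4,5}$ for every $s \in [0, d_{x,p} - \delta/2]$, so both Lemmas apply along the whole geodesic. Using the local slope representative of $|\nabla h^-_{d_\epsilon^2}|$ and openness of $X_{\delta/5,6}$, Lemma \ref{lip constant nice} promotes the $m$-a.e. gradient bound to a pointwise one along $\gamma_{x,p}$.

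The endpoint approximation then gives $|h^-_{d_\epsilon^2}(\gamma_{x,p}(T)) - h^-_{d_\epsilon^2}(x) + T| \leq c(e(x) + d_\epsilon^2)$ for $T \leq d_{x,p} - \delta/2$, so integrating the derivative identity yields the key estimate
\begin{equation*}
	\bigg|\int_0^T \inner{\nabla h^-_{d_\epsilon^2}}{\nabla d^-}(\gamma_{x,p}(s))\, ds - T\bigg| \leq c(e(x) + d_\epsilon^2).
\end{equation*}
Since $\inner{\nabla h^-_{d_\epsilon^2}}{\nabla d^-} \leq |\nabla h^-_{d_\epsilon^2}| \leq 1 + cd_\epsilon^2$, the positive part $(\inner{\nabla h^-_{d_\epsilon^2}}{\nabla d^-} - 1)_+$ integrates to $\leq cd_\epsilon^2 T \leq cd_\epsilon^2$, and combining with the displayed inequality bounds the negative part as well, giving statement 2. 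Statement 1 follows from the same displayed inequality by first using $\inner{\nabla h^-_{d_\epsilon^2}}{\nabla d^-} \leq |\nabla h^-_{d_\epsilon^2}|$ and then Cauchy--Schwarz to obtain $\int_0^T |\nabla h^-_{d_\epsilon^2}|^2 \geq T - 2c(e(x) + d_\epsilon^2)$; combined with the pointwise $L^\infty$ upper bound for the positive part, this gives statement 1.

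Statement 3 then follows from statements 1 and 2 via the identity $|\nabla h^-_{d_\epsilon^2} - \nabla d^-|^2 = (|\nabla h^-_{d_\epsilon^2}|^2 - 1) - 2(\inner{\nabla h^-_{d_\epsilon^2}}{\nabla d^-} - 1)$, together with nonnegativity of the integrands in 1 and 2 (so we can restrict them to $[t_1, t_2]$) and Cauchy--Schwarz $\int_{t_1}^{t_2} |\nabla h^-_{d_\epsilon^2} - \nabla d^-| \leq \sqrt{t_2 - t_1} \cdot (\int_{t_1}^{t_2} |\nabla h^-_{d_\epsilon^2} - \nabla d^-|^2)^{1/2}$. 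The complementary case $e(x) > (\delta/2)d_{p,q}$ is trivial: the RHS of each statement is bounded below by a positive constant depending only on $N$ and $\delta$, while the LHS is controlled by the global bound $|\nabla h^-_{d_\epsilon^2}| \leq c(N,\delta)$ (a consequence of Bakry--Ledoux and the boundedness of $|\nabla h^-_0|$, exactly as in the proof of Lemma \ref{grad h pw bound}) together with the bounded length $T \leq 3d_{p,q} \leq 3$ of the integration interval.

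The main potential obstacle is the measure-theoretic bookkeeping: Corollary \ref{W2 geodesic cont equation cor} provides the derivative identity only for $m$-a.e. $x$, and one needs the pointwise gradient bound to hold simultaneously along the chosen geodesic for the same set of $x$. This is handled by fixing the local slope representative of $|\nabla h^-_{d_\epsilon^2}|$ at the outset and using a Borel selection of geodesics $\gamma_{x,p}$ as in Remark \ref{a.e. unique geodesic}, so that both requirements hold on a common $m$-a.e. set of $x$.
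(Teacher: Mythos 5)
Your proposal is correct and follows essentially the same route as the paper: bound the positive parts of $|\nabla h^-_{d_\epsilon^2}|^2-1$ and $\inner{\nabla h^-_{d_\epsilon^2}}{\nabla d^-}-1$ via the $L^\infty$ gradient bound of Lemma~\ref{grad h pw bound}, bound the negative parts by integrating the derivative formula from Corollary~\ref{W2 geodesic cont equation cor} and using the endpoint estimate from Lemma~\ref{h pw bound} (an estimate of the type~\eqref{grad h integral bound eq5}), then combine via Cauchy--Schwarz for statements~1 and~3. The one organizational difference is cosmetic: where the paper promotes the $m$-a.e.\ gradient bound to a bound along $\gamma_{x,p}$ via Fubini's theorem and the density estimate in Theorem~\ref{-dp grad flow properties}, you invoke Lemma~\ref{lip constant nice} with the local-slope representative --- both are interchangeable; and your explicit case split on $e(x)\le \frac{\delta}{2}d_{p,q}$ versus $e(x)>\frac{\delta}{2}d_{p,q}$ makes precise the paper's remark that the excess need not be tied to the heat-flow time. (Your displayed intermediate bound should carry the factor $\frac{1}{d_{p,q}}$ coming from $\epsilon^2 d_{p,q}=d_\epsilon^2/d_{p,q}$ in Lemma~\ref{h pw bound}, but since $d_{p,q}\le 1$ this only weakens the estimate and the final conclusion is unaffected.)
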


\begin{proof}
The bounds on $(|\nabla h_{d_\epsilon^2}^-|^2-1)_+$ and $(\inner{\nabla h_{d_\epsilon^2}^-}{\nabla d^-}-1)_+$ for statements 1 and 2 come from Lemma \ref{grad h pw bound}, Fubini's theorem and statement 2 of Theorem \ref{-dp grad flow properties}. The bound on the negative part comes from an estimate like \eqref{grad h integral bound eq5} combined with Corollary \ref{W2 geodesic cont equation cor} for statement 2, and then an additional application Cauchy-Schwarz for statement 1. We note that if one traces the proof of \eqref{grad h integral bound eq5} back to Lemma \ref{h pw bound}, it is clear that one can obtain bounds where the excess is not related to the heat flow time as they have been for the past several claims.

For statement 3, 
\begin{equation*}
	|\nabla h_{d_\epsilon^2}^- - \nabla d^-|^2 = |\nabla h_{d_\epsilon^2}^-|^2 +1 - 2\inner{\nabla h_{d_\epsilon^2}^-}{\nabla d^-} \leq ||\nabla h_{d_\epsilon^2}^-|^2-1|+2|\inner{\nabla h_{d_\epsilon^2}^-}{\nabla d^-} - 1| \; \text{$m$-a.e..}
\end{equation*}
Therefore, statement 1, 2, Cauchy-Schwarz and an argument by Fubini's theorem using statement 2 of Theorem \ref{-dp grad flow properties} gives statement 3. 
\end{proof}

\section{Gromov-Hausdorff approximation}\label{section 5}

This section will be divided into three subsections. The main lemma proved in the first subsection gives a way of overcoming the lack of start of induction in the arguments of \cite{CN12} generalized to the $\RCD$ setting. In the second subsection we use the main lemma to construct geodesics with nice properties in its interior. Finally, we prove the main theorem in the third subsection. To be precise, we prove a slightly weaker version of the main theorem analagous to the main result of \cite{CN12}, which will be used to prove non-branching in Section \ref{section 6} and, subsequently, the main theorem. 

Fix $(X,d,m)$, an $\RCD(-(N-1),N)$ metric measure space for $N \in (1,\infty)$ and $p, q \in X$ with d(p,q)=1. Fix $0<\delta<0.1$. For any $x_1, x_2 \in X$, we fix a constant speed geodesic from $x_1$ to $x_2$ parameterized on $[0,1]$ and denote it $\tilde{\gamma}_{x_1,x_2}$. By Remark \ref{a.e. unique geodesic}, we may assume the map $X \times X \times [0,1] \ni (x_1,x_2,t) \mapsto \tilde{\gamma}_{x_1,x_2}(t)$ is Borel. The unit speed reparameterizations of $\tilde{\gamma}_{x_1,x_2}$ to the interval $[0,d(x_1,x_2)]$ will be denoted $\gamma_{x_1,x_2}$. $\gamma$ will denote $\gamma_{p,q}$. For each $x \in X$, define $\Psi: X \times [0,\infty) \to X$ by 
\begin{equation}\label{Psidef}
	(x,s) \mapsto \Psi_{s}(x) = \begin{cases} 
       \gamma_{x,p}(s) &  \text {if } d(x,p) \geq s,\\
	p & \text{if } d(x,p) < s.\\
       \end{cases}
\end{equation}
Similarly, define $\Phi: X \times [0,\infty) \to X$ by 
\begin{equation}\label{Phidef}
	(x,s) \mapsto \Phi_{s}(x) = \begin{cases} 
       \gamma_{x,q}(s) &  \text {if } d(x,q) \geq s,\\
	q & \text{if } d(x,q) < s.\\
       \end{cases}
\end{equation}

By integral Abresch-Gromoll inequality \ref{Abresch-Gromoll}, for any sufficiently small $r \leq \bar{r}(N,\delta)$ and any $\delta\leq t_0 \leq 1-\delta$, 
\begin{equation*}
	\fint_{B_{r}(\gamma(t_0))} e \leq c_0(N, \delta)r^2.
\end{equation*}
Therefore, there exists a subset $S \subseteq B_r(\gamma(t_0))$ so that 
\begin{enumerate}
	\item $\frac{m(S)}{m(B_r(\gamma(t_0))} \geq 1-\frac{V(1,10)}{3}$ (see \ref{BG volume comparison} for the definiton of $V:=V_{-(N-1),N}$)
	\item $\forall z \in S, e(z) \leq c_1(N,\delta)^2 r^2.$
\end{enumerate} 
We fix such a ${c_1}$\label{c1} for the rest of this section and assume in addition $c_1 > 100$. 

In all subsections the letter $c$ will be used to represent different constants which only depend on $N$ and $\delta$. Any constant which will be used repeatedly will be given a subscript. We will continue using the notations of Section \ref{section 4}.

\subsection{Proof of main lemma}

\begin{lem}\label{main lemma} (Main lemma)
There exists $\epsilon_1(N,\delta)>0$ and $\bar{r}_1(N,\delta) > 0$ so that for all $r \leq \bar{r}_1$ and $\delta \leq t_0 \leq 1-\delta$, there exists $z \in B_r(\gamma(t_0))$ so that
\begin{enumerate}
	\item $V(1, 100) \leq \frac{m(B_r(\Psi_s(z)))}{m(B_r(z))} \leq \frac{1}{V(1,100)}$ for any $s \leq \epsilon_1$.
	\item There exists $A \subseteq B_{r}(z)$ with $m(A) \geq (1-V(1,10))m(B_r(z))$ and $\Psi_{s}(A) \subseteq B_{2r}(\Psi_{s}(z))$ for any $s \leq \epsilon_1$. 
	\item $e(z) \leq c_1^2r^2$.
\end{enumerate}
\end{lem}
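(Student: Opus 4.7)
The plan is to apply the distance distortion estimate of Proposition \ref{distortion bound} with $U_t = V_t = -\nabla d_p$, so that both local flows are $\Psi$, and with $W = -\nabla h^{-}_{(c_1 r)^2}$ as the smooth regularization. The relevance of taking the heat-flow scale $\epsilon = c_1 r$ is that for every point $x$ in the low-excess subset
\[
S_r := \{x \in B_r(\gamma(t_0)) : e(x) \leq c_1^2 r^2\} \subseteq B_r(\gamma(t_0)),
\]
the piecewise geodesic $p \to x \to q$ is a $c_1 r$-geodesic, so both the integral Hessian bound (Theorem \ref{hess h integral bound}(4)) and the integral gradient bound (Lemma \ref{grad h geodesic bound}(3)) apply at this scale. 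By integral Abresch--Gromoll (Theorem \ref{Abresch-Gromoll}) and the fixed choice of $c_1$, $S_r$ has measure at least $(1 - V(1,10)/3)\,m(B_r(\gamma(t_0)))$.

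The main computation is to restrict the base sets in Proposition \ref{distortion bound} to $S_r$ and integrate from $0$ to $\epsilon_1$. The gradient term $\int_{S_r}\int_0^{\epsilon_1}|\nabla d^{-}-\nabla h^{-}_{\epsilon^2}|(\Psi_t(x))\,dt\,dm(x)$ is bounded via Fubini and Lemma \ref{grad h geodesic bound}(3) using $e(x)\leq \epsilon^2$ for $x\in S_r$. The Hessian term is reduced via the segment inequality (Theorem \ref{segment inequality}) applied to the pushforward $(\Psi_t,\Psi_t)_*((m\otimes m)|_{\Gamma^{\Psi,\Psi}_r(t)})$, which is supported in a ball around $\Psi_t(\gamma(t_0))$ with bounded density (Theorem \ref{-dp grad flow properties}(2)); Cauchy--Schwarz together with Theorem \ref{hess h integral bound}(4) along $\gamma$ then yields, for $\epsilon_1 = \epsilon_1(N,\delta,c_1)$ chosen small,
\[
\int_{S_r \times S_r} dt^{\Psi,\Psi}_r(\epsilon_1)(x,y)\,d(m\otimes m)(x,y) \leq \eta\,r\,m(B_r(\gamma(t_0)))^2
\]
with $\eta$ as small as desired. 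Two successive applications of Markov's inequality then extract $z \in S_r$ (which immediately gives (3)) so that $A := \{x \in B_r(z) : dt^{\Psi,\Psi}_r(\epsilon_1)(z,x) < r\}$ has $m(A) \geq (1-V(1,10))\,m(B_r(z))$. By definition of $dt^{\Psi,\Psi}_r$, for every $x \in A$ and $s \in [0,\epsilon_1]$, $d(\Psi_s(z),\Psi_s(x)) < d(z,x)+r \leq 2r$, i.e.\ $\Psi_s(A) \subseteq B_{2r}(\Psi_s(z))$, establishing (2). The 1-Lipschitz property of $\Psi$ (Proposition \ref{RLF speed bound}) trivially handles $s \leq r/2$, playing the role of the base case in the time induction alluded to in the outline.

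For property (1), the lower bound on $m(B_r(\Psi_s(z)))/m(B_r(z))$ follows from (2) combined with the pushforward bound $(\Psi_s)_*(m|_A) \leq c\,m$ (from Theorem \ref{-dp grad flow properties}(2), itself a consequence of the Laplacian comparison Theorem \ref{lap d bound}) and an application of Bishop--Gromov at $\Psi_s(z)$: $m(A) \leq c\,m(B_{2r}(\Psi_s(z))) \leq (c/V(1,2))\,m(B_r(\Psi_s(z)))$. I expect the upper bound to be the main obstacle, since $\epsilon_1$ is independent of $r$ and a naive Bishop--Gromov comparison between $B_r(z)$ and $B_r(\Psi_s(z))$ via an enveloping ball degenerates as $r \to 0$. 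The plan there is to pass through the RLF $\tilde\Psi$ of $-\nabla h^{-}_{\epsilon^2}$, which on the relevant annular region possesses a two-sided divergence bound --- upper from Lemma \ref{lap cutoffed bound} and lower from the Li--Yau gradient inequality (Theorem \ref{LY grad}) combined with $h^{-}_{\epsilon^2}$ being bounded below on $X_{\delta/8,8}$ --- so that $\tilde\Psi$ is a.e.\ invertible with comparable pushforward in both directions by Proposition \ref{RLF inverse}. The same distortion analysis, applied to compare $\Psi$ with $\tilde\Psi$, shows $\tilde\Psi_s(z)$ is close to $\Psi_s(z)$, which transfers the measure comparability to $\Psi$. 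The most delicate aspect will be the bookkeeping of the interdependent constants $c_1$, $\epsilon_1$, and the heat-flow scale $(c_1 r)^2$, so that every estimate is invariant under $r \to 0$.
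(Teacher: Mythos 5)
The proposal correctly assembles most of the ingredients the paper uses: the distortion bound (Proposition \ref{distortion bound}) with $W=-\nabla h^-$ at heat-flow scale $\sim(c_1 r)^2$, the low-excess set $S_r$ via integral Abresch--Gromoll and the fixed choice of $c_1$, the segment inequality, Theorem \ref{hess h integral bound}(4), Lemma \ref{grad h geodesic bound}(3), a Fubini/Markov extraction, and the inverse RLF $\tilde\Psi_{-t}$ for the upper volume bound. However, the central step is missing, and as written the argument does not close. The claim that the pushforward $(\Psi_t,\Psi_t)_*\big((m\otimes m)|_{\Gamma^{\Psi,\Psi}_r(t)}\big)$ is supported in a ball of radius $O(r)$ around $\Psi_t(\gamma(t_0))$ with bounded density is unjustified: the set $\Gamma^{\Psi,\Psi}_r(t)$ only constrains pairwise distance distortion, not the location of the flowed points relative to the geodesic, so both members of a pair can drift away from $\gamma$ together while remaining in $\Gamma^{\Psi,\Psi}_r(t)$. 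The naive bound on the support is $B_{r+2t}(\Psi_t(\gamma(t_0)))$, which for $t$ near $\epsilon_1$ is of radius $O(\epsilon_1)\gg r$ once $r$ is small. At that scale Theorem \ref{hess h integral bound}(4) no longer gives an $r$-scaled estimate (the $L^2$ Hessian integral is only controlled on balls of radius $\sim c_1 r$ centred on the piecewise geodesic), and the segment-inequality prefactor picks up an extra $(\epsilon_1/r)^{O(N)}$; the $\sqrt{\epsilon_1}$-smallness you are counting on is destroyed. This is precisely the obstruction flagged in the introduction.

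The missing idea is the bootstrap in time. One cannot prove the lemma in a single pass over $[0,\epsilon_1]$; instead one starts from the trivial base case (properties 1--3 hold for some $\epsilon(N,\delta,r)>0$ depending on $r$, by the $1$-Lipschitzness of $\Psi_\cdot(x)$ and Bishop--Gromov) and proves an \emph{improvement step}: if properties 1--3 hold for some $z$ and time horizon $\epsilon\leq\epsilon_1$, then there is a $z'\in B_r(\gamma(t_0))$ with strictly stronger constants in 1--2 for the \emph{same} $\epsilon$, with the gain depending only on $(N,\delta)$. Crucially, the inductive hypothesis (property 2, that a large-measure set $A\subseteq B_r(z)$ satisfies $\Psi_s(A)\subseteq B_{2r}(\Psi_s(z))$) is exactly what localizes the relevant pushforwards in $B_{c_1 r/2}(\Psi_s(z))$, so that the segment inequality, the Hessian integral bound along the $c_1 r$-geodesic through $z$, and the two-sided divergence bound on $-\nabla h^-$ all apply at scale $r$. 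Because the improved constants are strictly better than those required in 1--3, Lipschitzness then extends the time horizon by a definite $T(N,\delta,r)$ while still satisfying 1--3. Iterating eliminates the $r$-dependence and yields $\epsilon_1(N,\delta)$. Without this inductive localization your Hessian estimate does not close, regardless of how the constants are tuned.
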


\begin{proof}
Fix $\delta \leq t_0 \leq 1-\delta$ and a scale $r \leq \bar{r}_1(N,\delta)$. $\bar{r}_1$ need only be chosen smaller than the radius bounds required for the application of various theorems in the proof; most notably Theorem \ref{Abresch-Gromoll} and the estimates of Section \ref{section 4}. It will be clear that all the required radius bounds only depend on $N$ and $\delta$ so we will not address this each time for the sake of brevity. In addition, we assume $\bar{r}_1 \leq \frac{\delta}{10}$.

By Bishop-Gromov volume comparison \ref{BG volume comparison}, integral Abresch-Gromoll inequality \ref{Abresch-Gromoll}, and the fact that $\Psi$ is defined using unit speed geodesics, it is clear there exist $\epsilon$ depending on $N$, $\delta$ and $r$, and $z \in B_r(\gamma(t_0))$ satisfying
\begin{enumerate}
	\item \label{main lemma 1}  $V(1, 100) \leq \frac{m(B_r(\Psi_s(z)))}{m(B_r(z))} \leq \frac{1}{V(1,100)}$ for any $s \leq \epsilon$.
	\item \label{main lemma 2} There exists $A \subseteq B_{r}(z)$ with $m(A) \geq (1-V(1,10))m(B_r(z))$ and $\Psi_{s}(A) \subseteq B_{2r}(\Psi_{s}(z))$ for any $s \leq \epsilon$. 
	\item \label{main lemma 3} $e(z) \leq c_1^2r^2$.
\end{enumerate} We will remove the dependence of $\epsilon$ on $r$. 

To this effect, we will show that if \ref{main lemma 1}, \ref{main lemma 2} and \ref{main lemma 3} hold for some $z \in B_r(\gamma(t_0))$ and all $s \leq \epsilon$ less than or equal to some $\epsilon_1(N,\delta)$ to be fixed later, then in fact we can choose $z' \in B_r({\gamma(t_0)})$ satisfying \ref{main lemma 3} which significantly improves the estimates in \ref{main lemma 1} and \ref{main lemma 2} for $s \leq \epsilon$. To be precise, we find $z' \in B_r(\gamma(t_0))$ satisfying
\begin{enumerate}[label=\arabic*'$_{.}$,ref=\arabic*']
	\item \label{main lemma 1'}$2V(1,100) \leq \frac{m(B_r(\Psi_s(z')))}{m(B_r(z'))} \leq \frac{1}{2V(1,100)}$ for any $s \leq \epsilon$.
	\item \label{main lemma 2'}There exists $A' \subseteq B_{r}(z')$ with $m(A') \geq (1-V(1,10))m(B_r(z'))$ and $\Psi_{s}(A') \subseteq B_{\frac{3r}{2}}(\Psi_{s}(z'))$ for any $s \leq \epsilon$. 
	\item \label{main lemma 3'}$e(z') \leq c_1^2r^2$.
\end{enumerate}
We a priori assume $\epsilon_1 \leq \frac{\delta}{10}$ and impose more bounds on $\epsilon_1$ as the proof continues. Let $z$ satisfy \ref{main lemma 1}, \ref{main lemma 2} and \ref{main lemma 3} for some $\epsilon \leq \epsilon_1$. 

Let $w$ be the midpoint of $z$ and $\gamma(t_0)$. We know $B_{\frac{r}{2}}(w) \subseteq B_r(\gamma(t_0)) \cap B_r(z)$. By Bishop-Gromov and since $r \leq \bar{r}$ which was assumed to be less than $0.01$,
\begin{equation*}
	\frac{m(B_r(\gamma(t_0)) \cap B_r(z))}{m(B_r(z))} \geq \frac{m(B_{\frac{r}{2}}(w))}{m(B_r(z))}
		\geq \frac{m(B_{\frac{r}{2}}(w))}{m(B_{\frac{3r}{2}}(w))}  \geq V(\frac{r}{2}, \frac{3r}{2}) > V(\frac{1}{2},  \frac{3}{2}).
\end{equation*} 
Using $m(A) \geq (1-V(1,10))m(B_r(z)) > (1-\frac{V(\frac{1}{2}, \frac{3}{2})}{3})m(B_r(z))$ and the previous estimate,
\begin{equation*}
	\frac{m(A \cap B_r(\gamma(t_0)))}{m(B_r(z))} > \frac{2}{3}V(\frac{1}{2}, \frac{3}{2}).
\end{equation*}
Therefore, 
\begin{align}\label{main lemma eq1}
\begin{split}
	\frac{m(A \cap B_r(\gamma(t_0)))}{m(B_r(\gamma(t_0)))} &= \frac{m(A \cap B_r(\gamma(t_0)))}{m(B_r(z))}\frac{m(B_r(z))}{m(B_r(\gamma(t_0)))}\\
	& > \frac{2}{3}V(\frac{1}{2}, \frac{3}{2})V(r, 2r) > \frac{2}{3}V(\frac{1}{2}, \frac{3}{2})V(\frac{3}{2}, 3)> \frac{2}{3}V(1,10).
\end{split}
\end{align} 

Define the set 
\begin{equation}\label{main lemma eq2}
D_1 := A \cap B_{r}(\gamma(t_0)) \cap \left\{e(x) \leq c_1^2r^2 \right\},
\end{equation}
where \hyperref[c1]{$c_1$} is as fixed earlier. We will choose a $z'$ satisfyings propereties \ref{main lemma 1'} - \ref{main lemma 3'} from $D_1$. From \eqref{main lemma eq1} and the definition of $c_1$,
\begin{equation*}
	\frac{m(D_1)}{m(B_r(\gamma(t_0)))} > \frac{1}{3}V(1,10).
\end{equation*}
Therefore, by Bishop-Gromov,
\begin{equation}\label{main lemma eq3}
	\frac{m(D_1)}{m(B_r(z))} \geq c(N).
\end{equation}

Since $e(z)\leq c_1^2r^2$ by property \ref{main lemma 3} of $z$, the curve traversing $\gamma_{z,p}$ in reverse and then $\gamma_{z,q}$ is a $c_1r$-geodesic from $p$ to $q$. Fix $h^- \equiv h^-_{\rho(c_1r)^2}$ satisfying statement 4 of Theorem \ref{hess h integral bound} for the balls of radius $c_1r$ along this curve, where $\rho \in [\frac{1}{2},2]$. 

Since $z$ has low excess, by integral Abresch-Gromoll, there exists $B_{2r}(z)' \subseteq B_{2r}(z)$ so that
\begin{equation}\label{main lemma eq4}
 e(x) \leq c(N,\delta)r^2 \; \forall x \in B_{2r}(z)' \; \; \text{and}\; \; \frac{m(B_{2r}(z)')}{m(B_{2r}(z))} \geq 1-\frac{1}{2}V(1,10)^2.
\end{equation}

For all $s \in [0,\epsilon]$ and $(x,y)\in  X \times X$, define
\begin{equation}\label{main lemma eq5}
dt_1(s)(x,y) := \min\left\{r, \max\limits_{0 \leq \tau \leq s} |d(x,y) - d(\Psi_\tau(x), \Psi_\tau(y))|\right\}
\end{equation}
 and 
\begin{equation}\label{main lemma eq6}
U_1^s := \{(x,y) \in D_1 \times B_{2r}(z)'|dt_1(s)(x,y) < r\}. 
\end{equation}
Consider $\int_{D_1 \times B_{2r}(z)'} dt_1(s)(x,y) \, d(m \times m)(x,y)$ for $0 \leq s \leq \epsilon$. Since $r \leq \bar{r}_1 \leq \frac{\delta}{10}$, $\epsilon \leq \epsilon_1 \leq \frac{\delta}{10}$, and $t_0 \geq \delta$, $(\Psi_s)_{s \in [0,\epsilon]}$ is a local flow of $-\nabla d_p$ from both $D_1$ and $B_{2r}(z)'$. Therefore, $s \mapsto \int_{D_1 \times B_{2r}(z)'} dt_1(s)(x,y) \, d(m \times m)(x,y)$ is Lipschitz and
\begin{align}\label{main lemma eq7}
\begin{split}
	&\hspace{0.5cm}\frac{d}{ds} \int\limits_{D_1 \times B_{2r}(z)'} dt_1(s)(x,y) \, d(m \times m)(x,y)\\
	 &\leq \int\limits_{U_1^s} \bigg(|\nabla h^- - \nabla d_p|(\Psi_s(x))+ |\nabla h^- - \nabla d_p|(\Psi_s(y))\bigg)\, d(m \times m)(x,y) \\ 
&\hspace{2cm}+ \int\limits_{0}^{1}  \int\limits_{U_1^s} d(\Psi_s(x), \Psi_s(y))|\Hess h^-|_{\HS}(\tilde{\gamma}_{\Psi_s(x),\Psi_s(y)}(\tau)) \,  d(m \times m)(x,y) \, d\tau,
\end{split} 
\end{align}
for a.e. $s \in [0,\epsilon]$ by Proposition \ref{distortion bound}.

For any $s \in [0,\epsilon]$ and $(x,y) \in U_1^s$, 
\begin{enumerate}
	\item $d(x,y) < 3r$ since $D_1 \subseteq B_{r}(z)$ and $B_{2r}(z)' \subseteq B_{2r}(z)$;
	\item $d(\Psi_{s}(x), \Psi_{s}(z)) < 2r$ since $D_1 \subseteq A$ by definition \eqref{main lemma eq2};
	\item $dt_1(s)(x,y) < r$ by definition of $U_1^s$ \eqref{main lemma eq6}.
\end{enumerate}
Therefore, $\Psi_{s}(y) \in B_{6r}(\Psi_{s}(z))$ by triangle inequality and so $(\Psi_{s}, \Psi_{s})(U_1^s) \subseteq B_{\frac{c_1}{2}r}(\Psi_{s}(z)) \times B_{\frac{c_1}{2}r}(\Psi_{s}(z))$ since we assumed \hyperref[c1]{$c_1$}$> 100$. Therefore,
\begin{align}\label{main lemma eq8}
\begin{split}
	&\hspace{0.5cm}\int\limits_{0}^{1}  \int\limits_{U_1^s} d(\Psi_s(x), \Psi_s(y))|\Hess h^-|_{\HS}(\tilde{\gamma}_{\Psi_s(x),\Psi_s(y)}(\tau)) \,  d(m \times m)(x,y) \, d\tau\\
	&\leq c(N,\delta)\int\limits_{0}^{1}  \int\limits_{(\Psi_s, \Psi_s)(U_1^s)} d(x, y)|\Hess h^-|_{\HS}(\tilde{\gamma}_{x,y}(\tau)) \,  d(m \times m)(x,y) \, d\tau \; \; \text{, by Theorem \ref{-dp grad flow properties}, 2}\\
	&\leq c(N,\delta)rm(B_{\frac{c_1}{2}r}(\Psi_s(z)))\int\limits_{B_{c_1r}(\Psi_s(z))} |\Hess h^-|_{\HS}\,  dm \; \; \text{, by segment inequality \ref{segment inequality}}\\
	&\leq c(N,\delta)rm(B_{r}(z))^2 \fint\limits_{B_{c_1r}(\Psi_s(z))} |\Hess h^-|_{\HS} \, dm \; \; \text{, by Bishop-Gromov and property \ref{main lemma 1} of $z$}.
\end{split}
\end{align}
Integrating in $s \in [0,\epsilon]$, 
\begin{align}\label{main lemma eq9}
\begin{split}
	&\hspace{0.5cm} \int\limits_{0}^{\epsilon} \bigg( \int\limits_{0}^{1} \int\limits_{U_1^s} d(\Psi_s(x), \Psi_s(y))|\Hess h^-|_{\HS}(\tilde{\gamma}_{\Psi_s(x),\Psi_s(y)}(\tau)) \,  d(m \times m)(x,y)\, d\tau \bigg)\, ds\\
	&\leq crm(B_{r}(z))^2 \int\limits_{0}^{\epsilon} \fint\limits_{B_{c_1r}(\Psi_s(z))} |\Hess h^-|_{\HS} \, dm \, ds\\
	&\leq c(N,\delta)rm(B_{r}(z))^2 \sqrt{\epsilon},
\end{split}
\end{align}
where the last line follows from the definition of $h^-$, statement 4 of Theorem \ref{hess h integral bound}, and Cauchy-Schwarz.

By statement 3 of \ref{grad h geodesic bound}, the excess bound on the elements of $D_1$ \eqref{main lemma eq2} (and therefore also on the elements $\Psi_{s}(D_1)$), and Bishop-Gromov,
\begin{align}\label{main lemma eq10}
\begin{split}
	\int\limits_{0}^{\epsilon} \int\limits_{U^s_1} |\nabla h^- - \nabla d_p|(\Psi_s(x)) \, d(m \times m)(x,y)  \, ds \leq c(N,\delta)rm(B_r(z))^2\sqrt{\epsilon}.
\end{split}
\end{align}
Similarly by the excess bounds on the elements of $B_{2r}(z)'$ \eqref{main lemma eq4}, 
\begin{equation}\label{main lemma eq11}
\int\limits_{0}^{\epsilon} \int\limits_{U^s_1} |\nabla h^- - \nabla d_p|(\Psi_s(y))\, d(m \times m)(x,y) \, ds \leq c(N,\delta)rm(B_r(z))^2\sqrt{\epsilon}.
\end{equation}

Combining  \eqref{main lemma eq9} - \eqref{main lemma eq11} with the bound \eqref{main lemma eq7} on $\frac{d}{ds} \int\limits_{D_1 \times B_{2r}(z)'} dt_1(s)(x,y)$, we obtain
\begin{align}\label{main lemma eq12}
\begin{split}
\int\limits_{D_1 \times B_{2r}(z)'} dt_1(\epsilon)(x,y) \, d(m \times m)(x,y) & = \int_{0}^{\epsilon} [\frac{d}{ds} \int\limits_{D_1 \times B_{2r}(z)'} dt_1(s)(x,y) \, d(m \times m)(x,y)] \, ds\\ &\leq c(N,\delta)rm(B_r(z))^2\sqrt{\epsilon}.
\end{split}
\end{align}
Since $D_1$ takes a non-trivial portion of the measure of $B_r(z)$ by \eqref{main lemma eq3},
\begin{equation*}
	\fint\limits_{D_1} \int\limits_{B_{2r}(z)'} dt_1(\epsilon) (x,y) \, dm(y) \leq c(N,\delta)rm(B_{r}(z))\sqrt{\epsilon}.
\end{equation*}
In particular, there exists $z' \in D_1$ so that
\begin{equation*}
	\int\limits_{B_{2r}(z)'} dt_1(\epsilon) (z',y) \, dm(y) \leq crm(B_{r}(z))\sqrt{\epsilon}.
\end{equation*}
By definition of $D_1$, property \ref{main lemma 3'} of $z'$ is satisfied. 

We next check property \ref{main lemma 2'} is satisfied for the chosen $z'$ as well if $\epsilon$ is sufficiently small. Define $B_{r}(z')' := B_r(z') \cap B_{2r}(z)'$. By the previous estimate and Bishop-Gromov,
\begin{equation}\label{main lemma eq13}
	\int\limits_{B_r(z')'} dt_1(\epsilon)(z',y) \, dm(y) \leq crm(B_{r}(z))\sqrt{\epsilon} \leq c(N,\delta)rm(B_r(z')) \sqrt{\epsilon}.
\end{equation}
Using this, we bound $\epsilon_1$ sufficiently small depending on $N$ and $\delta$ so that for $\epsilon \leq \epsilon_1$, 
\begin{equation}\label{main lemma eq14}
	\int\limits_{B_r(z')'} dt_1(\epsilon)(z',y) \, dm(y) \leq \frac{1}{4}rm(B_r(z'))V(1,10).
\end{equation}
For example, $\epsilon_1 \leq  (\frac{V(1,10)}{4c})^2$ suffices, where $c$ is the last one from \eqref{main lemma eq13}. Moreover, $B_{2r}(z)'$ takes significant mass in $B_{2r}(z)$ from \eqref{main lemma eq4} and so by Bishop-Gromov,
\begin{equation}\label{main lemma eq15}
\frac{m(B_{r}(z')')}{m(B_{r}(z'))}  \geq  1 - \bigg(\frac{m(B_{2r}(z)')}{m(B_{2r}(z))}\frac{m(B_{2r}(z))}{m(B_{r}(z'))}\bigg) \geq 1 - \bigg(\frac{1}{2}\frac{(V(1,10)^2)}{V(1,3)}\bigg) \geq 1 - \frac{1}{2}V(1,10).
\end{equation}
Combining \eqref{main lemma eq14} and \eqref{main lemma eq15}, we conclude there exists $A' \subseteq B_r(z')'$ so that
\begin{equation}\label{main lemma eq16}
\frac{m(A')}{m(B_r(z'))} \geq 1 - V(1,10) \; \; \text{and} \; \; dt_1(\epsilon)(z',y) \leq \frac{1}{2}r \; \forall y \in A'.
\end{equation}
The latter implies $\Psi_{s}(A') \subseteq B_{\frac{3r}{2}}(\Psi_s(z'))$ for any $s \leq \epsilon$ and so property \ref{main lemma 2'} of $z'$ is satisfied. 

This also gives one direction of the bound in property \ref{main lemma 1'} for $z'$. For each $s \leq \epsilon$, 
\begin{align*}
	\frac{m(B_{r}(\Psi_{s}(z')))}{m(B_{r}(z'))} &\geq V(1, \frac{3}{2})\frac{m(B_{\frac{3r}{2}}(\Psi_{s}(z')))}{m(B_{r}(z'))} \; \;  \text{, by Bishop-Gromov}\\
								&\geq V(1, \frac{3}{2})\frac{m(\Psi_{s}(A'))}{m(B_{r}(z'))}\\
								&\geq V(1, \frac{3}{2})(1+c(N,\delta)s)^{-N}\frac{m(A')}{m(B_{r}(z'))} \; \;\text{, by Theorem \ref{-dp grad flow properties}, 2}\\
								&\geq V(1, \frac{3}{2})(1+c(N,\delta)s)^{-N}(1-V(1,10)).
\end{align*}
We bound $\epsilon_1$ sufficiently small depending on $N$ and $\delta$ so that for $s \leq \epsilon \leq \epsilon_1$, the last line is greater than $2V(1,100)$.

The other direction of the bound in property \ref{main lemma 1'} of z' will be proved similarly by sending a sufficiently large portion of $B_{r}(\Psi_{s}(z'))$ close to $z'$ (in fact $z$) using a flow which does not decrease measure significantly and then using Bishop-Gromov. To do this, we first use the RLF associated to $-\nabla h^-$ to send a portion of $B_{r}(z')$ close to $\Psi_{s}(z')$. We then use the inverse flow  (i.e. the RLF associated to $\nabla h^-$) on the image of that portion to make sure a large enough portion of $B_{r}(\Psi_{s}(z'))$ indeed ends up close to $z'$ under the inverse flow. 

$|\nabla h_0^-| \in L^{\infty}(m)$ by \eqref{grad h ae bound eq2} and so $|\nabla h^-|, \Delta h^- \in L^{\infty}(m)$ by Bakry-Ledoux estimate \ref{Bakry-Ledoux} and $h^- \in W^{2,2}(X)$ by Corollary \ref{improved Bochner inequality cor} of the improved Bochner inequality. Therefore, the time-independent vector fields $-\nabla h^-$ and $\nabla h^-$ are bounded and satisfy the conditions of the existence and uniqueness of RLFs Theorem \ref{RLF existence}. Let $(\tilde{\Psi}_t)_{t \in [0,1]}$ and $(\tilde{\Psi}_{-t})_{t \in [0,1]}$ be the associated RLFs of $-\nabla h^-$ and $\nabla h^-$ for $t \in [0,1]$ respectively. The choice of notation is due to Proposition \ref{RLF inverse}, which says $\tilde{\Psi}_{-t}$ and $\tilde{\Psi}_{t}$ are $m$-a.e. inverses of each other. 

Since $e(z') \leq c_1^2r^2$ and $m(A') \geq (1 - V(1,10))m(B_r(z'))$, integral Abresch-Gromoll gives $A'' \subseteq A'$ so that
\begin{equation}\label{main lemma eq2.1}
	e(x) \leq c(N,\delta)r^2 \; \forall x \in A'' \; \; {and} \; \; \frac{m(A'')}{m(B_{r}(z'))} \geq 1 - 2V(1,10).
\end{equation} 

For all $s \in [0,\epsilon]$ and $(x,y)\in X \times X$, define
\begin{equation}\label{main lemma eq2.2}
dt_2(s)(x,y) := \min\left\{r, \max\limits_{0 \leq \tau \leq s} |d(x,y) - d(\Psi_\tau(x), \tilde{\Psi}_\tau(y))|\right\}
\end{equation}
and 
\begin{equation}\label{main lemma eq2.3}
U_2^s := \{(x,y) \in A'' \times B_{r}(z')|dt_2(s)(x,y) < r\}. 
\end{equation}
Consider $\int_{A'' \times B_{r}(z')} dt_2(s)(x,y) \, d(m \times m)(x,y)$ for $0 \leq s \leq \epsilon$. By Proposition \ref{distortion bound}, for a.e. $s \in [0,\epsilon]$,
\begin{align}\label{main lemma eq2.4}
\begin{split}
	&\hspace{0.5cm}\frac{d}{ds} \int\limits_{A'' \times B_{r}(z')} dt_2(s)(x,y) \, d(m \times m)(x,y)\\
	 &\leq \int\limits_{U_2^s} |\nabla h^- - \nabla d_p|(\Psi_s(x)) \, d(m \times m)(x,y)\\
&\hspace{2cm} + \int\limits_{0}^{1} \int\limits_{U_2^s} d(\Psi_s(x), \tilde{\Psi}_s(y))|\Hess h^-|_{\HS}(\tilde{\gamma}_{\Psi_s(x),\tilde{\Psi}_s(y)}(\tau)) \,  d(m \times m)(x,y) \, d\tau.
\end{split} 
\end{align}

For any $s \in [0, \epsilon]$ and $(x,y) \in U_2^s$, 
\begin{enumerate}
	\item $d(x,y) < 2r$ since $A'' \subseteq B_{r}(z')$;
	\item $d(\Psi_{s}(x), \Psi_{s}(z)) \leq d(\Psi_{s}(x), \Psi_{s}(z'))+d(\Psi_{s}(z'), \Psi_{s}(z))< \frac{7}{2}r$ by definition of $A'$ \eqref{main lemma eq16} and $z' \in D_1 \subseteq A$ \eqref{main lemma eq2};
	\item $dt_2(s)(x,y) < r$ by definition of $U_2^s$ \eqref{main lemma eq2.4}.
\end{enumerate}
Therefore, $\tilde{\Psi}_{s}(y) \in B_{\frac{13r}{2}}(\Psi_{s}(z))$ by triangle inequality and so $(\Psi_{s}, \tilde{\Psi}_{s})(U_2^s) \subseteq B_{\frac{c_1}{2}r}(\Psi_{s}(z)) \times B_{\frac{c_1}{2}r}(\Psi_{s}(z))$ since $c_1 > 100$. Therefore,
\begin{align}\label{main lemma eq2.5}
\begin{split}
	&\hspace{0.5cm}\int\limits_{0}^{1} \int\limits_{U_2^s} d(\Psi_s(x), \tilde{\Psi}_s(y))|\Hess h^-|_{\HS}(\tilde{\gamma}_{\Psi_s(x),\tilde{\Psi}_s(y)}(\tau)) \,  d(m \times m)(x,y) \, d\tau\\
	&\leq c(N,\delta)\int\limits_{0}^{1} \int\limits_{(\Psi_s, \tilde{\Psi}_s)(U_2^s)} d(x, y)|\Hess h^-|_{\HS}(\tilde{\gamma}_{x,y}(\tau)) \,  d(m \times m)(x,y) \, d\tau \text{ , by \ref{-dp grad flow properties} 2, \ref{lap cutoffed bound} and \ref{RLF existence} \eqref{RLF existence eq1}}\\
	&\leq c(N,\delta)rm(B_{\frac{c_1}{2}r}(\Psi_s(z)))\int\limits_{B_{c_1r}(\Psi_s(z))} |\Hess h^-|_{\HS}\,  dm \; \; \text{, by segment inequality \ref{segment inequality}}\\
	&\leq c(N,\delta)rm(B_{r}(z))^2 \fint\limits_{B_{c_1r}(\Psi_s(z))} |\Hess h^-|_{\HS} \, dm \; \; \text{, by Bishop-Gromov and property \ref{main lemma 1} of $z$}.
\end{split}
\end{align}
Integrating in $s \in [0,\epsilon]$, 
\begin{align}\label{main lemma eq2.6}
\begin{split}
	&\hspace{0.5cm} \int\limits_{0}^{\epsilon} \bigg(\int\limits_{0}^{1}  \int\limits_{U_2^s} d(\Psi_s(x), \tilde{\Psi}_s(y))|\Hess h^-|_{\HS}(\tilde{\gamma}_{\Psi_s(x),\tilde{\Psi}_s(y)}(\tau)) \,  d(m \times m)(x,y) \, d\tau \bigg)\, ds\\
	& \leq crm(B_{r}(z))^2 \int\limits_{0}^{\epsilon} \fint\limits_{B_{c_1r}(\Psi_s(z))} |\Hess h^-|_{\HS} \, dm \, ds\\
	&\leq c(N,\delta)rm(B_{r}(z))^2 \sqrt{\epsilon},
\end{split}
\end{align}
where the last line follows from the definition of $h^-$, statement 4 of Theorem \ref{hess h integral bound}, and Cauchy-Schwarz.

By statement 3 of \ref{grad h geodesic bound}, the excess bound on the elements of $A''$ \eqref{main lemma eq2.1}, and Bishop-Gromov,
\begin{align}\label{main lemma eq2.7}
\begin{split}
	\int\limits_{0}^{\epsilon} \int\limits_{U^s_2} |\nabla h^- - \nabla d_p|(\Psi_s(x)) \, d(m \times m)(x,y)  \, ds \leq c(N,\delta)rm(B_r(z))^2\sqrt{\epsilon}.
\end{split}
\end{align}

Combining  \eqref{main lemma eq2.6}, \eqref{main lemma eq2.7} with the bound \eqref{main lemma eq2.4} we obtain,
\begin{align}\label{main lemma eq2.8}
\begin{split}
\int\limits_{A'' \times B_{r}(z')} dt_2(\epsilon)(x,y) \, d(m \times m)(x,y) & = \int_{0}^{\epsilon} [\frac{d}{ds} \int\limits_{A'' \times B_{r}(z')} dt_2(s)(x,y) \, d(m \times m)(x,y)] \, ds\\ &\leq c(N,\delta)rm(B_r(z))^2\sqrt{\epsilon}.
\end{split}
\end{align}
$A''$ is comparable in measure to $B_r(z')$ by \eqref{main lemma eq2.1} and hence also to $B_r(z)$ by Bishop-Gromov. Therefore, there exists $z_1 \in A''$ so that
\begin{equation*}
	\int\limits_{B_{r}(z')} dt_2(\epsilon) (z_1,y) \, dm(y) \leq c(N, \delta)rm(B_{r}(z))\sqrt{\epsilon}.
\end{equation*}
By Bishop-Gromov, $\frac{m(B_{r}(z))}{m(B_{r}(z'))} \leq c(N)$ and so
\begin{equation*}
	\fint\limits_{B_{r}(z')} dt_2(\epsilon) (z_1,y) \, dm(y) \leq c(N, \delta)r\sqrt{\epsilon}.
\end{equation*}
Using this, we bound $\epsilon_1$ sufficiently small depending on $N$ and $\delta$ so that there exists $D_2 \subseteq B_{r}(z')$ with
\begin{equation}\label{main lemma eq2.9}
\frac{m(D_2)}{m(B_r(z'))} \geq 1 - V(1,10) \; \; \text{and} \; \; dt_2(\epsilon)(z_1,y) \leq \frac{r}{2} \; \forall y \in D_2.
\end{equation}
For each $y \in D_2$ and $s \in [0,\epsilon]$,
\begin{enumerate}
	\item $d(z_1,y) < 2r$;
	\item $d(\Psi_{s}(z_1), \Psi_{s}(z)) \leq d(\Psi_{s}(z_1), \Psi_{s}(z'))+d(\Psi_{s}(z'), \Psi_{s}(z))< \frac{7}{2}r$;
	\item $dt_2(\epsilon)(z_1,y) \leq \frac{r}{2}$,
\end{enumerate}
and so 
\begin{equation}\label{main lemma eq2.9.5}
	\tilde{\Psi}_{s}(D_2) \subseteq B_{4r}(\Psi_s(z')) \subseteq B_{6r}(\Psi_s(z)).
\end{equation}
Moreover, $\tilde{\Psi}_{s}(D_2)$ is non-trivial in measure compared to $B_{r}(z)$.
\begin{align}\label{main lemma eq2.10}
\begin{split}
	\frac{m(\tilde{\Psi}_{s}(D_2))}{m(B_{r}(z))} &\geq e^{-c(N,\delta)\frac{\delta}{10}}\frac{m(D_2)}{m(B_{r}(z))} \; \; \text{ , by \ref{lap cutoffed bound}, \ref{RLF existence} \eqref{RLF existence eq1}, and $\epsilon_1 \leq \frac{\delta}{10}$}\\
	&\geq c(N,\delta) \; \; \text{ , by definition of $D_2$ and Bishop-Gromov.}
\end{split}
\end{align}

We will now flow $\tilde{\Psi}_{s}(D_2)$ back by $\tilde{\Psi}_{-t}$ and use that to control the flow of $B_{r}(\Psi_{s}(z'))$ under $\tilde{\Psi}_{-t}$. Fix $s \in [0,\epsilon]$. By Proposition \ref{RLF inverse}, we may assume, up to choosing a full measure subset, that $D_2$ satisfies
\begin{equation}\label{main lemma eq3.0}
\tPsi_{-t}(\tPsi_{s}(x)) = \tPsi_{s-t}(x) \; \forall t \in [0,s] \; \text{and} \; \forall x \in D_2.
\end{equation}

For all $t \in [0,s]$ and $(x,y)\in X \times X$, define
\begin{equation}\label{main lemma eq3.1}
dt_{3}(t)(x,y) := \min\left\{r, \max\limits_{0 \leq \tau \leq t} |d(x,y) - d(\tilde{\Psi}_{-\tau}(x), \tilde{\Psi}_{-\tau}(y))|\right\}
\end{equation}
and 
\begin{equation}\label{main lemma eq3.2}
U_3^t := \{(x,y) \in \tilde{\Psi}_{s}(D_2) \times B_{r}(\Psi_{s}(z'))| dt_{3}(t)(x,y) < r \}. 
\end{equation}
We note that $U_3^t$ implicitly depends on $s$.
Consider $\int_{\tilde{\Psi}_{s}(D_2) \times B_{r}(\Psi_{s}(z'))} dt_3(t)(x,y) \, d(m \times m)(x,y)$ for $0 \leq t \leq s$. By proposition \ref{distortion bound}, for a.e. $t \in [0,s]$,
\begin{align}\label{main lemma eq3.3}
\begin{split}
	&\hspace{0.5cm}\frac{d}{dt} \int\limits_{\tilde{\Psi}_{s}(D_2) \times B_{r}(\Psi_{s}(z'))} dt_3(t)(x,y) \, d(m \times m)(x,y)\\
&\leq \int\limits_{0}^{1}  \int\limits_{U_3^t} d(\tPsi_{-t}(x), \tPsi_{-t}(y))|\Hess h^-|_{\HS}(\tilde{\gamma}_{\tPsi_{-t}(x),\tPsi_{-t}(y)}(\tau)) \,  d(m \times m)(x,y) \, d\tau,
\end{split} 
\end{align}

For any $t \in [0,s]$, $\omega \in [0,t]$ and $(x,y) \in U_3^t$, 
\begin{enumerate}
	\item $d(x,y) < 5r$ since $\tilde{\Psi}_{s}(D_2) \subseteq B_{4r}(\Psi_s(z'))$ by \eqref{main lemma eq2.9.5};
	\item $d(\tilde{\Psi}_{-\omega}(x), \Psi_{s-\omega}(z)) = d(\tilde{\Psi}_{s-\omega}(x'), \Psi_{s-\omega}(z)) < 6r$ for some $x' \in D_2$ by \eqref{main lemma eq2.9.5} and \eqref{main lemma eq3.0};
	\item $dt_3(t)(x,y) < r$ by definition of $U_3^t$  \eqref{main lemma eq3.2}.
\end{enumerate}
Hence,
\begin{equation}\label{main lemma eq3.3.3}
\tilde{\Psi}_{-\omega}(y) \in B_{12r}(\Psi_{s-\omega}(z))
\end{equation}
by triangle inequality. Therefore, $(\tilde{\Psi}_{-\omega}, \tilde{\Psi}_{-\omega})(U_3^t) \subseteq B_{\frac{c_1}{2}r}(\Psi_{s-\omega}(z)) \times B_{\frac{c_1}{2}r}(\Psi_{s-\omega}(z))$ for all $\omega \in [0, t]$ since \hyperref[c1]{$c_1$} $>100$. For any $(x,y) \in U_3^t$,
\begin{align}\label{main lemma eq3.3.5}
\begin{split}
	\Delta h^-(\tilde{\Psi}_{-\omega}(x)) &= \Delta h^+(\tilde{\Psi}_{-\omega}(x)) + \Delta \hat{e}(\tilde{\Psi}_{-\omega}(x))\\
	&\geq -c(N,\delta) \; \; \text{ , by Lemma \ref{lap cutoffed bound} and Lemma \ref{excess pw bound} \ref{excess pw bound 3}, using $e(z) \leq c_1^2r^2$},
\end{split}
\end{align}
where $h^+$, $\hat{e}$ are heat flow approximations of $h^+_0$ and $e_0$ respectively up to the same time as $h^-$. We have the same bound for $\Delta h^-(\tilde{\Psi}_{-\omega}(y))$. Therefore,
\begin{align}\label{main lemma eq3.4}
\begin{split}
	&\hspace{0.5cm}\int\limits_{0}^{1} \int\limits_{U_3^t} d(\tilde{\Psi}_{-t}(x), \tilde{\Psi}_{-t}(y))|\Hess h^-|_{\HS}(\tilde{\gamma}_{\tilde{\Psi}_{-t}(x), \tilde{\Psi}_{-t}(y)}(\tau)) \,  d(m \times m)(x,y) \, d\tau\\
	&\leq c(N,\delta)\int\limits_{0}^{1} \int\limits_{(\tilde{\Psi}_{-t}, \tilde{\Psi}_{-t})(U_3^t)} d(x, y)|\Hess h^-|_{\HS}(\tilde{\gamma}_{x,y}(\tau)) \,  d(m \times m)(x,y) \, d\tau \text{, by \eqref{main lemma eq3.3.5} and Remark \ref{RLF local volume bound}}\\
	&\leq c(N,\delta)rm(B_{\frac{c_1}{2}r}(\Psi_{s-t}(z)))\int\limits_{B_{c_1r}(\Psi_{s-t}(z))} |\Hess h^-|_{\HS}\,  dm \; \; \text{, by segment inequality \ref{segment inequality}}\\
	&\leq c(N,\delta)rm(B_{r}(z))^2 \fint\limits_{B_{c_1r}(\Psi_{s-t}(z))} |\Hess h^-|_{\HS} \, dm \; \; \text{, by Bishop-Gromov and property \ref{main lemma 1} of $z$}.
\end{split}
\end{align}
Integrating in $t \in [0,s]$, 
\begin{align}\label{main lemma eq3.5}
\begin{split}
	&\hspace{0.5cm} \int\limits_{0}^{s} \bigg(\int\limits_{0}^{1}  \int\limits_{U_3^t} d(\tilde{\Psi}_{-t}(x), \tilde{\Psi}_{-t}(y))|\Hess h^-|_{\HS}(\tilde{\gamma}_{\tilde{\Psi}_{-t}(x), \tilde{\Psi}_{-t}(y)}(\tau)) \,  d(m \times m)(x,y) \, d\tau \bigg)\, dt\\
	& \leq crm(B_{r}(z))^2 \int\limits_{0}^{s} \fint\limits_{B_{c_1r}(\Psi_{s-t}(z))} |\Hess h^-|_{\HS} \, dm \, ds\\
	&\leq c(N,\delta)rm(B_{r}(z))^2 \sqrt{s},
\end{split}
\end{align}
where the last line follows from the definition of $h^-$, statement 4 of Theorem \ref{hess h integral bound}, and Cauchy-Schwarz.
Therefore, 
\begin{align}\label{main lemma eq3.6}
\begin{split}
\int\limits_{\tilde{\Psi}_{s}(D_2) \times B_{r}(\Psi_{s}(z'))} dt_3(s)(x,y) \, d(m \times m)(x,y) & = \int_{0}^{s} [\frac{d}{dt} \int\limits_{\tilde{\Psi}_{s}(D_2) \times B_{r}(\Psi_{s}(z'))} dt_3(t)(x,y) \, d(m \times m)(x,y)] \, dt\\ &\leq c(N,\delta)rm(B_r(z))^2\sqrt{s}.
\end{split}
\end{align}

We previously computed that $\tilde{\Psi}_{s}(D_2)$ is non-trivial in measure compared to $B_{r}(z)$ in \eqref{main lemma eq2.10} and so there exists $z_2 \in \tilde{\Psi}_s(D_2)$ with
\begin{equation*}
	\int\limits_{B_{r}(\Psi_{s}(z'))} dt_3(s) (z_2,y) \, dm(y) \leq c(N, \delta)rm(B_{r}(z))\sqrt{s}.
\end{equation*}
By Bishop-Gromov, $\frac{m(B_{r}(\Psi_{s}(z')))}{m(B_{r}(\Psi_{s}(z)))} \geq c(N)$ and so by property \ref{main lemma 1} of $z$, 
\begin{equation*}
	\fint\limits_{B_{r}(\Psi_{s}(z'))} dt_3(s) (z_2,y) \, dm(y) \leq c(N, \delta)r\sqrt{s}.
\end{equation*}
Using this, we bound $\epsilon_1$ sufficiently small depending on $N$ and $\delta$ so there exists $D_3 \subseteq B_{r}(\Psi_{s}(z'))$ with
\begin{equation}\label{main lemma eq3.8}
\frac{m(D_3)}{m(B_r(\Psi_s(z')))} \geq 1 - V(1,10) \; \; \text{and} \; \; dt_3(s)(z_2,y) \leq \frac{1}{2}r \; \forall y \in D_3.
\end{equation}
For each $y \in D_3$, 
\begin{enumerate}
	\item $d(z_2,y) < 5r$ by \eqref{main lemma eq2.9.5};
	\item $d(\tilde{\Psi}_{-s}(z_2), z') = d(z_2', z') < r$, where $z_2' \in D_2 \subseteq B_{r}(z')$ is so that $\tPsi_s(z_2') = z_2$;
	\item $dt_3(s)(z_2,y) \leq \frac{1}{2}r$,
\end{enumerate}
and so $\tilde{\Psi}_{-s}(D_3) \subseteq B_{7r}(z')$. Notice also for the next calculation that $\tilde{\Psi}_{-t}(D_3) \subseteq B_{12r}(z)$ for any $t \in [0,s]$ by the caulations of \eqref{main lemma eq3.3.3}. Therefore, one has the same lower bound for the Laplacian of $h^-$ on $\tilde{\Psi}_{-t}(D_3)$ as in \eqref{main lemma eq3.3.5}.

We estimate
\begin{align*}
	&\hspace{0.5cm}\frac{m(B_{r}(\Psi_s(z')))}{m(B_{r}(z'))}\leq \frac{1}{V(1,7)}\frac{m(B_{r}(\Psi_{s}(z')))}{m(B_{7r}(z'))} \; \; \text{, by Bishop-Gromov}\\
								&\leq  \frac{1}{V(1,7)}\frac{1}{1-V(1,10)}\frac{m(D_3)}{m(B_{7r}(z'))} \text{, by property \eqref{main lemma eq3.8} of $D_3$}\\
								&\leq  \frac{1}{V(1,7)}\frac{1}{1-V(1,10)}e^{c(N,\delta)s}\frac{m(\tilde{\Psi}_{-s}(D_3))}{m(B_{7r}(z'))}\; \;\text{, by Remark \ref{RLF local volume bound}}
\\
	&\leq \frac{1}{V(1,7)}\frac{1}{1-V(1,10)}e^{cs}.
\end{align*}
We bound $\epsilon_1$ sufficiently small depending on $N$ and $\delta$ so that for $s \leq \epsilon \leq \epsilon_1$, the last line is less than $\frac{1}{2V(1,100)}$.

All this imply that if \ref{main lemma 1}, \ref{main lemma 2} and \ref{main lemma 3} hold for $z \in B_r(\gamma(t_0))$ and $\epsilon \leq \epsilon_1(N, \delta)$, then there exists $z' \in B_r(\gamma(t_0))$ satisfying \ref{main lemma 1'},  \ref{main lemma 2'} and \ref{main lemma 3'} for the same $\epsilon$. By Bishop-Gromov volume comparison and the fact that $\Psi$ is defined using unit speed geodesics, there is some $T(N,\delta,r)>0$ so that \ref{main lemma 1} and \ref{main lemma 2} hold for $z'$, $A'$, and $0 \leq s \leq \epsilon+T$. Combining this with the existence of some $z$ and $\epsilon$ depending on $N, \delta$, and $r$ satisfying \ref{main lemma 1}, \ref{main lemma 2} and \ref{main lemma 3} mentioned at the beginning of the proof, we conclude there exists $z \in B_{r}(\gamma(t_0))$ so that \ref{main lemma 1}, \ref{main lemma 2} and \ref{main lemma 3} hold for $\epsilon = \epsilon_1$. 
\end{proof}

\subsection{Construction of limit geodesics}\label{subsection 5.2}

In this section we construct a geodesics between $p$ and $q$ which has properties 1 and 2 of Lemma \ref{main lemma} on the geodesic itself. Roughly, this means we construct $\bar{\gamma}$ so that small balls centered on $\bar{\gamma}$ between $\delta$ and $1-\delta$ stay close to the geodesic itself for a short amount time under the flows $\Psi$ and $\Phi$.

We start by showing that for any fixed scale $r$ we can find points $z$ arbitrarily close to $\gamma(t_0)$ which have the properties 1 - 3 as in Lemma \ref{main lemma}. In order to do this we will prove the following lemma which will form our induction step.

\begin{lem}\label{induction lemma}
There exists $\epsilon_2(N,\delta)>0$ and $\bar{r}_2(N,\delta) > 0$ so that for any $r \leq \bar{r}_2$, $\delta \leq t_0 \leq 1- \delta$, if there exists $z \in B_{r}(\gamma(t_0))$ and $\epsilon \leq \epsilon_2$ so that
\begin{enumerate}
	\item \label{induction lemma 1} $V(1, 100) \leq \frac{m(B_r(\Psi_s(z)))}{m(B_r(z))} \leq \frac{1}{V(1,100)}$ for any $s \leq \epsilon$;
	\item \label{induction lemma 2}There exists $A_r \subseteq B_{r}(z)$ with $m(A_r) \geq (1-V(1,10))m(B_r(z))$ and $\Psi_{s}(A_r) \subseteq B_{2r}(\Psi_{s}(z))$ for any $s \leq \epsilon$; 
	\item \label{induction lemma 3}$e(z) \leq c_1^2r^2$,
\end{enumerate}
then for the same $z$ and any $r' \in [4r, 16r]$,
\begin{enumerate}[label=\roman*., ref=\roman*]
	\item  \label{induction lemma i} $V(1, 100) \leq \frac{m(B_{r'}(\Psi_s(z)))}{m(B_{r'}(z))} \leq \frac{1}{V(1,100)}$ for any $s \leq \epsilon$;
	\item  \label{induction lemma ii} There exists $A_{r'} \subseteq B_{r'}(z)$ with $m(A_{r'}) \geq (1-V(1,10))m(B_{r'}(z))$ and $\Psi_{s}(A_{r'}) \subseteq B_{2r'}(\Psi_{s}(z))$ for any $s \leq \epsilon$;
	\item  \label{induction lemma iii} $e(z) \leq c_1^2r^2 < c_1^2(r')^2$.
\end{enumerate}
\end{lem}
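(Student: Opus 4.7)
My plan is to follow the architecture of Lemma \ref{main lemma} with the point $z$ given by hypothesis rather than chosen. Property iii is immediate from $r \leq r'$. Since $e(z) \leq c_1^2 r^2 \leq c_1^2 (r')^2$, the concatenation of $\gamma_{z,p}$ (reversed) and $\gamma_{z,q}$ is a $c_1 r'$-geodesic between $p$ and $q$, so I fix $h^- := h^-_{\rho(c_1 r')^2}$ for some $\rho \in [\tfrac{1}{2},2]$ realizing statement 4 of Theorem \ref{hess h integral bound} along this geodesic.

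For property ii at scale $r'$, I would define the capped distortion
\[
dt(s)(x,y) := \min\bigl\{\tfrac{r'}{2},\ \max_{0 \leq \tau \leq s} |d(x,y) - d(\Psi_\tau(x),\Psi_\tau(y))|\bigr\}
\]
and apply Proposition \ref{distortion bound} with $W = -\nabla h^-$ on the product $A_r \times B_{r'}(z)$, where $A_r$ is the set provided by hypothesis \ref{induction lemma 2}. Whenever $dt(s)(x,y) < r'/2$ with $x \in A_r$, hypothesis \ref{induction lemma 2} places $\Psi_s(x)$ inside $B_{2r}(\Psi_s(z))$ and the distortion cap together with $r \leq r'/4$ places $\Psi_s(y)$ inside $B_{3r'}(\Psi_s(z)) \subseteq B_{c_1 r'/2}(\Psi_s(z))$. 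This inclusion is exactly what is needed to apply the segment inequality (Theorem \ref{segment inequality}) on $B_{c_1 r'}(\Psi_s(z))$ and invoke the Hessian estimate from statement 4 of Theorem \ref{hess h integral bound}; the gradient-difference term is controlled by statement 3 of Lemma \ref{grad h geodesic bound} using that both $z$ and the elements of $A_r$ have excess $O(r^2)$. The relevant measure comparisons between $B_{c_1 r'}(\Psi_s(z))$, $B_{r'}(\Psi_s(z))$, and $B_{r'}(z)$ follow from hypothesis \ref{induction lemma 1} combined with Bishop-Gromov. Integrating over $s \in [0,\epsilon]$ and using Cauchy-Schwarz as in \eqref{main lemma eq8}--\eqref{main lemma eq12} yields
\[
\int_{A_r \times B_{r'}(z)} dt(\epsilon)(x,y)\, d(m \times m)(x,y) \leq c(N,\delta)\, r'\, m(A_r)\, m(B_{r'}(z))\, \sqrt{\epsilon}.
\]
A pigeonhole argument then produces some $x_0 \in A_r$ with $\fint_{B_{r'}(z)} dt(\epsilon)(x_0, y)\, dm(y) \leq c(N,\delta)\, r' \sqrt{\epsilon}$, and Chebyshev's inequality together with the choice of $\epsilon_2(N,\delta)$ small enough extracts a subset $A_{r'} \subseteq B_{r'}(z)$ of relative measure at least $1 - V(1,10)$ on which $dt(\epsilon)(x_0, y) \leq r'/4$. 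Since $\Psi_s(x_0) \in B_{2r}(\Psi_s(z))$, this implies $\Psi_s(A_{r'}) \subseteq B_{2r'}(\Psi_s(z))$ for all $s \leq \epsilon$, establishing property ii.

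Property i then follows by the exact same two-step argument used to derive property 1$'$ in Lemma \ref{main lemma}. The lower bound on $m(B_{r'}(\Psi_s(z)))/m(B_{r'}(z))$ comes from $\Psi_s(A_{r'}) \subseteq B_{2r'}(\Psi_s(z))$, the measure-non-decrease bound of statement 2 of Theorem \ref{-dp grad flow properties}, and Bishop-Gromov. The upper bound is obtained via the inverse-RLF construction of \eqref{main lemma eq2.2}--\eqref{main lemma eq3.8}: run the regular Lagrangian flows $\tilde\Psi_{\pm t}$ of $\mp \nabla h^-$, apply a further distortion estimate to push a large portion of $B_{r'}(z)$ near $\Psi_s(z)$, and then a third distortion estimate coupled with the inversion Proposition \ref{RLF inverse} to pull a large portion of $B_{r'}(\Psi_s(z))$ back close to $z$. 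All three distortion computations are line-by-line analogues of those in the Main Lemma with $r$ uniformly replaced by $r'$.

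The main obstacle is ensuring that a single threshold $\epsilon_2(N,\delta)$ and $\bar r_2(N,\delta)$ work for every $r' \in [4r,16r]$ independent of the scale. This reduces to checking that each ingredient -- the Hessian integral from statement 4 of Theorem \ref{hess h integral bound} at scale $c_1 r'$, the gradient-difference integral at the same scale, and the Bishop-Gromov ball comparisons between scales $r$, $r'$, $c_1 r'$ -- scales precisely as $r' \cdot m(B_{r'}(z))^2 \sqrt{\epsilon}$. Since exactly this scaling occurs in the Main Lemma proof, the same threshold calculation succeeds here, completing the induction step.
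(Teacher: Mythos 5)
Your overall architecture matches the paper's proof closely: define a capped distortion, apply Proposition \ref{distortion bound}, use the segment inequality plus the Hessian integral of Theorem \ref{hess h integral bound} for the second-order term, pigeonhole to find a representative point, then run the inverse-RLF argument of the Main Lemma for the volume upper bound. Your choice of heat-flow scale $c_1 r'$ and distortion cap $r'/2$ differs from the paper, which fixes $h^- \equiv h^-_{\rho(c_1 r)^2}$ at the smaller scale $c_1 r$ and caps the distortion at $r$; since $r' \leq 16r$ and $c_1 > 100$, either choice gives the needed containment $(\Psi_s,\Psi_s)(U^s) \subseteq B_{c_1 r/2}(\Psi_s(z))^{\times 2}$ (resp.\ $B_{c_1 r'/2}$), so both routes are viable, and neither is obviously simpler.

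However, there is a genuine gap in the treatment of the gradient-difference term. You write that ``both $z$ and the elements of $A_r$ have excess $O(r^2)$,'' but hypothesis (2) of the lemma says nothing about the excess of points in $A_r$: it only gives $A_r \subseteq B_r(z)$ with a measure lower bound and the flow-containment property. A priori, a point $x \in B_r(z)$ only satisfies $e(x) \leq e(z) + 2d(x,z) \leq c_1^2 r^2 + 2r \sim r$, and the same is true for arbitrary $y \in B_{r'}(z)$. Since statement 3 of Lemma \ref{grad h geodesic bound} bounds the gradient-difference integral by a constant times $\sqrt{t_2-t_1}(\sqrt{e(x)} + d_\epsilon)$, feeding in $e(x) \sim r$ produces $\sqrt{r}$ rather than $r$, which is off by $r^{-1/2}$ and destroys the scale-invariance of the final estimate as $r \to 0$. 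The paper repairs exactly this: it applies integral Abresch-Gromoll (Theorem \ref{Abresch-Gromoll}, relying on $e(z) \leq c_1^2 r^2$) to extract subsets $A' \subseteq A_r$ and $B_{r'}(z)' \subseteq B_{r'}(z)$ of comparable measure on which $e \leq c(N,\delta)r^2$, and only then runs the distortion computation on $A' \times B_{r'}(z)'$. Your proposal needs this filtration step; as written, the gradient-term estimate does not hold. (Separately, your intermediate integral bound should read $c\,r'\,m(B_{r'}(z))^2\sqrt\epsilon$ rather than $c\,r'\,m(A_r)\,m(B_{r'}(z))\sqrt\epsilon$ — the segment inequality contributes the sum, not the product, of the projection measures and the larger projection lands in a ball of radius $\sim r'$ — but since $m(A_r) \sim m(B_r(z)) \sim m(B_{r'}(z))$ up to $N$-dependent constants this does not affect the pigeonhole.)
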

\begin{proof}
Fix $\delta \leq t_0 \leq 1-\delta$. We assume $\bar{r}_2 \leq \frac{\delta}{1000}$ and $\epsilon_2 \leq \frac{\delta}{1000}$ to begin with but will impose more bounds on both depending on $N$ and $\delta$ as the proof continues. We will not keep track of $\bar{r}_2$ for the sake of brevity. Fix a scale $r \leq \bar{r}_2$ and $r' \in [4r,16r]$. Fix $z$, $A_{r}$, and $\epsilon \leq \epsilon_2$ so that \ref{induction lemma 1}, \ref{induction lemma 2} and \ref{induction lemma 3} hold. 

Since $e(z) \leq c_1^2r^2$, by integral Abresch-Gromoll, there exists $B_{r'}(z)' \subseteq B_{r'}(z)$ so that
\begin{equation}\label{induction lemma eq1}
 e(x) \leq c(N,\delta)r^2 \; \forall x \in B_{r'}(z)' \; \; \text{and}\; \; \frac{m(B_{r'}(z)')}{m(B_{r'}(z))} \geq 1-\frac{1}{2}V(1,10).
\end{equation}
Similarly, there exists $A' \subseteq A_r$ so that
\begin{equation}\label{induction lemma eq2}
 e(x) \leq c(N,\delta)r^2 \; \forall x \in A' \; \; \text{and}\; \; \frac{m(A')}{m(B_{r}(z))} \geq 1-2V(1,10).
\end{equation}

As in the previous lemma, the curve traversing $\gamma_{z,p}$ in reverse and then $\gamma_{z,q}$ is a $c_1r$-geodesic from $p$ to $q$. Fix $h^- \equiv h^-_{\rho(c_1r)^2}$ satisfying statement 4 of Theorem \ref{hess h integral bound} for the balls of radius $c_1r$ along this curve, where $\rho \in [\frac{1}{2},2]$. 

For all $s \in [0, \epsilon]$ and $(x,y) \in X\times X$, define 
\begin{equation}\label{induction lemma eq3}
dt_1(s)(x,y) := \min\left\{r, \max\limits_{0 \leq \tau \leq s} |d(x,y) - d(\Psi_\tau(x), \Psi_\tau(y))|\right\}
\end{equation}
 and 
\begin{equation}\label{induction lemma eq4}
U_1^s := \{(x,y) \in A' \times B_{r'}(z)'|dt_1(s)(x,y) < r\}. 
\end{equation}
Consider $\int_{ A' \times B_{r'}(z)'} dt_1(s)(x,y) \, d(m \times m)(x,y)$ for $0 \leq s \leq \epsilon$. For any $s \in [0,\epsilon]$ and $(x,y) \in U_1^s$,
\begin{enumerate}
	\item $d(x,y) < r'+r$;
	\item $d(\Psi_{s}(x), \Psi_{s}(z)) < 2r$ by definition of $A' \subseteq A_r$;
	\item $dt_1(s)(x,y) < r$.
\end{enumerate}
Therefore, $\Psi_{s}(y) \in B_{r'+4r}(\Psi_{s}(z)) \subseteq B_{\frac{c_1}{2}r}(\Psi_{s}(z))$ since \hyperref[c1]{$c_1$}$>100$. 

Using exactly the same type of computation as the first part of the proof of the main lemma, by interpolating between the two local flows of $\Psi_{s}$ from $A'$ and $B_{r'}(z)'$ with $\nabla h^-$, we obtain
\begin{align}\label{induction lemma eq5}
\begin{split}
\int\limits_{A' \times B_{r'}(z)'} dt_1(\epsilon)(x,y) \, d(m \times m)(x,y) \leq c(N,\delta)rm(B_r(z))^2\sqrt{\epsilon}.
\end{split}
\end{align}
Since $A'$ takes a significant portion of the measure of $B_r(z)$ by \eqref{induction lemma eq2},
\begin{equation*}
	\fint\limits_{A'} \int\limits_{B_{r'}(z)'} dt_1(\epsilon) (x,y) \, dm(y) \leq c(N,\delta)rm(B_{r}(z))\sqrt{\epsilon},
\end{equation*}
and so there exists $z' \in A'$ so that
\begin{equation*}
	\int\limits_{B_{r'}(z)'} dt_1(\epsilon) (z',y) \, dm(y) \leq crm(B_{r}(z))\sqrt{\epsilon}.
\end{equation*}
Therefore, by the fact that $\frac{m(B_{r'}(z)')}{m(B_{r'}(z))} \geq 1-\frac{1}{2}V(1,10)$ and Bishop-Gromov, 
\begin{equation*}
	\fint\limits_{B_{r'}(z)'} dt_1(\epsilon) (z',y) \, dm(y) \leq c(N,\delta)r\sqrt{\epsilon}.
\end{equation*}
Using this, we bound $\epsilon_2$ sufficiently small depending on $N$ and $\delta$ so that there exists $A_{r'} \subseteq B_{r'}(z)'$ with
\begin{equation}\label{induction lemma eq6}
	\frac{m(A_{r'})}{m(B_{r'}(z))} \geq 1 - V(1,10) \; \; \text{and} \; \; dt_1(\epsilon)(z',y) \leq \frac{r}{2} \; \forall y \in A_{r'}.
\end{equation}
For each $y \in A_{r'}$ and $s \in [0,\epsilon]$,
\begin{enumerate}
	\item $d(z',y) < r'+r$;
	\item $d(\Psi_{s}(z'), \Psi_{s}(z)) < 2r$;
	\item $dt_1(\epsilon)(z',y) \leq \frac{r}{2}$,
\end{enumerate}
and so 
\begin{equation}\label{induction lemma eq7}
	\Psi_{s}(A_{r'}) \subseteq B_{r'+\frac{7r}{2}}(\Psi_s(z)) \subseteq B_{2r'}(\Psi_s(z)).
\end{equation}
This proves property \ref{induction lemma ii}.

This also gives one direction of the bound in property \ref{induction lemma i}. For each $s \leq \epsilon$, 
\begin{align*}
	\frac{m(B_{r'}(\Psi_{s}(z)))}{m(B_{r'}(z))} &\geq V(1, 2)\frac{m(B_{2r'}(\Psi_{s}(z)))}{m(B_{r'}(z'))} \; \;  \text{, by Bishop-Gromov}\\
								&\geq V(1,2)\frac{m(\Psi_{s}(A_{r'}))}{m(B_{r'}(z'))}\\
								&\geq V(1,2)(1+c(N,\delta)s)^{-N}\frac{m(A_{r'})}{m(B_{r'}(z'))} \; \;\text{, by Theorem \ref{-dp grad flow properties}, 2}\\
								&\geq V(1,2)(1+c(N,\delta)s)^{-N}(1-V(1,10)).
\end{align*}
We bound $\epsilon_2$ sufficiently small depending on $N$ and $\delta$ so that for $s \leq \epsilon \leq \epsilon_2$, the last line is greater than $V(1,100)$.

To obtain the other direction of the bound in property \ref{induction lemma i}, we employ the same strategy as the proof of the main lemma as well. Let $(\tPsi_t)_{t \in [0,1]}$ and $(\tPsi_{-t})_{t \in [0,1]}$ be the RLFs of the time-independent vector fields $-\nabla h^-$ and $\nabla h^-$ respectively as before. 

For all $s \in [0, \epsilon]$ and $(x,y) \in X \times X$, define 
\begin{equation}\label{induction lemma eq2.1}
dt_2(s)(x,y) := \min\left\{r, \max\limits_{0 \leq \tau \leq s} |d(x,y) - d(\Psi_\tau(x), \tPsi_\tau(y))|\right\}
\end{equation}
and 
\begin{equation}\label{indctuion lemma eq2.2}
U_2^s := \{(x,y) \in A' \times B_{r}(z)|dt_2(s)(x,y) < r\}. 
\end{equation}
Consider $\int_{A' \times B_{r}(z)} dt_2(s)(x,y) \, d(m \times m)(x,y)$ for $0 \leq s \leq \epsilon$. For any $s \in [0,\epsilon]$ and $(x,y) \in U_2^s$,
\begin{enumerate}
	\item $d(x,y) < 2r$;
	\item $d(\Psi_{s}(x), \Psi_{s}(z)) < 2r$ by definition of $A' \subseteq A_r$;
	\item $dt_2(s)(x,y) < r$.
\end{enumerate}
Therefore, $\tPsi_{s}(y) \in B_{5r}(\Psi_{s}(z)) \subseteq B_{\frac{c_1}{2}r}(\Psi_{s}(z))$. 

Using exactly the same type of computation as the second part of the proof of the main lemma, 
\begin{align}\label{induction lemma eq2.3}
\begin{split}
\int\limits_{A' \times B_{r}(z)} dt_2(\epsilon)(x,y) \, d(m \times m)(x,y) \leq c(N,\delta)rm(B_r(z))^2\sqrt{\epsilon}.
\end{split}
\end{align}
$A'$ takes a significant portion of the measure of $B_r(z)$ by \eqref{induction lemma eq2}. The same considerations as before gives the existence of $D_1 \subseteq B_r(z)$ so that
\begin{equation}\label{induction lemma eq2.4}
\frac{m(D_1)}{m(B_r(z))} \geq 1 - V(1,10) \; \; \text{and} \; \; \tPsi_s(D_1) \subseteq B_{5r}(\Psi_s(z)),
\end{equation}
for any $s \in [0,\epsilon]$ after we bound $\epsilon_2$ sufficiently small depending only on $N$ and $\delta$. 
Moreover, $\tPsi_{s}(D_1)$ is non-trivial in measure compared to $B_{r}(z)$.
\begin{align}\label{induction lemma eq2.5}
\begin{split}
	\frac{m(\tilde{\Psi}_{s}(D_1))}{m(B_{r}(z))} &\geq e^{-c(N,\delta)\frac{\delta}{1000}}\frac{m(D_1)}{m(B_{r}(z))} \; \; \text{ , by \ref{lap cutoffed bound}, \ref{RLF existence} \eqref{RLF existence eq1}, and $\epsilon_2 \leq \frac{\delta}{1000}$}\\
	&\geq c(N,\delta) \; \; \text{ , by definition of $D_1$.}
\end{split}
\end{align}

Fix $s \in [0,\epsilon]$. By Proposition \ref{RLF inverse}, we may assume, up to choosing a full measure subset, that $D_1$ satisfies
\begin{equation}\label{induction lemma eq3.0}
\tPsi_{-t}(\tPsi_{s}(x)) = \tPsi_{s-t}(x) \; \forall t \in [0,s] \; \text{and} \; \forall x \in D_1.
\end{equation}
For all $t \in [0,s]$ and $(x,y) \in X \times X$, define
\begin{equation}\label{induction lemma eq3.1}
dt_{3}(t)(x,y) := \min\left\{r, \max\limits_{0 \leq \tau \leq t} |d(x,y) - d(\tilde{\Psi}_{-\tau}(x), \tilde{\Psi}_{-\tau}(y))|\right\}
\end{equation}
and 
\begin{equation}\label{induction lemma eq3.2}
U_3^t := \{(x,y) \in \tilde{\Psi}_{s}(D_1) \times B_{r'}(\Psi_{s}(z))| dt_{3}(t)(x,y) < r \}. 
\end{equation}

Consider $\int_{\tilde{\Psi}_{s}(D_1) \times B_{r'}(\Psi_{s}(z))} dt_3(t)(x,y) \, d(m \times m)(x,y)$ for $0 \leq t \leq s$. For any $t \in [0,s]$, $\omega \in [0,t]$, and $(x,y) \in U_3^t$, 
\begin{enumerate}
	\item $d(x,y) < r'+5r$ by \eqref{induction lemma eq2.4};
	\item $d(\tilde{\Psi}_{-\omega}(x), \Psi_{s-\omega}(z)) = d(\tilde{\Psi}_{s-\omega}(x'), \Psi_{s-\omega}(z)) < 5r$ for some $x' \in D_1$ by \eqref{induction lemma eq2.4} and \eqref{induction lemma eq3.0};
	\item $dt_3(t)(x,y) < r$.
\end{enumerate}
Hence, 
\begin{equation}\label{induction lemma eq3.2.5}
	\tilde{\Psi}_{-\omega}(y) \in B_{r'+11r}(\Psi_{s-\omega}(z))
\end{equation}
by triangle inequality. Therefore, $(\tilde{\Psi}_{-\omega}, \tilde{\Psi}_{-\omega})(U_3^t) \subseteq B_{\frac{c_1}{2}r}(\Psi_{s-\omega}(z)) \times B_{\frac{c_1}{2}r}(\Psi_{s-\omega}(z))$ for any $\omega \in [0,t]$ since \hyperref[c1]{$c_1$} $>100$. 

Using exactly the same type of computation as the third part of the proof of the main lemma, 
\begin{align}\label{induction lemma eq3.3}
\begin{split}
\int\limits_{\tilde{\Psi}_{s}(D_1) \times B_{r'}(\Psi_{s}(z))} dt_3(\epsilon)(x,y) \, d(m \times m)(x,y) \leq c(N,\delta)rm(B_r(z))^2\sqrt{\epsilon}.
\end{split}
\end{align}
By \eqref{induction lemma eq2.5}, $\tilde{\Psi}_{s}(D_1)$ is non-trivial in measure compared to $B_{r}(z)$. By Bishop-Gromov and property \ref{induction lemma 1} of $z$, $B_{r'}(\Psi_{s}(z))$ is also non-trivial in measure compared to $B_{r}(z)$. The same considerations as before gives the existence of $D_2 \subseteq B_{r'}(\Psi_{s}(z))$ so that
\begin{equation}\label{induction lemma eq3.4}
\frac{m(D_2)}{m(B_{r'}(\Psi_{s}(z)))} \geq 1 - V(1,10) \; \; \text{and} \; \; \tPsi_{-s}(D_2) \subseteq B_{r'+7r}(z) \subseteq B_{3r'}(z),
\end{equation}
for any $s \in [0,\epsilon]$ after we bound $\epsilon_2$ sufficiently small depending only on $N$ and $\delta$. 

We estimate
\begin{align*}
	&\hspace{0.5cm}\frac{m(B_{r'}(\Psi_s(z)))}{m(B_{r'}(z))}\leq \frac{1}{V(1,3)}\frac{m(B_{r'}(\Psi_{s}(z)))}{m(B_{3r'}(z))} \; \; \text{, by Bishop-Gromov}\\
								&\leq  \frac{1}{V(1,3)}\frac{1}{1-V(1,10)}\frac{m(D_2)}{m(B_{3r'}(z))} \text{, by property \eqref{induction lemma eq3.4} of $D_2$}\\
								&\leq  \frac{1}{V(1,3)}\frac{1}{1-V(1,10)}e^{c(N,\delta)s}\frac{m(\tilde{\Psi}_{-s}(D_2))}{m(B_{3r'}(z))}\; \;\text{, by Remark \ref{RLF local volume bound}}
\\
	&\leq \frac{1}{V(1,7)}\frac{1}{1-V(1,10)}e^{cs},
\end{align*}

where for the third inequality we used the fact that $\tPsi_{-t}(D_2)\subseteq B_{27r}(\Psi_{s-t}(z))$ for $0 \leq t\leq s$ by the calcluations of \eqref{induction lemma eq3.2.5}; On these sets $\Delta h^-$ is bounded below by $-c(N,\delta)$ by the same argument as \eqref{main lemma eq3.3.5}. Using this, we bound $\epsilon_2$ sufficiently small depending on $N$ and $\delta$ so that for $s \leq \epsilon \leq \epsilon_2$, the last line is less than $V(1,100)$. This shows the other half of the bound in property \ref{induction lemma i}. Property \ref{induction lemma iii} is obvious and so we conclude. 
\end{proof}

Combined with the main lemma, this gives
\begin{lem}\label{arbitrarily close}
There exists $\epsilon_3(N,\delta)>0$ and $\bar{r}_3(N,\delta)>0$ so that for any $r \leq \bar{r}_3$ and $\delta \leq t_0 \leq 1-\delta$, there exists $z \in B_{r}(\gamma(t_0))$ so that for any $r \leq r' \leq \bar{r}_3$:
\begin{enumerate}
	\item $V(1, 100) \leq \frac{m(B_{r'}(\Psi_s(z)))}{m(B_{r'}(z))} \leq \frac{1}{V(1,100)}$ for any $s \leq \epsilon_{3}$;
	\item There exists $A \subseteq B_{r'}(z)$ with $m(A) \geq (1-V(1,10))m(B_{r'}(z))$ and $\Psi_{s}(A) \subseteq B_{2r'}(\Psi_{s}(z))$ for any $s \leq \epsilon_{3}$;
	\item $e(z) \leq c_1^2r^2$.
\end{enumerate}
\end{lem}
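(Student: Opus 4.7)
The plan is to apply Lemma~\ref{main lemma} at a scale slightly smaller than $r$ to produce a candidate point $z$, and then iterate Lemma~\ref{induction lemma} with this fixed $z$ to propagate properties $1$ and $2$ to every scale $r' \in [r,\bar r_3]$. Concretely, set $\epsilon_3 := \min(\epsilon_1,\epsilon_2)$ and $\bar r_3 := \min(\bar r_1,\bar r_2)$.

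Given $r \leq \bar r_3$ and $t_0 \in [\delta,1-\delta]$, let $r_0 := r/4$ and apply Lemma~\ref{main lemma} at scale $r_0$ to produce $z \in B_{r_0}(\gamma(t_0)) \subseteq B_r(\gamma(t_0))$ satisfying properties $1$, $2$, and $3$ at scale $r_0$ with parameter $\epsilon_1 \geq \epsilon_3$. Property $3$ of the present lemma is then immediate, since $e(z) \leq c_1^2 r_0^2 \leq c_1^2 r^2$.

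To establish properties $1$ and $2$ at every scale $r' \in [r,\bar r_3]$ for this same $z$, define $\rho_j := 4^{j-1} r_0$ for $j \geq 1$, so that $\rho_1 = r_0$ and $\rho_2 = r$. I claim by induction on $j \geq 1$ that, for the same $z$, properties $1$, $2$, $3$ hold at scale $\rho_j$ with parameter $\epsilon_3$. The base case $j = 1$ is the output of Lemma~\ref{main lemma}. For the induction step, if properties hold at $\rho_j$, then an application of Lemma~\ref{induction lemma} at scale $\rho_j$ (valid because $\epsilon_3 \leq \epsilon_2$ and $\rho_j \leq \bar r_2$, as checked below) produces properties $1$ and $2$ at every scale in $[4\rho_j,16\rho_j] = [\rho_{j+1},\rho_{j+2}]$; property $3$ at $\rho_{j+1}$ comes for free from monotonicity of the bound $e(z) \leq c_1^2 r_0^2 \leq c_1^2 \rho_{j+1}^2$. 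Letting $K := \lceil \log_4(\bar r_3/r) \rceil$, the union of intervals $[\rho_{j+1},\rho_{j+2}]$ for $j = 1,\ldots,K$ equals $[\rho_2,\rho_{K+2}] = [r,4^{K+1}r_0] \supseteq [r,\bar r_3]$. All starting scales $\rho_j$ used in the iteration lie in $[r_0,\rho_K] \subseteq [0,\bar r_3/4] \subseteq [0,\bar r_2]$, confirming the validity of each invocation of Lemma~\ref{induction lemma}.

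The main obstacle — if it can be called that — is simply verifying that a single $z$ chosen at the initial scale $r_0$ is compatible with every subsequent application of Lemma~\ref{induction lemma}. This is automatic from the statement of Lemma~\ref{induction lemma}, which preserves the point $z$ between hypothesis and conclusion, combined with the monotonicity of the excess bound in the scale. With this, the proof reduces to bookkeeping about a chain of intervals $[4\rho_j,16\rho_j]$ overlapping at their endpoints, with all the genuine analytic content having been carried out already in Lemmas~\ref{main lemma} and~\ref{induction lemma}.
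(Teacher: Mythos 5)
Your proposal is correct and follows essentially the same approach as the paper, which applies Lemma~\ref{main lemma} at scale $r/4$ to produce a single $z$ and then iterates Lemma~\ref{induction lemma} to propagate properties $1$--$3$ to all scales in $[r,\bar r_3]$. The paper states this tersely ("use \ref{induction lemma} repeatedly"), while you have filled in the explicit bookkeeping of the overlapping scale intervals $[\rho_{j+1},\rho_{j+2}]$ and verified that each starting scale $\rho_j$ stays below $\bar r_2$; this bookkeeping is sound.
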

\begin{proof}
Choose $\epsilon_{3} = \min \left\{\epsilon_1, \epsilon_2 \right\}$ and $\bar{r}_{3}:=\min\braces{\bar{r}_1, \bar{r}_2}$. Apply \ref{main lemma} to find some $z \in B_{\frac{r}{4}}(\gamma(t_0))$ which satisfies properties 1 - 3  on the scale of $\frac{r}{4}$ and then use \ref{induction lemma} repeatedly to conclude. 
\end{proof}

The following corollary immediately follows from Lemma \ref{arbitrarily close} by a limiting argument. 
\begin{cor}\label{at unique geodesic}
There exists $\epsilon_3(N,\delta)>0$ and $\bar{r}_3(N,\delta) > 0$ so that if $\gamma$ is the unique geodesic between $p$ and $q$, then for all $r \leq \bar{r}_3$  and $\delta \leq t_0 \leq 1-\delta$, 
\begin{enumerate}
	\item $V(1, 100) \leq \frac{m(B_r(\gamma(t_0-s)))}{m(B_r(\gamma(t_0)))} \leq \frac{1}{V(1,100)}$ for any $0 \leq s \leq \epsilon_{3}$.
	\item There exists $A \subseteq B_{r}(\gamma(t_0))$ with $m(A) \geq (1-V(1,10))m(B_r(\gamma(t_0)))$ and $\Psi_{s}(A) \subseteq B_{2r}(\gamma(t_0-s))$ for any $0 \leq s \leq \epsilon_{3}$. 
\end{enumerate}
\end{cor}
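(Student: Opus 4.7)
The plan is a straightforward limiting argument, applying Lemma \ref{arbitrarily close} along a sequence of radii $r_n \downarrow 0$. Take $\epsilon_3$ and $\bar r_3$ exactly from Lemma \ref{arbitrarily close}, fix $r \leq \bar r_3$ and $\delta \leq t_0 \leq 1 - \delta$, and choose $z_n \in B_{r_n}(\gamma(t_0))$ satisfying the three conclusions of Lemma \ref{arbitrarily close} at all scales $r' \in [r_n, \bar r_3]$. The uniqueness hypothesis on $\gamma$ forces the geodesic from $\gamma(t_0)$ to $p$ to be unique as well (any other, concatenated with $\gamma|_{[t_0, 1]}$, would produce a second $p$-$q$ geodesic). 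An Arzela-Ascoli argument on the unit-speed geodesics $\gamma_{z_n, p}$, combined with this uniqueness, then yields $\Psi_s(z_n) \to \gamma(t_0 - s)$ for every $s \in [0, \epsilon_3]$, and in particular $\Psi_s(\gamma(t_0)) = \gamma(t_0 - s)$.

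For property 1 at a fixed $r$, the inequality $V(1,100) \leq m(B_r(\Psi_s(z_n)))/m(B_r(z_n)) \leq V(1,100)^{-1}$ holds for all large $n$. Combining $\liminf_n m(B_r(y_n)) \geq m(B_r(y))$ (open-ball lower semicontinuity under $y_n \to y$) with $\limsup_n m(B_r(y_n)) \leq m(\bar B_r(y))$ yields the same ratio bound at the limit in terms of closed balls in the numerator and open balls in the denominator. For all but countably many $r$ the boundaries $\partial B_r(\gamma(t_0))$ and $\partial B_r(\gamma(t_0-s))$ have zero mass and the conclusion follows directly; for exceptional $r$, approach through good radii $r_k \uparrow r$ and use monotone convergence of nested open balls.

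The main obstacle is property 2, where the limsup of the sets $A_n$ only yields containment in closed balls. I would circumvent this by nesting scales: for each $k \geq 1$ apply Lemma \ref{arbitrarily close} at base scale $r/k$ to get $z_n^{(k)} \to \gamma(t_0)$ together with sets $A_n^{(k)} \subseteq B_{r(1-1/k)}(z_n^{(k)})$ of relative measure $\geq 1 - V(1,10)$ in $B_{r(1-1/k)}(z_n^{(k)})$ and satisfying $\Psi_s(A_n^{(k)}) \subseteq B_{2r(1-1/k)}(\Psi_s(z_n^{(k)}))$ for all $s \leq \epsilon_3$. Reverse Fatou on a fixed bounded region gives $A^{(k)} := \limsup_n A_n^{(k)}$ with
\[
A^{(k)} \subseteq \bar B_{r(1-1/k)}(\gamma(t_0)), \qquad \Psi_s(A^{(k)}) \subseteq \bar B_{2r(1-1/k)}(\gamma(t_0 - s)),
\]
and $m(A^{(k)}) \geq (1 - V(1,10)) \, m(B_{r(1-1/k)}(\gamma(t_0)))$. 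Finally $A := \bigcup_k A^{(k)}$ lies in $B_r(\gamma(t_0))$ (since each piece sits in a strictly smaller closed ball), satisfies $\Psi_s(A) \subseteq B_{2r}(\gamma(t_0 - s))$ for every $s \leq \epsilon_3$, and by monotone convergence on nested open balls has $m(A) \geq (1 - V(1,10)) m(B_r(\gamma(t_0)))$. The only genuinely delicate point is this nesting-over-$k$ device, which is needed precisely because the limsup of sets contained in open balls is only guaranteed to lie in the closed ball.
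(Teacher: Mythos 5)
Your proof is correct and implements the limiting argument that the paper invokes in a single sentence without spelling it out: take $z_n\to\gamma(t_0)$ from Lemma \ref{arbitrarily close}, use the fact that uniqueness of $\gamma$ forces $\gamma|_{[0,t_0]}$ to be the unique geodesic from $\gamma(t_0)$ to $p$ (so Arzel\`a-Ascoli gives $\Psi_s(z_n)\to\gamma(t_0-s)$ along the full sequence), and then pass both properties to the limit. The open-versus-closed-ball issue you identify is genuine, and your nesting-over-$k$ device for property 2 and the good-radii/monotone-convergence argument for property 1 are a sound and careful way to resolve it; the paper simply does not address this subtlety.
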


We can use the corollary directly in the proof of Theorem \ref{section main theorem} to prove the main result for all unique geodesics. Taking Theorem \ref{directional BG} and Remark \ref{a.e. unique geodesic} into account, this is already enough for several applications. Nevertheless, we will next prove the existence of a geodesic between any $p, q \in X$ with H\"older continuity on the geometry of small radius balls in its interior, which is the full result of \cite{CN12}. The desired geodesic will be constructed using multiple limiting and gluing arguments. 

\begin{lem} \label{at geodesic one direction}
There exists $\epsilon_4(N,\delta) > 0$ and $\bar{r}_4(N,\delta) > 0$ so that for any unit speed geodesic $\gamma$ from $p$ to $q$, there exists a unit speed geodesic $\gamd$ from $p$ to $q$ with $\gamd \equiv \gamma$ on $[1-\delta,1]$ so that for all $r \leq \bar{r}_4$  and $\delta \leq t_0 \leq t_1 \leq 1-\delta$, if $t_1-t_0 \leq \epsilon_4$ then,
\begin{enumerate}
	\item \label{agod 1}$V(1, 100)^4 \leq \frac{m(B_r(\gamd(t_1)))}{m(B_r(\gamd(t_0)))} \leq \frac{1}{V(1,100)^4}$;
	\item \label{agod 2}There exists $A \subseteq B_{r}(\gamd(t_1))$ so that $m(A) \geq (1-V(1,10))m(B_r(\gamd(t_1)))$ and $\Psi_{s}(A) \subseteq B_{2r}(\gamd(t_1-s))$ for all $s \in [0,t_1-t_0]$. 
\end{enumerate}
\end{lem}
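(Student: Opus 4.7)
My plan is to construct $\gamd$ as a uniform limit of piecewise integral curves of $\Psi$ assembled by iterated application of Lemma \ref{arbitrarily close}, and then transfer the good base-point properties to every interior point of the resulting geodesic.

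For each $n\in\mathbb{N}$, I would partition $[0,1-\delta]$ into equal subintervals of length at most $\epsilon_4 := \epsilon_3/10$, and inductively construct a piecewise curve $\bar\gamma_n$ backward from $\gamma(1-\delta)$. At the $i$-th step, with current endpoint $w_i^n$, I would apply Lemma \ref{arbitrarily close} to find a point $z_i^n \in B_{1/n}(w_i^n)$ having the good properties at all scales $r' \geq 1/n$ for time $\epsilon_3$, and take the next piece of $\bar\gamma_n$ to be $\Psi_\cdot(z_i^n)$ restricted to the subinterval. Although Lemma \ref{arbitrarily close} is stated for the fixed geodesic $\gamma_{p,q}$, its proof only uses that the base point has small excess $e \leq c_1^2 r^2$; as the error $1/n$ at each step is made small enough, $w_i^n$ inherits small excess from $z_{i+1}^n$ and the lemma applies. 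The junctions of $\bar\gamma_n$ have jumps of size $O(1/n)$, vanishing as $n\to\infty$; passing to a subsequential Arzel\`a-Ascoli limit yields a unit-speed geodesic from $p$ to $\gamma(1-\delta)$, which upon gluing with $\gamma|_{[1-\delta,1]}$ gives the desired $\gamd$.

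To verify properties \ref{agod 1} and \ref{agod 2} at every $\gamd(t_1)$, $t_1 \in [\delta,1-\delta]$: by a diagonal subsequence extraction, for every $t$ in a countable dense subset $D\subseteq[\delta,1-\delta]$, I can arrange that the sequence of nearest-partition base points $z_i^n$ converges to $\gamd(t)$ as $n \to \infty$. A limiting argument in the spirit of how Corollary \ref{at unique geodesic} follows from Lemma \ref{arbitrarily close}, using continuity of volumes under pmGH-type approximation and the stability of containment under $\Psi_s$, transfers properties 1 and 2 to $\gamd(t)$: the sets $A_i^n \subseteq B_r(z_i^n)$ intersected with $B_r(\gamd(t))$ retain density $\geq 1-V(1,10)$ in the limit, and $\Psi_s(A_i^n) \subseteq B_{2r}(\Psi_s(z_i^n))$ limits to $\Psi_s(A) \subseteq B_{2r}(\gamd(t-s))$. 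For $t_1 \in [\delta,1-\delta]\setminus D$, I would approximate by $t\in D$ with $|t-t_1|\ll r$; since $\gamd$ is unit-speed, the property at $\gamd(t)$ on scale $r-|t-t_1|$ transfers to the property at $\gamd(t_1)$ on scale $r$ with minor losses absorbed into the slack between $V(1,100)$ and $V(1,100)^4$ via Bishop-Gromov.

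The hardest part will be bookkeeping the cumulative errors in the iterative construction: at each iteration step the endpoint $w_i^n$ is on the geodesic from $z_{i+1}^n$ to $p$ rather than on an exact geodesic from $p$ to $q$, and the application of Lemma \ref{arbitrarily close} at that point requires that the small-excess condition propagate through all iterations without the accumulated error inflating beyond the threshold $c_1^2 r^2$. This, together with ensuring that the diagonal extraction providing the dense convergence of base points is compatible with the original Arzel\`a-Ascoli extraction producing $\gamd$, constitutes the essential technical challenge.
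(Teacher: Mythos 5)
Your construction of $\gamd$ differs from the paper's: the paper works with a fixed finite grid $\sigma_0,\dots,\sigma_k$ of mesh comparable to $\epsilon_3$ and, at each grid point $\sigma_i$ inductively, takes $r_j\downarrow 0$, applies Lemma~\ref{arbitrarily close} to get $z_j\in B_{r_j}(\gamd(\sigma_i))$, and uses Arzel\`a--Ascoli on the geodesic segments $\gamma_{z_j,p}$ to define $\gamd$ on $[\sigma_{i-1},\sigma_i]$ as the limit geodesic; this yields properties (i), (ii) of Lemma~\ref{arbitrarily close} exactly at the $k$ grid points and nowhere else. The passage to \emph{arbitrary} $t_0,t_1$ in the statement is then not a limiting argument at all: the paper flows $A_1\subseteq B_{r/16}(\gamd(\sigma_i))$ past $\sigma_{i-1}$ (using the distortion estimate of Proposition~\ref{distortion bound}, the segment inequality, Theorem~\ref{hess h integral bound}, and Lemma~\ref{grad h geodesic bound}) for a small time $\omega'$, then compares the flowed set to $B_r(\gamd(t_1))$ via a second distortion estimate for a time $\omega''$, and only at the end fixes $\epsilon_4:=\omega''$. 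In this way the density bound $m(A)\geq (1-V(1,10))m(B_r(\gamd(t_1)))$ is \emph{re-derived from scratch} at the arbitrary time $t_1$ out of the estimates, not transferred from a nearby point.

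That last point is where your plan has a genuine gap. After establishing the properties on a countable dense set $D$, you propose to transfer to $t_1\notin D$ by choosing $t\in D$ with $|t-t_1|\ll r$, intersecting the good set $A'\subseteq B_r(\gamd(t))$ with $B_r(\gamd(t_1))$, and absorbing the loss into ``the slack between $V(1,100)$ and $V(1,100)^4$.'' But that slack lives only in statement~\ref{agod 1} (the volume ratio). Statement~\ref{agod 2} demands density \emph{exactly} $1-V(1,10)$ and containment in \emph{exactly} $B_{2r}$, and Lemma~\ref{arbitrarily close} provides precisely $1-V(1,10)$ and $B_{2r}$ at $t\in D$ -- no surplus. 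Intersecting and restricting turns these into $1-V(1,10)-O(\eta)$ and $B_{(2+\eta)r}$ for $|t-t_1|=\eta r$, and since $\eta>0$ is strictly positive for $t_1\notin D$, the required thresholds are strictly violated for every $r$; there is no constant you can tune after the fact. (A secondary but related issue: even at $t\in D$ the limiting argument giving density $\geq 1-V(1,10)$ only works for a.e.\ $r$ unless one tracks the excess scales carefully; the paper sidesteps this by only needing the limiting argument at finitely many grid points, for which one can work at all scales $r'\geq r_j$ simultaneously as in Lemma~\ref{arbitrarily close}.) To repair your approach you would have to reprove Lemma~\ref{arbitrarily close} with strictly better constants (say $1-\tfrac12 V(1,10)$ and $B_{(3/2)r}$) and propagate that slack through Lemmas~\ref{at geodesic other direction}, \ref{at geodesic larger delta} and beyond, which is a substantial rewrite; the paper avoids this by the two-step flow argument across consecutive grid cells and by choosing $\epsilon_4$ at the very end.
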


\begin{proof}
Let $\epsilon_3$, $\bar{r}_3$ be from Lemma \ref{arbitrarily close}. We begin by assuming $\bar{r}_4 \leq \bar{r}_3$ and $\epsilon_4 \leq \frac{\epsilon_3}{2}$ but will impose more bounds on both as the proof continues. We will not keep track of $\bar{r}_4$ for the sake of brevity.

Partition $[\delta, 1-\delta]$ by $\braces{\sigma_0 = \delta, \sigma_1, \; ... \;, \sigma_k = 1-\delta}$ for some $k \in \mathbb{N}$ so that all subintervals have the same width equal to some $\omega \in [\frac{\epsilon_3}{2}, \epsilon_3]$. This is always possible since the width of the original interval is $1-2\delta > 0.8$ and $\epsilon_3$ is much smaller than 0.4. We construct $\gamd$ inductively as follows
\begin{itemize}
	\item Define $\gamd \equiv \gamma$ for $t \in [1-\delta, 1]$.
	\item Let $1  \leq i \leq k$. Assume $\gamd$ has been constructed on the interval $[\sigma_i, 1]$. Fix a sequence of $r_j \to 0$. For each $j$, choose $z_j \in B_{r_j}(\gamd(\sigma_i))$ as in the statement of Lemma \ref{arbitrarily close}. Use Arzel\`a-Ascoli Theorem to take a limit, after passing to a subsequence, of the unit speed geodesics from $z_j$ to $p$. This limit $\gamma^{\delta,i}$ is a unit speed geodesic from $\gamd(\sigma_i)$ to $p$. For $t\in[\sigma_{i-1}, \sigma_{i}]$, define $\gamd(t) = \gamma^{\delta,i}(\sigma_i-t)$.
	\item For $t\in[0,\delta]$, define $\gamd$ to be any geodesic from $p$ to $\gamd(\sigma_0)$.
\end{itemize}

For any $1\leq i\leq k$, $\sigma_i-\sigma_{i-1} \leq \epsilon_3$ so it follows from the construction that $\gamd$ satisfies, for any $r \leq \bar{r}_4$,
\begin{enumerate}[label=\roman*., ref=\roman*]
	\item \label{agod i} $V(1, 100) \leq \frac{m(B_r(\gamd(\sigma_i-s)))}{m(B_r(\gamd(\sigma_i)))} \leq \frac{1}{V(1,100)}$ for any $0 \leq s \leq \omega$;
	\item  \label{agod ii} There exists $A \subseteq B_{r}(\gamd(\sigma_i))$ with $m(A) \geq (1-V(1,10))m(B_{r}(\gamd(\sigma_i)))$ and $\Psi_{s}(A) \subseteq B_{2r}(\gamd(\sigma_i - s))$ for any $0 \leq s \leq \omega$. 
\end{enumerate}

Fix $\delta\leq t_0 \leq t_1\leq1-\delta$ with $t_1 - t_0\leq\epsilon_4$ and a scale $r \leq \bar{r}_4$. Since $\epsilon_4 \leq \frac{\epsilon_3}{2}$ is no greater than the widths of the subintervals of the partition $\omega \geq \frac{\epsilon_3}{2}$, $t_0$ and $t_1$ must be contained in $[\sigma_{i-2}, \sigma_{i}]$ for some $2\leq i \leq k$. Statement \ref{agod 1} of the lemma then follows trivially from property \ref{agod i} of $\gamd$.

$t_0$ and $t_1$ must then be either contained in a single subinterval or two neighbouring subintervals of the partition. We will assume the second case; the first case follows from a similar and simpler argument. Let $t_1 \in (\sigma_{i-1}, \sigma_{i}]$ and $t_0 \in (\sigma_{i-2}, \sigma_{i-1}]$ for some $i$. By property \ref{agod ii} of $\gamd$ and Abresch-Gromoll, there exists $A_1 \subseteq B_{\frac{r}{16}}(\gamd(\sigma_i))$ so that
\begin{enumerate}
	\item $\frac{m(A_1)}{m(B_{\frac{r}{16}}(\gamd(\sigma_i)))} \geq 1 - 2V(1,10)$;
	\item $\Psi_{s}(A_1) \subseteq B_{\frac{r}{8}}(\gamd(\sigma_{i}-s)) \; \forall s \in [0, \omega]$;
	\item $e(x) \leq c(N,\delta)r^2 \; \forall x \in A_1$.
\end{enumerate}
Simiarly, there exists $A_2 \subseteq B_{\frac{r}{16}}(\gamd(\sigma_{i-1}))$ so that
\begin{enumerate}
	\item $\frac{m(A_2)}{m(B_{\frac{r}{16}}(\gamd(\sigma_{i-1})))} \geq 1 - 2V(1,10)$;
	\item $\Psi_{s}(A_2) \subseteq B_{\frac{r}{8}}(\gamd(\sigma_{i-1}-s)) \; \forall s \in [0, \omega]$;
	\item $e(x) \leq c(N,\delta)r^2 \; \forall x \in A_2$.
\end{enumerate}

The plan is as follows: first we show that a significant portion of $A_1$ can be flowed by $\Psi$ a non-trivial amount of time past $\gamd(\sigma_{i-1})$ while staying close $\gamd$, then we use the flow of $A_1$ under $\Psi$ to control the flow of $B_{r}(\gamd(t_1))$ under $\Psi$. 

Fix $h^- \equiv h^-_{\rho(8r)^2}$ satisfying statement 4 of Theorem \ref{hess h integral bound} for the balls of radius $8r$ along $\gamd$, where $\rho \in [\frac{1}{2},2]$. For all $s \in [0,\omega]$ and $(x,y) \in X \times X$, define 
\begin{equation}\label{agod eq1}
dt(s)(x,y) := \min\left\{r, \max\limits_{0 \leq \tau \leq s} |d(x,y) - d(\Psi_\tau(x), \Psi_\tau(y))|\right\}
\end{equation}
and 
\begin{equation}\label{agod eq2}
U_1^s := \{(x,y) \in A_2 \times A_1|dt(s)(x,\Psi_{\omega}(y)) < r\}. 
\end{equation}
Consider $\int_{A_2 \times A_1} dt(s)(x,\Psi_{\omega}(y)) \, d(m \times m)(x,y)$ for $0 \leq s \leq \omega$. 

For any $s \in [0,\omega]$ and $(x,y) \in  U_1^s$,
\begin{enumerate}
	\item $d(x,\Psi_{\omega}(y))<\frac{3r}{16}$;
	\item $d(\Psi_{s}(x), \gamd(\sigma_{i-1}-s)) < \frac{r}{8}$;
	\item $dt(s)(x, \Psi_{\omega}(y))<r$.
\end{enumerate}
Therefore, $\Psi_{s}(\Psi_{\omega}(y)) \in B_{r+\frac{5r}{16}}(\gamd(\sigma_{i-1}-s))$ and so $(\Psi_s, \Psi_s \circ \Psi_{\omega})(U_1^s) \subseteq B_{4r}(\gamd(\sigma_{i-1}-s)) \times B_{4r}(\gamd(\sigma_{i-1}-s))$. By Remark \ref{a.e. unique geodesic}, we have $\Psi_{s} \circ \Psi_{\omega} = \Psi_{s+\omega}$ $m$-a.e.. Since we may always choose subsets of full measure where the equality is satisfied, we will replace the former with the latter freely.

We have,
\begin{align}\label{agod eq3}
\begin{split}
	&\hspace{0.5cm}\int\limits_{0}^{1}  \int\limits_{U_1^s} d(\Psi_s(x), \Psi_{s+\omega}(y)))|\Hess h^-|_{\HS}(\tilde{\gamma}_{\Psi_s(x),\Psi_{s+\omega}(y)}(\tau)) \,  d(m \times m)(x,y) \, d\tau\\
	&\leq c(N,\delta)\int\limits_{0}^{1}  \int\limits_{(\Psi_s, \Psi_{s+\omega})(U_1^s)} d(x, y)|\Hess h^-|_{\HS}(\tilde{\gamma}_{x,y}(\tau)) \,  d(m \times m)(x,y) \, d\tau \; \; \text{, by Theorem \ref{-dp grad flow properties}, 2}\\
	&\leq c(N,\delta)rm(B_{4r}(\gamd(\sigma_{i-1}-s)))\int\limits_{B_{8r}(\gamd(\sigma_{i-1}-s))} |\Hess h^-|_{\HS}\,  dm \; \; \text{, by segment inequality \ref{segment inequality}}\\
	&\leq c(N,\delta)rm(B_{r}(\gamd(\sigma_{i})))^2 \fint\limits_{B_{8r}(\gamd(\sigma_{i-1}-s))} |\Hess h^-|_{\HS} \, dm \; \; \text{, by Bishop-Gromov and property \ref{agod i} of $\gamd$}.
\end{split}
\end{align}
Integrating in $s \in [0,\omega']$ for some $\omega' \in (0, \omega]$ to be fixed later, we have
\begin{align}\label{agod eq4}
\begin{split}
	&\hspace{0.5cm} \int\limits_{0}^{\omega'} \bigg( \int\limits_{0}^{1} \int\limits_{U_1^s} d(\Psi_s(x), \Psi_{s+\omega}(y))|\Hess h^-|_{\HS}(\tilde{\gamma}_{\Psi_s(x),\Psi_{s+\omega}(y)}(\tau)) \,  d(m \times m)(x,y)\, d\tau \bigg)\, ds\\
	&\leq crm(B_{r}(\gamd(\sigma_{i})))^2 \int\limits_{0}^{\omega'} \fint\limits_{B_{8r}(\gamd(\sigma_{i-1}-s))} |\Hess h^-|_{\HS} \, dm \, ds\\
	&\leq c(N,\delta)rm(B_{r}(\gamd(\sigma_{i})))^2 \sqrt{\omega'},
\end{split}
\end{align}
where the last line follows from the definition of $h^-$, statement 4 of Theorem \ref{hess h integral bound}, and Cauchy-Schwarz.

By statement 3 of \ref{grad h geodesic bound}, the excess bound on the elements of $A_2$ and property \ref{agod i} of $\gamd$,
\begin{align}\label{agod eq5}
\begin{split}
	\int\limits_{0}^{\omega'} \int\limits_{U^s_1} |\nabla h^- - \nabla d_p|(\Psi_s(x)) \, d(m \times m)(x,y)  \, ds \leq c(N,\delta)rm(B_r(\gamd(\sigma_i)))^2\sqrt{\omega'}.
\end{split}
\end{align}
Similarly by the excess bounds on the elements of $A_1$, 
\begin{equation}\label{agod eq6}
\int\limits_{0}^{\omega'} \int\limits_{U^s_1} |\nabla h^- - \nabla d_p|(\Psi_{s+\omega}(y))\, d(m \times m)(x,y) \, ds \leq c(N,\delta)m(B_r(\gamd(\sigma_i)))^2r\sqrt{\omega'}.
\end{equation}

By Proposition \ref{distortion bound}, 
\begin{equation}\label{agod eq6.5}
 	\int_{A_2 \times A_1} dt(\omega')(x,\Psi_{\tau}(y)) \leq c(N,\delta)m(B_r(\gamd(\sigma_i)))^2r\sqrt{\omega'}.
\end{equation}
Arguing as in the proof of Lemma \ref{main lemma} and using property \ref{agod i} of $\gamd$, we can then fix $\omega'$ sufficiently small depending only on $N$ and $\delta$ so that there exists $z \in A_2$ and $A_1' \subseteq A_1$ with
\begin{equation}\label{agod eq7}
\frac{m(A_1')}{m(B_{\frac{r}{16}}(\gamd(\sigma_i)))} \geq 1 - 3V(1,10) \; \; \text{and} \; \; dt(\omega')(z,\Psi_{\omega}(y)) \leq \frac{r}{16} \; \forall y \in A_1'.
\end{equation}
The latter implies 
\begin{equation}\label{agod eq8}
\Psi_{s+\omega}(A_1') \subseteq B_{\frac{3r}{8}}(\gamd(\sigma_{i-1}-s)) \; \forall s \in [0,\omega'].
\end{equation} 
Notice by definiton of $A_1$, $\Psi_{s}(A_1') \subseteq B_{\frac{r}{8}}(\gamd(\sigma_{i}-s))$ for any $s \in [0,\omega]$. 

We now compare the flow of $B_{r}(\gamd(t_1))$ to that of $\Psi_{\sigma_{i}-t_1}(A_1')$ under $\Psi_s$ for $s \in [0,\omega']$. By integral Abresch-Gromoll there exists $B_{r}(\gamd(t_1))' \subseteq B_{r}(\gamd(t_1))$ so that 
\begin{equation}\label{agod eq2.1}
 e(x) \leq c(N,\delta)r^2 \; \forall x \in B_{r}(\gamd(t_1))' \; \; \text{and}\; \; \frac{m(B_{r}(\gamd(t_1))')}{m(B_{r}(\gamd(t_1)))} \geq 1-\frac{1}{2}V(1,10).
\end{equation}

For all $s \in [0, \omega']$, define
\begin{equation}\label{agod eq2.2}
U_2^s := \{(x,y) \in A_1' \times B_{r}(\gamd(t_1))'|dt(s)(\Psi_{\sigma_{i}-t_1}(x),y) < r\}. 
\end{equation}
Consider $\int_{A_1' \times B_{r}(\gamd(t_1))'} dt(s)(\Psi_{\sigma_i-t_1}(x),y) \, d(m \times m)(x,y)$ for $0 \leq s \leq \omega'$. For any $s \in [0,\omega']$ and $(x,y) \in  U_2^s$,
\begin{enumerate}
	\item $d(\Psi_{\sigma_i - t_1}(x),y)<r+\frac{r}{8}$ by definition of $A_1$;
	\item $d(\Psi_{s}(\Psi_{\sigma_i - t_1}(x)), \gamd(t_1-s)) < \frac{3r}{8}$ by \eqref{agod eq8} and the line below it;
	\item $dt(s)(\Psi_{\sigma_i - t_1}(x),y)<r$.
\end{enumerate}
Therefore, $\Psi_{s}(y) \in B_{\frac{5r}{2}}(\gamd(t_1-s))$ and so $(\Psi_s \circ \Psi_{\sigma_i - t_1}, \Psi_s)(U_2^s) \subseteq B_{4r}(\gamd(t_1-s)) \times B_{4r}(\gamd(t_1-s))$. 

By the same type of computations as the first part of this proof, for some $\omega'' \in (0,\omega']$ to be fixed later,
\begin{equation}\label{agod eq2.3}
 	\int_{A_1' \times B_{r}(\gamma(t_1))'} dt(\omega'')(\Psi_{\sigma_{i}-t_1}(x), y) \leq c(N,\delta)m(B_r(\gamd(\sigma_i)))^2r\sqrt{\omega''}.
\end{equation} 
Arguing as in the proof of Lemma \ref{main lemma} and using property \ref{agod i} of $\gamd$, we can then fix $\omega''$ sufficiently small depending only on $N$ and $\delta$ so that there exists $z' \in A_1'$ and $A \subseteq B_{r}(\gamma(t_1))'$ with
\begin{equation}\label{agod eq2.4}
\frac{m(A)}{m(B_{r}(\gamd(t_1)))} \geq 1 - V(1,10) \; \; \text{and} \; \; dt(\omega'')(\Psi_{\sigma_i-t_1}(z'),y) \leq \frac{r}{2} \; \forall y \in A.
\end{equation}
The latter implies 
\begin{equation}\label{agod eq2.5}
\Psi_{s}(A) \subseteq B_{2r}(\gamd(t_1-s)) \; \forall s \in [0,\omega''].
\end{equation} 
We bound $\epsilon_4 \leq \omega''$ and so statement \ref{agod 1} of the lemma is proved.
\end{proof}

We may also apply Lemma \ref{at geodesic one direction} in the other direction of $\gamma$ towards $q$. However, there is no guarantee the two geodesics we end up with in the two applications of the lemma are the same geodesics. Therefore, we will show that a geodesic which has the properties of \ref{at geodesic one direction} necessarily has the same properties going in the other direction. The reason for this is Lemma \ref{excess pw bound}, which roughly implies the local flows of $h^+$ and $h^-$ are close to each other near a geodesic on the scale of $r$. 

\begin{lem} \label{at geodesic other direction}
There exists $\epsilon_5(N,\delta)>0$ and $\bar{r}_5(N,\delta) >0$ so that if there exists a unit speed geodesic $\gamma$ from $p$ to $q$, $\epsilon \leq \epsilon_5$ and $\bar{r} \leq \bar{r}_5$ which satisfy, for all $r \leq \bar{r}$ and $\delta \leq t_0 \leq t_1 \leq 1-\delta$ with $t_1 - t_0 \leq \epsilon$, 
\begin{enumerate}
	\item \label{agotd 1}$V(1, 100)^4 \leq \frac{m(B_r(\gamma(t_1)))}{m(B_r(\gamma(t_0)))} \leq \frac{1}{V(1,100)^4}$;
	\item \label{agotd 2}There exists $A_1 \subseteq B_{r}(\gamma(t_1))$ so that $m(A_1) \geq (1-V(1,10))m(B_r(\gamma(t_1)))$ and $\Psi_{s}(A_1) \subseteq B_{2r}(\gamma(t_1-s))$ for all $s \in [0, t_1 - t_0]$,
\end{enumerate}
then for the same geodesic $\gamma$, $\epsilon$ and $\bar{r}$, for all $r \leq \bar{r}$ and $\delta \leq t_0 \leq t_1 \leq 1-\delta$ with $t_1 - t_0 \leq \epsilon$, there exists $A_2 \subseteq B_{r}(\gamma(t_0))$ so that 
\begin{equation}\label{agotd eq0}
	\frac{m(A_2)}{m(B_r(\gamma(t_0)))} \geq 1 - V(1,10) \; \; \text{and} \; \; \Phi_{s}(A_2) \subseteq B_{2r}(\gamma(t_0+s)) \; \forall s \in [0, t_1-t_0].
\end{equation}
\end{lem}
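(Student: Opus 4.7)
The plan is to transport the hypothesized backward-flow control along $\Psi$ near $\gamma(t_1)$ into a forward-flow control along $\Phi$ near $\gamma(t_0)$ by bridging through the RLF $\tilde{\Psi}_{-s}$ of $+\nabla h^-$. The two crucial ingredients are Proposition \ref{RLF inverse}, which gives the $m$-a.e.\ inverse relation between $\tilde{\Psi}_s$ (the RLF of $-\nabla h^-$) and $\tilde{\Psi}_{-s}$, and Lemma \ref{grad d_p = -grad d_q}, which in a measure-theoretic sense identifies $-\nabla d_q$ with $\nabla d_p$ along the interior of $[p,q]$-geodesics and therefore makes $+\nabla h^-$ a good simultaneous model for both the inverse of $\Psi$ and for $\Phi$ near $\gamma$.

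Fix $r \leq \bar{r}_5$ and set $r_0 := r/100$. First, apply the hypothesis at scale $r_0$ to obtain $A_1 \subseteq B_{r_0}(\gamma(t_1))$ with $\Psi_s(A_1) \subseteq B_{2r_0}(\gamma(t_1-s))$ for $s \in [0, t_1-t_0]$. Choose $h^- = h^-_{\rho(16r_0)^2}$ along $\gamma$ satisfying Theorem \ref{hess h integral bound}(4), and let $\tilde{\Psi}_s, \tilde{\Psi}_{-s}$ be the RLFs of $-\nabla h^-$ and $+\nabla h^-$ respectively. Running the distortion-refinement argument from the second part of the proof of Lemma \ref{main lemma} (Proposition \ref{distortion bound} with $W = -\nabla h^-$, segment inequality, Theorem \ref{hess h integral bound}(4), and integral Abresch-Gromoll), I would produce $A_1' \subseteq A_1$ of comparable measure such that $\tilde{\Psi}_s(A_1') \subseteq B_{3r_0}(\gamma(t_1-s))$ for all $s \in [0, t_1-t_0]$, $\tilde{\Psi}_{-s}\circ\tilde{\Psi}_s = \mathrm{Id}$ on $A_1'$ (Proposition \ref{RLF inverse}), and $e(x) \leq c(N,\delta) r_0^2$ on $A_1'$. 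Setting $B := \tilde{\Psi}_{t_1-t_0}(A_1') \subseteq B_{3r_0}(\gamma(t_0))$, we then have $\tilde{\Psi}_{-s}(B) = \tilde{\Psi}_{t_1-t_0-s}(A_1') \subseteq B_{3r_0}(\gamma(t_0+s))$ for $s \in [0, t_1-t_0]$, and Remark \ref{RLF local volume bound} combined with hypothesis \ref{agotd 1} yields $m(B) \geq c(N,\delta) m(B_r(\gamma(t_0)))$.

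Next, compare $\Phi$ from $S_1 := B_r(\gamma(t_0)) \cap \{e \leq cr^2\}$ (of near-full measure by integral Abresch-Gromoll) with $\tilde{\Psi}_{-}$ from $B$ using Proposition \ref{distortion bound} at scale $r/10$ with $W = +\nabla h^-$. The Hessian term is controlled by the segment inequality and Theorem \ref{hess h integral bound}(4). The first-order term $\int |{-\nabla d_q} - \nabla h^-|(\Phi_s(x))$ splits as $|\nabla d^+ - \nabla h^+| + |\nabla h^+ - \nabla h^-|$: the first piece is controlled along $\Phi$-trajectories of low-excess $x$ by the $p \leftrightarrow q$-symmetric analogue of Lemma \ref{grad h geodesic bound} (identical proof by the $p \leftrightarrow q$-symmetry of Section \ref{section 4}), and the second piece, which equals $|\nabla e_{\rho(16r_0)^2}|$, is pointwise $O(r_0)$ on low-excess points by Lemma \ref{excess pw bound}(2). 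Integrating in $s \in [0, t_1-t_0]$ yields $\int_{S_1 \times B} dt_{r/10}(t_1-t_0)(x,y)\, d(m\times m) \leq c r \sqrt{t_1-t_0}\, m(B_r(\gamma(t_0)))^2$. A Markov-type argument using $m(S_1), m(B) \geq c(N,\delta) m(B_r(\gamma(t_0)))$ then produces, for $\epsilon_5$ small enough, a near-full-measure subset $A_2 \subseteq S_1$ of $B_r(\gamma(t_0))$ such that each $x \in A_2$ has some $y_x \in B$ with $dt_{r/10}(t_1-t_0)(x, y_x) \leq r/10$. Since $d(x, y_x) \leq r + 3r_0$ and $\tilde{\Psi}_{-s}(y_x) \in B_{3r_0}(\gamma(t_0+s))$, triangle inequality gives $d(\Phi_s(x), \gamma(t_0+s)) \leq r + 6r_0 + r/10 < 2r$ for all $x \in A_2$ and $s \in [0, t_1-t_0]$, as desired.

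The main obstacle will be controlling the first-order term $|{-\nabla d_q} - \nabla h^-|$ along $\Phi$-trajectories: Lemma \ref{grad h geodesic bound} is stated only along $\Psi$-trajectories (unit-speed geodesics toward $p$), so one must invoke its $p \leftrightarrow q$-symmetric analogue and bridge $h^+$ with $h^-$ via the heat-flow excess function, carefully tracking the identification $\nabla d^+ = \nabla d^-$ of Lemma \ref{grad d_p = -grad d_q}. A secondary difficulty is that $B$ concentrates in the much smaller ball $B_{3r_0}(\gamma(t_0))$, so covering most of $B_r(\gamma(t_0))$ requires the careful Markov pairing above rather than any direct inclusion, which is why the hypothesis must be applied at the smaller scale $r_0 = r/100$.
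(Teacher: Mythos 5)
Your overall approach is the paper's: bridge through the RLF $\tilde{\Psi}_{-s}$ of $+\nabla h^-$ so that the inverse relation of Proposition \ref{RLF inverse} can be used, run two successive distortion comparisons via Proposition \ref{distortion bound} (first to transfer the hypothesized $\Psi$-control near $\gamma(t_1)$ to $\tilde{\Psi}$-control, then to compare $\tilde{\Psi}_{-s}$ with $\Phi_s$), and split the first-order error along $\Phi$-trajectories as $|{-\nabla d_q} - \nabla h^-| \le |{-\nabla d_q} - \nabla h^+| + |\nabla h^+ - \nabla h^-| = |\nabla d^+ - \nabla h^+| + |\nabla e_t|$, controlling the first piece by the $p \leftrightarrow q$-symmetric analogue of Lemma \ref{grad h geodesic bound}(3) and the second by Lemma \ref{excess pw bound}(2). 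That skeleton and, in particular, your identification of the main technical obstacle are correct and match the paper's argument.

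There is, however, a concrete gap in the choice of heat-flow scale. You set $h^- = h^-_{\rho(16r_0)^2}$ with $r_0 = r/100$, so Theorem \ref{hess h integral bound}(4) gives Hessian control as an average over balls $B_{d_\epsilon}(\gamma(\cdot))$ of radius $d_\epsilon = 16r_0 = 0.16\,r$. This is fine for your first comparison, where everything lives at scale $r_0$. But in the second comparison you run $\Phi$ from $S_1 \subseteq B_r(\gamma(t_0))$ (radius $r$) against $\tilde{\Psi}_{-}$ from $B \subseteq B_{3r_0}(\gamma(t_0))$ with the distortion capped at $r/10$; for $(x,y)$ in the good set, $d(\tilde\Psi_{-s}(x), \Phi_s(y)) \le d(x,y) + r/10 \le r + 3r_0 + r/10$, hence $\Phi_s(y)$ ranges over $B_{\sim 1.2r}(\gamma(t_0+s))$. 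The segment inequality (Theorem \ref{segment inequality}) applied to this pair then produces a term $\int_{B_{\sim 2.4r}(\gamma(t_0+s))} |\Hess h^-|\,dm$, and this integral cannot be bounded using only the average over the much smaller ball $B_{0.16r}(\gamma(t_0+s))$ — you simply have no Hessian information on the annulus $B_{2.4r}\setminus B_{0.16r}$. The paper avoids this by taking $h^- = h^-_{\rho(8r)^2}$, so that the Hessian averages from Theorem \ref{hess h integral bound}(4) are over balls of radius $8r$, ample for both comparisons. The fix in your setup is to take the heat-flow scale proportional to $r$ (not $r_0$); the first, small-scale comparison still goes through because $m(B_{\sim r}(\gamma))/m(B_{r_0}(\gamma))$ is bounded by a dimensional constant via Bishop-Gromov and can be absorbed into $c(N,\delta)$. (A secondary, harmless slip: with the scale corrected to $\sim r$, the $|\nabla e_t|$ piece is pointwise $O(r)$ rather than $O(r_0)$ on low-excess points, but this still integrates to $O(r(t_1-t_0)) \le O(r\sqrt{t_1-t_0})$ as needed.)
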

\begin{proof}
As a reminder, $\Phi$ is defined by \eqref{Phidef} and is the local flow of $-\nabla d_q$, at least from the sets and for the time interval we are concerned with.

We assume $\bar{r}_5 \leq \frac{\delta}{10}$ and $\epsilon_5 \leq \frac{\delta}{10}$ to begin with but will impose more bounds on both as the proof continues. We will not keep track of $\bar{r}_5$ for the sake of brevity. Fix $\gamma$, $\epsilon \leq \epsilon_5$ and $\bar{r} \leq \bar{r}_5$ which satisfy conditions \ref{agotd 1} and \ref{agotd 2}. Fix $r \leq \bar{r}$ and $\delta \leq t_0 \leq t_1 \leq 1-\delta$ with $t_1 - t_0 \leq \epsilon$. 

Fix $h^- \equiv h^-_{\rho(8r)^2}$ satisfying statement 4 of Theorem \ref{hess h integral bound} for the balls of radius $8r$ along $\gamma$, where $\rho \in [\frac{1}{2},2]$. Let $(\tPsi_t)_{t \in [0,1]}$ and $(\tPsi_{-t})_{t \in [0,1]}$ be the RLFs of the time-independent vector fields $-\nabla h^-$ and $\nabla h^-$ respectively as before. 

The plan is as follows: first we use property \ref{agotd 2} to make sure a significant portion of $B_{\frac{r}{16}}(\gamma(t_1))$ stays close to $\gamma$ from $t_1$ to $t_0$ under the flow of $\tPsi_{s}$, then we reverse flow the image of this portion under $\tPsi_{-s}$ and use it to make sure a significant portion of $B_r(\gamma(t_0))$ stays close to $\gamma$ from $t_0$ to $t_1$ under $\Phi_{s}$. 

By condition \ref{agotd 2} and integral Abresch-Gromoll, there exists $A_1 \subseteq B_{\frac{r}{16}}(\gamma(t_1))$ so that
\begin{enumerate}
	\item $\frac{m(A_1)}{m(B_{\frac{r}{16}}(\gamma(t_1)))} \geq 1 - 2V(1,10)$;
	\item $\Psi_{s}(A_1) \subseteq B_{\frac{r}{8}}(\gamma(t_1-s)) \; \forall s \in [0, t_1-t_0]$;
	\item $e(x) \leq c(N,\delta)r^2 \; \forall x \in A_1$.
\end{enumerate}

For all $s \in [0,t_1-t_0]$ and $(x,y)\in X \times X$, define
\begin{equation}\label{agotd eq1}
dt_1(s)(x,y) := \min\left\{r, \max\limits_{0 \leq \tau \leq s} |d(x,y) - d(\Psi_\tau(x), \tilde{\Psi}_\tau(y))|\right\}
\end{equation}
and 
\begin{equation}\label{agotd  eq2}
U_1^s := \{(x,y) \in A_1 \times B_{\frac{r}{16}}(\gamma(t_1))|dt_1(s)(x,y) < r\}. 
\end{equation}
Consider $\int_{A_1 \times B_{\frac{r}{16}}(\gamma(t_1))} dt_1(s)(x,y) \, d(m \times m)(x,y)$ for $0 \leq s \leq t_1-t_0$. For any $s \in [0, t_1 - t_0]$ and $(x,y) \in U_1^s$, 
\begin{enumerate}
	\item $d(x,y) < \frac{r}{8}$;
	\item $d(\Psi_{s}(x), \gamma(t_1-s)) < \frac{r}{8}$ by definition of $A_1$;
	\item $dt_1(s)(x,y) < r$.
\end{enumerate}
Therefore, $\tilde{\Psi}_{s}(y) \in B_{\frac{5r}{4}}\gamma(t_1-s)$ by triangle inequality and so $(\Psi_{s}, \tilde{\Psi}_{s})(U_1^s) \subseteq B_{4r}(\Psi_{s}(z)) \times B_{4r}(\Psi_{s}(z))$.

Using exactly the same type of computation as the second part of the proof of the main lemma, 
\begin{align}\label{agotd eq3}
\begin{split}
\int\limits_{A_1 \times B_{\frac{r}{16}}(\gamma(t_1))} dt_1(t_1-t_0)(x,y) \, d(m \times m)(x,y) \leq c(N,\delta)rm(B_r(\gamma(t_1)))^2\sqrt{t_1-t_0}.
\end{split}
\end{align}
$A_1$ takes a significant portion of the measure of $B_r(\gamma(t_1))$ by definition. The same considerations as in the proof of Lemma \ref{main lemma} gives the existence of $z \in A_1$ and $D_1 \subseteq B_r(\gamma(t_1))$ so that
\begin{equation}\label{agotd eq4}
\frac{m(D_1)}{m(B_{\frac{r}{16}}(\gamma(t_1)))} \geq 1 - V(1,10) \; \; \text{and} \; \; dt_1(t_1-t_0)(z,y)\leq \frac{r}{16} \; \forall y \in D_1,
\end{equation}
after we bound $\epsilon_5$ sufficiently small depending only on $N$ and $\delta$. The latter implies 
\begin{equation}\label{agotd eq5}
\tPsi_{s}(D_1) \subseteq B_{\frac{5r}{16}}(\gamma(t_1-s)) \; \forall s \in [0,t_1-t_0].
\end{equation} 
By Proposition \ref{RLF inverse}, we may assume in addition that $D_1$ satisfies
\begin{equation}\label{agotd eq5.5}
\tPsi_{-s}(\tPsi_{t_1-t_0}(x)) = \tPsi_{t_1-t_0-s}(x) \; \forall s \in [0,t_1-t_0] \; \text{and} \; \forall x \in D_1.
\end{equation}

Furthermore, $\tPsi_{s}(D_1)$ is non-trivial in measure compared to $B_{r}(\gamma(t_1))$ for all $s \in [0,t_1-t_0]$.
\begin{align}\label{agotd eq6}
\begin{split}
	\frac{m(\tilde{\Psi}_{s}(D_1))}{m(B_{r}(\gamma(t_1)))} &\geq e^{-c(N,\delta)\frac{\delta}{10}}\frac{m(D_1)}{m(B_{r}(z))} \; \; \text{ , by \ref{lap cutoffed bound}, \ref{RLF existence} \eqref{RLF existence eq1}, and $\epsilon_5 \leq \frac{\delta}{10}$}\\
	&\geq c(N,\delta) \; \; \text{ , by definition of $D_1$ and Bishop-Gromov.}
\end{split}
\end{align}

By integral Abresch-Gromoll, there exists $B_{r}(\gamma(t_0))' \subseteq B_{r}(\gamma(t_0))$ so that
\begin{equation}\label{agotd eq6.5}
 e(x) \leq c(N,\delta)r^2 \; \forall x \in B_{r}(\gamma(t_0))' \; \; \text{and}\; \; \frac{m(B_{r}(\gamma(t_0))')}{m(B_{r}(\gamma(t_0)))} \geq 1-\frac{1}{2}V(1,10).
\end{equation}

For all $s \in [0,t_1-t_0]$ and $(x,y)\in X \times X$, define
\begin{equation}\label{agotd eq7}
dt_2(s)(x,y) := \min\left\{r, \max\limits_{0 \leq \tau \leq s} |d(x,y) - d(\tPsi_{-\tau}(x), \Phi_\tau(y))|\right\}
\end{equation}
and 
\begin{equation}\label{agotd  eq8}
U_2^s := \{(x,y) \in \tPsi_{t_1-t_0}(D_1) \times B_{r}(\gamma(t_0))'|dt_2(s)(x,y) < r\}. 
\end{equation}
Consider $\int_{\tPsi_{t_1-t_0}(D_1) \times B_{r}(\gamma(t_0))'} dt_2(s)(x,y) \, d(m \times m)(x,y)$ for $0 \leq s \leq t_1-t_0$. By proposition \ref{distortion bound}, for a.e. $s \in [0,t_1-t_0]$,
\begin{align}\label{agotd eq9}
\begin{split}
	&\hspace{0.5cm}\frac{d}{ds} \int\limits_{\tPsi_{t_1-t_0}(D_1) \times B_{r}(\gamma(t_0))'} dt_2(s)(x,y) \, d(m \times m)(x,y)\\
	 &\leq \int\limits_{U_2^s} |\nabla h^- + \nabla d_q|(\Phi_{s}(y)) \, d(m \times m)(x,y)\\
&\hspace{2cm} + \int\limits_{0}^{1} \int\limits_{U_2^s} d(\tPsi_s(x), \Phi_s(y))|\Hess h^-|_{\HS}(\tilde{\gamma}_{\tPsi_s(x),\Phi_s(y)}(\tau)) \,  d(m \times m)(x,y) \, d\tau.
\end{split} 
\end{align}

For any $s \in [0,t_1-t_0]$, $s' \in [0,s]$, and $(x,y) \in U_2^s$, 
\begin{enumerate}
	\item $d(x,y) < r+\frac{5r}{16}$ by \eqref{agotd eq5};
	\item $d(\tPsi_{-s'}(x), \gamma(t_0+s')) = d(\tilde{\Psi}_{t_1-t_0-s'}(x'), \gamma(t_0+s')) < \frac{5r}{16}$ for some $x' \in D_1$ by \eqref{agotd eq5} and \eqref{agotd eq5.5};
	\item $dt_2(s)(x,y) < r$ by definition of $U_2^s$ \eqref{agotd eq8}.
\end{enumerate}
Hence,
\begin{equation}\label{agotd eq10}
\Phi_{s'}(y) \in B_{2r+\frac{5r}{8}}(\gamma(t_0+s'))
\end{equation}
by triangle inequality. Therefore, $(\tPsi_{-s'}, \Phi_{s'})(U_2^s) \in B_{4r}(\gamma(t_0+s')) \times B_{4r}(\gamma(t_0+s'))$ for all $s' \in [0,s]$. For any $(x,y) \in U_2^s$,
\begin{align}\label{agotd eq11}
\begin{split}
	\Delta h^-(\tilde{\Psi}_{-s'}(x)) &= \Delta h^+(\tilde{\Psi}_{-s'}(x)) + \Delta \hat{e}(\tilde{\Psi}_{-s'}(x))\\
	&\geq -c(N,\delta) \; \; \text{ , by Lemma \ref{lap cutoffed bound} and Lemma \ref{excess pw bound} \ref{excess pw bound 3}},
\end{split}
\end{align}
where $h^+$, $\hat{e}$ are heat flow approximations of $h^+_0$ and $e_0$ respectively up to the same time as $h^-$. 
Therefore,
\begin{align}\label{agotd eq12}
\begin{split}
	&\hspace{0.5cm}\int\limits_{0}^{1} \int\limits_{U_2^s} d(\tilde{\Psi}_{-s}(x), \Phi_s(y))|\Hess h^-|_{\HS}(\tilde{\gamma}_{\tilde{\Psi}_{-s}(x), \Phi_s(y)}(\tau)) \,  d(m \times m)(x,y) \, d\tau\\
	&\leq c(N,\delta)\int\limits_{0}^{1} \int\limits_{(\tilde{\Psi}_{-s},\Phi_s)(U_2^s)} d(x, y)|\Hess h^-|_{\HS}(\tilde{\gamma}_{x,y}(\tau)) \,  d(m \times m)(x,y) \, d\tau \text{, by \ref{-dp grad flow properties} 2, \eqref{agotd eq11} and \ref{RLF local volume bound}}\\
	&\leq c(N,\delta)rm(B_{4r}(\gamma(t_0+s)))\int\limits_{B_{8r}(\gamma(t_0+s))} |\Hess h^-|_{\HS}\,  dm \; \; \text{, by segment inequality \ref{segment inequality}}\\
	&\leq c(N,\delta)rm(B_{r}(\gamma(t_1)))^2 \fint\limits_{B_{8r}(\gamma(t_0+s))} |\Hess h^-|_{\HS} \, dm \; \; \text{, by Bishop-Gromov and property \ref{agotd 1} of $\gamma$}.
\end{split}
\end{align}
Integrating in $s \in [0,t_1-t_0]$, 
\begin{align}\label{agotd eq13}
\begin{split}
	&\hspace{0.5cm} \int\limits_{0}^{t_1-t_0} \bigg(\int\limits_{0}^{1}  \int\limits_{U_2^s} d(\tilde{\Psi}_{-s}(x),\Phi_{s}(y))|\Hess h^-|_{\HS}(\tilde{\gamma}_{\tilde{\Psi}_{-s}(x), \Phi_s(y)}(\tau)) \,  d(m \times m)(x,y) \, d\tau \bigg)\, dt\\
	& \leq crm(B_{r}(\gamma(t_1)))^2 \int\limits_{0}^{t_1-t_0} \fint\limits_{B_{8r}(\gamma(t_0+s))} |\Hess h^-|_{\HS} \, dm \, ds\\
	&\leq c(N,\delta)rm(B_{r}(\gamma(t_1)))^2 \sqrt{t_1-t_0},
\end{split}
\end{align}
where the last line follows from the definition of $h^-$, statement 4 of Theorem \ref{hess h integral bound}, and Cauchy-Schwarz.

We have
\begin{align}\label{agotd eq14}
\begin{split}
	\int\limits_{0}^{t_1-t_0} \int\limits_{U^s_2} |\nabla h^+ + \nabla d_q|(\Phi_s(y)) \, d(m \times m)(x,y)  \, ds \leq c(N,\delta)rm(B_r(\gamma(t_1)))^2\sqrt{t_1-t_0},
\end{split}
\end{align}
where the first inequality is from statement 3 of Lemma \ref{grad h geodesic bound} and the excess bound on the elements of $B_{r}(\gamma(t_0))'$ \eqref{agotd eq6.5}, and the second inequality is from property \ref{agotd 1} of $\gamma$ and Bishop-Gromov. 
Similarly, using statement 3 of Lemma \ref{excess pw bound} with \eqref{agotd eq10}, 
\begin{align}\label{agotd eq15}
\begin{split}
	\int\limits_{0}^{t_1-t_0} \int\limits_{U^s_2} |\nabla h^- - \nabla h^+|(\Phi_s(y)) \, d(m \times m)(x,y)  \, ds &\leq c(N,\delta)rm(B_r(\gamma(t_1)))^2(t_1-t_0)\\
	&\leq crm(B_r(\gamma(t_1)))^2\sqrt{t_1-t_0}.
\end{split}
\end{align}

Combining \eqref{agotd eq12} - \eqref{agotd eq14} with the bound \eqref{agotd eq9} on $\frac{d}{ds} \int\limits_{\tPsi_{t_1-t_0}(D_1) \times B_r(\gamma(t_0))'} dt_2(s)(x,y)$, we obtain
\begin{align}\label{agotd eq16}
\begin{split}
&\hspace{0.5cm}\int\limits_{\tPsi_{t_1-t_0}(D_1) \times B_r(\gamma(t_0))'} dt_2(t_1-t_0)(x,y) \, d(m \times m)(x,y)\\
& = \int_{0}^{t_1-t_0} [\frac{d}{ds} \int\limits_{\tPsi_{t_1-t_0}(D_1) \times B_r(\gamma(t_0))'} dt_2(s)(x,y) \, d(m \times m)(x,y)] \, ds\\ 
&\leq c(N,\delta)rm(B_r(\gamma(t_1)))^2\sqrt{t_1-t_0}.
\end{split}
\end{align}

Both $\tPsi_{t_1-t_0}(D_1)$ and $B_{r}(\gamma(t_0))'$ are non-trivial in measure compared to $B_{r}(\gamma(t_1))$ by \eqref{agotd eq6} and property \ref{agotd 1} respectively. The same considerations as in the proof of Lemma \ref{main lemma} gives the existence of $z' \in \tPsi_{t_1-t_0}(D_1)$ and $A_2 \subseteq B_r(\gamma(t_0))'$ so that
\begin{equation}\label{agotd eq17}
\frac{m(A_2)}{m(B_{r}(\gamma(t_0)))} \geq 1 - V(1,10) \; \; \text{and} \; \; dt_2(t_1-t_0)(z',y)\leq \frac{3r}{8} \; \forall y \in A_2,
\end{equation}
after we bound $\epsilon_5$ sufficiently small depending only on $N$ and $\delta$. The latter implies 
\begin{equation}\label{agotd eq18}
\Phi_{s}(A_2) \subseteq B_{2r}(\gamma(t_0+s)) \; \forall s \in [0,t_1-t_0].
\end{equation} 
This finishes the proof of the lemma. 
\end{proof}

Lemmas \ref{at geodesic one direction} and \ref{at geodesic other direction} give
\begin{cor}\label{at geodesic both directions}
For any $\delta \in (0,0.1)$, there exists $\epsilon_6(N,\delta) > 0$ and $\bar{r}_6(N,\delta) > 0$ so that for any unit speed geodesic $\gamma$ from $p$ to $q$, there exists a unit speed geodesic $\gamd$ from $p$ to $q$ with with $\gamd \equiv \gamma$ on $[1-\delta,1]$ so that for all $r \leq \bar{r}_6$  and $\delta \leq t_0 \leq t_1 \leq 1-\delta$, if $t_1-t_0 \leq \epsilon_6$, then
\begin{enumerate}
	\item \label{agbd 1}$V(1, 100)^4 \leq \frac{m(B_r(\gamd(t_1)))}{m(B_r(\gamd(t_0)))} \leq \frac{1}{V(1,100)^4}$;
	\item \label{agbd 2}There exists $A_1 \subseteq B_{r}(\gamd(t_1))$ so that $m(A_1) \geq (1-V(1,10))m(B_r(\gamd(t_1)))$ and $\Psi_{s}(A_1) \subseteq B_{2r}(\gamd(t_1-s))$ for all $s \in [0,t_1-t_0]$;
	\item \label{agbd 3}There exists $A_2 \subseteq B_{r}(\gamd(t_0))$ so that $m(A_2) \geq (1-V(1,10))m(B_r(\gamd(t_0)))$ and $\Phi_{s}(A_2) \subseteq B_{2r}(\gamd(t_0+s))$ for all $s \in [0,t_1-t_0]$. 
\end{enumerate}
\end{cor}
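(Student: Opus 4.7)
The proof should be a short direct combination of the two preceding lemmas, so I will sketch it as such. Set $\epsilon_6(N,\delta) := \min\{\epsilon_4(N,\delta), \epsilon_5(N,\delta)\}$ and $\bar r_6(N,\delta) := \min\{\bar r_4(N,\delta), \bar r_5(N,\delta)\}$. My plan is to first invoke Lemma \ref{at geodesic one direction} on the given geodesic $\gamma$, and then feed the resulting geodesic $\gamd$ into Lemma \ref{at geodesic other direction} to pick up the reverse flow statement.

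More precisely, given $\gamma$, Lemma \ref{at geodesic one direction} produces a unit speed geodesic $\gamd$ from $p$ to $q$ with $\gamd \equiv \gamma$ on $[1-\delta, 1]$, and such that, for all $r \leq \bar r_4$ and $\delta \leq t_0 \leq t_1 \leq 1-\delta$ with $t_1 - t_0 \leq \epsilon_4$, properties \ref{agbd 1} and \ref{agbd 2} of the corollary hold verbatim. Since $\bar r_6 \leq \bar r_4$ and $\epsilon_6 \leq \epsilon_4$, these properties hold for all $r \leq \bar r_6$ and all admissible $t_0, t_1$ with $t_1 - t_0 \leq \epsilon_6$, giving parts \ref{agbd 1} and \ref{agbd 2}.

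For part \ref{agbd 3}, observe that the hypothesis list of Lemma \ref{at geodesic other direction} on a geodesic $\gamma$ is exactly properties \ref{agbd 1} and \ref{agbd 2} (for some $\epsilon \leq \epsilon_5$, $\bar r \leq \bar r_5$). Taking $\epsilon = \epsilon_6 \leq \epsilon_5$ and $\bar r = \bar r_6 \leq \bar r_5$, the geodesic $\gamd$ constructed above satisfies those hypotheses. Applying Lemma \ref{at geodesic other direction} to $\gamd$ with these parameters produces, for every $r \leq \bar r_6$ and $\delta \leq t_0 \leq t_1 \leq 1-\delta$ with $t_1-t_0 \leq \epsilon_6$, a subset $A_2 \subseteq B_r(\gamd(t_0))$ of measure at least $(1-V(1,10))m(B_r(\gamd(t_0)))$ satisfying $\Phi_s(A_2) \subseteq B_{2r}(\gamd(t_0+s))$ for all $s \in [0, t_1-t_0]$. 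This is property \ref{agbd 3} and completes the proof.

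In short, this corollary requires no new argument: the only content is that the output of Lemma \ref{at geodesic one direction} already satisfies the input of Lemma \ref{at geodesic other direction}, so the two lemmas chain together cleanly. There is no real obstacle; the corollary is essentially bookkeeping of constants, and the intended statement of the two lemmas was clearly designed with this composition in mind.
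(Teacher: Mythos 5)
Your proof is correct and matches the paper's own (implicit) argument: the paper simply states that Lemmas \ref{at geodesic one direction} and \ref{at geodesic other direction} give the corollary, and then immediately below fixes $\epsilon_6, \bar r_6$ as the minimum of the corresponding constants from those two lemmas, exactly as you do. The only content, as you note, is that the conclusion of Lemma \ref{at geodesic one direction} is precisely the hypothesis of Lemma \ref{at geodesic other direction}, so they compose directly.
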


We would like to now construct a geodesic that has this behaviour for all $\delta$ by taking a limit of $\gamma^{\delta}$ as $\delta \to 0$. To have the properties pass over to the limit, we need to make sure each $\gamma^\delta$ also satisfies the above properties for any $\delta' > \delta$. For all $\delta \in (0,0.1)$, we fix the constants $\epsilon_6(N,\delta)$ and $\bar{r}_6(N,\delta)$ which come from taking the minimimum of their respective counterparts in lemmas \ref{at geodesic one direction} and \ref{at geodesic other direction}.
\begin{lem}\label{at geodesic larger delta}
Let $\delta \in (0,0.1)$ and assume some unit speed geodesic $\gamd$ from $p$ to $q$ satisfies properties \ref{agbd 1} - \ref{agbd 3} for $\delta$, $\epsilon_6(N,\delta)$ and $\bar{r}_6(N,\delta)$. Then for any $\delta' \in (\delta,0.1)$, $\gamd$ also satisfies properties \ref{agbd 1} - \ref{agbd 3} for $\delta'$, $\epsilon_6(N,\delta')$ and $\bar{r}_6(N,\delta')$
\end{lem}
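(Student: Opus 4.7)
The plan is to run the arguments of Lemmas \ref{at geodesic one direction} and \ref{at geodesic other direction} for the parameter $\delta'$, using the given $\gamd$ as input rather than constructing a new geodesic. Since $[\delta', 1-\delta'] \subseteq [\delta, 1-\delta]$, the hypothesized properties \ref{agbd 1}--\ref{agbd 3} of $\gamd$ at $\delta$ automatically restrict to $[\delta', 1-\delta']$ at scale $\bar{r}_6(N,\delta)$ and time $\epsilon_6(N,\delta)$, and these will supply the ``starting data'' for the new argument.

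First, I would partition $[\delta', 1-\delta']$ into subintervals of some width $\omega \leq \tfrac{1}{2}\min\{\epsilon_6(N,\delta), \delta' - \delta\}$ with endpoints $\sigma_0 = \delta' < \sigma_1 < \cdots < \sigma_k = 1-\delta'$. At each partition point $\sigma_i$, properties \ref{agbd 2} and \ref{agbd 3} of $\gamd$ at $\delta$ (applied with $t_0 = \sigma_i - \omega$, $t_1 = \sigma_i$ and vice versa) produce the required control of $\Psi_s$ and $\Phi_s$ on $B_{\bar{r}_6(N,\delta)}(\gamd(\sigma_i))$ for $s \in [0,\omega]$. For scales $r \in (\bar{r}_6(N,\delta), \bar{r}_6(N,\delta')]$, I would bootstrap by applying the Induction Lemma \ref{induction lemma} to $z = \gamd(\sigma_i)$, whose hypotheses are satisfied (the excess condition $e(z) \leq c_1^2 r^2$ being trivial on a geodesic). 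Iterating \ref{induction lemma} geometrically in $r$ yields the same control at every scale $r \leq \bar{r}_6(N,\delta')$---exactly the analogue of the intermediate properties \ref{agod i}, \ref{agod ii} used in the proof of \ref{at geodesic one direction}, now for $\gamd$ relative to $\delta'$.

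With this intermediate data, I would reproduce verbatim the Hessian-and-distortion argument from \eqref{agod eq1} onward in the proof of Lemma \ref{at geodesic one direction}: for consecutive partition subintervals, combine the estimate from Theorem \ref{hess h integral bound} with the distortion bound from Proposition \ref{distortion bound} to establish property \ref{agbd 2} for $\gamd$ at $\delta'$ on intervals of length up to an appropriately chosen $\epsilon_6(N,\delta')$. The proof of Lemma \ref{at geodesic other direction} then applies unchanged (relying only on property \ref{agbd 2}, the Hessian estimate, and Lemma \ref{excess pw bound}) to convert property \ref{agbd 2} at $\delta'$ into property \ref{agbd 3}. Property \ref{agbd 1} follows from Bishop-Gromov comparison combined with these, as in the proof of Lemma \ref{at geodesic one direction}.

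The main obstacle is bookkeeping: one must verify that the resulting $\epsilon_6(N,\delta')$ and $\bar{r}_6(N,\delta')$ depend only on $N$ and $\delta'$, not on $\delta$. This reduces to checking that all the constants appearing in Theorem \ref{hess h integral bound}, Proposition \ref{distortion bound}, Lemma \ref{induction lemma}, and Lemma \ref{at geodesic other direction} depend only on $N$ and a positive lower bound on the ambient distance to $p,q$, which on $[\delta', 1-\delta']$ is $\delta'$; the partition width $\omega$ enters only in auxiliary estimates and not in the final output constants, so any dependence on $\delta$ through $\omega$ is harmless.
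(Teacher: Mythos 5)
The paper takes a fundamentally different route from yours, and I think your approach has a genuine gap. The paper does \emph{not} attempt to re-verify the estimates for $\gamd$ directly. Instead, it applies Lemma~\ref{at geodesic one direction} to $\gamd$ (with parameter $\delta'$) to produce a \emph{fresh} geodesic $\gamdp$ which automatically carries the correct $\epsilon_6(N,\delta')$, $\bar{r}_6(N,\delta')$ constants, agrees with $\gamd$ on $[1-\delta',1]$ by construction, and then proves that $\gamd = \gamdp$ on all of $[\delta',1]$. The identification is done by the ball-flow contradiction: if $\gamd$ and $\gamdp$ first separate at $s_0 \in (\delta', 1-\delta']$, take $t_1 = s_0$ and $t_0 < s_0$ nearby with $\gamd(t_0) \neq \gamdp(t_0)$ and $r < d(\gamd(t_0),\gamdp(t_0))/4$; property~\ref{agbd 2} for $\gamd$ at $\delta$ and for $\gamdp$ at $\delta'$ both force a majority of $B_r(\gamd(t_1)) = B_r(\gamdp(t_1))$ to land under $\Psi_{t_1-t_0}$ in disjoint balls $B_{2r}(\gamd(t_0))$ and $B_{2r}(\gamdp(t_0))$, which is impossible. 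Once $\gamd=\gamdp$ on $[\delta',1]$, the $\delta'$-conclusions transfer automatically. This sidesteps all constant-tracking.

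Your proposal has a concrete gap in the constant-dependence claim. The partition width you choose satisfies $\omega \le \tfrac12\epsilon_6(N,\delta)$, and in the gluing argument of Lemma~\ref{at geodesic one direction} the output time threshold is capped by the partition width: the proof sets $\epsilon_4 \le \omega'' \le \omega' \le \omega$. So $\omega$ is \emph{not} merely an auxiliary quantity; it directly bounds the time window that comes out. Your final $\epsilon$ therefore depends on $\delta$ through $\epsilon_6(N,\delta)$, which contradicts the requirement that the output constant $\epsilon_6(N,\delta')$ depend only on $N$ and $\delta'$. There is also a subtler mismatch in the bootstrap step: the hypotheses of the Induction Lemma~\ref{induction lemma} are phrased in terms of $\Psi_s(z)$ (flow under the Borel-selected geodesic from $z$ toward $p$), whereas what you know from properties~\ref{agbd 1}--\ref{agbd 3} of $\gamd$ is control relative to $\gamd(\sigma_i - s)$. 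These two points need not coincide unless $\gamd$ agrees with the Borel selection of geodesics from $\gamd(\sigma_i)$ to $p$, and at this point in the paper (before Theorem~\ref{no branch}) that cannot be assumed; resolving it would require precisely the kind of contradiction argument the paper's proof is built around.
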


\begin{proof}
	Fix $\delta' \in (\delta,0.1)$. Use $\gamd$ to construct some $\gamdp$ with the construction of Lemma \ref{at geodesic one direction}. $\gamdp$ satisfies properties \ref{agbd 1} - \ref{agbd 3} for $\delta'$, $\epsilon_6(N,\delta')$ and $\bar{r}_6(N,\delta')$ by lemmas \ref{at geodesic one direction} and \ref{at geodesic other direction}. Furthermore, $\gamd(t) = \gamdp(t)$ for all $t \in [1-\delta',1]$ by construction. We will show $\gamd(t) = \gamdp(t)$ for all $t \in [\delta',1]$ which will allow us to conclude.

	Assume this is not the case. Define $s_0 := \min\{s \in [\delta',1-\delta']\, : \, \gamd(t) = \gamdp(t)\, \forall \, t \in [s, 1]\}$. By assumption, $\delta' < s_0 \leq 1-\delta'$. Therefore, there exists $t_0 \in [\delta',s_0)$ so that $t_1 - t_0 \leq \min\{\epsilon_6(N, \delta), \epsilon_6(N, \delta')\}$ and $\gamd(t_0) \neq \gamdp(t_0)$. Choose any $r < \min\{\bar{r}_6(N,\delta), \bar{r}_6(N,\delta'), \frac{d(\gamd(t_0), \gamdp(t_0))}{4}\}$ and consider $B_r(\gamd(t_1))$. On one hand, most of $B_r(\gamd(t_1))$ needs to end up in $B_{2r}(\gamd(t_0))$ under $\Psi_{t_1-t_0}$ by property \ref{agbd 2} for $\gamd$. On the other hand, most of $B_r(\gamd(t_1))$ needs to end up in $B_{2r}(\gamdp(t_0))$ under $\Psi_{t_1-t_0}$ by property \ref{agbd 2} for $\gamdp$. These two balls are disjoint and so we have a contradiction. 
\end{proof}

This immediately gives the existence of a geodesic with the desired properties for any $\delta \in (0,0.1)$.

\begin{thm}\label{limit geodesic}
There exists a unit speed geodesic $\lgam$ from $p$ to $q$ so that for any $\delta \in (0,0.1)$, there exists $\epsilon_6(N,\delta)>0$ and $\bar{r}_6(N,\delta)>0$ so that for all $r \leq \bar{r}_6$ and $\delta \leq t_0 \leq t_1 \leq 1-\delta$, if $t_1-t_0 \leq \epsilon_6$, then
\begin{enumerate}
	\item \label{lg 1}$V(1, 100)^4 \leq \frac{m(B_r(\lgam(t_1)))}{m(B_r(\lgam(t_0)))} \leq \frac{1}{V(1,100)^4}$;
	\item \label{lg 2}There exists $A_1 \subseteq B_{r}(\lgam(t_1))$ so that $m(A_1) \geq (1-V(1,10))m(B_r(\lgam(t_1)))$ and $\Psi_{s}(A_1) \subseteq B_{2r}(\lgam(t_1-s))$ for all $s \in [0,t_1-t_0]$;
	\item \label{lg 3}There exists $A_2 \subseteq B_{r}(\lgam(t_0))$ so that $m(A_2) \geq (1-V(1,10))m(B_r(\lgam(t_0)))$ and $\Phi_{s}(A_2) \subseteq B_{2r}(\lgam(t_0+s))$ for all $s \in [0,t_1-t_0]$. 
\end{enumerate}
\end{thm}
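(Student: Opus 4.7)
The plan is to obtain $\lgam$ as a uniform subsequential limit of geodesics $\gamma^{\delta_k}$ with $\delta_k \downarrow 0$, each supplied by Corollary \ref{at geodesic both directions}. The critical lever is Lemma \ref{at geodesic larger delta}: a geodesic enjoying properties \ref{agbd 1}--\ref{agbd 3} at scale $\delta_k$ automatically enjoys them at every scale $\delta' > \delta_k$, with constants $\epsilon_6(N,\delta')$, $\bar{r}_6(N,\delta')$ appropriate to $\delta'$. This uniformity in $\delta$ is precisely what makes a limiting procedure viable, since otherwise the constants governing $\gamma^{\delta_k}$ could blow up as $\delta_k \downarrow 0$.

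Concretely, fix any unit speed geodesic $\gamma$ from $p$ to $q$, pick a decreasing sequence $\delta_k \downarrow 0$ with $\delta_1 < 0.1$, and for each $k$ apply Corollary \ref{at geodesic both directions} to $\gamma$ with parameter $\delta_k$ to obtain a unit speed geodesic $\gamma^{\delta_k}$ satisfying \ref{agbd 1}--\ref{agbd 3} at scale $\delta_k$. The sequence $\{\gamma^{\delta_k}\}$ is uniformly $1$-Lipschitz, so by Arzel\`a--Ascoli a subsequence (still denoted $\gamma^{\delta_k}$) converges uniformly to a $1$-Lipschitz curve $\lgam\colon [0,1]\to X$. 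Since each $\gamma^{\delta_k}$ has length $d(p,q) = 1$ and joins $p$ to $q$, lower semicontinuity of length forces $\lgam$ to be a unit speed geodesic from $p$ to $q$.

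To finish, fix $\delta \in (0,0.1)$ and pick $k_0$ so that $\delta_k < \delta$ for all $k \geq k_0$. By Lemma \ref{at geodesic larger delta}, each such $\gamma^{\delta_k}$ satisfies \ref{agbd 1}--\ref{agbd 3} at scale $\delta$ with the uniform constants $\epsilon_6(N,\delta)$, $\bar{r}_6(N,\delta)$. Property \ref{lg 1} passes to $\lgam$ because for each fixed $x\in X$ the function $r \mapsto m(B_r(x))$ is left-continuous with at most countably many discontinuities, so $m(B_r(\gamma^{\delta_k}(t_i))) \to m(B_r(\lgam(t_i)))$ for a.e.\ $r$; the inequality extends from a.e.\ $r$ to all $r \leq \bar{r}_6$ by sandwiching with nearby continuity radii and invoking the Bishop--Gromov volume ratio. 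For property \ref{lg 2}, given subsets $A_1^k \subseteq B_r(\gamma^{\delta_k}(t_1))$ supplied by \ref{agbd 2} for $\gamma^{\delta_k}$, define $A := \limsup_k A_1^k$. Fatou's lemma on indicator functions gives $m(A) \geq \limsup_k m(A_1^k) \geq (1-V(1,10))\lim_k m(B_r(\gamma^{\delta_k}(t_1)))$, while if $x \in A$ then $\Psi_s(x) \in B_{2r}(\gamma^{\delta_k}(t_1-s))$ for infinitely many $k$, forcing $\Psi_s(x) \in \overline{B_{2r}(\lgam(t_1-s))}$. Property \ref{lg 3} is entirely symmetric, replacing $\Psi$ with $\Phi$.

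The main technical obstacle is that the limsup-of-sets construction naturally yields conclusions involving closed balls of radius $r$ and $2r$, whereas \ref{lg 2}--\ref{lg 3} are stated for open balls with the specific constants $V(1,100)^4$ and $1-V(1,10)$. I would resolve this by applying the above argument at a slightly smaller radius $r' < r$ (so $\overline{B_{r'}(\lgam(t_1))} \subseteq B_r(\lgam(t_1))$ and $\overline{B_{2r'}(\lgam(t_1-s))} \subseteq B_{2r}(\lgam(t_1-s))$), then letting $r' \uparrow r$ and using left-continuity of $r \mapsto m(B_r(x))$ to recover the full measure bound on $A$. The delicate bookkeeping of how the constants interact under these radius perturbations, and how they remain compatible with the specific constants $V(1,100)^4$, $V(1,10)$ of the theorem statement, is the key point requiring careful verification.
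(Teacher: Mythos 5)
Your proposal is correct and takes essentially the same route as the paper: apply Corollary \ref{at geodesic both directions} to produce $\gamma^{\delta_k}$, invoke Lemma \ref{at geodesic larger delta} to lift the properties to any fixed scale $\delta > \delta_k$, pass to an Arzel\`a--Ascoli limit, and transfer the properties to $\lgam$ via measure-theoretic limiting arguments. The paper's own proof is terse (one sentence citing Arzel\`a--Ascoli, Lemma \ref{at geodesic larger delta}, and Bishop--Gromov); your fill-in of the $\limsup$-of-sets construction, the reverse Fatou bound, and the radius perturbation to handle the open/closed ball discrepancy is exactly the right way to make those words rigorous.
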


\begin{proof}
Fix any unit speed geodesic $\gamma$ from $p$ to $q$ and then use the construction of Lemma \ref{at geodesic one direction} to obtain a $\gamma^\delta$ for each $\delta \in (0,0.1)$. By Arzel\`a-Ascoli theorem, we can take a limit $\lgam$ of $\gamma^\delta$ as $\delta \to 0$ after passing to a subsequence. By Lemma \ref{at geodesic larger delta} and Bishop-Gromov, $\lgam$ will have the desired properties.
\end{proof}

\subsection{Proof of main theorem}
We now prove the H{\"o}lder continuity in pointed Gromov-Hausdorff distance of small balls along the interior of any geodesic between $p$ and $q$ constructed in Theorem \ref{limit geodesic} using essentially the same argument as in \cite{CN12}. 

\begin{thm}\label{section main theorem}
There exists a unit speed geodesic $\gamma$ between $p$ and $q$ so that for any $\delta \in (0,0.1)$, there exists $\epsilon(N,\delta) > 0$, $\bar{r}(N,\delta) >  0$ and $C(N,\delta)$ so that for any $r \leq \bar{r}$ and $t_0, t_1 \in [\delta, 1-\delta]$, if $|t_1-t_0| \leq \epsilon$ then 
\begin{equation*}
	d_{pGH}\bigg((B_{r}(\gamma(t_0)), \gamma(t_0)), (B_{r}(\gamma(t_1)), \gamma(t_1))\bigg) \leq Cr|t_1-t_0|^{\frac{1}{2N(1+2N)}} \footnote{We note that the H\"older exponent is slightly different to that of \cite{CN12} due to a minor error in the first line of equation (3.38).}.
\end{equation*}
\end{thm}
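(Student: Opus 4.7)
Take $\gamma := \bar{\gamma}$ from Theorem \ref{limit geodesic}. Fix $\delta \in (0,0.1)$, $t_0 < t_1$ in $[\delta,1-\delta]$ with $\tau := t_1 - t_0 \leq \epsilon$ (to be chosen depending on $N,\delta$) and $r \leq \bar{r}$. The plan is to construct an $O(r\tau^{\alpha})$-pointed Gromov--Hausdorff approximation between $(B_r(\gamma(t_1)),\gamma(t_1))$ and $(B_r(\gamma(t_0)),\gamma(t_0))$ via the local flow $\Psi_\tau$ acting on a carefully selected discrete net.

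The first step is to establish a master integral distance-distortion estimate. Since $\gamma$ is itself a $0$-geodesic from $p$ to $q$, fix a heat flow approximation $h^- = h^-_{\rho(c_1 r)^2}$ with $\rho\in[\tfrac12,2]$ provided by Theorem \ref{hess h integral bound}. Apply Proposition \ref{distortion bound} to two copies of the local flow $(\Psi_s)$ on $S_1 = S_2 = B_r(\gamma(t_1))$ with comparison field $W = -\nabla h^-$: bound the two gradient terms via statement 3 of Lemma \ref{grad h geodesic bound} on the large excess-controlled subset produced by Theorem \ref{Abresch-Gromoll}, and the Hessian term via the segment inequality (Theorem \ref{segment inequality}) together with Cauchy--Schwarz on statement 4 of Theorem \ref{hess h integral bound}. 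This yields
\begin{equation}\label{plan:master}
	\int_{B_r(\gamma(t_1))^2} dt^{\Psi,\Psi}_r(\tau)(x,y)\, d(m\times m)(x,y) \;\leq\; C(N,\delta)\, r\, m(B_r(\gamma(t_1)))^2\, \sqrt{\tau}.
\end{equation}

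Next I would convert \eqref{plan:master} into a discrete approximation. Set $\eta = \tau^{\alpha}$ with $\alpha = \tfrac{1}{2N(1+2N)}$, and let $\mathrm{Bad} \subset B_r(\gamma(t_1))^2$ denote the pairs where $dt^{\Psi,\Psi}_r(\tau) > \eta r$; by Markov, $(m\times m)(\mathrm{Bad}) \leq C\, m(B_r)^2\sqrt{\tau}/\eta$. Choose a maximal $\eta r/4$-separated net $\{x_i\}_{i=1}^M$ in $B_r(\gamma(t_1))$; Bishop--Gromov gives $M \lesssim \eta^{-N}$ and each $U_i := B_{\eta r/4}(x_i)$ carries mass $\gtrsim \eta^N m(B_r)$. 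Intersecting each $U_i$ with the set $A_1$ from property \ref{lg 2} of Theorem \ref{limit geodesic} forces the $\Psi_\tau$-images to lie inside $B_{2r}(\gamma(t_0))$, and an iterative Fubini--Chebyshev pigeonhole across the $\binom{M}{2}$ pairs then produces representatives $\tilde{x}_i \in U_i \cap A_1$ for which $(\tilde{x}_i,\tilde{x}_j) \notin \mathrm{Bad}$ for every $i,j$. The precise value of $\alpha$ is what one obtains by balancing the total bad-measure budget $\sqrt{\tau}/\eta$ against the $M^2 \lesssim \eta^{-2N}$ pairwise constraints together with the per-ball mass $\eta^N$ that determines the feasibility of the simultaneous selection.

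The discrete map $x_i \mapsto \Psi_\tau(\tilde{x}_i)$ is then an $\eta r$-isometry of nets into $B_{2r}(\gamma(t_0))$. Running the symmetric argument along $(\Phi_s)$ using property \ref{lg 3} of Theorem \ref{limit geodesic} shows that after discarding a boundary layer of thickness $O(\eta r)$ the image is also $\eta r$-dense in $B_r(\gamma(t_0))$, so the map extends to the required $C r\tau^{\alpha}$ pointed Gromov--Hausdorff approximation. The main technical obstacle will be this simultaneous net-selection step: while individual pair distortions are cheaply controlled by \eqref{plan:master}, arranging all $\binom{M}{2}$ pair constraints at once across a net of cardinality $\eta^{-N}$ is delicate and is precisely what forces the H\"older exponent to take the stated form.
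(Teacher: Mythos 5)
Your high-level plan (flow a net forward by $\Psi_\omega$, control distortion via Proposition \ref{distortion bound}, convert to a GH approximation) is the same as the paper's, and your appeal to Theorem \ref{limit geodesic} for the geodesic $\bar\gamma$ and to \ref{hess h integral bound}, \ref{grad h geodesic bound}, and the segment inequality is also right. But there are two genuine gaps.

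First, the ``master estimate'' on $B_r(\gamma(t_1))^2$ that you write down is not available. Statement 3 of Lemma \ref{grad h geodesic bound} controls $\int|\nabla h^--\nabla d_p|$ along a geodesic in terms of $\sqrt{e(x)}$, and the only excess bound you can get on a subset of $B_r(\gamma(t_1))$ of relative measure $1-\eta$ is $e(x)\leq c\eta^{-1}r^2$ (integral Abresch--Gromoll with a Markov inequality). That gives a gradient contribution $\eta^{-1/2}\sqrt{\tau}\,r\,m(B_r)^2$, not $\sqrt{\tau}\,r\,m(B_r)^2$; if instead you insist on $e(x)\leq c(N,\delta)r^2$ the good subset only has a fixed, non-degenerate proportion of the mass and cannot produce a dense net. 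The paper resolves this tension by pairing a small ball $B_{\mu r}(\gamma(t_1))'$ (which has uniformly small excess $\leq c\mu^2 r^2$ because it is close to $\gamma(t_1)\in\gamma$) against the large ball $B_r(\gamma(t_1))'$; one factor contributes a clean $\sqrt{\tau}$, the other the unavoidable $\eta^{-1/2}\sqrt{\tau}$, and the imbalance is absorbed by choosing $\mu=\eta^{1/N}$ and then integrating in the small-ball variable to select a single good $z\in B_{\mu r}$.

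Second, the simultaneous net-selection step you flag as ``delicate'' is a real gap: you need to choose representatives $\tilde x_i\in U_i$ so that every pair $(\tilde x_i,\tilde x_j)$ avoids the bad set, which is a global combinatorial (independent-set/system-of-representatives) condition that a one-pass Fubini--Chebyshev argument does not deliver — after you fix $\tilde x_1$, its bad slice shrinks the admissible portion of every other $U_j$, and these restrictions compound over $M\sim\eta^{-N}$ steps in a way you have not controlled. The paper's proof sidesteps this entirely with a two-point triangle argument: from the conditional (Fubini) form of the distortion estimate it extracts, for most $x$, a large subset $A_x\subseteq B_r(\gamma(t_1))$ of $y$'s with $dt(\omega)(x,y)\leq c\mu r$. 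Then for any $x,y\in A$ the set $A_x\cap A_y$ is $c\mu r$-dense, so one picks $z\in A_x\cap A_y$ with $d(x,z)<c\mu r$ and uses
\[
|d(\Psi_\omega x,\Psi_\omega y)-d(x,y)|\;\leq\;|d(\Psi_\omega x,\Psi_\omega y)-d(\Psi_\omega z,\Psi_\omega y)|+|d(\Psi_\omega z,\Psi_\omega y)-d(z,y)|+|d(z,y)-d(x,y)|,
\]
controlling the middle term by $z\in A_y$ and the outer terms by $d(x,z)$ and the $1$-Lipschitz property of $\Psi$. No simultaneous net selection is needed; the exponent $\tfrac{1}{2N(1+2N)}$ comes out of this density/measure bookkeeping, not from a pairwise hitting bound.
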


\begin{proof}
Let $\lgam$, $\epsilon_6$ and $\bar{r}_6$ be as in Theorem \ref{limit geodesic} and let $\gamma = \lgam$. Fix $\delta \in (0,0.1)$. We begin by assuming $\epsilon \leq \epsilon_6$ and $\bar{r} \leq \bar{r}_6$ but will impose more bounds as the proof continues. Fix $r \leq \bar{r}$ and $\delta \leq t_0 \leq t_1 \leq 1-\delta$ with $\omega := t_1 - t_0 \leq \epsilon$. 

We begin by showing that a large portion of $B_{r}(\gamma(t_1))$ is maped by $\Psi_{\omega}$ close (on the scale of $r$) to $\gamma(t_0)$, where the closeness and the relative size of the portion are both H\"older dependent on $\omega$. This also shows that the measure of $B_{r}(\gamma)$ is H\"older along $\gamma \rvert_{[\delta,1-\delta]}$ as a consequence. 

Define $\eta:=\omega^{\frac{N}{2(1+2N)}}$ and $\mu := \eta^{\frac{1}{N}} = \omega^{\frac{1}{2(1+2N)}}$. By property \ref{lg 2} of $\gamma$ from Theorem \ref{limit geodesic} and integral Abresch-Gromoll, there exists $B_{\mu r}(\gamma(t_1))' \subseteq B_{\mu r}(\gamma(t_1))$ so that
\begin{enumerate}
	\item $\frac{m(B_{\mu r}(\gamma(t_1))')}{m(B_{\mu r}(\gamma(t_1)))} \leq 1-2V(1,10)$;
	\item $\Psi_{s}(B_{\mu r}(\gamma(t_1))') \subseteq B_{2\mu r}(\gamma(t_1-s)) \; \forall s \in [0, \omega]$; 
	\item $e(x) \leq c(N,\delta)\mu^2r^2 \leq cr^2 \; \forall x \in B_{\mu r}(\gamma(t_1))'$.
\end{enumerate}

By integral Abresch-Gromoll, there exists $B_{r}(\gamma(t_1))' \subseteq B_{r}(\gamma(t_1))$ so that
\begin{equation}\label{smt eq1}
 e(x) \leq c(N,\delta)\frac{1}{\eta}r^2 \; \forall x \in B_{r}(\gamma(t_1))' \; \; \text{and}\; \; \frac{m(B_{r}(\gamma(t_1))')}{m(B_{r}(\gamma(t_1)))} \geq 1-\eta.
\end{equation}

Fix $h^- \equiv h^-_{\rho(4r)^2}$ satisfying statement 4 of Theorem \ref{hess h integral bound} for the balls of radius $4r$ along $\gamma$, where $\rho \in [\frac{1}{2},2]$. For all $s \in [0,\omega]$ and $(x,y)\in  X\times X$, define
\begin{equation}\label{smt eq2}
dt(s)(x,y) := \min\left\{4r, \max\limits_{0 \leq \tau \leq s} |d(x,y) - d(\Psi_\tau(x), \Psi_\tau(y))|\right\}.
\end{equation}
Note for any $(x,y) \in B_{\mu r}(\gamma(t_1))' \times B_{r}(\gamma(t_1))'$, $\Psi_{s}(x), \Psi_{s}(y) \in B_{2r}(\gamma(t_1-s))$ and so $dt(s)(x,y) < 4r$.

By Proposition \ref{distortion bound}, for a.e. $s \in [0,\omega]$, 
\begin{align}\label{smt eq3}
\begin{split}
	&\hspace{0.5cm}\frac{d}{ds} \int\limits_{B_{\mu r}(\gamma(t_1))' \times B_{r}(\gamma(t_1))'} dt(s)(x,y) \, d(m \times m)(x,y)\\
	 &\leq \int\limits_{B_{\mu r}(\gamma(t_1))' \times B_{r}(\gamma(t_1))'} \bigg(|\nabla h^- - \nabla d_p|(\Psi_s(x))+ |\nabla h^- - \nabla d_p|(\Psi_s(y))\bigg)\, d(m \times m)(x,y) \\ 
&\hspace{2cm}+ \int\limits_{0}^{1}  \int\limits_{B_{\mu r}(\gamma(t_1))' \times B_{r}(\gamma(t_1))'} d(\Psi_s(x), \Psi_s(y))|\Hess h^-|_{\HS}(\tilde{\gamma}_{\Psi_s(x),\Psi_s(y)}(\tau)) \,  d(m \times m)(x,y) \, d\tau.
\end{split} 
\end{align}

We estimate
\begin{align}\label{smt eq4}
\begin{split}
	&\hspace{0.5cm}\int\limits_{0}^{1}  \int\limits_{B_{\mu r}(\gamma(t_1))' \times B_{r}(\gamma(t_1))'} d(\Psi_s(x), \Psi_s(y))|\Hess h^-|_{\HS}(\tilde{\gamma}_{\Psi_s(x),\Psi_s(y)}(\tau)) \,  d(m \times m)(x,y) \, d\tau\\
	&\leq c(N,\delta)\int\limits_{0}^{1}  \int\limits_{(\Psi_s, \Psi_s)(B_{\mu r}(\gamma(t_1))' \times B_{r}(\gamma(t_1))')} d(x, y)|\Hess h^-|_{\HS}(\tilde{\gamma}_{x,y}(\tau)) \,  d(m \times m)(x,y) \, d\tau \; \; \text{, by Theorem \ref{-dp grad flow properties}, 2}\\
	&\leq c(N,\delta)rm(B_{2r}(\gamma(t_1-s)))\int\limits_{B_{4r}(\gamma(t_1-s))} |\Hess h^-|_{\HS}\,  dm \; \; \text{, by segment inequality \ref{segment inequality}}\\
	&\leq c(N,\delta)rm(B_{r}(\gamma(t_1)))^2 \fint\limits_{B_{4r}(\gamma(t_1-s))} |\Hess h^-|_{\HS} \, dm \; \; \text{, by Bishop-Gromov and property \ref{lg 1} of $\gamma$}.
\end{split}
\end{align}
Integrating in $s \in [0,\omega]$, 
\begin{align}\label{smt eq5}
\begin{split}
	&\hspace{0.5cm} \int\limits_{0}^{\omega} \bigg( \int\limits_{0}^{1} \int\limits_{B_{\mu r}(\gamma(t_1))' \times B_{r}(\gamma(t_1))'} d(\Psi_s(x), \Psi_s(y))|\Hess h^-|_{\HS}(\tilde{\gamma}_{\Psi_s(x),\Psi_s(y)}(\tau)) \,  d(m\times m)(x,y)\, d\tau \bigg)\, ds\\
	&\leq crm(B_{r}(\gamma(t_1)))^2 \int\limits_{0}^{\omega} \fint\limits_{B_{4r}(\gamma(t_1-s))} |\Hess h^-|_{\HS} \, dm \, ds\\
	&\leq c(N,\delta)m(B_{r}(\gamma(t_1)))^2 \sqrt{\omega}r,
\end{split}
\end{align}
where the last line follows from the definition of $h^-$, statement 4 of Theorem \ref{hess h integral bound}. and Cauchy-Schwarz.

By statement 3 of \ref{grad h geodesic bound} and the excess bound on the elements of $B_{\mu r}(\gamma(t_1))'$,
\begin{equation}\label{smt eq6}
	\int\limits_{0}^{\omega} \int\limits_{B_{\mu r}(\gamma(t_1))' \times B_{r}(\gamma(t_1))'} |\nabla h^- - \nabla d_p|(\Psi_s(x)) \, d(m \times m)(x,y)  \, ds \leq c(N,\delta)m(B_r(\gamma(t_1)))^2\sqrt{\omega}r.
\end{equation}
Similarly by the excess bound on the elemnts of $B_{r}(\gamma(t_1))'$ \eqref{smt eq1},
\begin{align}\label{smt eq7}
\begin{split}
	&\hspace{0.5cm} \int\limits_{0}^{\omega} \int\limits_{B_{\mu r}(\gamma(t_1))' \times B_{r}(\gamma(t_1))'} |\nabla h^- - \nabla d_p|(\Psi_s(y)) \, d(m \times m)(x,y)  \, ds\\ &\leq c(N,\delta)\frac{1}{\sqrt{\eta}}m(B_r(\gamma(t_1)))m(B_{\mu r}(\gamma(t_1)))\sqrt{\omega}r.
\end{split}
\end{align}

Combining  \eqref{smt eq5} - \eqref{smt eq7} with \eqref{smt eq3} we immediately obtain
\begin{align}\label{smt eq8}
\begin{split}
&\hspace{0.5cm}\int\limits_{B_{\mu r}(\gamma(t_1))' \times B_{r}(\gamma(t_1))'} dt(\omega)(x,y) \, d(m \times m)(x,y)\\ 
&= \int_{0}^{\omega} [\frac{d}{ds} \int\limits_{B_{\mu r}(\gamma(t_1))' \times B_{r}(\gamma(t_1))'} dt(s)(x,y) \, d(m \times m)(x,y)] \, ds\\ &\leq c(N,\delta)m(B_{r}(\gamma(t_1)))\bigg(m(B_{r}(\gamma(t_1)))+\frac{1}{\sqrt{\eta}}m(B_{\mu r}(\gamma(t_1))) \bigg)\sqrt{\omega}r.
\end{split}
\end{align}
By Bishop-Gromov, $\frac{m(B_{\mu r}(\gamma(t_1)))}{m(B_{r}(\gamma(t_1)))} \geq C(N)\mu^N = c\eta$. Therefore, there exists $z \in B_{\mu r}(\gamma(t_1))'$ so that 
\begin{equation*}
	\int\limits_{B_{r}(\gamma(t_1))'} dt(\omega)(z,y) \, dm(y) \leq  c(N,\delta)m(B_{r}(\gamma(t_1)))\bigg(\frac{1}{\eta}+\frac{1}{\sqrt{\eta}}\bigg)\sqrt{\omega}r \leq c(N,\delta)m(B_{r}(\gamma(t_1)))\frac{1}{\eta}\sqrt{\omega}r.
\end{equation*}

This means there exists $D \subseteq B_{r}(\gamma(t_1))'$ so that 
\begin{equation}\label{smt eq9}
	\frac{m(D)}{m(B_{r}(\gamma(t_1)))} \geq 1-2\eta \; \; \text{and} \; \; dt(\omega)(z,y)\leq c(N,\delta)\frac{1}{\eta^2}\sqrt{\omega}r \; \forall y \in D.
\end{equation}
Since $\omega = (\eta^2 \mu)^2$, the latter combined with $z \in B_{\mu r}(\gamma(t_1))'$ implies 
\begin{equation}\label{smt eq9.5}
	\Psi_{\omega}(D) \subseteq B_{(1+c(N,\delta)\mu)r}(\gamma(t_0)).
\end{equation}

We have the following estimate on the volume of $B_{r}(\gamma(t_1))$ compared to volume of $B_{r}(\gamma(t_0))$ after possibly constraining $\epsilon$ further depending on $N$ and $\delta$.
\begin{align}\label{smt eq10}
\begin{split}
	&\frac{m(B_{r}(\gamma(t_1)))}{m(B_{r}(\gamma(t_0)))} \leq (1+c\eta)\frac{m(D)}{m(B_{r}(\gamma(t_0)))} \; \; \text{, by \eqref{smt eq9}}\\
	&\leq (1+c\eta)(1+c(N,\delta)\omega)^N\frac{m(\Psi_{\omega}(D))}{m(B_{r}(\gamma(t_0)))} \; \; \text{, by Theorem \ref{-dp grad flow properties}, 2}\\
	&\leq (1+c\eta)(1+c\omega)^N(1+c(N,\delta)\mu)^N\frac{m(\Psi_{\omega}(D))}{m(B_{(1+c\mu)r}(\gamma(t_0)))} \; \; \text{, by Bishop-Gromov}\\
	&\leq 1+c(N,\delta)\mu = 1+c\omega^{\frac{1}{2(1+2N)}}.
\end{split}
\end{align}
Making the same calculation with $\Phi$ in the other direction as well, we obtain the following H\"older estimate on volume
\begin{equation}\label{smt eq11}
	\bigg| \frac{m(B_{r}(\gamma(t_1)))}{m(B_{r}(\gamma(t_0)))}-1 \bigg| \leq c(N,\delta)|t_1-t_0|^{\frac{1}{2(1+2N)}}.
\end{equation}

We now show the required Bishop-Gromov approximation can be constructed by using $\Psi_{\omega}$ on a $c\mu r$-dense subset of $B_{r}(\gamma(t_1))$. 

Fix representatitves for $|\Hess h^-|_{\HS}$ and $|\nabla h^- - \nabla d_p|$. Using the same calculation as before,
\begin{align}\label{smt eq12}
\begin{split}
	&\hspace{0.5cm} \int\limits_{0}^{\omega} \bigg( \int\limits_{0}^{1} \int\limits_{D \times D} d(\Psi_s(x), \Psi_s(y))|\Hess h^-|_{\HS}(\tilde{\gamma}_{\Psi_s(x),\Psi_s(y)}(\tau)) \,  d(m\times m)(x,y)\, d\tau \bigg)\, ds\\
	&\leq c(N,\delta)m(B_{r}(\gamma(t_1)))^2 \sqrt{\omega}r,
\end{split}
\end{align}
and so by Fubini's theorem, there exists $A \subseteq D$ so that
\begin{enumerate}
	\item $\frac{m(A)}{m(B_{r}(\gamma(t_1)))} \geq 1-3\eta$;
	\item $\int\limits_{0}^{\omega} \bigg( \int\limits_{0}^{1} \int\limits_{D} d(\Psi_s(x), \Psi_s(y))|\Hess h^-|_{\HS}(\tilde{\gamma}_{\Psi_s(x),\Psi_s(y)}(\tau)) \,  dm(y)\, d\tau \bigg)\, ds \leq c\frac{1}{\eta}m(B_{r}(\gamma(t_1)))\sqrt{\omega}r$ for all $x \in A$.
\end{enumerate}

For each $x \in A$, there exists $A_x \subseteq D$ so that
\begin{enumerate}
	\item $\frac{m(A_x)}{m(B_{r}(\gamma(t_1)))} \geq 1-3\eta$;
	\item $\int\limits_{0}^{\omega} \bigg( \int\limits_{0}^{1} d(\Psi_s(x), \Psi_s(y))|\Hess h^-|_{\HS}(\tilde{\gamma}_{\Psi_s(x),\Psi_s(y)}(\tau)) \, d\tau \bigg)\, ds\ \leq c\frac{1}{\eta^2}\sqrt{\omega}r$ for all $y \in A_x$.
\end{enumerate}

Since $A$ and each $A_x$ are contained in $B_{r}(\gamma(t_1))'$, their elements have an excess bound of $c\frac{1}{\eta}r^2$ by \eqref{smt eq1}. By statement 3 of \ref{grad h geodesic bound}, for $m$-a.e. $x \in A$,
\begin{equation}\label{smt eq13}
	\int\limits_{0}^{\omega} |\nabla h^- - \nabla d_p|(\Psi_s(x)) ds \leq c(N,\delta)\frac{1}{\sqrt{\eta}}\sqrt{\omega}r.
\end{equation}
Similarly, for each $x \in A$ and $m$-a.e. $y \in A_x$,
\begin{equation}\label{smt eq14}
	\int\limits_{0}^{\omega} |\nabla h^- - \nabla d_p|(\Psi_s(y)) ds \leq c(N,\delta)\frac{1}{\sqrt{\eta}}\sqrt{\omega}r.
\end{equation}

By a Fubini's theorem argument, it is clear that the inequality in Proposition \ref{distortion bound} holds pointwise for $(m \times m)$-a.e. $(x,y)$. We first replace $A$ with a full measure subset so that in addition the inequality in Proposition \ref{distortion bound} holds for all $x \in A$ and $m$-a.e. $y \in A_x$. We then replace each $A_x$ with a full measure subset so that the same inequality holds for all $x \in A$ and all $y \in A_x$. Therefore, for all $x \in A$, $y \in A_x$, 
\begin{equation*}
	dt(\omega)(x,y) \leq c(N,\delta)\bigg(\frac{1}{\eta^2}+\frac{1}{\sqrt{\eta}}\bigg)\sqrt{\omega}r \leq c(N,\delta)\frac{1}{\eta^2}\sqrt{\omega}r \leq c\mu r.
\end{equation*}

For any $x, y \in A$, $A_x \cap A_y$ is $c(N)\eta^{\frac{1}{n}}r$-dense in $B_{r}(\gamma(t_1))$ and so there exists some $z \in A_x \cap A_y$ where $d(x,z) < c\eta^{\frac{1}{n}}r =  c\mu r$. Therefore, 
\begin{align}\label{smt eq15}
\begin{split}
	&\hspace{0.5cm} |d(\Psi_{\omega}(x), \Psi_{\omega}(y))-d(x,y))|\\ &\leq |d(\Psi_{\omega}(x), \Psi_{\omega}(y))-d(\Psi_{\omega}(z), \Psi_{\omega}(y)|+|d(\Psi_{\omega}(z), \Psi_{\omega}(y))-d(z,y)|+|d(z,y)-d(x,y)|\\ &\leq c(N,\delta)\mu r.
\end{split}
\end{align}
Moreover, we have the following estimate on the volume of $\Psi_{\omega}(A) \subseteq B_{(1+c\mu)r}(\gamma(t_0))$ after possibly constraining $\epsilon$ further depending on $N$ and $\delta$.
\begin{align}\label{smt eq16}
\begin{split}
	&\frac{m(\Psi_{\omega}(A))}{m(B_{1+c\mu r}(\gamma(t_0)))} \geq \frac{1}{(1+c(N,\delta)\omega)^N}\frac{m(A)}{m(B_{(1+c\mu)r}(\gamma(t_0)))} \; \; \text{, by Theorem \ref{-dp grad flow properties}, 2}\\
	&\geq \frac{1}{(1+c\omega)^N} \frac{1}{(1+c(N,\delta)\mu)^N}\frac{m(A)}{m(B_{r}(\gamma(t_0)))} \; \; \text{, by Bishop-Gromov}\\
	&\geq \frac{1}{(1+c\omega)^N} \frac{1}{(1+c\mu)^N}\frac{1}{1+c(N,\delta)\mu}\frac{m(A)}{m(B_{r}(\gamma(t_1)))} \; \; \text{, by \eqref{smt eq10}}\\
	&\geq \frac{1}{(1+c\omega)^N} \frac{1}{(1+c\mu)^N}\frac{1}{1+c\mu}(1-3\eta) \geq 1-c(N,\delta)\mu.
\end{split}
\end{align}

To summarize, $A \subseteq B_{r}(\gamma(t_1))$ is so that
\begin{enumerate}
	\item $\Psi_{\omega}(A) \subseteq B_{(1+c(N,\delta)\mu)r}(\gamma(t_0))$;
	\item $\forall x, y \in A$, $|d(\Psi_\omega(x),\Psi_\omega(y))-d(x,y)| \leq c(N,\delta)\mu r$;
	\item $A$ is $c(N)\mu r$-dense in $B_{r}(\gamma(t_1))$;
	\item $\Psi_{\omega}(A)$ is $c(N,\delta)\mu^{\frac{1}{N}}r$-dense in $B_{(1+c(N,\delta)\mu)r}(\gamma(t_0))$.
\end{enumerate}
Moreover, there exists $c(N)$ so that $m(B_{c(N)\mu r}(\gamma(t_1))) \geq \frac{2\eta}{1-V(1,10)}m(B_{r}(\gamma(t_1)))$ by Bishop-Gromov. By property \ref{lg 2} of $\gamma$, there exists $B_{c\mu r}(\gamma(t_1))' \subseteq B_{c\mu r}(\gamma(t_1))$ so that 
\begin{equation*}
\frac{m(B_{c\mu r}(\gamma(t_1))')}{m(B_{c\mu r}(\gamma(t_1)))} \geq 1-V(1,10) \; \; \text{and} \; \; \Psi_{\omega}(B_{c\mu r}(\gamma(t_1))') \subseteq B_{2c\mu r}(\gamma(t_0)).
\end{equation*}
Therefore, $B_{c(N)\mu r}(\gamma(t_1))' \cap A$ is non-empty by measure considerations. In other words, there is an element in $A$ which is $c(N) \mu r$ close to $\gamma(t_1)$ and is mapped $2c(N) \mu r$ close to $\gamma(t_0)$ under $\Psi_\omega$.

These facts about $A$ allow for the construction of a $c(N,\delta)\mu^{\frac{1}{N}}r$ pointed Gromov-Hausdorff approximation which finishes the proof. 
\end{proof}

Before we prove Theorem \ref{main theorem 1}, we will first prove that $\RCD(K,N)$ spaces are non-branching in the next section using the construction we have developed so far. A corollary then follows which immediately gives Theorem \ref{main theorem 1}.

\section{Applications}\label{section 6}

\subsection{Non-branching} \label{subsection 6.1}
In this subsection, we prove that $\RCD(K,N)$ spaces are non-branching. The use of the essentially non-branching property of $\RCD(K,N)$ spaces in the proof was pointed out to the author by Vitali Kapovitch. 

\begin{proof}[Proof of Theorem \ref{no branch}]
Assume otherwise. By zooming in and cutting off geodesics if necessary, we may assume $(X,d,m)$ is an $\RCD(-(N-1),N)$ space for some $N \in (1,\infty)$ and we have two unit speed geodesics $\gamma_1, \gamma_2:[0,1] \to X$ with
\begin{enumerate}
	\item $\gamma_1(0) = \gamma_2(0) = p$ for $p \in X$;
	\item $\gamma_1(1) = q_1$ and $ \gamma_2(1) = q_2$ for $q_1, q_2 \in X$;
	\item $\max\{t\in[0,1]\, : \, \gamma_1(s) = \gamma_2(s) \; \forall s \in [0,t]\} = 0.5$. 
\end{enumerate}
Let $p' = \gamma_1(0.5)$ and $\Psi_{s}$ be as in \eqref{Psidef} towards $p$. 

Since $(X,2d,m)$ is again an $\RCD(-(N-1),N)$ space and $2d(p,p')=1$, we may apply Theorem \ref{limit geodesic} to obtain a $2d$-unit speed geodesic $\tgam:[0,1] \to X$ between $p$ and $p'$. Reparameterize $\tgam$ to $\gamma: [0,0.5] \to X$ so that $\gamma$ is a $d$-unit speed geodesic. 

Fix any $\delta \in (0,0.1)$, use Corollary \ref{at geodesic both directions} to construct a unit speed geodesic $\gamma_1^{\delta}:[0,1] \to X$ from $p$ to $q_1$ with $\gamma^{\delta}_1(t)=\gamma(t)$ for all $t \in [0,\delta]$. Therefore, the proof of Theorem \ref{section main theorem} passes for $\gamma_1^{\delta}$ for the same $\delta$ and in particular we have the estimates \eqref{smt eq9} - \eqref{smt eq11} for $\gamma_1^{\delta}$. As a reminder, for $\delta \leq s_1 < s_2 \leq 1-\delta$ and sufficiently small $r$,
\begin{itemize}
	\item \eqref{smt eq9} and \eqref{smt eq9.5} imply a portion of $B_{r}(\gamma_1^\delta(s_2))$ is sent to a ball of radius slightly larger than $r$ around $\gamma_1^\delta(s_1)$ by $\Psi_{s_2-s_1}$, where the relative size of the portion and the increase in radius on the scale of $r$ can be both made H\"older dependent on $s_2 - s_1$ and go uniformly to $1$ and $0$ respectively as $s_2 - s_1 \to 0$. 
	\item \eqref{smt eq11} implies the ratio between the measures of $B_{r}(\gamma^{\delta}_1(s_1))$ and $B_{r}(\gamma^{\delta}_1(s_2))$ is H\"older dependent on $s_2 - s_1$ and in particular goes uniformly to $1$ as $s_2 - s_1\to 0$
\end{itemize}

We show that $\gamma_1^{\delta}(t) = \gamma(t)$ for all $t \in [0,0.5]$. Suppose not, let $t_0 := \max\{t\in[0,0.5]\, : \, \gamma_1^{\delta}(s) = \gamma(s) \; \forall s \in [0, t]\}$ and so $t_0 \in [\delta, 0.5)$. 

We claim there exists $t_1 \in (t_0, 0.5)$ and $\bar{r} > 0$ so that for any $r \leq \bar{r}$, there exists $A_1 \subseteq B_{r}(\gamma_1^{\delta}(t_1))$ and $A_2 \subseteq B_{r}(\gamma(t_1))$ so that 
\begin{enumerate}
	\item $\gamma_1^{\delta}(t_1) \neq \gamma(t_1)$.
	\item $\Psi_{t_1-t_0}(A_1) \subseteq B_{r}(\gamma_1^{\delta}(t_0))$ and $\Psi_{t_1-t_0}(A_2) \subseteq B_{r}(\gamma(t_0))$;
	\item $\frac{m(\Psi_{t_1-t_0}(A_1))}{m( B_{r}(\gamma_1^{\delta}(t_0)))} > \frac{1}{2}$ and $\frac{m(\Psi_{t_1-t_0}(A_2))}{m( B_{r}(\gamma(t_0)))} > \frac{1}{2}$.
\end{enumerate}
We can choose $t_1$ arbitrarily close to $t_0$ so that statement 1 holds by definition of $t_0$. Statements 2 and 3 then follow from \eqref{smt eq9} - \eqref{smt eq11} for $\gamma_1^{\delta}$, the same for $\tgam$, Bishop-Gromov and statement 2 of Theorem \ref{-dp grad flow properties} to control the volume distortion of $\Psi$, as soon as $t_1$ is chosen close enough to $t_0$.  Choosing $r \leq \min\{\bar{r}, d(\gamma_1^{\delta}(t_1), \gamma(t_1))\slash2\}$ then leads to a contradiction with Theorem \ref{directional BG}. To be precise, Theorem \ref{directional BG} can be used to show that the subset of points $x \in X$ where a geodesic from $p$ to $x$ can be extended to two branching geodesics must be measure 0, see \cite[Proposition 4.5]{C14}, which gives the contradiction. 

Therefore, $\gamma_1^{\delta}(t) = \gamma(t)$ for all $t \in [0,0.5]$. Since this is true for all $\delta \in (0,0.1)$, taking $\delta \to 0$ and using Arzel\`a-Ascoli Theorem, after possibly passing to a subsequence, we obtain a geodesic $\bar{\gamma}_1$ satisfying Theorem \ref{limit geodesic} with $\bar{\gamma}_1 \equiv \gamma$ on $[0, 0.5]$ and $\bar{\gamma}_1(1) = q_1$. The same construction for $\gamma_2$ gives $\bar{\gamma}_2$ satisfying Theorem \ref{limit geodesic} with $\bar{\gamma}_2 \equiv \gamma$ on $[0, 0.5]$ and $\bar{\gamma}_2(1) = q_2$. Applying the previous argument again for $\bar{\gamma}_1$ and $\bar{\gamma}_2$ shows that they cannot split, which is a contradiction. 
\end{proof}

As a corollary, we have the following improvement of Theorem \ref{section main theorem}.
\begin{cor}\label{section main theorem 2}
Let $(X,d,m)$ be an $\RCD(-(N-1),N)$ space for some $N \in (1,\infty)$ and $p,q \in X$ with $d(p,q) = 1$. For any $\delta \in (0,0.1)$, there exists $\epsilon(N,\delta) > 0$, $\bar{r}(N,\delta) >  0$ and $C(N,\delta)$ so that for any unit speed geodesic $\gamma$ between $p$ and $q$, $r \leq \bar{r}$, and $t_0, t_1 \in [\delta, 1-\delta]$, if $|t_1-t_0| \leq \epsilon$ then 
\begin{equation*}
	d_{pGH}\bigg((B_{r}(\gamma(t_0)), \gamma(t_0)), (B_{r}(\gamma(t_1)), \gamma(t_1)))\bigg) \leq Cr|t_1-t_0|^{\frac{1}{2N(1+2N)}}.
\end{equation*}
\end{cor}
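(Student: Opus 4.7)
\medskip

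\noindent\textbf{Proof proposal.} The plan is to leverage non-branching (Theorem~\ref{no branch}) together with the existence result Corollary~\ref{at geodesic both directions} to reduce the arbitrary geodesic case to the already-established Theorem~\ref{section main theorem}. Concretely, Theorem~\ref{section main theorem} was proved for a specific geodesic $\bar\gamma$ coming from Theorem~\ref{limit geodesic}, and its proof uses only the three properties \ref{lg 1}--\ref{lg 3} of $\bar\gamma$ on the interval $[\delta,1-\delta]$ (together with the heat-flow estimates of Section~\ref{section 4} which hold intrinsically). So it suffices to show that, given a completely arbitrary unit speed geodesic $\gamma$ between $p$ and $q$, we can produce these same three properties on $[\delta,1-\delta]$ for $\gamma$ itself.

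Fix $\delta\in(0,0.1)$ and an arbitrary unit speed geodesic $\gamma:[0,1]\to X$ between $p$ and $q$. First I would apply Corollary~\ref{at geodesic both directions} to $\gamma$ to obtain a unit speed geodesic $\gamd$ from $p$ to $q$ with $\gamd\equiv\gamma$ on $[1-\delta,1]$ and satisfying the volume and flow properties \ref{agbd 1}--\ref{agbd 3} on $[\delta,1-\delta]$. Next, I would observe that $\gamma$ and $\gamd$ are two unit speed geodesics from $p$ to $q$ that coincide on the nontrivial terminal segment $[1-\delta,1]$. Reversing parameterizations, the curves $t\mapsto\gamma(1-t)$ and $t\mapsto\gamd(1-t)$ are unit speed geodesics from $q$ to $p$ which agree on the initial interval $[0,\delta]$; Theorem~\ref{no branch} then forces them to agree on all of $[0,1]$, so $\gamma\equiv\gamd$. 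Consequently $\gamma$ itself satisfies properties \ref{agbd 1}--\ref{agbd 3} on $[\delta,1-\delta]$.

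Finally, inspecting the proof of Theorem~\ref{section main theorem}, the construction of the pointed Gromov--Hausdorff approximation from $B_r(\gamma(t_1))$ to $B_r(\gamma(t_0))$ via $\Psi_{\omega}$ uses only: (i) the two-sided volume bound \ref{lg 1} along $\gamma$, (ii) the $\Psi$-invariance \ref{lg 2} of large subsets of $B_r(\gamma(t))$ on scale $r$, (iii) the analogous $\Phi$-invariance \ref{lg 3} in the opposite direction, (iv) integral Abresch--Gromoll along $\gamma$, and (v) the Hessian/gradient estimates for $h^\pm$ of Section~\ref{section 4} together with the second order interpolation formula of Section~\ref{section 3}. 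Items (iv) and (v) hold intrinsically for any low-excess curve, while (i)--(iii) are exactly what non-branching has just provided for $\gamma$ on $[\delta,1-\delta]$. The proof of Theorem~\ref{section main theorem} therefore transports verbatim, giving the desired estimate
\[
 d_{pGH}\bigl((B_r(\gamma(t_0)),\gamma(t_0)),(B_r(\gamma(t_1)),\gamma(t_1))\bigr)\leq Cr|t_1-t_0|^{\frac{1}{2N(1+2N)}}
\]
for $t_0,t_1\in[\delta,1-\delta]$ with $|t_1-t_0|\leq\epsilon$ and $r\leq\bar r$. The only conceptual subtlety — which is not really an obstacle since it has been settled in Theorem~\ref{no branch} — is the reduction of the problem, via non-branching, to a single geodesic constructed by the gluing and limiting procedure of Subsection~\ref{subsection 5.2}; once that is in hand, no new analytic estimate is needed.
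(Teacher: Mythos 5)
Your proof is correct and logically sound, but it takes a genuinely different route from the paper's. The paper keeps Theorem~\ref{section main theorem} as a black box: for each $s\in(0,0.5)$ it rescales to $(X,2d,m)$ on the sub-interval $[s,s+0.5]$, invokes Theorem~\ref{section main theorem} there to produce some good geodesic $\gamma^s$ between $\gamma(s)$ and $\gamma(s+0.5)$, and then uses non-branching on the concatenation $\gamma|_{[0,s]}\ast\gamma^s$ against $\gamma|_{[0,s+0.5]}$ to conclude $\gamma^s=\gamma|_{[s,s+0.5]}$; the H\"older estimate for $\gamma$ on $[\delta,1-\delta]$ then follows by covering with such sub-intervals. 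Your argument instead goes one layer deeper: it applies Corollary~\ref{at geodesic both directions} to $\gamma$ directly to produce $\gamd$ with $\gamd\equiv\gamma$ on $[1-\delta,1]$, uses non-branching (in reversed time) to force $\gamma\equiv\gamd$, and then observes that the proof of Theorem~\ref{section main theorem} only consumed properties \ref{lg 1}--\ref{lg 3} at the fixed $\delta$, so it transports to $\gamma$. Both uses of non-branching are valid; yours is a single, clean application with no rescaling or covering argument, at the cost of having to re-inspect the proof of Theorem~\ref{section main theorem} rather than cite it as a closed statement. One small point worth flagging for the paper's version: the assertion that $\gamma^s$ and $\gamma$ coincide requires the concatenation trick with $\gamma|_{[0,s]}$ and hence only works for $s>0$; this is harmless because the covering only needs $s\in(0,0.5)$, but your approach sidesteps this subtlety entirely by using the terminal agreement $\gamd\equiv\gamma$ on $[1-\delta,1]$ that Corollary~\ref{at geodesic both directions} builds in.
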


\begin{proof}
Fix any $s \in [0,0.5]$. Since $(X,2d,m)$ is again an $\RCD(-(N-1),N)$ space and $2d(\gamma(s), \gamma(s+0.5))=1$, we may use Theorem \ref{section main theorem} to construct some 2d-unit speed geodesic $\gamma^s$ between $\gamma(s)$ and $\gamma(s+0.5)$. Since $X$ is non-branching, $\gamma^s$ and $\gamma$ must coincide between $\gamma(s)$ and $\gamma(s+0.5)$. Since this is true for all $s\in[0,0.5]$ and all $\gamma^s$ have the H\"older properties of Theorem \ref{section main theorem} on $(X,2d,m)$. The same is true for $\gamma$ on $(X,d,m)$ with slightly worse constants. 
\end{proof}




Theorems \ref{main theorem 1} now follows immediately by rescaling.

\subsection{Dimension and weak convexity of the regular set}\label{subsection 6.2} In this subsection we will extend to the $\RCD(K,N)$ setting the results of \cite{CN12} on regular sets. All proofs translate directly from \cite{CN12}. We mention again that Theorem \ref{unique local dimension 2} has already been established using a new argument involving the Green's function in \cite{BS20}.

\begin{thm}\label{unique local dimension 2} (Constancy of the dimension) Let $(X,d,m)$ be an $\RCD(K,N)$ m.m.s. for some $K \in \mathbb{R}$ and $N \in (1, \infty)$. Assume $X$ is not a point. There exists a unique $n \in \mathbb{N}$, $1 \leq n \leq N$ so that $m(X\backslash \mathcal{R}_{n})=0$.
\end{thm}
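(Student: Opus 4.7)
The plan is to argue by contradiction in the spirit of Colding--Naber \cite{CN12}, using Theorem \ref{main theorem 2} together with the optimal transport structure supplied by Theorem \ref{directional BG}. Recall from \cite{MN19} that each regular set $\mathcal{R}_k$ is $m$-measurable and $m(X \setminus \bigcup_k \mathcal{R}_k) = 0$, so it suffices to rule out $m(\mathcal{R}_k), m(\mathcal{R}_j) > 0$ for any two indices $k \neq j$. Under this assumption, I select bounded positive-measure subsets $A_k \subseteq \mathcal{R}_k$ and $A_j \subseteq \mathcal{R}_j$, and let $\mu_0 := m\rvert_{A_k}/m(A_k)$, $\mu_1 := m\rvert_{A_j}/m(A_j)$. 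Theorem \ref{directional BG} furnishes a unique optimal dynamical plan $\nu$ between $\mu_0$ and $\mu_1$ with $(e_s)_{*}\nu \ll m$ for every $s \in [0,1)$. Combined with $m(\mathcal{S}) = 0$ and a Fubini argument, this yields a positive-$\nu$-measure family of unit speed geodesics $\gamma:[0,1] \to X$ such that $\gamma(0) \in \mathcal{R}_k$, $\gamma(1) \in \mathcal{R}_j$, and $\gamma(s) \in \bigcup_n \mathcal{R}_n$ for a.e.\ $s \in (0,1)$; I fix one such $\gamma$.

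The central step is to establish local constancy of the regular dimension along the interior of $\gamma$. For fixed $\delta \in (0, 1/2)$, $s_0 \in [\delta, 1-\delta]$ with $\gamma(s_0) \in \mathcal{R}_n$, and any sequence $r_i \downarrow 0$, uniqueness of the tangent cone at $\gamma(s_0)$ forces the rescalings there to converge to $(\mathbb{R}^n, 0^n)$. By Gromov precompactness, after passing to a subsequence the rescalings at $\gamma(s)$ (for $s \in [\delta, 1-\delta]$) also converge to some $(Y_s, y_s) \in \Tan(X, d, m, \gamma(s))$; these arise from the same sequence of rescalings, so Theorem \ref{main theorem 2} gives
\begin{equation}
	d_{pGH}\bigl((B_r(y_s), y_s),\, (B_r(0^n), 0^n)\bigr) \leq C(N,\delta)\, r\, |s - s_0|^{\alpha(N)}
\end{equation}
for every $r > 0$. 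If additionally $\gamma(s) \in \mathcal{R}_{n'}$, then $(Y_s, y_s) = (\mathbb{R}^{n'}, 0^{n'})$; taking $r = 1$ and exploiting scale invariance of Euclidean balls gives
\begin{equation}
	d_{pGH}\bigl((B_1(0^{n'}), 0^{n'}),\, (B_1(0^n), 0^n)\bigr) \leq C(N,\delta)\, |s - s_0|^{\alpha(N)}.
\end{equation}
The left-hand side is bounded below by a strictly positive constant $c_0(N) > 0$ whenever $n \neq n'$ with $n, n' \leq N$ (Euclidean unit balls of different dimensions are distinguished by the cardinality of maximal $\epsilon$-separated subsets of their unit spheres), so there exists $\eta(N,\delta) > 0$ forcing $n' = n$ whenever $|s - s_0| < \eta(N,\delta)$.

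With this separation in hand, set $E_n := \{s \in (\delta, 1-\delta) : \gamma(s) \in \mathcal{R}_n\}$. The sets $E_n$ are pairwise disjoint, their union has full Lebesgue measure in $(\delta, 1-\delta)$, and any two points in distinct $E_n, E_{n'}$ lie at distance at least $\eta(N,\delta)$ apart. A supremum argument along $(\delta, 1-\delta)$, using that the complement of $\bigcup_n E_n$ is Lebesgue null, then shows that at most one $E_n$ can be nonempty; denote this index $n_*(\delta)$. Letting $\delta \to 0$ and linking $n_*(\delta)$ to the endpoints by rescaling about $p = \gamma(0)$ and $q = \gamma(1)$, where the tangent cones are the rigidly Euclidean $\mathbb{R}^k$ and $\mathbb{R}^j$ respectively, should yield simultaneously $n_* = k$ and $n_* = j$, the desired contradiction.

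The principal obstacle is the endpoint identification in the last step. Theorem \ref{main theorem 2} degenerates as $\delta \to 0$, so linking the interior dimension $n_*$ to the endpoint dimensions requires an auxiliary argument. Following \cite{CN12}, this is handled by rescaling about $p$ so the ambient space converges to $\mathbb{R}^k$ with $\gamma$ tending to a Euclidean ray; tangent cones at points along $\gamma$ close to $p$ are then shown to be Euclidean of dimension $k$ by invoking the pGH-stability of the $\RCD(K,N)$ condition under rescaling together with Theorem \ref{no branch} to identify the limiting geodesic in the rescaled spaces with a geodesic in $\mathbb{R}^k$.
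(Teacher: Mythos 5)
Your interior argument — density of $\gamma^{-1}(\mathcal{R}_{\reg})$ from the segment inequality, local constancy from Theorem \ref{main theorem 2} via the separation of $d_{pGH}\bigl((B_1(0^n),0^n),(B_1(0^{n'}),0^{n'})\bigr)$, then a supremum argument — is sound and essentially mirrors the paper's argument in the interior of a geodesic. The genuine gap is the endpoint identification, which you yourself flag as the ``principal obstacle.'' Your proposed fix (rescale about $p$ so the ambient space converges to $\mathbb{R}^k$, then claim tangent cones at interior points near $p$ are forced to be $\mathbb{R}^k$) does not actually follow: tangent cones at $\gamma(s_i)$ for $s_i \to 0$ are governed by blowups at $\gamma(s_i)$ as $r\to 0$ independently of $s_i$, whereas the convergence of the ambient space to $\mathbb{R}^k$ under rescaling at $p$ only controls fixed-scale geometry near $p$. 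These are different limits and there is no a priori propagation from one to the other; indeed, the whole content of the theorem is that such propagation holds, so the sketch is circular. (You also attribute this step to \cite{CN12}, but that is not what Colding--Naber do at this point.)

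The mechanism the paper (following \cite{CN12}) actually uses is simpler and closes the gap cleanly: for $(m\times m)$-a.e.\ $(x,y)$, the geodesic $\gamma_{x,y}$ is extendible a definite amount past both endpoints. One shows this with a standard Bishop--Gromov / measurable-selection argument (the set of $y$ for which $\gamma_{x,y}$ is not extendible past $y$ is $m$-null for every $x$, then Fubini). Consequently the entire closed interval $[0,1]$ lies in the interior of a slightly longer geodesic $\gamma_{x',y'}$, so Theorem \ref{main theorem 1} applies with a fixed $\delta$ uniformly on $[0,1]$ and there is no $\delta\to 0$ degeneration at the endpoints. Your supremum/local-constancy argument then runs on all of $[0,1]$ and shows $\gamma_{x,y}([0,1]) \subseteq \mathcal{R}_n$ for a single $n$; applying this to a positive-measure set of pairs with $x\in\mathcal{R}_k$, $y\in\mathcal{R}_j$ immediately forces $k=n=j$. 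You should replace your endpoint rescaling argument with this extendibility lemma; the rest of your proof is then correct.
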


\begin{proof}
Let $A^1, A^2 \subseteq X \times X$ be the sets of $(x,y) \in X \times X$ so that geodesics from $x$ to $y$ are extendible past $x$ and $y$ respectively. For each $x \in X$, let $A_{x}$ be the set of $y \in X$ so that geodesics from $x$ to $y$ are extendible past $y$. Using the arguments of \cite[Section 4]{C14},  $A^1$, $A^2$ are $(m \times m)$-measurable and $A_x$ is $m$-measurable for all $x \in X$. $m(X \backslash A_x) = 0$ for any $x \in X$ by a standard argument using Bishop-Gromov. Let $A := A^1 \cap A^2$, Fubini's theorem then gives $(m \times m)((X \times X) \backslash A) = 0$. 

Let $\gamma_{x,y}:[0,1] \to X$ be a Borel selection (\ref{a.e. unique geodesic}) of constant speed geodesics from any $x \in X$ to any $y \in X$. Since $m(\mathcal{S}) = 0$, it follows from applying the segment inequality to the characteristic function of $\mathcal{S}$ that for $(m \times m)$-a.e. $(x,y) \in X \times X$, $\gamma_{x,y} \cap \mathcal{R}_{\reg}$ has full measure, and therefore is also dense, in $[0,1]$. By Theorem \ref{main theorem 1}, for any geodesic $\gamma$ and $k$, $\gamma \cap \mathcal{R}_k$ is closed relative to the interior of $\gamma$. Combining these with the fact that almost every $\gamma_{x,y}$ is extendible, we obtain for $(m \times m)$-a.e. $(x,y) \in X \times X$, there exists $k \in \mathbb{N}$ with $1 \leq k \leq N$ so that $\gamma_{x,y} \subseteq \mathcal{R}_k$. This leads to a contradiction if there are two regular sets of different dimension with positive measure. 
\end{proof}

\begin{de}\label{def a.e. convexity}($m$-a.e. convexity)
	Let $(X,d,m)$ be a m.m.s.. Let $S$ be an $m$-measurable set in $X$. $S$ is \textit{$m$-a.e. convex} iff for $(m \times m)$ almost every pair $(x,y) \in S \times S$, there exists a minimizing geodesic $\gamma \subseteq S$ connecting $x$ and $y$. 
\end{de}

\begin{de}\label{def weak convexity}(weak convexity)
	Let $(X,d)$ be a metric space. $S \subseteq X$ is \textit{weakly convex} iff for all $(x,y) \in S\times S$ and $\epsilon > 0$, and there exists an $\epsilon$-geodesic (see Definition \ref{epsilon geodesic}) $\gamma \subseteq S$ connecting $x$ and $y$.
\end{de}

\begin{thm}\label{a.e. and weak convexity} ($m$-a.e. and weak convexity of the regular set) Let $\mathcal{R}_n$ be as in Theorem \ref{unique local dimension 2}, then
	\begin{enumerate}
		\item $\mathcal{R}_n$ is $m$-a.e. convex;
		\item $\mathcal{R}_n$ is weakly convex.
	\end{enumerate}
In particular, $\mathcal{R}_n$ is connected. 
\end{thm}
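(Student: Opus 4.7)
The plan has two ingredients: (a) I would first show that $\mathcal{R}_n$ is relatively closed along the interior of every unit-speed minimizing geodesic, and (b) deduce (1) and (2) by combining (a) with the segment inequality, Fubini, and a careful selection of nearby regular anchor points. For (a), suppose $\gamma\co[0,\ell]\to X$ is a unit-speed geodesic with $\gamma(t_k)\in\mathcal{R}_n$ for some $t_k\to t^\star\in(0,\ell)$. Fix a rescaling sequence $s_j\downarrow 0$ producing a tangent $(Y,d_Y,m_Y,y)$ at $\gamma(t^\star)$; by a diagonal Gromov-precompactness argument I may refine $s_j$ so that the rescalings at each $\gamma(t_k)$ along the same sequence converge to a tangent $Z_k$, which must be $(\mathbb R^n,d_E,c_n\mathscr H^n,0^n)$ since $\gamma(t_k)\in\mathcal R_n$. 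Theorem \ref{main theorem 2} applied to the pair $(Y,Z_k)$, now produced by a single rescaling sequence, gives for every $r>0$
\[ d_{pGH}\bigl((B_r(y),y),(B_r(0^n),0^n)\bigr)\le C\,r\,|t^\star-t_k|^{\alpha}, \]
and letting $k\to\infty$ forces $Y\cong(\mathbb R^n,d_E,c_n\mathscr H^n,0^n)$, hence $\gamma(t^\star)\in\mathcal R_n$.

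For (1), fix the Borel selection $\{\gamma_{x,y}\}$ of Remark \ref{a.e. unique geodesic} and apply the segment inequality (Theorem \ref{segment inequality}) to $\mathbf 1_{X\setminus\mathcal R_n}$: since $m(X\setminus\mathcal R_n)=0$ by Theorem \ref{unique local dimension 2}, for $(m\times m)$-a.e.\ $(x,y)$ the set $\{t\in[0,1]:\gamma_{x,y}(t)\in\mathcal R_n\}$ has full Lebesgue measure in $[0,1]$ and hence is dense in $[0,1]$; combining with (a) forces $\gamma_{x,y}((0,1))\subseteq\mathcal R_n$. Restricting to $(x,y)\in\mathcal R_n\times\mathcal R_n$ (still a set of full $(m\times m)$-measure) promotes this to $\gamma_{x,y}([0,1])\subseteq\mathcal R_n$, which is $m$-a.e.\ convexity. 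For (2), given $x,y\in\mathcal R_n$ and $\epsilon>0$, I would take a minimizing geodesic between them, partition it into $N$ equal pieces with nodes $p_0=x,\ldots,p_N=y$ ($N$ large depending on $\epsilon$), and select $x_i\in B_\delta(p_i)\cap\mathcal R_n$ iteratively so that each consecutive pair $(x_{i-1},x_i)$ lies in the full-$(m\times m)$-measure good set
\[ G_i=\{(u,v)\in B_\delta(p_i)\times B_\delta(p_{i+1})\,:\,\exists\text{ minimizing geodesic from }u\text{ to }v\text{ in }\mathcal R_n\}, \]
with positive slice measure guaranteed by Bishop-Gromov (Theorem \ref{BG volume comparison}) together with $m(\mathcal S)=0$. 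The fixed endpoints $x,y$ are absorbed by a two-step trick, choosing auxiliary anchors $\hat x,\hat y$ in the full-measure Fubini slice close to $x,y$ and joining each of them to $x,y$ by further short geodesics in $\mathcal R_n$ provided again by (1) applied at very small scales, as in \cite{CN12}. Concatenating yields a piecewise geodesic in $\mathcal R_n$ of total length $\le d(x,y)+O(N\delta)\le(1+\epsilon^2)d(x,y)$ once $\delta\le\epsilon^2 d(x,y)/(2N)$, i.e.\ the required $\epsilon$-geodesic; connectedness of $\mathcal R_n$ is immediate from weak convexity.

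The essential step is (a): it is only there that the new Hölder continuity result Theorem \ref{main theorem 2} is genuinely needed, and only in the form comparing tangent cones arising from the \emph{same} rescaling sequence at two nearby points of a single geodesic. The rest reduces to measure-theoretic cleanup by segment inequality, Fubini and Bishop-Gromov. The one real subtlety in (2) is that the endpoints $x,y$ are not a priori in the full-measure Fubini slice on which the iterative selection is based; handling this by iterating (1) on small neighborhoods of the endpoints is the only point where extra care over the iterative selection scheme of \cite{CN12} is required.
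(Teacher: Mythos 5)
Your argument follows the same route the paper takes. Your step (a) is exactly the observation the paper extracts from Theorem~\ref{main theorem 1}/\ref{main theorem 2} (that $\gamma \cap \mathcal{R}_k$ is relatively closed in the interior of $\gamma$), and your proof of (1) via segment inequality, Fubini, (a), and restriction to $\mathcal{R}_n \times \mathcal{R}_n$ is the same argument the paper embeds in the proof of Theorem~\ref{unique local dimension 2} (the paper additionally invokes extendibility because it does not yet know $m(X\setminus\mathcal{R}_n)=0$ at that stage; you correctly observe that once one does, restricting the pair to $\mathcal{R}_n\times\mathcal{R}_n$ handles the endpoints). For (2) the paper defers verbatim to \cite[Theorem 1.20]{CN12}; your iterative-anchor construction is the right picture.

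One imprecision worth flagging in your endpoint trick for (2): statement (1) alone cannot produce the short geodesics from the fixed endpoints $x,y$ to the anchors $\hat x,\hat y$, since (1) is an $(m\times m)$-a.e.\ statement and $\{x\}\times X$ is null. What you actually need is a fixed-endpoint version: apply Theorem~\ref{directional BG} to the $W_2$-geodesic from $m\rvert_{B_\delta(x)}/m(B_\delta(x))$ to $\delta_x$ to see that $(\gamma_{\cdot,x}(t))_*(m\rvert_{B_\delta(x)})\ll m$ for $t<1$, so for $m$-a.e.\ $v$ near $x$ the set $\{t:\gamma_{x,v}(t)\in\mathcal{R}_n\}$ is dense in $(0,1)$; then (a) upgrades this to $\gamma_{x,v}((0,1))\subseteq\mathcal{R}_n$, and the endpoints are handled by $x\in\mathcal{R}_n$ and by choosing $v\in\mathcal{R}_n$. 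So the ingredient is (a) together with a one-sided segment inequality, not (1) itself. Since you already isolate (a) as the load-bearing step, this is a matter of phrasing rather than of substance.
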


\begin{proof}
Statement 1 is contained in the proof of Theorem \ref{unique local dimension 2}. The proof of statement 2 follows verbatim from \cite[Theorem 1.20]{CN12}.
\end{proof}

\bibliographystyle{amsalpha}
\bibliography{new}

\end{document}